\documentclass[11pt,reqno]{amsart}

\usepackage[top=1in,bottom=1in,left=1.25in,right=1.25in]{geometry}
\usepackage{indentfirst}
\usepackage{amsmath,amssymb,amsthm,mathtools,mathrsfs}
\usepackage{enumitem}
\usepackage{xcolor}
\usepackage{graphicx}
\usepackage{float}
\usepackage[colorlinks=true,
  linkcolor=blue!45!black,
  citecolor=green!35!black,
  urlcolor=blue!55!black]{hyperref}
\usepackage[nameinlink,capitalize,noabbrev]{cleveref}

\allowdisplaybreaks
\numberwithin{equation}{section}

\newtheorem{theorem}{Theorem}[section]
\newtheorem{proposition}[theorem]{Proposition}
\newtheorem{lemma}[theorem]{Lemma}
\newtheorem{corollary}[theorem]{Corollary}
\newtheorem{conjecture}[theorem]{Conjecture}

\theoremstyle{definition}
\newtheorem{definition}[theorem]{Definition}

\theoremstyle{remark}

\crefname{equation}{equation}{equations}
\crefname{section}{Section}{Sections}
\crefname{subsection}{Section}{Sections}

\newcommand{\R}{\mathbb R}
\newcommand{\C}{\mathbb C}

\newcommand{\Z}{\mathbb Z}
\newcommand{\Ztwo}{\mathbb Z/2}
\newcommand{\cI}{\mathcal I}
\newcommand{\Euc}{\mathrm{Euc}}

\newcommand{\vol}{\operatorname{vol}}
\newcommand{\supp}{\operatorname{supp}}
\newcommand{\diver}{\operatorname{div}}
\newcommand{\dist}{\operatorname{dist}}
\newcommand{\diam}{\operatorname{diam}}

\newcommand{\Zero}{\operatorname Z}
\newcommand{\Mono}{\operatorname D}
\newcommand{\Freq}{\operatorname N}
\newcommand{\id}{\mathrm{id}}
\newcommand{\interior}{\operatorname{int}}

\newcommand{\eps}{\varepsilon}

\setlist[enumerate,1]{label=\textup{(\roman*)},leftmargin=2.4em}
\setlist[itemize,1]{leftmargin=2em}

\title[Examples of $\mathbb Z/2$-harmonic 1-forms]
{Examples of $\mathbb Z/2$-Harmonic 1-Forms}

\author[J. Chen]{Jiahuang Chen}
\address{Institute of Mathematics, Academy of Mathematics and Systems Science,
Chinese Academy of Sciences, Beijing 100190, P. R. China}
\email{chenjiahuang@amss.ac.cn}

\author[S. He]{Siqi He}
\address{Morningside Center of Mathematics, Chinese Academy of Sciences,
Beijing 100190, P. R. China}
\email{sqhe@amss.ac.cn}

\begin{document}

\begin{abstract}
In this paper, we develop methods for constructing
$\mathbb Z/2$-harmonic 1-forms in dimension three by varying
the background metric.
On $\mathbb R^3$, we construct an example for which the complement
of the smooth locus in the singular set is a Cantor set.
We realize every finite graph with positive even valence at each
vertex as the monodromy locus of a $\mathbb{Z}/2$ harmonic 1-form on $B^3$.
We also construct desingularization models for every critical
$\mathbb Z/2$-eigensection, agreeing exactly with the associated
homogeneous model outside a compact set.
Finally, we construct a nondegenerate $\mathbb Z/2$-harmonic 1-form
on every closed connected oriented three-manifold for a suitable
smooth metric.
The case of $S^3$ gives counterexamples to the conjectures in
\cite{HeWentworthZhangTrees,HeParkerConnectedSums}.
\end{abstract}

\maketitle

\section{Introduction}\label{sec:introduction}

$\Ztwo$-harmonic forms and spinors arise in Taubes' work on
compactness problems in low-dimensional gauge theory.
In several such theories, sequences of solutions with unbounded
fields can produce $\Ztwo$-harmonic forms or spinors after
normalization
\cite{TaubesPSL2C,HaydysWalpuskiCompactness,WalpuskiZhangCompactness}.
These objects play an important role in gauge theory and have many
interesting applications in geometry; see
\cite{HeSpecialLagrangian,FranceschiniMazzeoMinterMinimal,ParkerGluing}.
Donaldson \cite{Donaldson} developed a deformation theory for
nondegenerate multivalued harmonic functions.
Related deformation results for $\Ztwo$-harmonic forms and spinors
appear in
\cite{TakahashiModuli,ParkerDeformations,HeParkerWalpuskiUniversal}.

Examples have been constructed using wall-crossing
\cite{DoanWalpuskiExistence}, harmonic morphisms
\cite{HaydysMazzeoTakahashiExamples}, twistor methods
\cite{donaldson-twistor}, and gluing
\cite{HeParkerConnectedSums,YanShrinkingBranches}; see also \cite{ZhouQuadricBranching,LMZ2026}.
Calabi surgery provides a method for modifying and gluing such
examples \cite{ChenPerturbationPruning,ChenHeYanCalabiSurgery}.
In \cite{ChenHeSalmPrescribed}, every smooth link in $\R^3$ is
realized as the branching set of a nondegenerate $\Ztwo$-harmonic
1-form for a complete metric that is Euclidean outside a compact set.

Our motivation comes from Taubes' foundational 2014 work on the
zero loci of $\Ztwo$-harmonic forms and spinors
\cite{TaubesZeroLoci}.
We aim to construct examples that reflect the singular behavior
discussed in that paper and explore how complicated these singular
sets can be.
In this paper, we develop methods for constructing $\Ztwo$-harmonic
1-forms in dimension three by varying the background metric.

A $\Ztwo$-harmonic 1-form on a Riemannian three-manifold $X$ is a triple
$(Z,\cI,\alpha)$.
Here $Z\subset X$ is closed, and $\cI\to X\setminus Z$ is a flat real
line bundle with structure group $\{\pm1\}$.
The form $\alpha$ is a nonzero $\cI$-valued harmonic 1-form on
$X\setminus Z$ satisfying the finite-energy and growth conditions in
Definition~\ref{def:z2-harmonic}.

We write $\Zero(\alpha)=Z$ for the complete zero set.
The monodromy locus $\Mono(\cI)\subset Z$ consists of the points across
which $\cI$ does not extend trivially.
Points of $Z\setminus\Mono(\cI)$ are ordinary zeros of a locally defined
smooth harmonic 1-form.

For $p\in Z$, denote by $\Freq_p(0)$ the limiting value of Taubes'
frequency function.  Define
\begin{equation}\label{eq:CT-definition}
  C_{\mathrm T}
  :=\left\{
    p\in Z:
    q\mapsto\Freq_q(0)\text{ is continuous at }p,
    \quad
    \Freq_p(0)\in\frac12+\Z_{\geq0}
  \right\}.
\end{equation}
Taubes established fundamental regularity results for the zero sets of
$\Ztwo$-harmonic forms and spinors \cite{TaubesZeroLoci}.
In particular, $C_{\mathrm T}$ is a relatively open $\mathcal{C}^1$ curve
with $\overline{C_{\mathrm T}}=\Mono(\cI)$.
See also \cite{ZhangRectifiability} for further regularity results. Minimal submanifolds and laminations with complicated singular sets
have been constructed in
\cite{LiuGraphSingularSet,LiuFractalSingularSets,HoffmanWhitePrescribedBlowup}.

Our first example shows that $\Mono(\cI)\setminus C_{\mathrm T}$ can be
a closed Cantor set in $\R^3$.

\begin{theorem}\label{thm:intro-cantor}
There exist a complete smooth metric $g$ on
$\R^3=\R_t\times\C$ and a $\Ztwo$-harmonic 1-form
\[
  \alpha\in\Omega^1(\R^3\setminus Z;\cI)
\]
such that $g=g_{\Euc}$ and $\alpha=dt$ outside a compact set.
Moreover,
\[
  Z=\Mono(\cI),
  \qquad
  \Mono(\cI)\setminus C_{\mathrm T}=K_0,
\]
where $K_0\subset(-1,1)\times\{0\}$ is a closed Cantor set in $\R^3$.
\end{theorem}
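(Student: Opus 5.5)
Rather than solving a PDE for a fixed metric, I will build the pair $(g,\alpha)$ by choosing the form first and then adjusting the metric so that the form is harmonic. The basic observation is this. Let $\alpha=df$ locally, where $f$ is a two-valued function with $|df|\neq0$ on a region. Then $\alpha$ is $g$-harmonic as soon as $d{*_g}df=0$. In dimension three this condition can be arranged by a conformal or anisotropic rescaling of $g$ in the directions transverse to the level sets of $f$. Write $g=\lambda^2 df^2+h_t$ near the level sets. Harmonicity then becomes a single divergence-type condition on $\lambda$ together with the area form of the level sets. That condition can be solved wherever $df\neq0$, and it leaves $g$ unchanged wherever $\alpha$ is already Euclidean-harmonic.

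The construction then has three steps.

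\textbf{Step 1 (local model at a branch arc).} Near an interval $\{|t-t_0|<\delta\}\times\{0\}$, use the standard model $\alpha=d\,\mathrm{Re}(z^{3/2})$, which is Euclidean-harmonic with $\Freq\equiv 3/2$. In a collar, interpolate between $dt$ and this model. The transition region, where the zero set has endpoints, is the place where the metric must be modified. Concretely, I will use a family of two-valued functions
\[
f_\epsilon(t,z)=t+\epsilon\,\chi(t)\,\mathrm{Re}(z^{3/2})\cdots
\]
chosen so that $|df|\ne0$ off the branch arcs. This gives a smooth metric that is Euclidean outside a small ball and makes $f$ harmonic.

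\textbf{Step 2 (Cantor construction).} Take the standard middle-thirds construction $K_0=\bigcap_n K_n\subset(-1,1)$, where $K_n$ is a union of $2^n$ intervals. The monodromy locus is to be $Z=[-a,a]\times\{0\}$ for a suitable $a<1$. Along $Z$, the local model is chosen so that on the complementary open gaps of $K_0$ the form is locally $\mathrm{Re}(z^{1/2})$-type (or $z^{3/2}$-type) with constant frequency in $\tfrac12+\Z$. Those gap points then lie in $C_{\mathrm T}$. At points of $K_0$, the frequency is forced to be discontinuous, because the gaps adjacent to a point of $K_0$ alternate between two different half-integer frequencies, say $1/2$ and $3/2$. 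Alternatively, points of $K_0$ are accumulation points of the "switching" locations.

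The construction proceeds inductively. At stage $n$, modify the metric only in a neighborhood of size $\sim 3^{-n}$ of each new gap, with $C^k$-size of the modification chosen to be summable, for example at most $2^{-n}$ after rescaling. Because every gap is handled by a rescaled copy of a single transition model, scale invariance of harmonicity under $g\mapsto r^2g$ makes the estimates uniform. For the metric itself to be smooth at $K_0$, I will instead use metric perturbations that decay like $e^{-1/\text{dist}}$ toward $K_0$.

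\textbf{Step 3 (verification).} Check the following in turn:
\begin{enumerate}
\item $g$ is smooth and complete.
\item $\alpha$ is harmonic, with finite energy and the growth condition required in Definition~\ref{def:z2-harmonic}.
\item $Z=\Mono(\cI)$, that is, there are no ordinary zeros off the axis.
\item The frequency is locally constant on gaps and is discontinuous exactly on $K_0$, so that $\Mono(\cI)\setminus C_{\mathrm T}=K_0$.
\end{enumerate}

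\textbf{Main obstacle.} I expect the hardest point to be smoothness of $g$ at the points of $K_0$ while infinitely many frequency transitions accumulate there. The transitions cannot be made small in scale-invariant terms, since a frequency jump is an $O(1)$ change. So near $K_0$ the metric is not $C^1$-close to Euclidean in rescaled coordinates. My first attempt will be to let the metric degenerate toward a fixed smooth model with a suitable profile near $K_0$. If that fails, I will place all the "frequency-changing" geometry in a part of the metric whose derivatives vanish to infinite order near the axis, so that the accumulation is harmless.
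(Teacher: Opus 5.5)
Your proposed branch locus cannot carry monodromy, so there is a genuine gap. The complement $\R^3\setminus([-a,a]\times\{0\})$ is homotopy equivalent to $S^2$, so every flat $\{\pm1\}$-bundle on it is trivial and $\Mono(\cI)=\varnothing$. A branch arc with free endpoints cannot exist. For the same reason, $f_\epsilon=t+\epsilon\chi(t)\operatorname{Re}(z^{3/2})$ in Step 1 is not an $\cI$-valued function: it adds an untwisted function to one that changes sign around the axis. Monodromy loci have no free ends, which is why the paper uses only closed curves and even-valence graphs. There are also two normalization slips. In the paper's convention, $d\operatorname{Re}(z^{3/2})$ has frequency $1/2$, not $3/2$. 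And $\operatorname{Re}(z^{1/2})$ is inadmissible, because $|d\operatorname{Re}(z^{1/2})|\sim|z|^{-1/2}$ does not extend continuously by zero.

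Suppose you repair this with a closed branch curve through $K_0$ whose leading coefficient vanishes exactly on $K_0$, as in the paper's appendix. Then your frequency mechanism does give $K_0\subset\Mono(\cI)\setminus C_{\mathrm T}$, but item (iii), $Z=\Mono(\cI)$, is no longer routine; it fails for the natural model. Take $u=\operatorname{Re}(B(t)z^{3/2}+z^{5/2})$.
\begin{itemize}
\item The off-axis normal critical point is $z_*(t)=-\tfrac35B(t)$.
\item The critical value $v(t)=u(t,z_*(t))=\tfrac25\operatorname{Re}(Bz_*^{3/2})$ satisfies $v'(t)=\partial_tu(t,z_*(t))$.
\item $v$ tends to $0$ at both ends of every gap where $B\neq0$.
\end{itemize}
Rolle's theorem therefore gives an ordinary zero of $du$ over each such gap, and these zeros accumulate on $K_0$; this is the curve $\Gamma_B$ in the appendix. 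Since $B$ vanishes on the perfect set $K_0$ and is smooth, it is flat there, so the amplitudes shrink toward $K_0$. The phase trick of \cref{lem:phase-adjacent-transition} clears such zeros only from tubes of radius comparable to the amplitude. It therefore cannot clear a neighborhood of a Cantor point.

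Your metric step is also too local. Harmonicity needs a globally defined closed $\beta$ with $\alpha\wedge\beta>0$, matching the models near $Z$ and equal to $dx\wedge dy$ at infinity. That is a global problem; the paper solves it with explicit vector fields and a flux cancellation. The obstacle you single out is not the real one either. The appendix metric $\operatorname{diag}(1,\mathrm I_2+H_{s(t)})$, with $s\propto B''$, is smooth despite the frequency jumps.

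The paper avoids all of this by never putting $K_0$ on a branch curve. It starts from $F_0=H(t)+x^2-y^2$, whose critical set is $K_0$ (where $H'$ is flat) together with a pair of nondegenerate critical points in each gap. Each pair is replaced by two shrunken ellipse models of opposite flux, with the scales chosen diagonally so that $g$ is smooth at $K_0$. Then $K_0\subset\Mono(\cI)$ because every neighborhood of a Cantor point contains a whole ellipse, and no ordinary zeros remain. The frequency at $K_0$ is $1$, with tangent form $x\,dx-y\,dy$, so $K_0$ leaves $C_{\mathrm T}$ because $1$ is not a half-integer.
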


At every $p\in K_0$, we have $\Freq_p(0)=1$, and the normalized
tangent cone is unique. In orthonormal coordinates $(t,x,y)$ on $T_p\R^3$, the tangent form is $\alpha_\infty=x\,dx-y\,dy,$ with zero set the $t$-axis. The limiting line bundle is trivial, and $\alpha_\infty$ extends smoothly across this axis.

Even when the monodromy locus is a smooth circle, the leading
branch coefficient can vanish on any prescribed closed subset
of the circle; see Appendix~\ref{sec:prescribed-degeneracy}.

$\Ztwo$-harmonic 1-forms with graph singularities are of particular
interest in the study of deformations of branching loci.
The linear analysis for forms and spinors branching along graphs
has been developed in \cite{HaydysMazzeoTakahashiIndex}.
Let $G\subset\interior B^3$ be an embedded finite graph.
Let $\cI\to B^3\setminus G$ be a flat real line bundle with structure
group $\{\pm1\}$.
If $\cI$ has holonomy $-1$ around every edge meridian, then each
vertex of $G$ has even valence.

\begin{theorem}\label{thm:intro-even-graphs}
For every finite graph $G$ with positive even valence at each vertex,
there exist a smooth metric $g$ on $B^3$ and a $\Ztwo$-harmonic 1-form
\[
  \alpha\in\Omega^1(B^3\setminus Z;\cI)
\]
such that $\Mono(\cI)=G$ for some embedding of $G$ into
$\interior B^3$.
\end{theorem}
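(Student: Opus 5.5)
The idea is to build the form first and then choose the metric that makes it harmonic, which is the strategy announced in the introduction. We look for a pair $(g,\alpha)$ with $\alpha=dF$ locally, where $F$ is a two-valued function on $B^3$. The function $F$ should vanish to the right order along $G$, and $|\alpha|$ should be nonvanishing away from $G$. Closedness $d\alpha=0$ is automatic. The harmonicity condition $d^{*_g}\alpha=0$ is a single scalar equation, $d(*_g\alpha)=0$, and it can be solved by adjusting $g$ rather than $\alpha$. Concretely, we would use the standard trick: if $\alpha$ is nowhere zero on an open set and $\beta$ is any closed $\cI$-valued 2-form with $\alpha\wedge\beta>0$, then there is a metric $g$ with $*_g\alpha=\beta$. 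One can take $g$ so that $\alpha$ and $\beta$ become $dt$ and $dx\wedge dy$ in a frame adapted to the foliation. Away from $G$, the problem therefore reduces to producing a closed pair $(\alpha,\beta)$ of $\cI$-valued forms with $\alpha\wedge\beta>0$ off the zero set. Near $G$, we use explicit Euclidean models.

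The steps, in order, are as follows.

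First, we fix local models.
\begin{enumerate}
\item Along the interior of each edge we take the standard branched model $\alpha=d\,\mathrm{Re}(z^{3/2})$ in Euclidean coordinates $(t,z)$, with $\beta=*\alpha$. This model is harmonic, has monodromy $-1$ around the meridian, and satisfies $\Freq=3/2$.
\item At a vertex of valence $2k$ we need a homogeneous $\Ztwo$-harmonic 1-form on $\R^3$ whose monodromy locus is a cone over $2k$ points of $S^2$. For this we invoke the paper's critical $\Ztwo$-eigensections on $S^2$ branched at $2k$ points; their existence and desingularization are stated in the abstract.
\item Alternatively, and more robustly, we embed $G$ so that each vertex locally looks like $k$ smooth arcs crossing transversally at a point in a plane. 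We then take $\alpha=d\,\mathrm{Re}\big(\prod_j (w-\ell_j)^{1/2}\cdots\big)$, built from the product of branched planar models. This reduces the vertex model to a $2$-dimensional harmonic-function construction times a line, perturbed.
\end{enumerate}
Embedding $G$ as a piecewise planar graph in the plane $\{t=0\}$ (after subdividing, any graph embeds in $\R^3$) makes this uniform.

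Second, away from small neighborhoods of the vertices and edges, we construct the global $\alpha$ as $dF$. Here $F$ is a two-valued function on $B^3\setminus G$ with the prescribed monodromy, matching the local models near $G$ and having no critical points elsewhere. A natural choice is $F=\mathrm{Re}\sqrt{h}$, where $h$ is a complex-valued function vanishing transversally exactly on $G$ to first order along edges. With cubic behaviour it gives the $z^{3/2}$ model; one can instead use $\mathrm{Re}(h^{3/2})$ with $h$ a complex function whose zero set is $G$. Existence of such an $h$ uses the even valence: the mod-2 class dual to $G$ must be trivial, which is exactly the condition for $\cI$ to exist. We then choose $\beta$ as a closed $\cI$-valued 2-form agreeing with $*_{\Euc}\alpha$ near $G$ and positive against $\alpha$ elsewhere. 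We would build $\beta=d\gamma$ by interpolating $\gamma$ between the model primitive and a primitive adapted to level sets of $F$, using a partition of unity along the flow of $\alpha$. Finally, we define $g$ by $*_g\alpha=\beta$, with $g$ Euclidean near $G$. Then $\alpha$ is $g$-harmonic, the finite-energy and growth conditions hold because of the model behaviour, and $\Mono(\cI)=G$.

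The main obstacle is making $\alpha$ nonvanishing away from $G$ in the interpolation regions while keeping $\beta$ closed and positive. Critical points of the two-valued function $F$ cannot always be avoided topologically, since a Morse-type count on $B^3$ with boundary may force them. The remedy I would try first is to allow such zeros, which are permitted in $Z$ since only $\Mono(\cI)$ is prescribed, and to make them nondegenerate Morse critical points of index $1$ or $2$. Near each one we use a Euclidean harmonic quadratic model such as $d(x^2+y^2-2t^2)$, so the metric construction only needs to be done where $\alpha\neq0$. The local existence of a closed positive $\beta$ is then a standard fact for nonsingular closed 1-forms, the "taming" condition. The delicate point is that the cohomological obstruction to $\beta$ is absent: on $B^3$ with boundary there are no closed leaves forcing an obstruction. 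This argument must nonetheless be checked near the transition annuli around vertices.
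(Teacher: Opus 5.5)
\textbf{There are two genuine gaps: the vertex models and the global existence of $\beta$.} Your overall frame is exactly \cref{thm:relative-metric-realization}: produce a closed $\cI$-valued pair $(\alpha,\beta)$ with $\alpha\wedge\beta>0$ off the zero set, then define $g$ by $*_g\alpha=\beta$. However, that metric extends smoothly across $Z$ only where you already have a smooth $g_0$ making $\alpha$ harmonic, so the real work lies elsewhere.

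\textbf{The vertex models.}
\begin{enumerate}
\item Your option (iii) produces no vertex. A planar two-valued harmonic function times a line branches along disjoint parallel lines. Crossing branch arcs need a homogeneous model $r^\mu\phi(\omega)$ with $\phi$ a critical eigensection punctured at the arc directions. Nothing you cite supplies such a configuration with coplanar punctures.
\item A global planar embedding is unavailable in general: $K_5$ is $4$-valent and nonplanar.
\item $\operatorname{Re}(h^{3/2})$ cannot vanish transversally at a vertex of valence $\geq4$. You also exhibit no smooth metric near the vertex for which it is harmonic.
\item Option (ii) is the right input (\cref{thm:critical-configurations}), with $G$ embedded along the cone's ray directions. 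It then leaves three matching problems you do not address. The eigencone may vanish to order $\gamma>3/2$ on its branch rays, so it does not agree with $\operatorname{Re}(z^{3/2})$ on the edges. Its ordinary zero rays $\R_{>0}Q_f$ must be terminated inside the vertex region. Its Euclidean metric must be joined to the edge metric while keeping the function harmonic.
\end{enumerate}
The paper handles these with \cref{lem:exact-ray-normal-form,lem:fixed-order-collar} (normal form $u_\gamma$ and a metric collar), \cref{lem:phase-adjacent-transition} (lowering $\gamma$ one step at a time to $3/2$), and \cref{lem:ordinary-ray-cap} (capping each ordinary ray, which leaves the finite segments $Z_{\mathrm{ord}}$).

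\textbf{The global extension.} You call the existence of $\beta$ on $W=B^3\setminus\interior N$ a standard taming fact with no obstruction. It is not automatic. An interior local extremum of $F$ already rules out any closed $\beta$ with $dF\wedge\beta>0$, by Stokes on the small level spheres around it. In general one needs a global Calabi-type transitivity property of $F$, and you give no construction of an extension of the boundary data with this property. You do not even say how to exclude index $0$ and $3$ zeros.

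A partition of unity does not help either. Interpolating primitives adds the term $d\chi\wedge(\gamma_1-\gamma_2)$, which is harmless only if $\chi$ is a function of $|F|$. Such cutoffs cannot localize near $G$, because the level set $\{F=0\}$ through $G$ leaves every neighborhood of $G$.

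The paper sidesteps all of this in three steps:
\begin{enumerate}
\item It obtains a collar primitive $\lambda_0$ of $\star_{g_0}dF_0$ from $H^2(\partial N;\cI)=0$ (\cref{lem:interface-primitive}).
\item It extends $dF_0$ to a nowhere-zero formal section using $H^3(W,K_{\mathrm{in}};\mathfrak o_E)\cong H_0(W,\partial B^3;\mathfrak o_E)=0$ (\cref{lem:graph-extension-data}).
\item It applies the relative $h$-principle for positive pairs (\cref{thm:relative-positive-pair-hprinciple}). This is holonomic approximation near $K_{\mathrm{in}}\cup P$, followed by compressing $W$ into that neighborhood, which is possible because every component of $W\setminus K_{\mathrm{in}}$ meets $\partial B^3$.
\end{enumerate}
This gives $(F,\lambda)$ with $dF\wedge d\lambda>0$ on $W$, so no new zeros appear there. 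Your route could in principle be completed in the style of \cref{sec:desingularization}: build $F$ with only index $1$ and $2$ zeros and positive arcs through every regular point exiting through $\partial B^3$, then add Thom forms. That construction, however, is exactly the missing content.

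Minor point: $d\operatorname{Re}(z^{3/2})$ has frequency $1/2$, not $3/2$.
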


The construction may also produce finitely many ordinary zero arcs
near the vertices.

$\Ztwo$ eigensections and the associated homogeneous models have been
studied by Taubes and Wu
\cite{TaubesWuModels,TaubesWuTopologicalEigenfunctions,TaubesWuPolytopes}.
Further work on critical eigensections appears in
\cite{ChenHeCritical,HeHaydysSalm}.  The graph analysis in \cite{HaydysMazzeoTakahashiIndex} and the Calabi
surgery in \cite{ChenHeYanCalabiSurgery} motivate the construction of
desingularization models for these homogeneous forms.

Let $P=\{p_1,\ldots,p_{2n}\}\subset S^2$, and let
$\cI_P\to S^2\setminus P$ have holonomy $-1$ around each puncture.
A nonzero finite-energy eigensection $\phi$ is called
\emph{critical} if both $|\phi|$ and $|d\phi|$ tend to zero at $P$.
Write
\[
  -\Delta_{S^2}\phi=\mu(\mu+1)\phi,
  \qquad
  F_{\mathrm{hom}}(r,\omega)=r^\mu\phi(\omega),
\]
where $\mu>0$.
Then $dF_{\mathrm{hom}}$ is a homogeneous
$\Ztwo$-harmonic 1-form on $\R^3$, with branch rays through $P$;
see \cref{subsec:tangent-cones-eigensections}.

Fix an arbitrary partition
\[
  \Pi=\bigl\{\{p_{a_1},p_{b_1}\},\ldots,
              \{p_{a_n},p_{b_n}\}\bigr\}
\]
of $P$ into pairs.
The following theorem resolves the vertex according to this pairing
and preserves the homogeneous model outside a compact set.

\begin{theorem}\label{thm:intro-desingularization}
For every critical eigensection $\phi$ and pairing $\Pi$ as above,
there exist a complete smooth metric $g$ on $\R^3$ and a
$\Ztwo$-harmonic 1-form
\[
  \alpha=dF\in\Omega^1(\R^3\setminus Z;\cI)
\]
whose monodromy locus is a disjoint union
$\Gamma=\gamma_1\sqcup\cdots\sqcup\gamma_n$
of smooth proper copies of $\R$.
For some $R>0$,
\[
  \gamma_k\cap\{|x|\geq R\}
  =\{rp_{a_k}:r\geq R\}\sqcup\{rp_{b_k}:r\geq R\},
  \qquad 1\leq k\leq n.
\]
After identifying the line bundles at infinity,
\[
  g=g_{\Euc}\quad\text{on }\R^3\setminus B_R,
  \qquad
  F=F_{\mathrm{hom}}
  \quad\text{on }(\R^3\setminus B_R)\setminus Z.
\]
\end{theorem}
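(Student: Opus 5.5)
The construction keeps the function fixed and builds the metric around it, in the spirit of the paper's program of varying the background metric. The key observation is that if $F$ is a two-valued function whose differential is nowhere zero away from the prescribed branch locus, then we can choose a metric making $F$ harmonic. Indeed, harmonicity of $F$ for $g$ means $d(\ast_g dF)=0$. Near a point where $dF\neq0$, choose local coordinates $(F,u,v)$ and a metric of the form $g=a^2\,dF^2+h$, where $h$ is a metric on the level sets. Then $\ast_g dF=a^{-1}\vol_h$, and closedness becomes a single scalar condition: the level-set area form $\vol_h/a$ must be preserved by the flow of $\nabla F/|\nabla F|^2$. This condition can always be met by rescaling $h$ or $a$. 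The plan is therefore:
\begin{enumerate}
\item construct a two-valued function $F$ on $\R^3$ with $F=F_{\mathrm{hom}}$ outside $B_R$ and monodromy locus the resolved curves $\Gamma$;
\item construct a metric $g$ equal to $g_{\Euc}$ outside $B_R$ and making $F$ harmonic.
\end{enumerate}

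\textbf{Step 1: construct $F$.} First connect the rays. For each pair $\{p_{a_k},p_{b_k}\}$, join the rays $\{rp_{a_k}\}$ and $\{rp_{b_k}\}$ for $r\geq R$ by a smooth arc inside $B_R$, keeping the $n$ resulting curves $\gamma_k$ disjoint and properly embedded. This is possible in dimension three; if $p_{a_k},p_{b_k}$ are not antipodal, use an arc through the interior that avoids the others. The line bundle $\cI$ on $\R^3\setminus\Gamma$ with holonomy $-1$ around each $\gamma_k$ exists and is unique, because $H^1(\R^3\setminus\Gamma;\Ztwo)\cong\Ztwo^n$ is generated by the meridians. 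It agrees with the pullback of $\cI_P$ at infinity.

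Next, near each $\gamma_k$ use the local model. Criticality of $\phi$ means its expansion at $p_j$ begins at order $r^{3/2}$ or higher in the distance to $p_j$. So near a branch ray $F_{\mathrm{hom}}$ looks like $\mathrm{Re}(c(s)\,w^{3/2})+\cdots$ in transverse complex coordinate $w$, with $s$ the parameter along the ray. Along the interior arcs, take $F=\mathrm{Re}(w^{3/2})$ or, more simply, $\mathrm{Re}(w^{1/2})\cdot\rho^{\ast}$ adjusted so that $dF\neq0$ off $\Gamma$ near the arc. Then interpolate with $F_{\mathrm{hom}}$ by cutoffs in the transition region.

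Finally, fill in the rest of $B_R\setminus\Gamma$ by any two-valued smooth extension. Then $F$ is a smooth $\cI$-valued function, equal to $F_{\mathrm{hom}}$ outside $B_R$, with the correct branching asymptotics along $\Gamma$.

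\textbf{Step 2: remove critical points (the main obstacle).} A generic extension has isolated critical points, and at those the metric construction fails. Harmonic functions for smooth metrics can have critical points, but their local structure is constrained, since the Hessian must be $g$-traceless. So there are two options:
\begin{enumerate}
\item make $dF\neq0$ on $B_R\setminus\Gamma$, using the topological freedom to route level sets;
\item allow nondegenerate critical points of index $1$ or $2$ and arrange that $g$ near each one makes the Hessian traceless.
\end{enumerate}
I would try option (i) first. Morse-theoretically, $F$ restricted to the double cover is a function on the complement of the branched cover, and critical points can be cancelled in pairs. Otherwise use option (ii), choosing $g$ near each critical point so that $F$ is a harmonic quadratic-plus-higher-order model, which a suitable local coordinate change allows.

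Near $\Gamma$, $dF\neq0$ automatically on the punctured neighborhood, from the $w^{3/2}$ (or $w^{1/2}$) model. In a smaller neighborhood, take $g$ to be the flat metric in adapted coordinates and $F$ the exact harmonic model $\mathrm{Re}(w^{1/2})$ or $\mathrm{Re}(w^{3/2})$, which is $t$-independent and hence harmonic. Use a tubular neighborhood where the metric is a product, so the model is exactly harmonic. This has to be compatible with the Euclidean region, where $F_{\mathrm{hom}}$ is already harmonic near the rays, so no metric modification is needed there.

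\textbf{Step 3: build $g$ and check the conditions.} On $B_R\setminus(\text{tubes}\cup\text{critical balls})$, define $g$ by the level-set construction above. Interpolate with $g_{\Euc}$ and the tube metrics by starting from any metric $g_0$ agreeing with those regions. Then correct by a conformal factor on the $dF$-direction component, solving the transport ODE for $a$ along the gradient flow lines of $F$. For this, every flow line entering the modification region must enter and leave through regions where $g_0$ is already harmonic for $F$, with compatible boundary data; this is arranged by choosing the modification region as a union of flow segments. The flux of $\ast dF$ through level sets is conserved on each flow tube, which gives the needed consistency.

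Finally, $\alpha=dF$ has finite energy and the growth condition of Definition~\ref{def:z2-harmonic} locally, because near $\Gamma$ we have $|\alpha|\sim|w|^{-1/2}$ or better. Its monodromy locus is exactly $\Gamma$, since the holonomy around each $\gamma_k$ is $-1$ by construction.
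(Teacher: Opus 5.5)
There is a genuine gap. The global construction of the metric in your Step~3 does not work as written, and the input that makes it work is missing.

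\textbf{The metric construction.} In flow coordinates $(F,u,v)$ with $V=\partial_F$ and $h(V,\cdot)=0$, harmonicity for $g=a^2dF^2+h$ reads $\partial_F\bigl(a^{-1}\sqrt{\det h}\bigr)=0$. So along each flow line, $a$ is fixed by its value where the line enters a region where $g$ is prescribed: the Euclidean end, a product tube about $\Gamma$, or a Morse ball. Matching the prescribed $a$ where the line exits is then an extra pointwise condition. It generically fails, and a conformal factor in the $dF$-direction cannot repair it. ``Flux is conserved on each flow tube'' is a consequence of a solution, not a reason one exists. To solve, you would also have to move the line field $V=\ker(\ast_g dF)$, a Moser-type problem for the flow maps between prescribed regions, and you need a global input guaranteeing this is possible.

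The paper's input is transitivity. \cref{prop:odd-indefinite-filling}(iii) shows that every regular point of $\widetilde F$ lies on a positive arc with two radial ends. \cref{prop:remote-thom-correction} turns these arcs into a closed odd $2$-form $\widetilde b$ with $d\widetilde F\wedge\widetilde b>0$, and \cref{thm:relative-metric-realization} then gives $g$. Because $d\widetilde F$ is exact, no loop is positive, so Thom forms of positive loops are unavailable inside any compact region. The paper closes each arc in the shell $R_3<r<2R_3$ and uses $\mu>1$ (\cref{lem:homogeneous-exponent}): then $|dF_{\mathrm{hom}}|\gtrsim R_3^{\mu-1}$ dominates the $O(1)$ Thom forms along the non-positive closing segments. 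The metric is therefore changed far outside the ball where $F$ was modified, and the final $R$ is of order $2R_3$. Your plan of imposing $g=g_{\Euc}$ outside the same $B_R$ has no mechanism of this kind.

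\textbf{The construction of $F$.} Your Steps~1--2 omit what makes the filling and the local models work.

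First, interior local extrema of the lifted function are genuine obstructions: by the maximum principle, and because a positive definite Hessian is never $g$-traceless. ``Cancel in pairs'' requires the relevant cobordisms to be connected with nonempty ends. A component of $\{\widetilde F\leq-\eps\}\cap\widetilde B_R$ that misses $\partial\widetilde B_R$ carries a minimum that cannot be cancelled inside that cobordism. The paper obtains connectivity from \cref{lem:connected-odd-separator} and the radial extensions $C^\pm$, and then applies \cref{lem:indefinite-relative-filling}.

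Second, you ignore the ordinary zero rays $\R_{>0}Q_\phi$. Since $F=F_{\mathrm{hom}}$ outside $B_R$, we have $dF=0$ on $[R,\infty)Q_\phi$. So $dF\neq0$ off $\Gamma$ is impossible, and each such ray must terminate at a point where $g$ is smooth and $F$ is harmonic. That needs an explicit cap model (\cref{lem:ordinary-ray-cap}), which your level-set construction cannot provide because it degenerates where $dF=0$.

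Third, for the same reason the transition along each branch ray needs a metric smooth across the axis. This is the transition from $F_{\mathrm{hom}}=\operatorname{Re}(a(t,z)z^\gamma)$, with $\gamma\in\{3/2,5/2,\ldots\}$, to $\operatorname{Re}(z^{3/2})$. Cutoffs give neither such a metric nor control of the zeros. For example, $\operatorname{Re}\bigl((1-\chi)z^{3/2}+\chi z^{5/2}\bigr)$ has $du=0$ along the new curve $z=-3(1-\chi(t))/(5\chi(t))$. This is why the paper uses \cref{lem:fixed-order-collar,lem:exact-ray-normal-form,lem:phase-adjacent-transition} together with the phase condition.

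Finally, $\operatorname{Re}(w^{1/2})$ is not admissible. It gives $|\alpha|\sim|w|^{-1/2}$ and $|\nabla\alpha|^2\sim|w|^{-3}$, which violate conditions (iii) and (iv) of \cref{def:z2-harmonic}. Only exponents $\geq3/2$ are allowed, giving $|\alpha|\sim|w|^{1/2}$.
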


\begin{figure}[H]
  \centering
  \includegraphics[width=.80\textwidth]{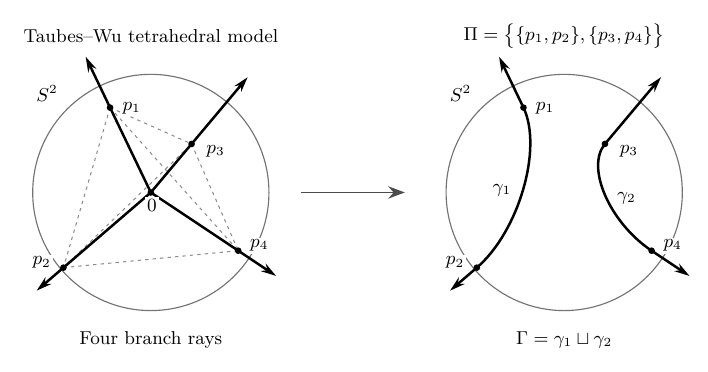}
  \caption{A pairing of the tetrahedral branch rays, giving
  $\Gamma=\gamma_1\sqcup\gamma_2$.}
  \label{fig:tetrahedral-pairing}
\end{figure}

We finally consider the existence of nondegenerate $\Ztwo$-harmonic
1-forms on closed three-manifolds.
One motivation comes from the role of $\Ztwo$-harmonic 1-forms in
compactifications of $\operatorname{SL}_2(\C)$ character varieties
\cite{TaubesPSL2C,HeWentworthZhangTrees}.
Haydys proved that, on an integral homology three-sphere, a smooth
zero set of a $\Ztwo$-harmonic 1-form must have at least two connected
components \cite{HaydysPSL2R}.
The existence question on $S^3$ is the subject of the following conjecture.

\begin{conjecture}[\cite{HeParkerConnectedSums,HeWentworthZhangTrees}]
\label[conjecture]{conj:sphere-nonexistence}
For every smooth Riemannian metric $g$ on $S^3$, there is no nonzero
$\Ztwo$-harmonic 1-form on $(S^3,g)$.
\end{conjecture}

He--Wentworth--Zhang prove nonexistence under additional hypotheses
\cite[Theorem~1.7]{HeWentworthZhangTrees}.
See also \cite{SunSeifertDichotomy} for nonexistence results for
certain metrics.

Allowing the metric to vary, we obtain the following existence
theorem.

\begin{theorem}\label{thm:intro-sphere}
Let $M$ be a closed connected oriented three-manifold.
Let $K_0,K_1\subset M$ be smooth knots contained respectively
in the interiors of two disjoint smoothly embedded closed
three-balls.
Then there exist a smooth metric $g$ on $M$ and a nondegenerate
$\Ztwo$-harmonic 1-form $(Z,\cI,\alpha)$ on $(M,g)$ such that
\[
  \Mono(\cI)=K_0\sqcup K_1.
\]
The set $Z\setminus\Mono(\cI)$ consists of finitely many
ordinary Morse zeros.
\end{theorem}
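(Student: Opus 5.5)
The construction starts from a closed 1-form on all of $M$ and then varies the metric so that a prescribed $\cI$-valued form becomes harmonic. The guiding principle, used throughout the paper, is the following. Suppose $\alpha$ is a closed $\cI$-valued 1-form on $M\setminus Z$, and we can find a metric $g$ with $d{*_g}\alpha=0$. Then $\alpha$ is $\Ztwo$-harmonic for $g$.

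A convenient way to achieve coclosedness is to arrange that, away from its zeros, $\alpha=dF$ is the differential of a local (two-valued) function. We then choose $g$ so that $F$ is $g$-harmonic. Wherever $dF\neq0$, this is essentially free: conformally rescale in the directions transverse to the level sets. Equivalently, pick coordinates $(F,u,v)$ and a metric of the form $dF^2+h_F$, where the area form of $h_F$ on level sets is independent of $F$. Then ${*}dF$ is the pullback of a fixed area form and is closed.

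The plan therefore reduces to four steps.

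\emph{Local models near the branch knots.} Each $K_i$ lies in a ball $B_i$. Apply \cite{ChenHeSalmPrescribed}, or the construction behind Theorem~\ref{thm:intro-desingularization}, to obtain a metric on $\R^3$ that is Euclidean outside a compact set. This gives a nondegenerate $\Ztwo$-harmonic 1-form branching exactly along $K_i$ and equal to a fixed model, say $dt$, near the boundary sphere. Shrink each example by scaling so that it fits inside $B_i$, with $\alpha_i=dt_i$ on a collar of $\partial B_i$.

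\emph{Extension over the complement.} On $W=M\setminus(B_0\cup B_1)$ we need a smooth real function $f$ with $df\neq0$ near $\partial W$ and $f=t_i$ on the collars. We also require that $f$ is Morse with finitely many critical points in the interior. Such an $f$ exists by standard Morse theory relative to the boundary. Prescribing $f$ near each boundary sphere as a height function is unobstructed because two balls are removed: the boundary condition has no cohomological constraint. Glue to get a global closed $\cI$-valued form $\alpha$, with $\cI$ nontrivial only near $K_0\sqcup K_1$, and with $\alpha$ exact locally near each critical point.

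\emph{Harmonizing on $W$.} Near each Morse critical point, choose a metric making $f$ harmonic. A nondegenerate critical point of signature $(1,2)$ or $(2,1)$ admits coordinates in which $f$ is the harmonic quadratic $x^2+y^2-2z^2$, made Euclidean after a linear change. For indices $0$ and $3$, no metric can make $f$ harmonic, so we first cancel or pair these critical points, or simply avoid them. Since $W$ has nonempty boundary and is connected, a Morse function without index $0$ or $3$ critical points exists. Away from critical points, use the level-set construction above. This requires the area forms $\sigma$ on the level sets to extend consistently. We choose a closed 2-form $\sigma$ on $W\setminus\mathrm{Crit}$ with $df\wedge\sigma>0$, matching $*_{g_i}dt_i$ on the collars and $*_{\mathrm{loc}}df$ near critical points. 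Then $g$ is defined as the metric with $*_g df=\sigma$, which exists pointwise because $\sigma|_{\ker df}>0$.

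\emph{Main obstacle.} The hard step is the existence of this closed positive 2-form $\sigma$ with prescribed boundary values. This is a Thurston/Calabi-type intrinsic-harmonicity problem: $df$ is intrinsically harmonic iff every cycle transverse to the gradient-like flow has positive pairing, i.e. no closed orbit obstructions. Our plan is to choose $f$ via a handle decomposition of $W$ from $\partial B_0$ to $\partial B_1$ with no closed gradient orbits and no index $0$ or $3$ points, so that $f$ is self-indexing with all flow lines running from $\partial B_0$ to $\partial B_1$ or into critical points. By Calabi's criterion, every point then lies on a closed loop or a proper path along which $df>0$, and $df$ is intrinsically harmonic. The boundary normalizations are handled by a partition-of-unity correction $\sigma=\sigma_0+d\beta$, with $\beta$ supported in the interior and chosen small relative to $df$. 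Nondegeneracy along $K_0\sqcup K_1$ is inherited from the local models, and the critical points of $f$ become the ordinary Morse zeros.
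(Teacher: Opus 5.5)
There is a genuine gap, and it is exactly the step you call the main obstacle. With your collar data, the closed positive 2-form $\sigma$ on $W$ cannot exist, whatever $f$ you choose in the interior. Near $\partial B_i$ you impose $f=t_i+c_i$ and $\sigma=*_{\Euc}dt_i$. Here $(t_i,x_i,y_i)$ are the rescaled coordinates of your $\R^3$ model, and they extend over all of $B_i$. Since $d\sigma=0$ and $\partial W=-(\partial B_0\sqcup\partial B_1)$, Stokes' theorem gives
\[
  \int_W df\wedge\sigma=\int_{\partial W}f\,\sigma
  =-\sum_{i=0,1}\int_{B_i}dt_i\wedge *_{\Euc}dt_i
  =-\sum_{i=0,1}\vol_{\Euc}(B_i)<0 .
\]
The constants $c_i$ drop out because $d*_{\Euc}dt_i=0$. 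On the other hand, $df\wedge\sigma>0$ away from finitely many points forces the left side to be positive. Equivalently, suppose some metric on $W$ made $f$ harmonic with these collar data. Refilling each $B_i$ with the flat ball carrying $t_i+c_i$ would then give a nonconstant harmonic function on the closed manifold $M$, which is impossible.

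The knots are invisible from $W$: your collar model $(dt,g_{\Euc})$ has zero flux through $\partial B_i$ and extends over $B_i$ as an ordinary harmonic function. No handle decomposition or Morse cancellation on $W$ can repair this. The Calabi step is also misapplied in two ways. First, boundary-to-boundary positive arcs in $W$ do not solve a problem with data fixed on all of $\partial W$. Second, a self-indexing $f$ running from $\partial B_0$ to $\partial B_1$ is incompatible with $f=t_i$, for which $\partial B_i$ is not a level set. Separately, Theorem~\ref{thm:intro-desingularization} produces noncompact branch curves asymptotic to eigencones, not a knot in a ball with $\alpha=dt$ near its boundary.

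The missing idea is to make each ball model radial rather than linear at its boundary, so that the lifted ball switches sheets and carries nonzero flux. The paper works on the double branched cover $\hat B_i$ and builds an odd function $u_i$ with three properties: $u_i=\operatorname{Re}(w^3)$ near $K_i$, $u_i=\pm(1-s)$ near $\partial_\pm\hat B_i$, and only Morse critical points of index $1$ or $2$. This uses a connected odd separator followed by Milnor cancellation. Since $b_1(\hat B_i)=0$, the Reeb graph of $u_i$ is a segment, which gives positive arcs through every regular point with prescribed endpoints. These models are inserted at the unique minimum and maximum of a Morse function $f$ on $M$, with $\partial_+\hat B_0$ and $\partial_-\hat B_1$ glued to $W_+$. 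Every regular point of $\widehat M$ then lies on a closed positive loop $W_-\to\hat B_0\to W_+\to\hat B_1\to W_-$. This transitivity on the whole double cover, together with the local harmonic models near $K_0\sqcup K_1$ and near the Morse zeros, is what lets \cite[Theorem~3.11]{ChenHeYanCalabiSurgery} produce the metric.
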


For $M=S^3$, the theorem gives the following corollary.

\begin{corollary}\label{cor:intro-sphere-split}
Let $L\subset S^3$ be a smooth split link with two components.
Then there exist a smooth metric $g$ on $S^3$ and a nondegenerate
$\Ztwo$-harmonic 1-form $(Z,\cI,\alpha)$ on $(S^3,g)$ such that $\Mono(\cI)=L.$
\end{corollary}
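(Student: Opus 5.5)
The corollary follows from \cref{thm:intro-sphere} with $M=S^3$. The only work is to check that a two-component split link satisfies that theorem's hypothesis.

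Write $L=L_0\sqcup L_1$. Since $L$ is split, there is a smoothly embedded $2$-sphere $\Sigma\subset S^3\setminus L$ separating $L_0$ from $L_1$. By the smooth Schoenflies theorem in $S^3$ (Alexander's theorem), $\Sigma$ bounds smoothly embedded closed $3$-balls $B_0$ and $B_1$ on its two sides. These satisfy $S^3=B_0\cup B_1$ and $B_0\cap B_1=\Sigma$, with $L_i\subset\interior B_i$.

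These balls meet along $\Sigma$, so they are not yet disjoint. To fix this, shrink them. Choose a closed collar $\Sigma\times[-1,1]$ disjoint from $L$, and remove the half-collar from each ball. This gives smoothly embedded closed balls $B_0'\subset\interior B_0$ and $B_1'\subset\interior B_1$ that are disjoint and still contain $L_0$ and $L_1$ in their interiors. Equivalently, since $L_i$ is compact, we may take $B_i'$ to be the complement in $B_i$ of a small open collar of $\partial B_i$ that misses $L$.

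Now apply \cref{thm:intro-sphere} with $M=S^3$, which is closed, connected and oriented. Take $K_0=L_0$ and $K_1=L_1$; these are smooth knots lying in the interiors of the disjoint smoothly embedded closed balls $B_0'$ and $B_1'$. The theorem gives a smooth metric $g$ and a nondegenerate $\Ztwo$-harmonic 1-form $(Z,\cI,\alpha)$ on $(S^3,g)$ with $\Mono(\cI)=L_0\sqcup L_1=L$.

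The only step that is more than bookkeeping is producing the disjoint balls, and it uses only the standard Alexander--Schoenflies theorem. In particular, the Hopf link, which is not split, is not covered by this argument.
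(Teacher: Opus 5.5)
Your proposal is correct and takes the same route as the paper, which obtains the corollary by applying \cref{thm:intro-sphere} with $M=S^3$. Your use of Alexander's theorem and the collar shrinking to produce two disjoint closed balls containing $L_0$ and $L_1$ spells out the hypothesis check that the paper leaves implicit.
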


This disproves \cref{conj:sphere-nonexistence}.
The forms constructed on $S^3$ do not satisfy the transverse
distance condition in
\cite[Theorem~1.7(2)]{HeWentworthZhangTrees}, as explained in
\cref{subsec:sphere-s3}.

\subsection*{Acknowledgments}
We thank Rafe Mazzeo, Clifford Taubes, Thomas Walpuski and Dasen Yan
for helpful comments, discussions and encouragement.

\subsection*{Declaration of AI use}
We used GPT-5.5 Pro and GPT-5.6 Sol to assist with parts of the proofs.
The authors verified the mathematical arguments and rewrote the manuscript.
The authors take full responsibility for its content.

\section{Preliminaries on \texorpdfstring{$\Ztwo$}{Z/2}-harmonic 1-forms}
\label{sec:positive-pairs}

In this section, we collect the preliminaries needed for our constructions.
We recall the definition of $\Ztwo$-harmonic 1-forms and review the
basic results on their zero sets and homogeneous models.
We then give criteria for choosing the background metric so that
a prescribed closed 1-form is harmonic.

\subsection{\texorpdfstring{$\Ztwo$}{Z/2}-harmonic 1-forms}

Let $(X,g)$ be a smooth Riemannian three-manifold, let $Z\subset X$
be closed, and let $\cI\to X\setminus Z$ be a real line bundle with
structure group $\{\pm1\}$.
The flat connection on $\cI$ extends the exterior derivative $d$ to
$\cI$-valued forms.
We write $d^*$ for its formal adjoint with respect to $g$.
We denote by $\nabla$ the covariant derivative induced by the flat
connection on $\cI$ and the Levi--Civita connection of $g$.
We use the convention $\Delta_g=-d^*d$ on functions.

For a double branched cover $\pi:\widetilde X\to X$ associated with
$\cI$, we write $\tau$ for its involution.
A $\cI$-valued form $\alpha$ corresponds to a form $\widetilde\alpha$
on $\pi^{-1}(X\setminus Z)$ with
$\tau^*\widetilde\alpha=-\widetilde\alpha$.

\begin{definition}\label[definition]{def:z2-harmonic}
A $\Ztwo$-harmonic $1$-form on a connected Riemannian three-manifold
$(X,g)$ is a triple $(Z,\cI,\alpha)$ satisfying:
\begin{enumerate}
  \item $Z\subsetneq X$ is closed and
  $\cI\to X\setminus Z$ is a real line bundle with structure group
  $\{\pm1\}$;
  \item $\alpha\in\Omega^1(X\setminus Z;\cI)$ satisfies harmonic equation $d\alpha=0,\;d^*\alpha=0;$
  \item $|\alpha|_g$ extends continuously to $X$ and vanishes precisely on
  $Z$;
  \item for every $U\Subset X$, $\int_{U\setminus Z}|\nabla\alpha|_g^2\,\vol_g<\infty;$
  \item for every $U\Subset X$, there are constants
  $c_U,r_U,\delta_U>0$ such that $\int_{B_r(p)}|\alpha|_g^2\,\vol_g
      \leq c_Ur^{3+\delta_U}$
  for every $p\in Z\cap U$ and every $0<r<r_U$.
\end{enumerate}
\end{definition}

For an open set $U\subset X$, we say that $\alpha$ is \emph{exact on $U$}
if there exists a smooth section $F\in \mathcal{C}^\infty(U\setminus Z;\cI)$ such that $\alpha=dF$ on $U\setminus Z$.  When $U=X$, we simply say that
$\alpha$ is \emph{exact}. We write $\Zero(\alpha)=Z$.

\begin{definition}\label[definition]{def:monodromy-locus}
The monodromy locus $\Mono(\cI)\subset\Zero(\alpha)$ consists of the points
$p\in\Zero(\alpha)$ for which $\cI|_{U\setminus\Zero(\alpha)}$
is nontrivial for every neighborhood $U$ of $p$.
\end{definition}

Clearly, $\Mono(\cI)\subset\Zero(\alpha)$.  If
$p\in\Zero(\alpha)\setminus\Mono(\cI)$, then $\cI$ is trivial over
$U\setminus\Zero(\alpha)$ for some neighborhood $U$ of $p$.  In this
trivialization, the standard local extension and elliptic regularity argument
shows that $\alpha$ extends smoothly across $p$ as an ordinary harmonic
1-form.

\subsection{Frequency and regularity}
We now summarize Taubes' results on the frequency function and the regularity of the singular set \cite{TaubesZeroLoci}.  

Put $Z=\Zero(\alpha)$.  For $p\in Z$ and $r$ smaller than the injectivity
radius at $p$, let $B_r(p)$ be the geodesic ball and set
\begin{equation}\label{eq:frequency-integrals}
 H_p(r)=\int_{\partial B_r(p)}|\alpha|_g^2\,\mathrm dA_g,
 \qquad
 D_p(r)=\int_{B_r(p)\setminus Z}|\nabla\alpha|_g^2\,\mathrm dV_g.
\end{equation}
By \cite[Lemma~3.1]{TaubesZeroLoci}, $H_p(r)>0$ for all sufficiently
small $r>0$.  Define
\begin{equation}\label{eq:frequency-function}
 \Freq_p(r):=\frac{rD_p(r)}{H_p(r)}.
\end{equation}
This agrees with Taubes' frequency up to a multiplicative factor
$1+O(r^2)$ \cite[equations~(3.2)--(3.3)]{TaubesZeroLoci}.
Thus \cite[Lemma~3.2]{TaubesZeroLoci} gives the finite limit
\[
 \Freq_p(0):=\lim_{r\to0}\Freq_p(r),
\]
which we call the \emph{frequency at $p$}.

A point $p\in Z$ is a \emph{frequency-continuity point} if
$q\mapsto\Freq_q(0)$ is continuous at $p$ as a function on $Z$. Now we summarize some regularity results for $\Ztwo$-harmonic 1-forms
\cite{TaubesZeroLoci,ZhangRectifiability}.
\begin{proposition}
\label[proposition]{prop:taubes-frequency-package}
Let $(Z,\cI,\alpha)$ be a $\Ztwo$-harmonic 1-form on $(X,g)$.
Then the following hold.
\begin{enumerate}
  \item $\Freq_p(0)\geq1/2$ for every $p\in Z$, and
  $p\mapsto\Freq_p(0)$ is upper semicontinuous on $Z$
  \cite[Lemmas~3.4 and~6.4]{TaubesZeroLoci}.
  \item $Z$ is $1$-rectifiable and has Hausdorff dimension at most one
  \cite{ZhangRectifiability}.
  \item The frequency-continuity points form a relatively open dense
  subset of $Z$.  At each such point, $q\mapsto\Freq_q(0)$ is locally
  constant on $Z$ \cite[Lemma~10.3 and the proof of Theorem~1.4]{TaubesZeroLoci}.
  Near such a point, $Z$ is contained in a codimension-two Lipschitz
  graph.  If its frequency is an odd half-integer, then a neighborhood
  of the point in $Z$ is an embedded $\mathcal{C}^1$ curve
  \cite[Proposition~10.1]{TaubesZeroLoci}.
  \item $C_{\mathrm T}$ defined in $\eqref{eq:CT-definition}$ is relatively open in $Z$ and is an embedded
  $\mathcal{C}^1$ curve in $X$, with
  $\Mono(\cI)=\overline{C_{\mathrm T}}$
  \cite[Proposition~10.5]{TaubesZeroLoci}.
  \item For all but at most countably many $p\in Z$,
  $\Freq_p(0)\in\tfrac12\mathbb Z_{>0}$
  \cite[Lemmas~6.1 and~6.6]{TaubesZeroLoci}.
\end{enumerate}
\end{proposition}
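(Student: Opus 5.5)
This proposition is a compilation of results from \cite{TaubesZeroLoci} and \cite{ZhangRectifiability}. My plan is therefore not to reprove the analysis. Instead, I will check that our normalization of the frequency and our \cref{def:z2-harmonic} match the hypotheses of those papers, and then read off each item.

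\textbf{Step 1: matching the hypotheses.} Conditions (ii)--(v) of \cref{def:z2-harmonic} are the standing assumptions in \cite{TaubesZeroLoci}:
\begin{itemize}
  \item $\alpha$ is harmonic;
  \item $|\alpha|$ is continuous and vanishes exactly on $Z$;
  \item $\nabla\alpha$ is locally $L^2$;
  \item $\alpha$ satisfies the decay $\int_{B_r(p)}|\alpha|^2\le c r^{3+\delta}$.
\end{itemize}
Passing to the double cover, Taubes' Weitzenb\"ock setting for $\Ztwo$-harmonic 1-forms applies. Our $\Freq_p(r)$ differs from Taubes' frequency by a factor $1+O(r^2)$, by the remark after \eqref{eq:frequency-function}. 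Hence the limits $\Freq_p(0)$ coincide, and so do lower bounds and semicontinuity statements.

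\textbf{Step 2: items (i), (iii), (v).} These follow from the quoted lemmas: the bound $\Freq_p(0)\ge 1/2$ and upper semicontinuity from Lemmas 3.4 and 6.4, and the half-integrality off a countable set from Lemmas 6.1 and 6.6.

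For the density claim in (iii), I will use a Baire-type argument. An upper semicontinuous function on the locally compact set $Z$ is continuous on a residual set, so frequency-continuity points are dense. They form a relatively open set by Lemma 10.3, which also gives local constancy there. The Lipschitz-graph and $\mathcal C^1$ conclusions are Proposition 10.1 and the proof of Theorem 1.4. Item (ii) is the main theorem of \cite{ZhangRectifiability}.

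\textbf{Step 3: item (iv).} Relative openness of $C_{\mathrm T}$ follows from (iii): near a continuity point, the frequency is locally constant, so it stays equal to the same odd half-integer. The $\mathcal C^1$ property is again Proposition 10.1. The closure statement $\Mono(\cI)=\overline{C_{\mathrm T}}$ is Proposition 10.5.

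\textbf{Main obstacle.} The only nonroutine point is to confirm that Taubes' $\Mono$-type set is the same as our monodromy locus in \cref{def:monodromy-locus}, since conventions for "points where $\cI$ does not extend" vary. I would argue both inclusions directly:
\begin{itemize}
  \item At a point of $Z\setminus\Mono(\cI)$, the form is smooth harmonic, so its frequency is a positive integer. Hence $C_{\mathrm T}\subset\Mono(\cI)$, and closedness of $\Mono(\cI)$ gives $\overline{C_{\mathrm T}}\subset\Mono(\cI)$.
  \item For the reverse inclusion, I rely on Taubes' argument. Away from $\overline{C_{\mathrm T}}$, the set $Z$ has $\mathcal H^1$-measure concentrated on integer-frequency points. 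Such a set cannot carry monodromy, because a linking loop would have to encircle a codimension-two piece of odd-half-integer frequency.
\end{itemize}
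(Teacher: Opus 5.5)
Your proposal is correct and takes essentially the paper's approach: the paper gives no independent proof and simply cites \cite{TaubesZeroLoci} and \cite{ZhangRectifiability}. Your additional checks are sound, namely the Baire-category argument for density and the inclusion $\overline{C_{\mathrm T}}\subset\Mono(\cI)$ via integer frequencies at ordinary zeros and closedness of $\Mono(\cI)$; your heuristic for the reverse inclusion $\Mono(\cI)\subset\overline{C_{\mathrm T}}$ is not an argument by itself, but that direction is exactly what Proposition~10.5 of \cite{TaubesZeroLoci} supplies, which is how the paper handles it as well.
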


\subsection{Critical eigensections and tangent cones}
\label{subsec:tangent-cones-eigensections}

The homogeneous branch models used below arise from critical
$\Ztwo$-eigensections on $S^2$.  We first describe this construction and then
recall Taubes' existence theorem for tangent cones.

Equip $S^2$ with the unit round metric.
Let $P=\{p_1,\dots,p_{2n}\}\subset S^2$ be nonempty, and let
$\cI_P\to S^2\setminus P$ have holonomy $-1$ around every puncture.
Let $\phi$ be a nonzero smooth section of $\cI_P$ satisfying
\begin{equation}\label{eq:spherical-eigensection}
  -\Delta_{S^2}\phi=\lambda\phi
\end{equation}
and
\[
  \int_{S^2\setminus P}\bigl(|\phi|^2+|d\phi|^2\bigr)<\infty.
\]
The local expansion theory of Taubes--Wu
\cite{TaubesWuTopologicalEigenfunctions} gives, in a complex coordinate
$z_j$ centered at $p_j$,
\[
  \phi(z_j)
  =\operatorname{Re}\bigl(a_jz_j^{m_j+1/2}\bigr)
   +O\bigl(|z_j|^{m_j+3/2}\bigr),
  \qquad
  m_j\in\Z_{\geq0},
  \qquad
  a_j\neq0.
\]
Equivalently, the convergent expansion factors as
\begin{equation}\label{eq:critical-puncture-factorization}
  \phi(z_j)
  =\operatorname{Re}\left(
    z_j^{m_j+1/2}A_j(z_j,\bar z_j)
  \right),
  \qquad
  A_j(0)\neq0.
\end{equation}
Here $A_j$ is a smooth single-valued complex function; after shrinking the
coordinate disk, we assume that it is nowhere zero.
We call $\phi$ a \emph{critical eigensection} if $m_j\geq1$ for every $j$.  In this case,
both $|\phi|$ and $|d\phi|$ extend continuously by zero at the punctures.

\begin{lemma}\label[lemma]{lem:homogeneous-exponent}
If $\phi$ is a critical eigensection satisfying
\eqref{eq:spherical-eigensection}, then $\lambda>2$.
\end{lemma}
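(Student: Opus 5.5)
The plan is to argue by contradiction using the homogeneous extension and its Euclidean partial derivatives. The point is that differentiating lowers the degree of homogeneity by one, and criticality is what keeps the derivatives well behaved at the punctures.

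\emph{Step 1: $\lambda>0$.} Multiply \eqref{eq:spherical-eigensection} by $\phi$ and integrate over $S^2$ minus small disks $|z_j|<\eps$ around the punctures. By \eqref{eq:critical-puncture-factorization}, $|\phi|=O(\eps^{m_j+1/2})$ and $|d\phi|=O(\eps^{m_j-1/2})$ on $|z_j|=\eps$. The boundary terms are therefore $O(\eps^{2m_j+1})\to0$, and we get
\[
  \lambda\int\phi^2=\int|d\phi|^2 .
\]
If $\lambda\le 0$, then $d\phi=0$, so $\phi$ is a parallel section of $\cI_P$. Since $\cI_P$ has holonomy $-1$ and $P\neq\emptyset$, this forces $\phi=0$, a contradiction. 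Hence $\lambda>0$, and we may write $\lambda=\mu(\mu+1)$ with $\mu>0$. The claim $\lambda>2$ is then equivalent to $\mu>1$, so we suppose $0<\mu\le1$ and seek a contradiction.

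\emph{Step 2: derivatives of the homogeneous extension.} Set $F=r^\mu\phi$. This is a harmonic section of the pulled-back bundle $\cI$ on $\R^3$ minus the branch rays $\R_{>0}p_j$. For each Euclidean coordinate direction, $\psi_i:=\partial_{x_i}F$ is again a harmonic section of $\cI$, because the flat bundle is preserved by the constant-coefficient operator $\partial_{x_i}$. It is homogeneous of degree $\mu-1$, so $\psi_i=r^{\mu-1}\chi_i$, where $\chi_i$ is a section of $\cI_P$ on $S^2\setminus P$ satisfying
\[
  -\Delta_{S^2}\chi_i=(\mu-1)\mu\,\chi_i .
\]

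\emph{Step 3: behaviour at the punctures.} Near a branch ray, use the expansion \eqref{eq:critical-puncture-factorization} in transverse coordinates. This gives $|\psi_i|=O(\rho^{m_j-1/2})$ and $|\nabla\psi_i|=O(\rho^{m_j-3/2})$, where $\rho$ is the distance to the ray. Consequently $|\chi_i|=O(|z_j|^{m_j-1/2})$ and $|d\chi_i|=O(|z_j|^{m_j-3/2})$. I expect this to be the main technical step: one must check carefully that differentiating the convergent expansion (including the $r$-dependence along the ray) costs exactly one power of $\rho$. This is exactly where criticality $m_j\ge1$ enters, since it makes $\chi_i$ continuous and vanishing at $P$ with finite energy.

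\emph{Step 4: the contradiction.} Repeat the integration by parts of Step 1 for $\chi_i$. The boundary terms are now $O(\eps^{m_j-1/2}\cdot\eps^{m_j-3/2}\cdot\eps)=O(\eps^{2m_j-1})$, which tends to $0$ because $m_j\ge1$. This gives
\[
  \int|d\chi_i|^2=(\mu-1)\mu\int\chi_i^2\le 0,
\]
since $0<\mu\le1$. Hence each $\chi_i$ is parallel, and the nontrivial holonomy forces $\chi_i=0$. Then all $\partial_{x_i}F$ vanish, so $F$ is locally constant, and again the nontrivial holonomy gives $F=0$. This contradicts $\phi\neq0$, so $\mu>1$ and $\lambda=\mu(\mu+1)>2$.
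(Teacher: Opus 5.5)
Your proof is correct, but it follows a different route from the paper's.

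\emph{The paper's route.} The paper works intrinsically on $S^2$. It integrates the Bochner identity, using $\operatorname{Ric}_{S^2}=g_{S^2}$, to get $\int|\nabla^2\phi|^2=(\lambda-1)\int|d\phi|^2$. The two-dimensional trace inequality $|\nabla^2\phi|^2\geq\tfrac12(\Delta_{S^2}\phi)^2$ then gives $\lambda\geq2$. The case $\lambda=2$ is excluded by a separate rigidity argument: $\nabla^2\phi=-\phi\,g_{S^2}$ makes $|d\phi|^2+\phi^2$ constant, and that constant is zero at $P$.

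\emph{Your route.} You pass to the flat cone $\R^3$. The Euclidean derivatives of $r^\mu\phi$ restrict to $\cI_P$-valued eigensections $\chi_i$ with eigenvalue $(\mu-1)\mu$. The two facts from your Step 1 then rule out $\mu\in(0,1]$ all at once, including the borderline case $\mu=1$: the Dirichlet integral is nonnegative once the boundary terms vanish, and a parallel section with holonomy $-1$ is zero.

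\emph{What each buys.} You need neither the trace inequality nor a separate equality analysis. Criticality enters only to kill the boundary term $O(\eps^{2m_j-1})$. The geometric input is the same in both proofs, since flatness of the cone $dr^2+r^2g_{S^2}$ is equivalent to $\operatorname{Ric}_{S^2}=g_{S^2}$.

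\emph{Step 3 is easier than you anticipate.} Let $e_i$ be the standard basis and $\omega\in S^2$ a unit vector. On the unit sphere,
$\chi_i=\mu\,\omega_i\,\phi+d\phi\bigl(e_i-\omega_i\,\omega\bigr)$.
Hence $|\chi_i|=O(|z_j|^{m_j-1/2})$ and $|d\chi_i|=O(|z_j|^{m_j-3/2})$ follow directly from the bounds on $\phi$, $d\phi$, and $\nabla^2\phi$ given by \eqref{eq:critical-puncture-factorization}. No differentiation along the ray is needed, and these are exactly the estimates the paper's proof uses.
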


\begin{proof}
Near each $p_j$, write $\rho=|z_j|$.
The local expansion and criticality give
\[
  |\phi|=O(\rho^{3/2}),\qquad
  |d\phi|=O(\rho^{1/2}),\qquad
  |\nabla^2\phi|=O(\rho^{-1/2}).
\]
Thus $\nabla^2\phi$ is square-integrable.
Integrate the eigenvalue equation against $\phi$ over
$S^2\setminus\bigcup_j B_\eps(p_j)$.
Integrate the Bochner identity over the same domain.
The boundary terms are $O(\eps^3)$ and $O(\eps)$, respectively.
Since $\operatorname{Ric}_{S^2}=g_{S^2}$, letting $\eps\to0$ gives
\[
  \int_{S^2}|d\phi|^2=\lambda\int_{S^2}|\phi|^2,
  \qquad
  \int_{S^2}|\nabla^2\phi|^2
    =(\lambda-1)\int_{S^2}|d\phi|^2.
\]
The trace inequality in dimension two therefore yields
\[
  (\lambda-1)\int_{S^2}|d\phi|^2
  \geq\frac12\int_{S^2}(\Delta_{S^2}\phi)^2
  =\frac\lambda2\int_{S^2}|d\phi|^2.
\]
The integral of $|d\phi|^2$ is positive: otherwise $\phi$ would be
covariantly constant, and its vanishing at $P$ would force $\phi=0$.
Hence $\lambda\geq2$.

If $\lambda=2$, equality holds in the trace inequality, so
\[
  \nabla^2\phi=-\phi g_{S^2}.
\]
It follows that $|d\phi|^2+|\phi|^2$ is constant.
Both terms tend to zero at each puncture, so this constant is zero.
This contradicts $\phi\neq0$, and therefore $\lambda>2$.
\end{proof}

A critical eigensection determines a homogeneous model on $\R^3$.
Let $\mu>1$ be determined by $\mu(\mu+1)=\lambda$, and define
\begin{equation}\label{eq:homogeneous-eigencone}
  F_\infty(r,\omega):=r^\mu\phi(\omega),
  \qquad
  v_\infty:=dF_\infty.
\end{equation}
Indeed,
\[
  \Delta_{\R^3}F_\infty
  =r^{\mu-2}\bigl(\mu(\mu+1)-\lambda\bigr)\phi=0,
\]
and
\[
  |v_\infty|^2
  =r^{2\mu-2}\bigl(\mu^2|\phi|^2+|d\phi|^2\bigr).
\]
Let $Z_\phi=\Zero(v_\infty)$, and let $\cI_\phi$ be the restriction of the
radial pullback of $\cI_P$ to $\R^3\setminus Z_\phi$.  Criticality and
\cref{lem:homogeneous-exponent} imply that
$(Z_\phi,\cI_\phi,v_\infty)$ is a $\Ztwo$-harmonic 1-form on $\R^3$.
For the dilation $D_s(x)=sx$,
\[
  D_s^*v_\infty=s^\mu v_\infty,
  \qquad
  |v_\infty|(sx)=s^{\mu-1}|v_\infty|(x).
\]
Thus $v_\infty$ is homogeneous with frequency $\mu-1$; its branch rays are
the rays through $P$, and it vanishes along these rays and at the vertex.
The critical eigensections above provide the homogeneous vertex models used
in the gluing constructions below.  A critical configuration with $2n$
punctures gives a cone with $2n$ branch rays and hence a local model for a
vertex of valence $2n$.

The following existence result is proved in
\cite{TaubesWuModels,ChenHeCritical}.
\begin{theorem}
\label[theorem]{thm:critical-configurations}
For every $n>1$, there exist infinitely many configurations
$P=\{p_1,\dots,p_{2n}\}\subset S^2$ such that $\cI_P$ admits a critical
eigensection.
\end{theorem}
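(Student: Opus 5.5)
The plan is to obtain critical eigensections as critical points of an eigenvalue functional on configuration space, following the Taubes--Wu variational picture. For a configuration $P=\{p_1,\dots,p_{2n}\}$ of distinct points, let $\lambda(P)$ be the smallest eigenvalue of $-\Delta_{S^2}$ acting on finite-energy sections of $\cI_P$. The key input, which I would establish first, is a \emph{Hadamard-type variation formula}. Assume $\lambda(P)$ is simple with eigensection $\phi$ having expansion \eqref{eq:critical-puncture-factorization}. Then moving $p_j$ in the direction $v\in T_{p_j}S^2\cong\C$ changes $\lambda$ at first order by a nonzero constant times $\operatorname{Re}(a_j^2\bar v)$, up to the normalization of $\phi$ in $L^2$. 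Consequently, $P$ is a critical point of $\lambda$ precisely when every leading coefficient $a_j$ with $m_j=0$ vanishes, i.e.\ when $\phi$ is critical. To derive this, I would pull back by a family of diffeomorphisms moving $p_j$ and supported near $p_j$. I would then differentiate the Rayleigh quotient and reduce the resulting integral, via integration by parts on the complement of a small disk around $p_j$, to a boundary residue. Only the $z_j^{1/2}$ term of the expansion survives in the limit, which produces the factor $a_j^2$.

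Second, I would obtain critical points by a min--max or maximization argument on the configuration space $\mathrm{Conf}_{2n}(S^2)/SO(3)$. The main obstacle is the lack of compactness when points collide. When two punctures merge, the holonomy around the merged point becomes trivial, and the line bundle degenerates to one with $2n-2$ punctures. I would need an asymptotic expansion of $\lambda(P)$ near such collisions showing that $\lambda$ decreases strictly toward the degenerate value, for instance by a capacity-type estimate with a logarithmic or power-law correction. I would also show that the limiting value is strictly below a suitable interior value. Together these give that a maximizing sequence, or a min--max sequence, stays in a compact subset of the configuration space. Simplicity of $\lambda(P)$ at the critical point is also a concern. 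If it fails, I would use the standard Clarke-subdifferential version of the variation formula, or perturb within a symmetry class where simplicity is automatic.

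Third, to get \emph{infinitely many} configurations for each fixed $n>1$, I would impose symmetry and use the principle of symmetric criticality. For each finite subgroup $\Gamma\subset SO(3)$ whose orbit structure allows $2n$ points (dihedral groups $D_k$ acting on configurations made of orbits, with varying orbit types and reflection-invariant families), I would restrict $\lambda$ to the $\Gamma$-invariant configurations. A $\Gamma$-invariant critical point of the restriction is then critical for the full functional. Beyond this, I would use a family of configurations lying on a fixed great circle together with two poles, and vary the number of points placed in each symmetric orbit. Alternatively, I would apply Lusternik--Schnirelmann theory on the quotient space, whose topology is rich, to produce infinitely many distinct critical values. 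Finally, I would use criticality and \cref{lem:homogeneous-exponent} to check that the resulting eigensections are genuinely critical and nonzero. I would also verify that the configurations found are pairwise non-congruent, for instance by showing their symmetry groups or eigenvalues differ.
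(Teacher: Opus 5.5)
The paper does not prove this statement; it quotes it from \cite{TaubesWuModels,ChenHeCritical}. Your outline therefore has to stand on its own, and as written it has a genuine gap. Your first step is the right input: at a simple eigenvalue, moving the punctures changes $\lambda$ by a nonzero multiple of $\sum_j\operatorname{Re}(a_j^2\bar v_j)$, so critical points of $\lambda$ are exactly the configurations whose eigensection has no $z_j^{1/2}$ terms.

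The argument fails in passing from an extremal configuration to a critical eigensection when the eigenvalue is not simple. Let $E$ be the eigenspace. First-order perturbation theory then only says that, for every variation $v$, the quadratic form $q_v(\phi)=\sum_j\operatorname{Re}(a_j(\phi)^2\bar v_j)$ is not positive definite on $E$. By duality this is equivalent to $\sum_i t_i\,a_j(\phi_i)^2=0$ for all $j$, for some unit $\phi_i\in E$ and weights $t_i\geq0$ with $\sum_i t_i=1$. That is all your Clarke-subdifferential fallback gives, and it does not produce a single $\phi\in E$ with all $a_j(\phi)=0$.

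The antipodal pair already shows this. There $E$ is spanned by $\phi_1=\sin^{1/2}\vartheta\cos(\varphi/2)$ and $\phi_2=\sin^{1/2}\vartheta\sin(\varphi/2)$, with eigenvalue $3/4$. At both poles $a_j(\phi_2)=\pm i\,a_j(\phi_1)\neq0$. So the condition holds with $t_1=t_2=\tfrac12$, yet every nonzero element of $E$ has $a_j\neq0$. Imposing symmetry does not make simplicity automatic; symmetry is the main source of multiplicity.

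Two further points in the existence part:
\begin{itemize}
\item \cref{lem:homogeneous-exponent} cannot be used at the end to certify criticality. It is a constraint ($\lambda>2$) that your extremal value would have to satisfy, and nothing in the outline guarantees it.
\item The compactness step is only announced. You would need a strict inequality between the supremum of $\lambda_1$ and its limiting values on every collision stratum, for clusters of any size (the limiting bundle depends on the parity of each cluster). You would also need continuity of $\lambda_1$ up to those strata. This is the core analysis, and it is left as a to-do.
\end{itemize}

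Your third step does not deliver what it claims either. Read literally, the statement follows from one configuration by rotating it, so everything rests on existence. If you want infinitely many configurations up to rotation, symmetry alone cannot supply them for fixed $n$. Only finitely many conjugacy classes of subgroups of $SO(3)$, and finitely many orbit types, fit $2n$ points. Lusternik--Schnirelmann theory on the finite-dimensional space $\mathrm{Conf}_{2n}(S^2)/SO(3)$ only guarantees as many critical points as the (finite) category. It also requires a $C^1$ functional, which $\lambda$ is not at eigenvalue crossings.

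The missing idea is a mechanism that kills the $z_j^{1/2}$ coefficients without relying on simplicity. Symmetry at the punctures is one such mechanism. Suppose a rotation of order $k\geq3$ fixes $p_j$, and let $\tilde R$ be its lift with $\tilde R^k=-1$. Then $\tilde R$ multiplies the $z_j^{1/2}$ coefficient by $e^{i\pi/k}$. Hence any eigensection in a $\tilde R$-isotypic component with eigenvalues other than $e^{\pm i\pi/k}$ automatically has $a_j=0$. This is what makes the tetrahedral eigensections critical, with order $3/2$. For general $n$ you would need to combine such forcing with a reduced variational problem, or prove a simplicity statement for the extremal eigenvalue inside a suitable symmetry sector.
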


Taubes and Wu also use eigensections on the complements of graphs in
$S^3$ to construct homogeneous $\Ztwo$-harmonic forms and spinors on
$\R^4$ \cite{TaubesWuPolytopes}.

Let $(Z,\cI,\alpha)$ be a $\Ztwo$-harmonic 1-form on $(X,g)$ and
let $p\in Z$.
A tangent cone at $p$ is a subsequential limit of rescalings centered
at $p$ in geodesic coordinates.
Each rescaled form is normalized to have unit $L^2$ norm on the unit
sphere with respect to the rescaled metric.
Such a limit exists by \cite[Proposition~4.1]{TaubesZeroLoci}.
It is a nonzero homogeneous $\Ztwo$-harmonic 1-form
$(Z_\infty,\cI_\infty,\alpha_\infty)$ on $\R^3$ satisfying
\[
  D_s^*\alpha_\infty=s^{\Freq_p(0)+1}\alpha_\infty.
\]

\subsection{Metric realization}

Let $X$ be an oriented three-manifold.  We now choose a metric for which
a given closed $\cI$-valued 1-form $\alpha$ is harmonic.  The construction
uses a closed $\cI$-valued 2-form $\beta$ with $\alpha\wedge\beta>0$ and
preserves any prescribed metric satisfying $*_{g_0}\alpha=\beta$.

\subsubsection{The metric from a positive dual form}

\begin{theorem}\label{thm:relative-metric-realization}
Let $Z,K\subset X$ be closed, and let $\cI\to X\setminus Z$ be a flat
real line bundle with structure group $\{\pm1\}$.  Suppose that smooth forms
$\alpha\in\Omega^1(X\setminus Z;\cI)$ and
$\beta\in\Omega^2(X\setminus Z;\cI)$ satisfy
\[
  d\alpha=0,\qquad d\beta=0,\qquad \alpha\wedge\beta>0.
\]
Here $\alpha\wedge\beta$ is an ordinary 3-form, and positivity refers to
the orientation of $X$.
Let $U$ be an open neighborhood of $Z\cup K$, and let $g_0$ be a smooth
metric on $U$ satisfying
\[
  *_{g_0}\alpha=\beta\qquad\text{on }U\setminus Z.
\]
Then there are an open neighborhood $V$ of $Z\cup K$, with
$\overline V\subset U$, and a smooth metric $g$ on $X$ such that
\[
  g=g_0\quad\text{on }V,\qquad
  *_g\alpha=\beta\quad\text{on }X\setminus Z.
\]
In particular, $\alpha$ is harmonic with respect to $g$ on $X\setminus Z$.
\end{theorem}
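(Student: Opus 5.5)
The plan is to reduce the statement to pointwise linear algebra. I will show that, at each point of $X\setminus Z$, the metrics $g$ with $*_g\alpha=\beta$ are in natural bijection with inner products on the plane $\ker\alpha$. Since inner products on a plane form a convex set, the prescribed metric $g_0$ can then be patched to an arbitrary background choice with a cutoff function.

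\emph{Pointwise analysis.} First, $\alpha\wedge\beta>0$ forces $\alpha\neq0$ and $\beta\neq0$ on $X\setminus Z$. So $E:=\ker\alpha$ is a smooth rank-two subbundle of $T(X\setminus Z)$, well defined even though $\alpha$ is $\cI$-valued. Since $\beta$ is a nonzero $2$-form in dimension three, its kernel is a line $L$. I take $L$ to be spanned locally by a vector $w$ with $\alpha(w)>0$.
\begin{itemize}
  \item $\alpha(w)\neq0$: otherwise, completing $w$ to a basis shows $(\alpha\wedge\beta)(w,\cdot,\cdot)=0$, contradicting $\alpha\wedge\beta>0$.
  \item The real-valued $2$-form $\sigma:=\iota_w(\alpha\wedge\beta)/\alpha(w)$ restricts to $\beta|_E$. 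It is an area form on $E$, positive for the orientation of $E$ induced by the coorientation $\alpha$. The sign ambiguities of $\cI$ cancel in the products involved.
\end{itemize}

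Now fix a metric $g$. In a $g$-orthonormal frame with $e_1=\alpha^\sharp/|\alpha|_g$, we have $*_g\alpha=|\alpha|_g\,\mathrm{dA}_{E}$, where $\mathrm{dA}_E$ is the $g$-area form of $E=e_1^\perp$, pulled back by the orthogonal projection. Hence $*_g\alpha=\beta$ holds if and only if three conditions hold:
\begin{itemize}
  \item $\alpha^\sharp_g\in L$, i.e.\ $w\perp_g E$;
  \item $|\alpha|_g\,\mathrm{dA}_{g|_E}=\sigma$;
  \item the two forms agree on $E$ (since both kill $w$).
\end{itemize}
Given any inner product $h$ on $E$, set $f_h:=\sigma/\mathrm{dA}_h>0$. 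Because $|\alpha|_g=\alpha(w)/|w|_g$ when $w\perp E$, the conditions determine $g$ uniquely as
\[
  g=h\oplus\Bigl(\frac{\alpha(w)}{f_h}\Bigr)^2\frac{w^\flat\otimes w^\flat}{|w|^2}.
\]
Here $w$ is declared orthogonal to $E$, and the last term just prescribes $|w|_g=\alpha(w)/f_h$. This assignment depends smoothly on $h$ and is independent of the choice of sign of $w$. Conversely, any solution $g$ arises from $h=g|_E$.

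\emph{Globalization.} Choose a smooth cutoff $\chi:X\to[0,1]$ with $\operatorname{supp}\chi\subset U$ and $\chi=1$ on an open neighbourhood $V$ of $Z\cup K$ with $\overline V\subset U$; this is possible since $Z\cup K$ is closed. Let $h_1$ be the restriction to $E$ of any smooth metric on $X$, and put
\[
  h:=\chi\,g_0|_E+(1-\chi)h_1 .
\]
This is a smooth inner product on $E$ over $X\setminus Z$ by convexity. Let $g$ be the metric on $X\setminus Z$ associated to $h$. Then:
\begin{itemize}
  \item on $V\setminus Z$ we have $h=g_0|_E$, and $g_0$ solves $*_{g_0}\alpha=\beta$, so uniqueness gives $g=g_0$ there;
  \item defining $g:=g_0$ on $V$ therefore yields a smooth metric on all of $X$, since $Z\subset V$;
  \item $*_g\alpha=\beta$ holds on $X\setminus Z$ by construction;
  \item $d^*\alpha=\pm *d*\alpha=\pm*d\beta=0$ together with $d\alpha=0$ gives harmonicity.
\end{itemize}

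The main point requiring care is the pointwise step: verifying that $w\perp_g E$ is forced, and that the scaling of $|w|_g$ is uniquely fixed by $h$. It is this uniqueness that makes the patched metric coincide exactly with $g_0$ on $V$, rather than merely satisfying the same equation there.
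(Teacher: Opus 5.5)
Your argument is correct and is essentially the paper's proof. Both rest on the pointwise fact that $*_g\alpha=\beta$ forces the line $\ker\beta$ to be $g$-orthogonal to $\ker\alpha$ and fixes the remaining length/area data, with the construction reproducing $g_0$ wherever $g_0$ already solves the equation; the only difference is that the paper interpolates full metrics to get $g_{\mathrm{aux}}=g_0$ on $V$ and then applies a retraction (keeping the conformal class of $g_{\mathrm{aux}}|_{\ker\alpha}$ and setting $|v|_g=|a|_{g_{\mathrm{aux}}}^{-1}$), whereas you interpolate the inner product $h$ on $\ker\alpha$ and use your bijection $h\mapsto g$, which also makes the convexity (hence contractibility) of the solution space explicit.
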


\begin{proof}
Choose $V$ with $Z\cup K\subset V$ and $\overline V\subset U$.
Choose an auxiliary metric $g_{\mathrm{aux}}$ on $X$ that agrees with
$g_0$ on $V$.  In a parallel local frame $e$ of $\cI$, write
$\alpha=a\otimes e$ and $\beta=b\otimes e$.
Since $a\wedge b>0$, the line $\ker b$ is transverse to $\ker a$.
There is therefore a unique vector field $v$ satisfying
\[
  \iota_vb=0,\qquad a(v)=1.
\]
Define $H:=\ker a$ and orient $H$ so that $v$ followed by a positive basis
of $H$ is positively oriented in $X$.
Let $dA_{g_{\mathrm{aux}},H}$ be the area form of the restricted metric
on $H$.  Write $b|_H=\lambda\,dA_{g_{\mathrm{aux}},H}$, where $\lambda>0$,
and define $L:=|a|_{g_{\mathrm{aux}}}^{-1}$.
Define $g$ by
\[
  v\perp_g H,\qquad g(v,v):=L^2,\qquad
  g|_H:=L\lambda\,g_{\mathrm{aux}}|_H.
\]
The $g$-area form on $H$ is $L\,b|_H$, and $|a|_g=L^{-1}$.
Hence $*_ga=b$.

Replacing $e$ by $-e$ changes $(a,b,v)$ to $(-a,-b,-v)$ and reverses the
orientation of $H$.
The quantities $L$, $\lambda$, and $g$ remain unchanged.
These formulas therefore define a smooth metric on $X\setminus Z$.
The metric depends smoothly on $\alpha$, $\beta$, and $g_{\mathrm{aux}}$.

If $b=*_{g_{\mathrm{aux}}}a$, then
\[
  v=\frac{a^{\sharp_{g_{\mathrm{aux}}}}}{|a|_{g_{\mathrm{aux}}}^2},
  \qquad \lambda=|a|_{g_{\mathrm{aux}}}=L^{-1}.
\]
The defining formulas then give $g=g_{\mathrm{aux}}$.
Thus $g=g_0$ on $V\setminus Z$, and $g$ extends smoothly across $Z$ by
defining $g=g_0$ there.

Finally, closedness of $\beta$ gives
\[
  d_g^*\alpha=-*_gd*_g\alpha=-*_gd\beta=0.
\]
Together with $d\alpha=0$, this proves harmonicity.
\end{proof}

When $\alpha=dF$ and $\beta=d\lambda$, closedness is automatic.
We use the following terminology for their positivity condition.

\begin{definition}\label[definition]{def:positive-pair}
Let $F\in\mathcal{C}^\infty(X\setminus Z;\cI)$ and
$\lambda\in\Omega^1(X\setminus Z;\cI)$.
We call $(F,\lambda)$ a \emph{positive pair} if
\[
  dF\wedge d\lambda>0\qquad\text{on }X\setminus Z.
\]
\end{definition}

\subsubsection{Inverse metric densities}

Let $U\subset X$ be an oriented coordinate chart with
coordinates $x=(x^1,x^2,x^3)$, and use the summation convention.
Write $g=g_{ij}\,dx^i\otimes dx^j$.
Define $|g|:=\det(g_{ij})$ and $(g^{ij}):=(g_{ij})^{-1}$.
The \emph{inverse metric density} of $g$ is the symmetric positive
definite matrix $A=(A^{ij})$ defined by
\[
  A^{ij}:=\sqrt{|g|}\,g^{ij}.
\]
In dimension three, $\det A=\sqrt{|g|}$.
Conversely, a symmetric positive definite matrix $A$ determines the
metric
\begin{equation}\label{eq:metric-from-density}
  (g_{ij}):=(\det A)A^{-1}.
\end{equation}

For a 1-form $\xi=\xi_j\,dx^j$, define
\[
  \diver(A\xi):=\partial_i(A^{ij}\xi_j).
\]
Here $A\xi$ is a vector density, and $\diver$ is its divergence in the
coordinates $x$.
On a coordinate ball in $U\setminus Z$, choose a parallel frame of
$\cI$ and write $\alpha=du$.
Then
\[
  d_g^*du=-\frac{1}{\det A}\diver(A\,du).
\]
Thus choosing $A$ with $\diver(A\,du)=0$ is the coordinate form of
requiring the 2-form $\beta=*_gdu$ to be closed.

\section{A Euclidean-ended Cantor accumulation model}
\label{sec:cantor-accumulation}

This section constructs the Euclidean-ended Cantor model in
\cref{thm:intro-cantor}.  We start from an ordinary function $F$ with paired quadratic
zeros. Then replace each pair by two shrinking branch ellipses and obtain a $2$-valued function. Finally, we construct a smooth metric on \(\R^3\) so that the new function is harmonic.

\subsection{The Euclidean model}

We construct the ordinary function $F$ on \(\R^3\) from a function \(h\) of one variable.
Fix the middle-thirds Cantor set $K\subset[-1,1]$ centered at the origin, so that the first removed interval is $I_1=(-1/3,1/3)$.  
The remaining bounded components of $\R\setminus K$ are denoted by $I_j=(a_j,b_j)$ with length $\ell_j=b_j-a_j$ ($j\geq2$). 
The function \(h\) is given by the following lemma.

\begin{lemma}\label[lemma]{lem:straight-cantor-profile}
There is a function $h\in \mathcal{C}^\infty(\R)$ with the following properties.
\begin{enumerate}
  \item Function $h$ and all its derivatives vanish on $K$.
  
  \item Each interval $I_j$ contains exactly two zeros $q_j^-<q_j^+$ of $h$. Moreover, $h$ is
  linear near $q_j^\pm$, and $h'(q_j^\pm)=\pm s_j$ for some $s_j>0$.

  \item $h\equiv 1$ near infinity and $\int_{\R}(h-1)\,dt=0$.
  \item There is smooth function $H$ such that $H'=h$ and $H|_K\equiv 0$. Moreover, for some constant $C_0$,
    $G(t):=H(t)-(t+C_0)$ is compactly supported.
\end{enumerate}
\end{lemma}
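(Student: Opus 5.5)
We build $h$ piece by piece on the gaps $I_j$ and glue the pieces with a flat cutoff structure near $K$. The natural scale is that on a gap $I_j$ of length $\ell_j$ every derivative of the piece is bounded by $C_k\,e^{-1/\ell_j}\ell_j^{-k}$. This tends to zero faster than any power of $\ell_j$, so the pieces assemble to a function that is $\mathcal C^\infty$ and flat on $K$.

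\emph{The pieces on the bounded gaps.} Fix once and for all a profile $\psi\in\mathcal C^\infty_c((0,1))$ with:
\begin{itemize}
\item exactly two zeros in $(0,1)$, at $1/3$ and $2/3$, with $\psi$ linear near them, of slopes $-1$ and $+1$;
\item $\psi>0$ on $(0,1/3)\cup(2/3,1)$ away from the support boundary, and $\psi<0$ on $(1/3,2/3)$.
\end{itemize}
Then $\psi$ vanishes near $0$ and $1$ but is otherwise nonzero there. The sign pattern $-,+$ of the slopes at $q^-<q^+$ is the one required in (ii).

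\emph{Extra zeros.} A compactly supported profile also vanishes identically near $0$ and $1$, which would add zeros inside $I_j$. To avoid this, interpret "exactly two zeros" as exactly two isolated zeros, or instead take $\psi=e^{-1/x}e^{-1/(1-x)}\,\chi(x)$. Here $\chi$ is linear of slope $\mp1$ near $1/3$ and $2/3$, positive near the endpoints, and negative between. This $\psi$ is flat at $0$ and $1$ and nonzero on $(0,1)$ except at the two prescribed points.

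Now set
\[
h(t)=\varepsilon_j\,\psi\bigl((t-a_j)/\ell_j\bigr)\quad\text{on } I_j,\ j\ge2,
\]
with $\varepsilon_j=e^{-1/\ell_j}$. Then $s_j=\varepsilon_j/\ell_j>0$. The derivative bound $\|h^{(k)}\|_{I_j}\le C_k\varepsilon_j\ell_j^{-k}\to0$ as $j\to\infty$ gives smoothness and flatness on $K$, which is (i) and (ii) away from $I_1$ and the unbounded components.

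\emph{The gap $I_1$ and the unbounded components.} On $I_1$ and on $(-\infty,-1)\cup(1,\infty)$ we need (ii) on $I_1$, and $h\equiv1$ near infinity. We also need the two integral conditions of (iii) and (iv), which are
\[
\int_{\R}(h-1)\,dt=0,\qquad H(t)=\int_{-1}^t h
\]
with $H$ vanishing on all of $K$. The second condition forces $\int_{I_j}h=0$ for every bounded gap. So we additionally normalize the profile to satisfy $\int_0^1\psi=0$, which is possible by adjusting the relative sizes of the positive and negative lobes. The same normalized profile is used on $I_1$.

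\emph{The outer pieces.} On $(1,\infty)$ take $h$ smooth, flat at $1$, positive, equal to $1$ for $t\ge2$, and with no zeros. Do the same symmetrically on $(-\infty,-1)$. Then $H(t)=\int_{-1}^t h$ vanishes on $K$, since it vanishes at $-1$ and gains zero integral across each gap. For $t\ge2$,
\[
H(t)=t+c_+,\qquad c_+=\int_1^2 h-2,
\]
and for $t\le-2$,
\[
H(t)=t+c_-,\qquad c_-=2-\int_{-2}^{-1}h.
\]
Choosing the outer profiles symmetric, with $\int_1^2h=m$, gives $c_+=m-2$ and $c_-=2-m$. Requiring $c_+=c_-$ forces $m=2$. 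This is compatible with $0<h$ only if $h$ exceeds $1$ somewhere on $(1,2)$, or if we lengthen the transition region, which is fine. With $m=2$ we get $C_0=0$ and $G$ compactly supported, which is (iv).

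Condition (iii) then follows:
\[
\int(h-1)=\lim_{T\to\infty}\bigl[H(T)-H(-T)-2T\bigr]=c_+-c_-=0.
\]

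\emph{Main obstacle.} The delicate step is the global bookkeeping. We must match $H$'s affine behaviour at both ends, which requires equal constants and fixes the outer integrals, while keeping every gap integral zero so that $H|_K=0$. Both are handled by the zero-mean normalization of $\psi$ and the symmetric choice of the outer profiles.
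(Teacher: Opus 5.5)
Your proposal is correct and is essentially the paper's construction. You place a fixed zero-mean profile, flat at $0$ and $1$ with two linear zeros of slopes $\mp s$, on each gap with super-polynomially small amplitude (your $e^{-1/\ell_j}$ is one such choice). You then extend positively outside $[-1,1]$ with $\int_{\R}(h-1)\,dt=0$, so that the primitive vanishes on $K$ and the two affine end constants coincide; your explicit computation giving $\int_1^2 h=2$ and $C_0=0$ just makes this concrete.

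Two small repairs to the profile are needed. First, drop the $\mathcal{C}^\infty_c((0,1))$ option, since it produces whole intervals of zeros in each $I_j$. Second, in $\psi=e^{-1/x}e^{-1/(1-x)}\chi$, choose $\chi$ so that $\psi$ itself, not merely $\chi$, is linear near $1/3$ and $2/3$, as (ii) requires, and make the lobe adjustment for $\int_0^1\psi=0$ away from these points.
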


\begin{proof}
Choose a smooth function $\varphi$ on $[0,1]$ which is flat at both
endpoints, symmetric about $1/2$, has exactly two simple zeros
$q^-<q^+$, is linear near these zeros with \(\varphi'(q^\pm)=\pm s\), and has integral \(\int_0^1\varphi\,dt=0\).

On $I_j$ put
 $ h(t)=A_j\varphi\left(({t-a_j})/{\ell_j}\right)$, where we choose $A_j>0$ super-polynomially small in $\ell_j$, i.e., fix any $m>0$ we have $A_j\ell_j^{-m}\to 0$ as $j\to\infty$. Thus the derivative estimate
\[
  \|A_j\varphi((\,\cdot-a_j)/\ell_j)\|_{\mathcal{C}^m(I_j)}
  \leq C_mA_j\ell_j^{-m}
\]
shows that the
function $h$ extends smoothly by zero across $K$ with vanishing $m$-jet for each $m$. In each $I_j$ the two zeros are
$q_j^\pm=a_j+\ell_jq^\pm$.  The integral of $h$ over every $I_j$ is zero.
Extend $h$ positively outside $(-1,1)$ so that it equals one near
infinity and $\int_{\R}(h-1)\,dt=0$.

Choose a primitive $H$ that vanishes at one point of $K$.  The Cantor set has
Lebesgue measure zero, and the integral over every $I_j$ vanishes.
Consequently $H|_K=0$.  Since $(H-t)'=h-1$ is compactly supported, $H-t$ is constant at
both ends of $\R$.  The difference of the two constants is $\int(h-1)=0$. Denote this constant by $C_0$ proves the last assertion.
\end{proof}
We call the numbers $A_j$ in the proof the \emph{amplitudes} of $h$ on $I_j$.  They may be decreased independently in
the later diagonal choice without moving the zeros $q_j^\pm$ or
affecting the conclusions of the lemma.

Put $K_0=K\times\{(0,0)\}$ and $p_j^\pm=(q_j^\pm,0,0)$ in \(\R^3\) with coordinates $(t,x,y)$.
We now use $h$ to construct an ordinary harmonic function on $\R^3$
with paired zeros.  Choose
$B_*>1+\frac12\|h'\|_{\mathcal{C}^0}$ and a smooth function $b_1\geq1$ that
equals $B_*$ on a bounded interval containing $(-1,1)\cup\supp h'$
and equals one outside a larger interval.  In each $I_j$, choose a
nonnegative bump $\beta_j$ equal to $s_j/2$ near $q_j^-$ and supported
away from $q_j^+$, and set $b=b_1+\sum_j\beta_j$ and $c=b+\frac12h'$. The choice of the amplitudes \(A_j\) gives
$s_j\ell_j^{-m}=sA_j\ell_j^{-m-1}\to0$ for every $m$. Thus \(\sum_j\beta_j\) converges in every $\mathcal{C}^m$ norm, and $b$ is smooth.
Then $b$ and $c$ are positive, equal $B_*$ on $K$, equal one near
infinity, and are constant near every $q_j^\pm$.  Moreover,
\[
  (b,c)|_{q_j^+}=(B_*,B_*+s_j/2),
  \qquad
  (b,c)|_{q_j^-}=(B_*+s_j/2,B_*).
\]

 Define $F_0(t,x,y)=H(t)+x^2-y^2$ and
$A_0(t)=\operatorname{diag}(1,b(t),c(t))$ on \(\R^3\).  Then
$\diver(A_0\nabla F_0)=h'+2b-2c=0$, so the metric
$g_0=(\det A_0)A_0^{-1}$ makes $dF_0$ harmonic.  Thus the zero set of $dF_0$ is
$K_0\cup\{p_j^\pm:j\geq1\}$.
Near each paired zero $q_j^\pm$, $h$ is linear and $b,c$ are constant. Let $g_j^\pm$ denote the constant metrics, which are values of $g_0$ near $p_j^\pm$. 

\begin{figure}[htbp]
  \centering
  \includegraphics[width=.70\textwidth]{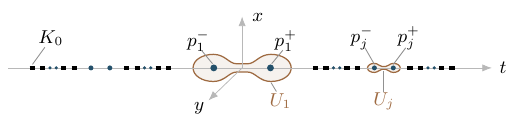}
  \caption{$K_0\cup\{p_j^\pm:j\geq1\}$ on the
  $t$-axis.}
  \label{fig:cantor-background}
\end{figure}

Near \(p_j^\pm\), in the
coordinates $\tau=t-q_j^\pm$, the function $F_0$ is given by quadratic
polynomials that are harmonic with respect to the constant metrics \(g_j^\pm\)
\begin{equation}\label{eq:paired-quadratics}
  P_j^\pm(\tau,x,y)=\pm A_j\ell_j\int_0^{q^+}\varphi\,dt\pm\frac{s_j}{2}\tau^2+x^2-y^2.
\end{equation}
In coordinates \((\tau,x,y)\), the orientation-preserving linear map $Q(\tau,x,y)=(-\tau,y,x)$ makes \(P_j^-=-P_j^+\circ Q\) and \(Q^*g_j^+=g_j^-\).

We now modify \(F_0\) further to obtain the ordinary function \(F\) on \(\R^3\), together with a metric with respect to which \(F\) is harmonic.

\begin{proposition}
  \label[proposition]{prop:euclidean-ended-background}
There are a complete smooth metric $\bar{g}$ on $\R^3$, a smooth
function $F$, and sufficiently large $R$ such that $ \bar{g}=g_{\Euc}$ and $dF=dt$
  on $\R^3\setminus B_R$, and such that 
$dF$ is $\bar{g}$-harmonic. Moreover, the zero set of \(dF_0\) is exactly
$K_0\cup\{p_j^\pm:j\geq1\}$, and near this set we have $F=F_0$ and $\bar{g}=g_0$.
\end{proposition}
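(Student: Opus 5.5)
We interpolate between $F_0$ and the linear function $t+C_0$ outside a large ball. We then use \cref{thm:relative-metric-realization} to produce a metric that agrees with $g_0$ near the critical set and is Euclidean near infinity.

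\emph{Step 1: the function $F$.} By \cref{lem:straight-cantor-profile}(iv), $F_0=t+C_0+G(t)+x^2-y^2$ with $G$ compactly supported. Choose $R_1$ so large that $\supp G\subset(-R_1,R_1)$ and the critical set $K_0\cup\{p_j^\pm\}\subset(-1,1)\times\{0\}$ lies well inside $B_{R_1}$. Let $\chi$ be a cutoff equal to $1$ on $\{x^2+y^2\le r_0^2\}$ and vanishing for $x^2+y^2\ge 4r_0^2$, with $r_0$ small, and set
\[
F=H(t)+\chi(x,y)(x^2-y^2)+\bigl(1-\chi(x,y)\bigr)\psi(x,y),
\]
with $\psi$ to be chosen. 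A cleaner choice avoids $\psi$: take $F=H(t)+\chi(x,y)(x^2-y^2)$. Then $\partial_tF=h(t)$, which is nonzero off $K\cup\{q_j^\pm\}$. On the slab where $h(t)$ vanishes or is tiny, the transverse gradient is $\nabla_{x,y}(\chi(x^2-y^2))$, whose only zero for $x^2+y^2<4r_0^2$ is the origin (for a radial cutoff $\chi$ with $|\chi'|r$ controlled, $\nabla(\chi\cdot q)$ is a nonzero multiple of $\nabla q$ plus a radial term, and both are nonzero away from $0$ when $\chi+r\chi'/2>0$). Where $\chi=0$ we have $\partial_xF=\partial_yF=0$, so $h(t)=0$ would create extra critical points. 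To fix this, restrict $\chi$ to cut off only for $|t|\ge R_1$: take $\chi=\chi(t,x,y)$ equal to $1$ on $B_{R_1}$ and $0$ outside $B_{2R_1}$. Where $|t|$ is large, $h\equiv1$ and $\partial_tF=1-\partial_t\chi\,(x^2-y^2)$, which may vanish. Scaling $x^2-y^2$ by a small factor kills this problem, so we replace $F_0$ by $H(t)+\epsilon(x^2-y^2)$ and adjust $A_0$ accordingly (multiply $b,c$ by $\epsilon^{-1}$; the identity $h'+2\epsilon b-2\epsilon c=0$ persists after rescaling $b,c$). Alternatively we may keep $F_0$ and take the cutoff region huge, making $|\partial_t\chi|(x^2+y^2)$ small only in bounded $(x,y)$. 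I would use the rescaling option, or simply note that the proposition permits redefining the quadratic coefficient. The result is that $dF$ has zero set exactly $K_0\cup\{p_j^\pm\}$, that $F=F_0$ on $B_{R_1}$, and that $dF=dt$ outside $B_{2R_1}$.

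\emph{Step 2: the dual closed form.} We need a closed 2-form $\beta$ with $dF\wedge\beta>0$ on $\R^3\setminus Z$, with $\beta=*_{g_0}dF_0$ near $Z$ and $\beta=dx\wedge dy$ outside a ball. Take $\beta=\chi\,*_{g_0}dF_0+(1-\chi)\,dx\wedge dy+d\eta$. Closedness fails only where $d\chi\neq0$, so we instead interpolate primitives. On $\R^3$ every closed 2-form is exact, so write $*_{g_0}dF_0=d\lambda_0$ and $dx\wedge dy=d\lambda_1$ with $\lambda_1=x\,dy$, and set $\beta=d(\chi\lambda_0+(1-\chi)\lambda_1)$. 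On the transition annulus, positivity is the issue. Since $dF=dt$ there and $g_0$ is diagonal with $t$-independent entries for $|t|$ large (where $b=c=1$, $h=1$, so $*_{g_0}dt=dx\wedge dy$), we can arrange $\lambda_0=\lambda_1$ in the region $|t|\ge R_1$ where $g_0$ is Euclidean, up to an exact correction. The remaining transition is in $(x,y)$ for bounded $t$, which is harder. My plan there is to first deform $g_0$ to the Euclidean metric away from a thin tube around the $t$-axis. Far from the axis, $dF=h\,dt+2\epsilon(x\,dx-y\,dy)$ is nonvanishing, and the form $\beta=d(\chi\lambda_0+(1-\chi)\lambda_1)$ is positive provided the error $d\chi\wedge(\lambda_0-\lambda_1)$ is small compared with the main terms. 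Choosing $\lambda_0-\lambda_1$ to be a closed form, hence exact and equal to $d\mu$, and replacing it by a cutoff of $\mu$ performed over a long region, makes the error arbitrarily small.

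\emph{Step 3.} Apply \cref{thm:relative-metric-realization} with $U$ a neighborhood of $Z\cup(\R^3\setminus B_{R})$ on which $g_0$ and $g_{\Euc}$ respectively satisfy $*\,dF=\beta$. This yields $\bar g$, and completeness holds because $\bar g$ is Euclidean outside a compact set. The main obstacle is the positivity of $\beta$ in the transition region in Step 2, where the Cantor accumulation forces $dF$ to be small; that is why the transition must be placed away from the $t$-axis, where $|dF|\gtrsim\epsilon r$.
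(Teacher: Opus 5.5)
Your overall plan matches the paper's: build $F$ and a closed $\beta$ with $dF\wedge\beta>0$ off the critical set, then apply \cref{thm:relative-metric-realization}. However, Step~1 does not produce an admissible $F$. You never straighten $H(t)$ in the $\rho=\sqrt{x^2+y^2}$ direction. For any $F=H(t)+\chi\,(x^2-y^2)$, with or without the factor $\epsilon$, one has $dF=h(t)\,dt$ wherever $\chi$ vanishes. With $\chi$ supported in $B_{2R_1}$, $dF$ therefore vanishes at every $(t,x,y)$ with $t\in K\cup\{q_j^\pm\}$ and $\rho>2R_1$, so the zero set is much larger than $K_0\cup\{p_j^\pm\}$. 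Also, since $h\not\equiv1$ on $(-1,1)$, $dF=h(t)\,dt\neq dt$ at arbitrarily large $\rho$, so $dF=dt$ outside a ball fails. If $\chi$ cuts off only in $t$, the quadratic term survives at infinity instead. (Multiplying $b,c$ by $\epsilon^{-1}$ also destroys $b=c=1$ near infinity and changes the fixed $F_0,g_0$ the statement refers to.)

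The missing idea is a two-stage cutoff. First replace $H$ by $t+C_0$ through the term $\chi_G(\rho)G(t)$ in a range $R_0<\rho<R_1$ where the full quadratic term is still present, so that $|\partial_{x,y}F|\gtrsim\rho$ absorbs the error $\chi_G'G$. Only afterwards cut off $x^2-y^2$ by $\zeta(t)\chi_Q(\rho)$. Here $\chi_Q$ transitions on $R_2<\rho<R_3$ with $R_2>R_1$, and $\zeta=1$ near $\supp G\cup\supp(h-1)\cup\supp(b-1)$. One also needs $\|\zeta'\|_{\mathcal{C}^0}\sup_\rho\rho^2\chi_Q<1/4$ and $2(\rho^2\chi_Q)(\rho^2\chi_Q)'/\rho>-1/4$, so that no new zeros appear.

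Step~2 does not give a closed positive $\beta$ either. The difference $\lambda_0-\lambda_1$ cannot be chosen closed, since $d(\lambda_0-\lambda_1)=*_{g_0}dF_0-dx\wedge dy\neq0$. Moreover, positivity on the transition region must be checked against the modified $dF$, and the natural candidates are not closed; for instance $d*_{g_{\Euc}}dF_0\neq0$ because $\Delta_{\Euc}F_0=h'$.

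The paper's device is to write the divergence-free field $A_0\nabla F_0=(h,2bx,-2cy)$ as $\partial_t+\operatorname{curl}V_G+\operatorname{curl}V_Q$. Here $V_G=((b+c-2)xy,-\tfrac12(h-1)y,\tfrac12(h-1)x)$ and $V_Q=(2xy,0,0)$. One then cuts off $V_G$ and $V_Q$ by exactly the cutoffs $\chi_G$ and $\zeta\chi_Q$ used on $G$ and on $x^2-y^2$ in $F$. The field $X=\partial_t+\operatorname{curl}(\chi_GV_G)+\operatorname{curl}(\zeta\chi_QV_Q)$ is then automatically divergence free, so $\beta=\iota_X(dt\wedge dx\wedge dy)$ is closed. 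Furthermore $dF(X)$ expands explicitly. On the intermediate region it is a main term at least $4\min\{1,\inf b,\inf c\}\rho^2$ plus errors of size $O(|\rho\chi_G'|(1+\rho^2))$, which a logarithmic cutoff makes small. In the far region, $dF(X)>1/2$.

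Your ``cutoff over a long region'' is in the spirit of that logarithmic cutoff. But without matching the primitive to the cutoffs in $F$, nothing makes the error small relative to $|dF|^2$.
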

\begin{proof}
  Define the following vector fields on \(\R^3\)
  \[V_G=\left((b+c-2)xy,-\tfrac12(h-1)y,\tfrac12(h-1)x\right) \text{ and } V_Q=(2xy,0,0).\]
Using $c=b+h'/2$, direct calculation gives
\[
  \operatorname{curl}V_G=(h-1,2(b-1)x,-2(c-1)y) \text{ and } \operatorname{curl}V_Q=(0,2x,-2y).
\]

Let \(\rho=\sqrt{x^2+y^2}\). Choose $1\ll R_0<R_1$ and let $\chi_G$ be a cut-off function such that \(\chi_G(\rho)\equiv 1\) for
$\rho\leq R_0$ and \(\chi_G(\rho)\equiv 0\) for $\rho\geq R_1$.  We can choose \(\chi_G\) so that
$|\rho\chi_G'|\le C/\log(R_1/R_0)$, taking $R_1/R_0$ large makes its derivative as small as needed.

Next choose $R_3>R_2>R_1$ and choose cut-off function \(\chi_Q\) such that \(\chi_Q(\rho)\equiv 1\) for $\rho\leq R_2$ and \(\chi_Q(\rho)\equiv 0\) for $\rho\geq R_3$.  
With \(R_3\) sufficiently large, we can choose \(\chi_Q\) so that
$2(\rho^2\chi_Q)(\rho^2\chi_Q)'/\rho>-1/4$ for $\rho>0$. 

Finally, choose cut-off function $\zeta(t)$ equal to one near
$\supp G\cup\supp(h-1)\cup\supp(b-1)$, such that
$\|\zeta'\|_{\mathcal{C}^0}\sup_{\rho\geq0}\rho^2\chi_Q(\rho)<1/4$.
Define 
\begin{equation}\label{eq:def-F}
F=t+C_0+\chi_G(\rho)G(t)
+\zeta(t)\chi_Q(\rho)(x^2-y^2)
\end{equation}
and vector field
$X=\partial_t+\operatorname{curl}(\chi_GV_G)
+\operatorname{curl}(\zeta\chi_QV_Q)$.

Set $\alpha=dF$ and $\beta=\iota_X(dt\wedge dx\wedge dy)$.  Both forms are
closed.  On the region where $\rho\leq R_0$ and $\zeta(t)=1$, $\alpha=dF_0$, $X=A_0\nabla F_0=(h,2bx,-2cy)$, and
$\beta=\star_{g_0}\alpha$.  Outside a bounded cylinder,
$\alpha=dt$ and $\beta=dx\wedge dy$.

Next we prove that $\alpha(X)>0$ away from $K_0\cup\{p_j^\pm:j\ge 1\}$.  When \(\rho\le R_0\) and \(\zeta=1\), this follows from
$\alpha(X)=h^2+4bx^2+4cy^2$.  When $\zeta=1$ but
$R_0<\rho<R_1$, we have $\chi_Q=1$ and direct expansion gives
\[
\begin{aligned}
\alpha(X)=&(1+\chi_G(h-1))^2
 +4(1+\chi_G(b-1))x^2+4(1+\chi_G(c-1))y^2\\
&+\rho\chi_G'\big(
 {(h-1)(1+\chi_G(h-1))}/{2}+{4(b+c-2)x^2y^2}/{\rho^2}\big)\\
&+\frac{2\chi_G'G}{\rho}
 \big((1+\chi_G(b-1))x^2-(1+\chi_G(c-1))y^2\big).
\end{aligned}
\]
The first line is at least
$4\min\{1,\inf b,\inf c\}\rho^2$. In particular, it is positive.  Since all coefficients depending on $t$
are bounded, the absolute value of the last two lines is at most
$C|\rho\chi_G'|(1+\rho^2)$.  Here $\rho>R_0>1$, so taking
$|\rho\chi_G'|$ sufficiently small makes them smaller than the first
line.  Hence $\alpha(X)>0$ on this region.

At every remaining point, either $\rho\geq R_1$ or $\zeta\ne1$.  In the first
case $\chi_G=\chi_G'=0$; in the second, the support condition on $\zeta$
implies $G=h-1=b-1=c-1=0$.  Hence all terms of \(dF\) involving $G$ and $V_G$ vanish,
and
\[
  \alpha(X)=1+\zeta'\chi_Q(x^2-y^2)
  +{2\zeta^2(\rho^2\chi_Q)(\rho^2\chi_Q)'}/{\rho}>\frac12.
\]
 The absolute
value of the second term of the right hand side is less than $1/4$ by the construction of \(\zeta\) and the fact that \(|x^2-y^2|\le \rho^2\). 
And the last term is greater than $-1/4$.  

Note that
$\alpha\wedge\beta=\alpha(X)\,dt\wedge dx\wedge dy$, thus \(\alpha\wedge\beta\) is positive awyay from 
$K_0\cup\{p_j^\pm:j\ge 1\}$. In particular, the zero set of $\alpha=dF$ is exactly $K_0\cup\{p_j^\pm:j\ge 1\}$.

Apply \cref{thm:relative-metric-realization} to the pair $(\alpha,\beta)$, taking $Z=K_0\cup\{p_j^\pm:j\ge1\}$, retaining $g_0$ near $Z$, and prescribing the Euclidean metric on the Euclidean end, where $\alpha=dt$ and $\beta=dx\wedge dy$. This yields a metric
$\bar{g}$ on $\R^3$ satisfying $\star_{\bar{g}}\alpha=\beta$.
Since $\alpha$ and $\beta$ are closed, $\alpha=dF$ is harmonic.  The metric
equals $g_0$ near the zero set $K_0\cup\{p_j^\pm:j\ge 1\}$ and is Euclidean
outside a compact set, so it is complete.
\end{proof}

\subsection{Paired branch surgery}

The following theorem assigns to each nondegenerate homogeneous quadratic polynomial a $\mathbb{Z}/2$-harmonic function in $\R^3$ 
whose singular set is an ellipse.
 See \cite{YanShrinkingBranches,donaldson-twistor} for two different constructions 
 and \cite{LMZ2026} for the rigidity of these constructions.  
 
\begin{theorem}\label{thm:compact-branch-model}
Let $P$ be a nondegenerate homogeneous
quadratic polynomial that is harmonic on $(\R^3,g_{\mathrm{Euc}})$.  
There is a nondegenerate, exact $\Z/2$-harmonic $1$-form $(\Sigma,\cI_{\Sigma},\alpha)$ on $(\R^3,g_{\mathrm{Euc}})$ associated with $P$,
 where $\Sigma\subset\R^2\subset \R^3$ is an ellipse. Let $f$ be a $\cI_\Sigma$-valued function satisfying $\alpha=df$ and $\Sigma\subset |f|^{-1}(0)$. In a
trivialization of $\cI_\Sigma$ near infinity, $f$ has the asymptotic expansion
\begin{align}\label{eq:asymptotic-expansion-model}
  f(\mathbf v)=a_0+P(\mathbf v)+O'(|\mathbf v|^{-1}),
\end{align}
where $a_0$ is a constant depends on $P$. The ellipse $\Sigma$ is also determined by the coefficients of $P$. Moreover, $\Zero(\alpha)$ is exactly $\Sigma$.
\end{theorem}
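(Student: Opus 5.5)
We plan to prove \cref{thm:compact-branch-model} by separation of variables in confocal ellipsoidal coordinates. First normalize $P$. After a rotation, $P=a_1x^2+a_2y^2+a_3z^2$ with $a_1+a_2+a_3=0$ and all $a_i\neq0$. Replacing $P$ by $-P$ if necessary (this only flips the sign of $f$), we may assume $a_1,a_2>0>a_3$.

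Fix semi-axes $A>B>0$ to be determined, and introduce the confocal family
\[
  \frac{x^2}{A^2+s}+\frac{y^2}{B^2+s}+\frac{z^2}{s}=1 .
\]
Its focal ellipse $\Sigma=\{z=0,\ x^2/A^2+y^2/B^2=1\}$ lies in the plane $\{z=0\}$, and the ellipsoidal coordinates $(\lambda,\mu,\nu)$ are the three roots in $s$. Laplace's equation separates in these coordinates into three copies of the Lamé equation. The coordinate whose range ends at $s=0$ degenerates exactly on the focal disc. Its square root $\sqrt{s}$ is the natural $\Ztwo$-valued function: it changes sign across the focal disc, is smooth away from the disc in the double cover, and behaves like $\operatorname{Re}(w^{1/2})$, where $w$ is a complex coordinate transverse to $\Sigma$.

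Second, build $f$ from degree-two Lamé products that contain one factor $\sqrt{s_i}$ in the coordinate degenerating on the disc. Concretely, take $f=E(\lambda)E(\mu)E(\nu)$, where each factor solves the Lamé equation with the same separation constants as the degree-two ellipsoidal harmonics that reproduce quadratics at infinity. The factor in the exterior variable $\lambda$ is taken to be a combination of the first- and second-kind Lamé functions. The first-kind part gives growth $\lambda\sim r^2$. The second-kind part decays like $r^{-3}$ relative to it, and this contributes the $O'(|\mathbf v|^{-1})$ term in \eqref{eq:asymptotic-expansion-model}. The combination is fixed by requiring that $f$ has the odd half-integer expansion $\operatorname{Re}(c\,w^{3/2})+\cdots$ along $\Sigma$, rather than $w^{1/2}$ or a smooth term, with $c\neq0$ everywhere on $\Sigma$. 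This gives the exponent $3/2$, hence nondegeneracy and conditions (iii)--(v) of \cref{def:z2-harmonic}. Expanding at infinity, $f=a_0+\widetilde P+O'(r^{-1})$, where $\widetilde P$ is a harmonic quadratic whose coefficients depend smoothly on $(A,B)$ and on an overall scale. Since traceless diagonal quadratics form a two-parameter family, and $(A,B,\text{scale})$ modulo dilation supplies two parameters, we would show that the map $(A/B,\text{scale})\mapsto \widetilde P$ covers the open cone $a_1,a_2>0>a_3$. We expect to do this by an explicit computation of the coefficients, or failing that by a continuity and degree argument using the limits $B/A\to0$ and $B/A\to1$. In the limit $B/A\to1$ the construction becomes the rotationally symmetric circle case, where explicit toroidal-coordinate formulas are available as a check.

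Third, verify that $\Zero(df)=\Sigma$; we expect this to be the main obstacle. Away from $\Sigma$, $|df|^2$ is a sum of squares of the three separated derivative terms, weighted by the metric coefficients. We would try to show that each Lamé factor and its derivative have no common zeros on the relevant interval. The tool is a Sturm-type oscillation argument: the chosen eigenvalues correspond to the lowest nontrivial mode on each interval, so the factors have at most one interior zero, and the derivatives do not vanish there. For the exterior factor, the derivative is controlled by monotonicity of the combined first- and second-kind solution, which we would establish via a Wronskian identity. If this coordinatewise argument leaves a residual case, the fallback is to use the maximum principle on $\partial_z f$ and $\partial_x f,\partial_y f$ restricted to the symmetry planes, which pins down the possible critical points.

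Finally, the constant $a_0$ is read off from the expansion, and exactness follows because $f$ is globally defined as a section of $\cI_\Sigma$.
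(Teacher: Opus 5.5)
The paper does not prove this statement; it cites \cite{YanShrinkingBranches,donaldson-twistor}. Your ellipsoidal-coordinate route would therefore be an independent argument. As written, however, its central step cannot work. No single separated product $f=E(\lambda)E(\mu)E(\nu)$ has $\operatorname{Re}(c\,w^{3/2})$ behaviour along $\Sigma$, whatever mix of first- and second-kind functions you put in $\lambda$.

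To see this, note that to match a diagonal $P$ the product must be even in $z$. It must also be smooth across $\{z=0\}\setminus\overline D$, where $\mu\approx-cz^2$. So the $\mu$-factor is the exponent-$0$ solution at $\mu=0$, and $E(\mu)|_{\mu=0}\neq0$.

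Next, a $z$-even $\cI$-valued function on $\R^3\setminus\overline D$ must vanish on the disc $D$. The indicial exponents at $\lambda=0$ are $0$ and $\tfrac12$, so the $\lambda$-factor must be the exponent-$\tfrac12$ Frobenius solution $\sqrt\lambda\,\rho(\lambda)$ with $\rho(0)\neq0$. On $\Sigma=\{\lambda=\mu=0\}$ the product is then $\rho(0)E(0)E(\nu)\sqrt\lambda+\cdots$. This is a nonzero multiple of $\operatorname{Re}(w^{1/2})$ on an open part of $\Sigma$, so $|df|$ is unbounded there. Killing the $\sqrt\lambda$ part instead returns the polynomial harmonic, which has no branching. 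The first/second-kind ratio only mixes these two failures.

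The missing idea is cancellation between modes with different separation constants. Put $\Delta(s)=(A^2+s)(B^2+s)s$ and $E_0=1$. Let $E_i(s)=s-\theta_i$, where $\theta_1,\theta_2<0$ are the roots of $\frac1{A^2+\theta}+\frac1{B^2+\theta}+\frac1\theta=0$. Take
\[
f=\sum_{k=0}^{2}\beta_k\,E_k(\lambda)\Bigl(\int_0^\lambda E_k^{-2}\Delta^{-1/2}\,ds\Bigr)E_k(\mu)E_k(\nu).
\]
Along $\Sigma$ the $\sqrt\lambda$-coefficient is $\frac{2}{AB}\bigl[\beta_0+\beta_1(\nu-\theta_1)+\beta_2(\nu-\theta_2)\bigr]$. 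This vanishes identically in $\nu$ if and only if $\beta_2=-\beta_1$ and $\beta_0=\beta_1(\theta_1-\theta_2)\neq0$.

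So the degree-$0$ mode is indispensable. It is this mode, not the degree-two second-kind functions (which are $O(r^{-3})$), that produces the $r^{-1}$ term and the nonzero flux used later in the paper. The resulting $f$ is $Q-v$. Here $v$ is the potential of the elliptic disc with density $\propto\sqrt{1-x^2/A^2-y^2/B^2}$, a flattened homogeneous ellipsoid, and $Q$ is the harmonic quadratic agreeing with $v$ on $D$.

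Because $f$ is now a sum of products, your Sturm--Wronskian argument for $\Zero(df)=\Sigma$ no longer applies. Use the classical gradient formula instead:
\[
  \nabla f=c\Bigl(x\,J_A(\lambda),\ y\,J_B(\lambda),\
  -z\bigl(J_A(\infty)+J_B(\infty)+K(\lambda)\bigr)\Bigr),
\]
where $J_A(\lambda)=\int_0^\lambda\frac{ds}{(A^2+s)\sqrt{\Delta(s)}}$, $J_B$ is defined analogously, and $K(\lambda)=\int_\lambda^\infty\frac{ds}{s\sqrt{\Delta(s)}}$.

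Off $\overline D$ this vanishes only if $x=y=z=0$, which is impossible there. On the open disc, $zK(\lambda)\to\pm\frac{2}{AB}\sqrt{1-x^2/A^2-y^2/B^2}\neq0$. Hence $\Zero(df)=\Sigma$. Moreover $|df|$ is comparable to $|w|^{1/2}$ uniformly along $\Sigma$, which gives nondegeneracy. The same integrals give $a_1/a_2=J_A(\infty)/J_B(\infty)$ for your parameter count.
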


We call $(\Sigma,\cI_{\Sigma},f)$ the model associated with $P$. Fix the trivialization of $\cI_\Sigma$ near infinity.  For sufficiently large
$R$, define the flux of $f$ by
\[
  \mathrm{Flux}(f)
  :=\int_{|\mathbf v|=R}\star_{g_{\Euc}}df.
\]
This is independent of
$R$, since $d\star_{g_{\Euc}}df=0$.  The same definition applies to $\Z/2$-harmonic functions on manifolds with Euclidean end. 
The sign of the flux depends on the choice of trivialization of $\cI_\Sigma$ near infinity. 

Recall that $g_j^\pm$ are the constant values of $\bar g$ near $p_j^\pm$ and $P_j^\pm$ are quadratic polynomials given in \eqref{eq:paired-quadratics}.
Set $\hat P_j^\pm=P_j^\pm-P_j^\pm(p_j^\pm)$.  These homogeneous polynomials are
$g_j^\pm$-harmonic and satisfy $\hat P_j^-=-\hat P_j^+\circ Q$.

Let $\alpha=dF$ be the ordinary $1$-form constructed in
\cref{prop:euclidean-ended-background}.  Fix $j$ and choose disjoint
coordinate balls $B_j^\pm$ centered at $p_j^\pm$.  On $B_j^\pm$, the metric
$\bar g$ is the constant metric $g_j^\pm$ and
$\alpha=dP_j^\pm=d\hat P_j^\pm$.  Let $\gamma_j$ be the segment of the
$t$-axis joining $p_j^-$ to $p_j^+$.  By construction, its interior contains
no zero of $\alpha$.  Let $U_j\Subset\R^3$ be a smoothly bounded regular
neighborhood of $B_j^-\cup\gamma_j\cup B_j^+$ such that
$\Zero(\alpha)\cap\overline U_j=\{p_j^-,p_j^+\}$.
We call $U_j$ a smooth dumbbell domain.
The case $j=1$ is illustrated in \cref{fig:cantor-background}.

Use the coordinate identifications
$(t,x,y)\mapsto(\tau,x,y)=(t-q_j^\pm,x,y)$ to regard $B_j^\pm$ as balls about
the origins in two copies of $\R^3$ equipped with the constant metrics
$g_j^\pm$.  Choose an orientation-preserving linear isometry $L_j^+:(\R^3,g_j^+)\to (\R^3,g_{\mathrm{Euc}})$ 
and define $L_j^-:=L_j^+\circ Q:(\R^3,g_j^-)\to (\R^3,g_{\mathrm{Euc}})$.
Apply \cref{thm:compact-branch-model} to $\hat P_j^\pm$, and denote the
resulting functions and ellipses by $f_j^\pm$ and $\Sigma_j^\pm$.  Since
$\hat P_j^-=-\hat P_j^+\circ Q$ and $Q^*g_j^+=g_j^-$, we have
$f_j^-=-f_j^+\circ Q$.  

For $\eps>0$, set
$f_{j,\eps}^\pm(\mathbf v)=\eps^2f_j^\pm(\mathbf v/\eps)$ and
$\Sigma_{j,\eps}^\pm=\eps\Sigma_j^\pm$, and pull back the corresponding line
bundles under $\mathbf v\mapsto\mathbf v/\eps$. Denote these data by $(\Sigma_{j,\eps}^\pm,\cI_{j,\eps}^\pm,f_{j,\eps}^\pm)$.
For small
$\eps$, the ellipses $\Sigma_{j,\eps}^\pm$ lie in $B_j^\pm$. 
The paired replacement is illustrated in \cref{fig:cantor-paired-surgery}.

\begin{figure}[!htb]
  \centering
  \includegraphics[width=.80\textwidth]{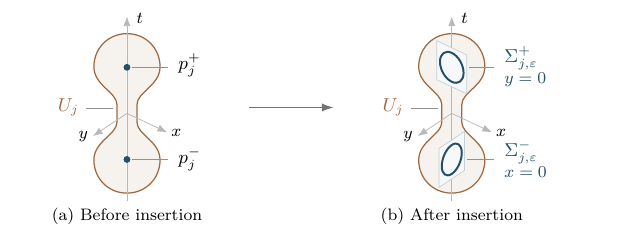}
  \caption{Paired branchsurgery in $U_j$.}
  \label{fig:cantor-paired-surgery}
\end{figure}

On every fixed annulus centered at the origin, we have
$|df_{j,\eps}^\pm-d\hat P_j^\pm|=O'(\eps^3)$. 
Moreover, $\mathrm{Flux}(f_{j,\eps}^\pm)=\eps^3\mathrm{Flux}(f_j^\pm)$.
With compatible trivializations of the flat line bundles $\cI_{j,\eps}^\pm$ near infinity,
we have $\mathrm{Flux}(f_{j,\eps}^+)=-\mathrm{Flux}(f_{j,\eps}^-)$.
We fix these trivializations away from the ellipses afterward. Through the coordinate
identifications above, $(\Sigma_{j,\eps}^\pm,\cI_{j,\eps}^\pm,f_{j,\eps}^\pm)$ can be regarded as defined on $B_j^\pm$. 
Set $U_{j,\eps}=U_j\setminus(\Sigma_{j,\eps}^+\cup\Sigma_{j,\eps}^-)$. 

Choose concentric coordinate balls $B_{j,0}^\pm\Subset B_{j,1}^\pm\Subset B_j^\pm$.
Set \mbox{$W=U_j\setminus\overline{B_{j,0}^+\cup B_{j,0}^-}$}.
Put \mbox{$A^\pm=B_{j,1}^\pm\setminus\overline{B_{j,0}^\pm}$}.
Choose smooth cutoffs $\chi^\pm$ on $A^\pm$.
Each cutoff is one near the inner boundary and zero near the outer boundary.
Choose $L_0\Subset W$ with two components containing $\supp d\chi^+$ and
$\supp d\chi^-$, respectively.
Require each component to meet $\gamma_j$.
Enlarge $L_0$ slightly and connect its components by a thin tube along the
$t$-axis to obtain a smoothly bounded connected open set $L_1\Subset W$
containing $\overline{L_0}$.
The set $L_0$ will contain the support of the gluing error.
The correction will be supported in $L_1$.
All these choices are fixed for each $j$ and do not depend on $\eps$.

\begin{lemma}\label[lemma]{lem:compact-support-right-inverse}
There is a linear operator
\[
  T:\left\{\rho\in\Omega_c^3(L_0):\int_{L_0}\rho=0\right\}
  \longrightarrow\Omega_c^2(L_1)
\]
such that $d(T\rho)=\rho$.
The forms $T\rho$ are supported in a fixed compact subset of $L_1$.
For every integer $m\geq0$, there is $C_m<\infty$ such that
\[
  \|T\rho\|_{C^m(\R^3)}\leq C_m\|\rho\|_{C^m(\R^3)}.
\]
The norms use Euclidean coordinates.
\end{lemma}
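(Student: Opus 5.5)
We plan to build $T$ as a Bogovskii-type operator on the connected domain $L_1$, constructed explicitly from finitely many balls so that the bounds are visibly in $C^m$ and supports are controlled.

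\emph{Step 1: a fixed chain of balls.} Since $L_1$ is connected and open and $\overline{L_0}\subset L_1$ is compact, we cover $\overline{L_0}$ by finitely many open Euclidean balls $D_1,\dots,D_N$ with $\overline{D_k}\subset L_1$. After adding finitely many more balls along paths in $L_1$, we may assume that the union $D_1\cup\dots\cup D_N$ is connected. We then order the balls as a rooted spanning tree of the intersection graph, with root $D_1$, so that each $D_k$ with $k\ge2$ has a parent $D_{\pi(k)}$ meeting it. We fix a partition of unity $\{\psi_k\}$ on a neighborhood of $\overline{L_0}$ subordinate to $\{D_k\}$. For each edge we also fix a bump 3-form $\theta_k\in\Omega_c^3(D_k\cap D_{\pi(k)})$ with $\int\theta_k=1$. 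All of these choices are independent of $\rho$.

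\emph{Step 2: redistribute the mass.} Given $\rho$ with $\int\rho=0$, we write $\rho=\sum_k\psi_k\rho$ and set $c_k=\int\psi_k\rho$, so that $|c_k|\le C\|\rho\|_{C^0}$. Processing the tree from the leaves toward the root, we define the total mass $M_k$ of the subtree rooted at $k$. We subtract $M_k\theta_k$ from the $k$th piece and add it to the parent's piece. This yields forms $\rho_k\in\Omega_c^3(D_k)$, each with zero integral and depending linearly on $\rho$. The root piece has mass $\sum_k c_k=0$. By construction $\sum_k\rho_k=\rho$, and $\|\rho_k\|_{C^m}\le C_m\|\rho\|_{C^m}$.

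\emph{Step 3: the local inverse on a ball.} On a ball $D$ we use Bogovskii's operator: average the radial homotopy (Poincaré) operator over centers weighted by a bump supported in a smaller concentric ball. For compactly supported $\rho_k$ with zero integral, the output $S_k\rho_k$ is a compactly supported 2-form in $D_k$ with $dS_k\rho_k=\rho_k$. It is a singular integral operator of Calderón--Zygmund type.

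The main obstacle is that Calderón--Zygmund operators are not bounded on $C^m$, only on $W^{m,p}$ and $C^{m,\alpha}$. We intend to get around this using a Hölder norm one degree higher. The precise plan is:
\[
\|S\rho\|_{C^m}\le C\|S\rho\|_{C^{m,1/2}}\le C\|\rho\|_{C^{m-1,1/2}}\le C\|\rho\|_{C^m}.
\]
The middle inequality uses that Bogovskii's operator gains one derivative, which holds since its kernel is homogeneous of degree $-2$ (order $-1$). If this route proves delicate, the fallback is to prove the estimate $\|S\rho\|_{C^m}\le C_m\|\rho\|_{C^m}$ directly. There, the kernel of order $-1$ gives a $C^0$ bound by integrability, and derivatives commute with the averaged construction up to terms of the same type, so no loss of derivatives occurs at the $C^m$ level. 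Finally, we set $T\rho=\sum_k S_k\rho_k$. This is linear, satisfies $d(T\rho)=\rho$, is supported in the fixed compact set $\bigcup_k\overline{D_k}\subset L_1$, and obeys $\|T\rho\|_{C^m}\le C_m\|\rho\|_{C^m}$.
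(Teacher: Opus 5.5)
Your proof is correct, but it takes a genuinely different route from the paper's. The paper works globally on $L_1$. It solves the Neumann problem $\Delta u=f$, $\partial_\nu u=0$, which is solvable because $L_1$ is connected and $\int f=0$. It then takes $\eta=\star_{g_{\Euc}}du$, so that $d\eta=\rho$ and $i^*\eta=0$ on $\partial L_1$. Finally it subtracts $d(\chi_\partial\mathcal H\eta)$, with $\mathcal H$ a collar homotopy operator, to make the primitive vanish near $\partial L_1$. The $C^m$ bound is quoted from elliptic regularity; to avoid losing a derivative, this implicitly uses the same H\"older gain your main chain invokes, applied to $u$.

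You instead localize. Connectedness of $L_1$ enters through the spanning tree of balls and the mass transfer by the bumps $\theta_k$, replacing the Neumann compatibility condition. Each zero-mean piece is then inverted by an explicit integral operator on a ball. This buys you:
\begin{itemize}
\item independence from the regularity of $\partial L_1$ and from boundary-value theory;
\item an explicit support, namely the convex hull of $\supp\rho_k$ and the support of the weight $\omega$, which is a fixed compact subset of $D_k$;
\item an elementary estimate.
\end{itemize}
The paper's route is shorter to write, given standard elliptic theory.

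Two remarks on your Step~3.

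\emph{Which operator.} The support-preserving operator is the one that integrates along rays from $x$ away from the centre. Identifying $3$-forms with functions and $2$-forms with vector fields, it is
\[
S\rho(x)=\int\rho(x-z)\,N(x,z)\,dz,\qquad N(x,z)=z\int_1^\infty\omega\bigl(x+(s-1)z\bigr)\,s^2\,ds .
\]
Averaging the usual Poincar\'e homotopy, which integrates from the centre to $x$, gives a primitive that is not compactly supported. Your mention of the zero-mean hypothesis shows you intend Bogovskii's operator, but your parenthetical description names the wrong one.

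\emph{The estimate.} Your H\"older chain does not cover $m=0$ and rests on a cited mapping theorem. Your fallback, by contrast, is complete and is the argument to use. In the variable $z=x-y$, every $x$-derivative falls either on $\rho$ or on $\omega$. Each resulting kernel is still $O(|z|^{-2})$ for bounded $(x,z)$, hence integrable. This gives $\|S\rho\|_{C^m}\le C_m\|\rho\|_{C^m}$ for all $m\geq0$ at once. The Calder\'on--Zygmund difficulty you raise concerns $\nabla S$, not $S$, so it never actually arises.
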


\begin{proof}
Equip $L_1$ with the Euclidean metric.
Write $\rho=f\,\vol_{g_{\Euc}}$.
Extend $f$ by zero to $L_1$.
Let $\nu$ be the outward unit normal on $\partial L_1$.
Since $L_1$ is connected and $\int_{L_1}f\,\vol_{g_{\Euc}}=0$, the Neumann problem
\[
  \Delta u=f\quad\text{in }L_1,\qquad
  \partial_\nu u=0\quad\text{on }\partial L_1
\]
has a unique solution $u\in C^\infty(\overline{L_1})$ with $\int_{L_1}u\,\vol_{g_{\Euc}}=0$.
The boundary condition is imposed on every component of $\partial L_1$.
Set $\eta=\star_{g_{\Euc}}du$.
The equation $\Delta u=f$ gives $d\eta=\rho$.
Let $i:\partial L_1\hookrightarrow\overline{L_1}$ be the inclusion.
The Neumann condition gives $i^*\eta=(\partial_\nu u)\,\vol_{\partial L_1}=0$.

Fix a collar of $\partial L_1$ disjoint from $\overline{L_0}$.
Let $\pi$ be its projection onto $\partial L_1$.
Let $\mathcal H$ be the homotopy operator obtained by integration along the collar coordinate, see \cite[Chapter~I, \S4]{BottTu1982}.
The homotopy formula is
$d\mathcal H+\mathcal Hd=\operatorname{id}-\pi^*i^*$.
On the collar, $d\eta=0$ because $\supp\rho\subset L_0$.
Together with $i^*\eta=0$, this gives $d\mathcal H\eta=\eta$.
Choose a fixed smooth cutoff $\chi_\partial$ supported in the collar and equal to one near $\partial L_1$.
Extend $\chi_\partial\mathcal H\eta$ by zero off the collar.
Define $T\rho=\eta-d(\chi_\partial\mathcal H\eta)$.
Then $d(T\rho)=\rho$ because $d\eta=\rho$ and $d^2=0$.
On the collar, the identity $d\mathcal H\eta=\eta$ gives
\[
  T\rho=(1-\chi_\partial)\eta-d\chi_\partial\wedge\mathcal H\eta.
\]
Thus $T\rho$ vanishes on a fixed neighborhood of $\partial L_1$.
It therefore extends smoothly by zero to $\R^3$.
The support $\supp(T\rho)$ lies in a fixed compact subset of $L_1$ because $\chi_\partial$ is fixed.
The operator $T$ is linear because $u$ and $d(\chi_\partial\mathcal H\eta)$ depend linearly on $\rho$.

By standard elliptic theory,
$\|\eta\|_{C^m(\overline{L_1})}\leq A_m\|\rho\|_{C^m(\R^3)}$.
The operator $\mathcal H$ is bounded in every $C^m$ norm because it integrates over a fixed interval.
The displayed formula on the collar and the equality $T\rho=\eta$ elsewhere therefore give
\[
  \|T\rho\|_{C^m(\R^3)}
  \leq B_m\|\eta\|_{C^m(\overline{L_1})}
  \leq C_m\|\rho\|_{C^m(\R^3)}.
\]
The construction of $T$ does not depend on $m$.
\end{proof}

\begin{proposition}
  \label[proposition]{prop:quantitative-paired-surgery}
For every sufficiently small $\eps>0$, there are embedded ellipses
$\Sigma_{j,\eps}^\pm\subset B_j^\pm$, a smooth metric $g_{j,\eps}$ on $U_j$, and a nondegenerate
$\Ztwo$-harmonic $1$-form
$(\Sigma_{j,\eps}^+\cup\Sigma_{j,\eps}^-,\cI_{j,\eps},\alpha_{j,\eps})$ on $(U_j,g_{j,\eps})$ with
the following properties. The line bundle $\cI_{j,\eps}$ is trivial on a collar of $\partial U_j$.
Moreover, in a trivialization, $\alpha_{j,\eps}=\alpha$ and $g_{j,\eps}=\bar g$ on a (smaller) collar.
\end{proposition}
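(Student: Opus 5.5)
The plan is to glue the rescaled models $f_{j,\eps}^\pm$ of \cref{thm:compact-branch-model} into $F$ inside the two balls $B_j^\pm$. I will keep $\alpha$ unchanged on $W$ away from the gluing annuli. The failure of $*\alpha$ to be closed will be repaired with the operator $T$ of \cref{lem:compact-support-right-inverse}, and the metric will then come from \cref{thm:relative-metric-realization}. The point of working with a pair $p_j^\pm$ is that the two model fluxes cancel, which is exactly the mean-zero hypothesis needed to apply $T$.

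\emph{Step 1: the glued function and line bundle.} Let $\cI_{j,\eps}$ be the model bundle $\cI_{j,\eps}^\pm$ on $B_j^\pm\setminus\Sigma_{j,\eps}^\pm$. Since these bundles are trivial on the annuli $A^\pm$, I can extend $\cI_{j,\eps}$ trivially over the rest of $U_j$, and in particular over $W$ and a collar of $\partial U_j$. The trivializations on $A^\pm$ are fixed so that $f_{j,\eps}^\pm$ is asymptotic to $+\hat P_j^\pm$. On $B_j^\pm$ I set
\[
F_\eps=P_j^\pm(p_j^\pm)+\chi^\pm\bigl(f_{j,\eps}^\pm-\eps^2a_0^\pm\bigr)+(1-\chi^\pm)\hat P_j^\pm ,
\]
and $F_\eps=F$ elsewhere in $U_j$. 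Here $a_0^\pm$ is the constant in \eqref{eq:asymptotic-expansion-model}. Subtracting $\eps^2a_0^\pm$ changes neither $df$ nor the branching, and by \eqref{eq:asymptotic-expansion-model} it makes $f_{j,\eps}^\pm-\eps^2a_0^\pm-\hat P_j^\pm=O'(\eps^3)$ on the fixed annuli $A^\pm$. Then $\alpha_{j,\eps}:=dF_\eps$ is closed and equals $\alpha$ outside $B_{j,1}^\pm$, in particular on a collar of $\partial U_j$. Inside $B_{j,0}^\pm$ it equals $df_{j,\eps}^\pm$, and on $A^\pm$ we have $|\alpha_{j,\eps}-\alpha|_{C^m}=O(\eps^3)$.

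\emph{Step 2: a closed positive dual form.} Let
\[
\beta_0=\chi^\pm *_{g_j^\pm}df_{j,\eps}^\pm+(1-\chi^\pm)*_{\bar g}\alpha \text{ on } B_j^\pm, \qquad \beta_0=*_{\bar g}\alpha \text{ elsewhere}.
\]
Since $\bar g=g_j^\pm$ on $B_j^\pm$ and both $f_{j,\eps}^\pm$ and $\hat P_j^\pm$ are $g_j^\pm$-harmonic, we get
\[
d\beta_0=d\chi^\pm\wedge\bigl(*df_{j,\eps}^\pm-*d\hat P_j^\pm\bigr).
\]
This is supported in $L_0$ and is $O(\eps^3)$ in every $C^m$ norm. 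Its integral over the component near $p_j^\pm$ equals, by Stokes, $\pm\mathrm{Flux}(f_{j,\eps}^\pm)$ minus the flux of $\hat P_j^\pm$, which is $0$. Because $\hat P_j^-=-\hat P_j^+\circ Q$ with $Q$ an orientation-preserving isometry $(g_j^-)\to(g_j^+)$, we have $f_j^-=-f_j^+\circ Q$, and hence $\mathrm{Flux}(f_{j,\eps}^-)=-\mathrm{Flux}(f_{j,\eps}^+)$ in the trivializations induced from $W$. The main thing to check carefully is this sign bookkeeping, in particular that the trivialization on $W$, which is connected through the tube along $\gamma_j$, really induces the compatible trivializations used to state the flux relation. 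Granting it, $\int_{L_0}d\beta_0=0$, and I set $\beta=\beta_0-T(d\beta_0)$. This form is closed, and $\beta-\beta_0=O(\eps^3)$ in $C^m$ with support in a fixed compact subset of $L_1\Subset W$.

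\emph{Step 3: positivity and metric.} On $B_{j,0}^\pm$ we have $\alpha_{j,\eps}\wedge\beta=|df_{j,\eps}^\pm|^2\vol>0$ off $\Sigma_{j,\eps}^\pm$, using that $\Zero$ of the model is exactly $\Sigma$. On $\overline W\cap(\overline{B_{j,1}^\pm}\cup\overline{L_1})$, a fixed compact set avoiding $\Zero(\alpha)=\{p_j^\pm\}$, the quantity $\alpha\wedge*_{\bar g}\alpha=|\alpha|^2\vol$ is bounded below. The perturbations are $O(\eps^3)$, so positivity persists for small $\eps$. Elsewhere the forms equal $(\alpha,*_{\bar g}\alpha)$, which is positive since $\Zero(\alpha)\cap\overline U_j=\{p_j^\pm\}$ and these points are excluded from $W$. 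Now apply \cref{thm:relative-metric-realization} with $Z=\Sigma_{j,\eps}^+\cup\Sigma_{j,\eps}^-$. For the prescribed metric $g_0$ I take $g_j^\pm$ on $B_{j,0}^\pm$ together with $\bar g$ on a collar of $\partial U_j$, where $\beta=*_{\bar g}\alpha$. This gives $g_{j,\eps}$ with $*\alpha_{j,\eps}=\beta$. Near the ellipses the triple is then exactly the rescaled Euclidean model in linear coordinates, so conditions (iii)--(v) of \cref{def:z2-harmonic} and nondegeneracy are inherited from \cref{thm:compact-branch-model}. I expect Step 2, namely the flux cancellation together with the uniform control of $T$, to be the only genuinely delicate point.
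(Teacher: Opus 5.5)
Your proposal is correct and follows the paper's proof essentially step by step. The steps are: cutoff gluing of the rescaled models on the annuli $A^\pm$; a $\star$-defect supported in $\supp d\chi^\pm\subset L_0$ whose integral vanishes by the cancellation $\mathrm{Flux}(f_{j,\eps}^-)=-\mathrm{Flux}(f_{j,\eps}^+)$, which does hold in the trivializations forced by requiring $f_{j,\eps}^\pm\approx+\hat P_j^\pm$ on $A^\pm$ (transport through the orientation-preserving isometry $Q$); correction by $T$; positivity for small $\eps$; and \cref{thm:relative-metric-realization}. Your deviations are cosmetic: subtracting $\eps^2a_0^\pm$, and interpolating $\star df_{j,\eps}^\pm$ with $\star_{\bar g}\alpha$ instead of taking $\star_{\bar g}\alpha_{j,\eps}$. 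One small fix: on $B_{j,0}^\pm$ define $\alpha_{j,\eps}$ directly as $df_{j,\eps}^\pm$, since a constant plus a section of the nontrivial bundle $\cI_{j,\eps}$ is not a section.
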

\begin{proof}
For sufficiently small $\eps$, the scaled ellipses satisfy
$\Sigma_{j,\eps}^\pm\subset B_{j,0}^\pm$.
Use the chosen trivializations to glue $\cI_{j,\eps}^\pm$ to the trivial line bundle on $W$.
Denote the resulting line bundle on $U_{j,\eps}$ by $\cI_{j,\eps}$.
This construction fixes a trivialization of $\cI_{j,\eps}$ on $W$.

On $A^\pm$, in the chosen trivialization of $\cI_{j,\eps}$, set
$\theta_{\eps}^\pm=f_{j,\eps}^\pm-\hat P_j^\pm$.
Define $\alpha_{j,\eps}$ to be $df_{j,\eps}^\pm$ on $B_{j,0}^\pm$, to be
$d\hat P_j^\pm+d(\chi^\pm\theta_{\eps}^\pm)$ on $A^\pm$, and to be $\alpha$
outside $B_{j,1}^+\cup B_{j,1}^-$.
The choices of $\chi^\pm$ make these definitions agree near the annular boundaries.
Each local expression is closed, so $\alpha_{j,\eps}$ is closed.

By \cref{thm:compact-branch-model}, $\Zero(df_{j,\eps}^\pm)=\Sigma_{j,\eps}^\pm$ in $B_{j,0}^\pm$.
On the annulus $A^\pm$, the asymptotic expansion \eqref{eq:asymptotic-expansion-model} implies that $\alpha_{j,\eps}$
converges uniformly to $d\hat P_j^\pm$ as $\eps\to 0$. Since $d\hat{P}_j^\pm$ has no zeros on $A^\pm$, the form $\alpha_{j,\eps}$ 
is nonvanishing on $A^\pm$ for sufficiently small $\eps$. 
Outside the outer balls $B_{j,1}^\pm$, $\alpha_{j,\eps}=\alpha$, whose only zeros in $\overline U_j$ lie in the
inner balls.  Hence
$\Zero(\alpha_{j,\eps})=\Sigma_{j,\eps}^+\cup\Sigma_{j,\eps}^-$, and
$\alpha_{j,\eps}=\alpha$ near $\partial U_j$.

Set $\widehat\beta_\eps=\star_{\bar g}\alpha_{j,\eps}$.
Put $\rho_\eps=d\widehat\beta_\eps$.
Away from $\supp d\chi^\pm$ in $A^\pm$, the cutoff $\chi^\pm$ is locally constant.
On $A^\pm\setminus\supp d\chi^\pm$, we have $\alpha_{j,\eps}=(1-\chi^\pm)d\hat P_j^\pm+\chi^\pm df_{j,\eps}^\pm$.
The forms $d\hat P_j^\pm$ and $df_{j,\eps}^\pm$ are harmonic for $g_j^\pm=\bar g$ on $A^\pm$.
On $B_{j,0}^\pm\setminus\Sigma_{j,\eps}^\pm$, we have $\alpha_{j,\eps}=df_{j,\eps}^\pm$.
Outside $B_{j,1}^+\cup B_{j,1}^-$, we have $\alpha_{j,\eps}=\alpha$.
Hence $\supp\rho_\eps\subset\supp d\chi^+\cup\supp d\chi^-\subset L_0$.
The model asymptotics give $\alpha_{j,\eps}\to\alpha$ smoothly on the fixed annuli.
Since $d\star_{\bar g}\alpha=0$, it follows that $\rho_\eps\to0$ smoothly.

Using the fixed trivialization of $\cI_{j,\eps}$ on $W$, regard $\rho_\eps$ as an ordinary
compactly supported $3$-form and extend it by zero to $U_j$.  Since $\rho_\eps$ is supported in
$A^+\cup A^-$, Stokes' theorem gives
\[
  \int_{U_j}\rho_\eps
  =
    \int_{\partial B_{j,1}^+}\widehat\beta_\eps
    -
    \int_{\partial B_{j,0}^+}\widehat\beta_\eps
+
    \int_{\partial B_{j,1}^-}\widehat\beta_\eps
    -
    \int_{\partial B_{j,0}^-}\widehat\beta_\eps.
\]
The integrals over $\partial B_{j,1}^\pm$ vanish because $\hat{P}_j^\pm$ are smooth homogeneous harmonic
polynomials.  The integrals over $\partial B_{j,0}^\pm$ are
$\mathrm{Flux}(f_{j,\eps}^\pm)$ and cancel by the choice of
trivializations.  Hence $\int_{U_j}\rho_\eps=0$.
Since $\supp\rho_\eps\subset L_0\subset L_1$, we also have
$\int_{L_1}\rho_\eps=\int_{U_j}\rho_\eps=0$.

Apply \cref{lem:compact-support-right-inverse} to $\rho_\eps$ with the fixed domains $L_0,L_1$.
The resulting form $T\rho_\eps$ satisfies $d(T\rho_\eps)=\rho_\eps$.
For fixed $j$, the constants in the lemma do not depend on $\eps$.
The estimates therefore give $T\rho_\eps\to0$ in $C^\infty$.
Extend $T\rho_\eps$ by zero to $W$.
Use the fixed trivialization of $\cI_{j,\eps}$ on $W$ to regard $T\rho_\eps$ as a form with values in $\cI_{j,\eps}$.
Extend $T\rho_\eps$ by zero to $U_{j,\eps}$.
Both extensions are smooth because $T\rho_\eps$ has compact support in $L_1\Subset W$.
Define $\beta_\eps=\widehat\beta_\eps-T\rho_\eps$.
Then $d\beta_\eps=0$ because $d\widehat\beta_\eps=\rho_\eps=d(T\rho_\eps)$.

The form $\alpha$ has no zeros on $W$.
Since $\overline{L_1}\subset W$ is compact, $|\alpha|_{\bar g}$ is bounded away from zero on $\overline{L_1}$.
The smooth
convergence of $\alpha_{j,\eps}$ to $\alpha$ and of $T\rho_\eps$ to zero gives
$\alpha_{j,\eps}\wedge\beta_\eps>0$ on $L_1$ for sufficiently small $\eps$.  
Since $T\rho_\eps=0$ outside $L_1$, we have
$\beta_\eps=\star_{\bar g}\alpha_{j,\eps}$ on $U_{j,\eps}\setminus L_1$.
The form $\alpha_{j,\eps}$ has no zeros on $U_{j,\eps}$, so
$\alpha_{j,\eps}\wedge\beta_\eps=|\alpha_{j,\eps}|^2_{\bar g}\vol_{\bar g}>0$ on $U_{j,\eps}\setminus L_1$.

Set $Z=\Sigma_{j,\eps}^+\cup\Sigma_{j,\eps}^-$.
Apply \cref{thm:relative-metric-realization} to $\alpha_{j,\eps}$ and $\beta_\eps$, retaining $\bar g$ near $Z$ and near $\partial U_j$.
Use $\bar g$ as the auxiliary metric $g_{\mathrm{aux}}$ in its proof.
The resulting smooth metric $g_{j,\eps}$ satisfies $\star_{g_{j,\eps}}\alpha_{j,\eps}=\beta_\eps$.
The construction returns $g_{\mathrm{aux}}$ wherever $\beta_\eps=\star_{g_{\mathrm{aux}}}\alpha_{j,\eps}$.
Thus $g_{j,\eps}=\bar g$ outside $L_1$.
In particular, $g_{j,\eps}=g_j^\pm$ on $B_{j,0}^\pm$.
Since $\alpha_{j,\eps}$ and $\beta_\eps$ are closed, $\alpha_{j,\eps}$ is $g_{j,\eps}$-harmonic.
On $B_{j,0}^\pm$, we have $\alpha_{j,\eps}=df_{j,\eps}^\pm$.
Thus $\alpha_{j,\eps}$ is nondegenerate by \cref{thm:compact-branch-model}.
\end{proof}

On $\overline{L_1}$, the pair $(\alpha_{j,\eps},\beta_\eps)$ converges smoothly to
$(\alpha,\star_{\bar g}\alpha)$.
The metric construction in \cref{thm:relative-metric-realization} depends smoothly on a positive pair.
Since $\alpha$ has no zeros on $\overline{L_1}$, we obtain $g_{j,\eps}\to\bar g$ smoothly on $\overline{L_1}$.
On $U_j\setminus L_1$, we have $g_{j,\eps}=\bar g$.
Thus $g_{j,\eps}\to\bar g$ in every $C^m$ norm on $U_j$.
Moreover, since $\Sigma_{j,\eps}^\pm$ is obtained from the fixed ellipse
$\Sigma_j^\pm$ by dilation by $\eps$, its diameter and length converge to zero as $\eps\to 0$, and it's contained in a ball centered at $p_j^\pm$ whose radius also converges to zero.
All these quantities are measured with respect to $\bar g$.

\begin{lemma}\label[lemma]{lem:paired-surgery-estimates}
The data in \cref{prop:quantitative-paired-surgery} can be chosen so that
\[
  \bigl\||\alpha_{j,\eps}|_{g_{j,\eps}}-|\alpha|_{\bar g}\bigr\|_{\mathcal{C}^{0,1/2}(\overline U_j)}
  \longrightarrow0
  \qquad\text{as }\eps\to0.
\]
Here the H\"older norm uses Euclidean metric. Moreover,
\[
  \int_{U_{j,\eps}}|\nabla^{g_{j,\eps}}\alpha_{j,\eps}|_{g_{j,\eps}}^2
    \,\vol_{g_{j,\eps}}
  \longrightarrow
  \int_{U_j}|\nabla^{\bar g}\alpha|_{\bar g}^2\,\vol_{\bar g}\qquad\text{as }\eps\to0.
\]
\end{lemma}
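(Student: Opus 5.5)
The plan is to split $\overline U_j$ into three fixed, $\eps$-independent regions: the inner balls $\overline{B_{j,0}^\pm}$; the annuli $A^\pm$ together with $L_1$; and the rest. On the last region $\alpha_{j,\eps}=\alpha$ and $g_{j,\eps}=\bar g$, so both differences vanish there. On the middle region, the discussion after \cref{prop:quantitative-paired-surgery} gives $\alpha_{j,\eps}\to\alpha$ and $g_{j,\eps}\to\bar g$ in every $\mathcal C^m$. Since $\alpha$ has no zeros on this compact set, $|\alpha_{j,\eps}|_{g_{j,\eps}}\to|\alpha|_{\bar g}$ in $\mathcal C^1$ there, and the integrands $|\nabla\alpha_{j,\eps}|^2\vol$ converge uniformly. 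So everything reduces to the inner balls, where $g_{j,\eps}=g_j^\pm$ is constant and $\alpha_{j,\eps}=df^\pm_{j,\eps}$. By the linear isometry $L_j^\pm$ we may work with the Euclidean metric; the change of coordinates only alters Hölder constants by a fixed factor.

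For the energy on $B_{j,0}^\pm$, write $r_0$ for its radius and use scaling. With $f_{j,\eps}=\eps^2 f_j(\cdot/\eps)$ we have $|\nabla df_{j,\eps}|(\mathbf v)=|\nabla df_j|(\mathbf v/\eps)$, so
\[
  \int_{B_{r_0}}|\nabla df_{j,\eps}|^2=\eps^3\int_{B_{r_0/\eps}}|\nabla df_j|^2 .
\]
The expansion \eqref{eq:asymptotic-expansion-model} gives $\nabla df_j=\nabla d\hat P_j+O(|\mathbf v|^{-3})$ at infinity. Hence the right side equals $\eps^3\bigl(|\nabla d\hat P_j|^2\,|B_{r_0/\eps}|+O(\log(1/\eps))+O(1)\bigr)$, where the $O(1)$ term comes from the finite energy of the model near $\Sigma$ (\cref{def:z2-harmonic}(iv)). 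This is $\int_{B_{r_0}}|\nabla d\hat P_j|^2+o(1)$. Since $\alpha=d\hat P_j^\pm$ on $B_{j,0}^\pm$, adding the three regions gives the convergence of the energies.

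For the Hölder estimate on $B_{r_0}$, set $u_\eps=|df_{j,\eps}|-|d\hat P_j|$. Scaling gives $u_\eps(\mathbf v)=\eps\,u_1(\mathbf v/\eps)$, where $u_1=|df_j|-|d\hat P_j|$ is continuous and $u_1=O(|\mathbf v|^{-2})$ at infinity. Thus $\|u_\eps\|_{\mathcal C^0}=O(\eps)$. The key input, and the main obstacle, is a uniform $\mathcal C^{0,1/2}$ bound on $|df_j|$ near the ellipse. I would take it from nondegeneracy: near $\Sigma$ one has $df_j\sim \operatorname{Re}(c\,z^{1/2}\,dz)$ with $c\neq0$ in normal coordinates $z$, so $|df_j|\sim|z|^{1/2}$ with bounded error in $\mathcal C^{0,1/2}$. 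Combined with the decay at infinity, this gives $[u_1]_{1/2}<\infty$ on all of $\R^3$, and also a global Lipschitz-type bound $|\nabla u_1|\le C|\mathbf v|^{-3}$ for large $|\mathbf v|$. Scaling then gives $[u_\eps]_{1/2}=\eps^{1/2}[u_1]_{1/2}\to0$.

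Finally, I would patch the three regions together, which is routine. The Hölder seminorm over $\overline U_j$ is controlled by the seminorms on the pieces plus the sup norm, because the pieces overlap on collars of fixed width. Adding the three estimates proves both claims.
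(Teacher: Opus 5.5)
Your argument is correct. Its global structure matches the paper's: fixed regions, smooth convergence away from $p_j^\pm$, and scaling on the inner balls. The difference lies in how you treat the inner balls.

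For the H\"older estimate, the paper splits $B_{j,0}^\pm$ at the $\eps$-dependent sphere $r_\pm=R_C\eps$. On the core it rescales the H\"older bound for $|df_j^\pm|$ on $B_{R_C}(0)$. On the outer annulus it combines three things: Kato's inequality, which gives a uniform Lipschitz bound; the sup bound $|w_\eps|\le C\eps^3r_\pm^{-2}=O(\eps)$; and the interpolation $[w]_{1/2}\le(2\|w\|_\infty[w]_1)^{1/2}$. It then patches across that sphere.

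You instead use that $|d\hat P_j^\pm|$ is homogeneous of degree one to write $w_\eps(\mathbf v)=\eps\,u_1(\mathbf v/\eps)$ exactly. This reduces everything to a single $\eps$-independent fact: $u_1=|df_j|-|d\hat P_j|$ is bounded and $\mathcal C^{0,1/2}$ on all of $\R^3$. That fact needs the same inputs as the paper's route, namely H\"older continuity of $|df_j|$ near $\Sigma$ and a gradient bound for $u_1$ at infinity. But you use them only once, at unit scale, so you avoid both the interpolation step and the patching at an $\eps$-dependent sphere.

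For the energy, the paper bounds $\|\nabla(\alpha_{j,\eps}-\alpha)\|_{L^2}=O(\eps^{3/2})$ on the annulus and then applies Cauchy--Schwarz. You expand the rescaled integral $\eps^3\int|\nabla df_j|^2$ directly and get the sharper error $O(\eps^3\log(1/\eps))$. Both work.

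Two minor points:
\begin{enumerate}
  \item After applying $L_j^\pm$, the coordinate ball $B_{j,0}^\pm$ becomes an ellipsoid. This changes nothing in your computation.
  \item As you defined them, $\overline{B_{j,0}^\pm}$ and $A^\pm$ share only a boundary sphere, so they do not have the fixed-width overlap your patching invokes. To get it, enlarge the inner balls into the thin shell outside $B_{j,0}^\pm$ where $\chi^\pm=1$ and which avoids $\overline{L_1}$; there you still have $\alpha_{j,\eps}=df_{j,\eps}^\pm$ and $g_{j,\eps}=g_j^\pm$. Alternatively, do as the paper does and shrink the inner boundary of the outer region.
\end{enumerate}
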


\begin{proof}
By the construction in \cref{prop:quantitative-paired-surgery}, $g_{j,\eps}=\bar g$ outside $L_1$.
In particular, $g_{j,\eps}=g_j^\pm$ on $B_{j,0}^\pm$.
Set $w_\eps:=|\alpha_{j,\eps}|_{g_{j,\eps}}-|\alpha|_{\bar g}$.

Choose $R_C>0$ so that $\Sigma_j^\pm\subset B_{R_C}(0)$ and the asymptotic
expansion \eqref{eq:asymptotic-expansion-model} for $f_j^\pm$ holds for
$|\mathbf v|\geq R_C$.  The norm $|df_j^\pm|$ is $1/2$-H\"older on $B_{R_C}(0)$,
and $df_j^\pm$ has finite energy there.  Write $r_\pm$ for the Euclidean
distance from $p_j^\pm$.  By the definition
$f_{j,\eps}^\pm(\mathbf v)=\eps^2f_j^\pm(\mathbf v/\eps)$, the norm
$|df_{j,\eps}^\pm|$ is $O(\eps)$ on $r_\pm\leq R_C\eps$, and its
$1/2$-H\"older seminorm on this ball is $O(\eps^{1/2})$.  
Since $|\alpha|_{\bar g}=|d\hat P_j^\pm|_{g_j^\pm}$ is Lipschitz on
$B_{j,0}^\pm$ and vanishes at $p_j^\pm$, its supremum on the same core is
$O(\eps)$ and its $1/2$-H\"older seminorm there is $O(\eps^{1/2})$.
Consequently, on $\{r_\pm\le R_C\eps\}$, we have $\|w_\eps\|_\infty=O(\eps)$ and
 $[w_\eps]_{\mathcal{C}^{0,1/2}}=O(\eps^{1/2})$.

On the annulus $\{\mathbf{v}\in B_{j,0}^\pm: R_C\eps\leq r_\pm\}$, differentiating
\eqref{eq:asymptotic-expansion-model} and rescaling gives
\begin{equation}\label{eq:paired-model-derivative-estimates}
  \begin{aligned}
    |df_{j,\eps}^\pm-d\hat P_j^\pm|_{g_j^\pm}
      \leq C\eps^3r_\pm^{-2}\text{ and }
    |\nabla^{g_j^\pm}(df_{j,\eps}^\pm-d\hat P_j^\pm)|_{g_j^\pm}
      \leq C\eps^3r_\pm^{-3}.
  \end{aligned}
\end{equation}

Since $d\hat P_j^\pm$ is linear and $g_j^\pm$ is constant,
$\nabla^{g_j^\pm}d\hat P_j^\pm$ is constant.  The derivative estimate above
and Kato's inequality therefore give
\[
    \bigl|d\bigl(|df_{j,\eps}^\pm|_{g_j^\pm}\bigr)\bigr|_{g_j^\pm}
    \leq |\nabla^{g_j^\pm}df_{j,\eps}^\pm|_{g_j^\pm}
    \leq C(1+\eps^3r_\pm^{-3})\leq C(1+R_C^{-3}),
\]
where $C$ is independent of $\eps$ and the last inequality uses
$r_\pm\geq R_C\eps$.  Any two points in this annulus can be joined within
it by a path whose Euclidean length is at most a fixed multiple of their
Euclidean distance.  Integrating the preceding bound along such paths,
and using the equivalence of $g_j^\pm$ and $g_{\Euc}$, yields $\bigl||df_{j,\eps}^\pm|_{g_j^\pm}(p)
       -|df_{j,\eps}^\pm|_{g_j^\pm}(q)\bigr|
  \leq L|p-q|_{g_{\mathrm{Euc}}}$
for all $p,q$ in this annulus, with $L$ independent of $\eps$.
The same is true of $|\alpha|_{\bar{g}}$, so
$[w_\eps]_{\mathcal{C}^{0,1}}$ is uniformly bounded on this annulus as $\eps\to 0$.
Moreover, the estimate for $df_{j,\eps}^\pm-d\hat P_j^\pm$ gives
$|w_\eps|
    =\bigl||df_{j,\eps}^\pm|_{g_j^\pm}
           -|d\hat P_j^\pm|_{g_j^\pm}\bigr|
    \leq |df_{j,\eps}^\pm-d\hat P_j^\pm|_{g_j^\pm}
    \leq C\eps^3r_\pm^{-2}.$

Since $r_\pm\geq R_C\eps$ on this annulus, it follows that
$\|w_\eps\|_\infty=O(\eps^3(R_C\eps)^{-2})=O(\eps)$.  By the interpolation inequality for H\"older norms, we have
\[
  [w_\eps]_{\mathcal{C}^{0,1/2}}
  \leq\bigl(2\|w_\eps\|_\infty[w_\eps]_{\mathcal{C}^{0,1}}\bigr)^{1/2}
  =O(\eps^{1/2})
\]
on this annulus.

We now combine the estimates on $\{r_\pm\le R_C\eps\}$ and the annulus $\{\mathbf{v}\in B_{j,0}^\pm:r_\pm\ge R_C\eps\}$
 into an estimate on $B_{j,0}^\pm$.
For two points in the ball $B_{j,0}^\pm$, split the line
segment joining them at its intersections with $\{r_\pm=R_C\eps\}$.
There are at most three pieces, and applying the core and annular
estimates to their endpoints gives $\|w_\eps\|_{\mathcal{C}^{0,1/2}(B_{j,0}^\pm)}=O(\eps^{1/2})$.

On the complement of slightly smaller balls contained in $B_{j,0}^\pm$,
$\alpha_{j,\eps}\to\alpha$ and $g_{j,\eps}\to\bar g$ smoothly, and $\alpha$ is nonvanishing.  Thus $w_\eps\to0$ smoothly on this complement.
Combining this with the preceding estimate on $B_{j,0}^\pm$ gives
$\|w_\eps\|_{\mathcal{C}^{0,1/2}(\overline U_j)}\to0$ as $\eps\to0$.

It remains to compare the energies.
Differentiating
$f_{j,\eps}^\pm(\mathbf v)=\eps^2f_j^\pm(\mathbf v/\eps)$
twice and rescaling the integral gives
\[
  \int_{\{r_\pm<R_C\eps\}\setminus\Sigma_{j,\eps}^\pm}
    |\nabla^{g_j^\pm}\alpha_{j,\eps}|_{g_j^\pm}^2
    \,\vol_{g_j^\pm}
  =
  \eps^3
  \int_{B_{R_C}(0)\setminus\Sigma_j^\pm}
    |\nabla^{g_j^\pm}df_j^\pm|_{g_j^\pm}^2
    \,\vol_{g_j^\pm}
  =O(\eps^3),
\]
where the last estimate uses the finite energy of the model.
The energy of $\alpha$ on the same ball is $O(\eps^3)$,
since its covariant derivative is constant.

On the annulus $\{\mathbf v\in B_{j,0}^\pm:r_\pm\ge R_C\eps\}$,
use the fixed trivialization to compare the forms.
Since $\alpha_{j,\eps}=df_{j,\eps}^\pm$ and
$\alpha=d\hat P_j^\pm$ there,
\eqref{eq:paired-model-derivative-estimates} gives
$|\nabla^{g_j^\pm}(\alpha_{j,\eps}-\alpha)|_{g_j^\pm}
\leq C\eps^3r_\pm^{-3}$.
Squaring this bound and integrating in polar coordinates gives
\[
  \|\nabla^{g_j^\pm}(\alpha_{j,\eps}-\alpha)\|_{L^2(\{\mathbf v\in B_{j,0}^\pm:r_\pm\ge R_C\eps\})}^2
  \leq C\eps^6\int_{R_C\eps}^{\infty}r^{-4}\,dr
  =O(\eps^3).
\]
On this annulus, expanding the squared norm and applying
the Cauchy--Schwarz inequality gives
\[
  \begin{aligned}
  &\left|
    \|\nabla^{g_j^\pm}\alpha_{j,\eps}\|_{L^2}^2
    -\|\nabla^{g_j^\pm}\alpha\|_{L^2}^2
  \right|\\
  \leq&
    2\|\nabla^{g_j^\pm}\alpha\|_{L^2}
      \|\nabla^{g_j^\pm}(\alpha_{j,\eps}-\alpha)\|_{L^2}
    +\|\nabla^{g_j^\pm}(\alpha_{j,\eps}-\alpha)\|_{L^2}^2
    =O(\eps^{3/2}).
  \end{aligned}
\]
The form $\alpha$, the metrics $g_j^\pm$, and the balls $B_{j,0}^\pm$ are fixed as $\eps\to0$.
Thus
$\|\nabla^{g_j^\pm}\alpha\|_{L^2}
\leq\|\nabla^{g_j^\pm}\alpha\|_{L^2(B_{j,0}^\pm)}=O(1)$.
Together with $\|\nabla^{g_j^\pm}(\alpha_{j,\eps}-\alpha)\|_{L^2}=O(\eps^{3/2})$, this proves the last estimate.
Since $g_{j,\eps}=\bar g=g_j^\pm$ on $B_{j,0}^\pm$,
adding the estimates on $\{r_\pm<R_C\eps\}$
proves energy convergence on $B_{j,0}^\pm$.

On $\overline W$, the forms $\alpha_{j,\eps}$ and metrics $g_{j,\eps}$
converge smoothly to $\alpha$ and $\bar g$, respectively.
Hence the covariant derivatives, tensor norms, and volume forms
converge uniformly there.
Integration over the region $W$ gives
energy convergence on $W$.
Adding the comparisons on $B_{j,0}^+$, $B_{j,0}^-$, and $W$
proves the assertion.
\end{proof}

\subsection{The diagonal countable insertion}

We apply \cref{prop:quantitative-paired-surgery} in every interval $I_j$,
choosing the parameters so that the resulting metric extends smoothly across $K_0$.
Recall that $\gamma_j$ is the segment joining $p_j^-$ to $p_j^+$.
Set $e_j=\dist_{g_{\Euc}}(\gamma_j,K_0)>0$.
The points $p_j^\pm$ do not depend on the amplitudes $A_j$ in \cref{lem:straight-cantor-profile}.
We may therefore fix numbers $0<\sigma_j\leq 2^{-j}(e_j/2)^{2j}$ before decreasing the amplitudes.

Decrease each $A_j$ so that $A_j\|\varphi\|_{\mathcal{C}^0}<\sigma_j/4$.
These decreases preserve the decay required in
\cref{lem:straight-cantor-profile}.
Construct $F$ and $\bar g$ with these amplitudes as in
\cref{prop:euclidean-ended-background}, retaining $g_0$ on a fixed neighborhood of
$[-1,1]\times\{(0,0)\}$.
Keep $F$ and $\bar g$ fixed from now on, and write $\alpha=dF$.

On $\gamma_j$, $\alpha=h(t)\,dt$ and $b,c\geq1$, so
$\sup_{\gamma_j}|\alpha|_{\bar g}
\leq A_j\|\varphi\|_{\mathcal{C}^0}<\sigma_j/4$.
Choose a smooth dumbbell domain
$U_j\Subset I_j\times\R^2$ around $\gamma_j$, contained in
its $r_j$-neighborhood, with
$0<r_j<\min\{\sigma_j,e_j/2\}$.
Taking $U_j$ sufficiently thin gives
$\sup_{U_j}|\alpha|_{\bar g}<\sigma_j/2$.
Since $|\nabla^{\bar g}\alpha|_{\bar g}^2$ is bounded near $\gamma_j$,
shrinking $U_j$ further gives
\begin{equation}\label{eq:diagonal-background-energy}
  \vol_{\bar g}(U_j)
  +\int_{U_j}|\nabla^{\bar g}\alpha|_{\bar g}^2\,\vol_{\bar g}
  <\frac{\sigma_j}{2}.
\end{equation}

The domains $U_j$ are pairwise disjoint because they are contained in
the pairwise disjoint regions $I_j\times\R^2$.
Set $d_j=\dist_{g_{\Euc}}(U_j,K_0)$.
The inequality $d_j\geq e_j-r_j>e_j/2$ implies
\begin{equation}\label{eq:sigma-flatness}
  d_j\geq\frac{e_j}{2},
  \qquad
  \sigma_j\leq2^{-j}d_j^{2j}.
\end{equation}

Choose the endpoint balls $B_j^\pm$ inside $U_j$ as in
\cref{prop:quantitative-paired-surgery}.
For each $j$, fix $B_{j,0}^\pm,B_{j,1}^\pm,\chi^\pm,L_0,L_1$ and the operator $T$ in
\cref{lem:compact-support-right-inverse} before choosing $\eps_j$.
For $\eps_j>0$ sufficiently small, denote the resulting metric, line bundle,
and form by $g_j$, $\cI_j$, and $\alpha_j$, respectively.
In the prescribed trivialization near $\partial U_j$,
$g_j=\bar g$ and $\alpha_j=\alpha$.
In the following, we use Euclidean coordinates for tensor $\mathcal{C}^m$ norms and Euclidean distance for H\"older norms.

\begin{lemma}\label[lemma]{lem:strong-diagonal-choice}
The parameters $\eps_j$ can be chosen so that, for every $j\geq1$,
\begin{align}
  \|g_j-\bar g\|_{\mathcal{C}^j(U_j)}
    &\leq\sigma_j,
    \label{eq:diagonal-metric}\\
  \bigl\||\alpha_j|_{g_j}-|\alpha|_{\bar g}\bigr\|_{\mathcal{C}^{0,1/2}(\overline U_j)}
    &\leq\frac{\sigma_j}{2},
    \label{eq:diagonal-norm-difference}\\
  \vol_{g_j}(U_j)+
  \int_{U_{j}}
    |\nabla^{g_j}\alpha_j|_{g_j}^2\,\vol_{g_j}
    &\leq\sigma_j.
    \label{eq:diagonal-energy}
\end{align}
\end{lemma}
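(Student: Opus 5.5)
The plan is to fix $j$ and treat each inequality as a limit statement as $\eps\to0$. The three required bounds are then strict consequences of convergence results already established. Since each of \eqref{eq:diagonal-metric}--\eqref{eq:diagonal-energy} involves only the data on $U_j$, and the domains $U_j$, balls, cutoffs, $L_0,L_1$ and $T$ were fixed before $\eps_j$, the choices for different $j$ are independent. We may therefore choose $\eps_j$ one index at a time, with no compatibility issue.

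For \eqref{eq:diagonal-metric}, we use the remark after \cref{prop:quantitative-paired-surgery}. There $g_{j,\eps}=\bar g$ outside $L_1$, and on $\overline{L_1}$ the pair $(\alpha_{j,\eps},\beta_\eps)$ converges smoothly to $(\alpha,\star_{\bar g}\alpha)$, with $\alpha$ nonvanishing. Because the construction in \cref{thm:relative-metric-realization} depends smoothly on the positive pair, $g_{j,\eps}\to\bar g$ in every $\mathcal C^m$ norm on $U_j$. In particular this holds in $\mathcal C^j$ with Euclidean coordinates, since Euclidean and $\bar g$-coordinate norms are equivalent on the compact set $\overline{U_j}$. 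Hence $\|g_{j,\eps}-\bar g\|_{\mathcal C^j(U_j)}\le\sigma_j$ for small $\eps$.

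For \eqref{eq:diagonal-norm-difference}, we invoke the first assertion of \cref{lem:paired-surgery-estimates} directly: the $\mathcal C^{0,1/2}$ norm tends to zero, so it drops below $\sigma_j/2$ for small $\eps$. For \eqref{eq:diagonal-energy}, we combine the second assertion of \cref{lem:paired-surgery-estimates} (energy convergence) with $\vol_{g_{j,\eps}}(U_j)\to\vol_{\bar g}(U_j)$. The volume convergence follows from the uniform convergence $g_{j,\eps}\to\bar g$ just obtained. The limit of the left-hand side is then $\vol_{\bar g}(U_j)+\int_{U_j}|\nabla^{\bar g}\alpha|^2_{\bar g}\vol_{\bar g}$, which is strictly less than $\sigma_j/2$ by \eqref{eq:diagonal-background-energy}. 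So for small $\eps$ the sum is below $\sigma_j$.

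Taking $\eps_j$ small enough that all three hold, and also small enough for \cref{prop:quantitative-paired-surgery} to apply, finishes the proof. The only point requiring care is the uniformity behind the metric convergence in \eqref{eq:diagonal-metric}. This means checking that the smooth dependence in \cref{thm:relative-metric-realization} gives $\mathcal C^j$ closeness on the fixed compact set $\overline{L_1}$. It does, because $\alpha$ is bounded away from zero there, so $L=|a|^{-1}$, $v$, and $\lambda$ are smooth functions of the $j$-jets of $(\alpha,\beta_\eps)$ on a compact set. The other steps are direct citations.
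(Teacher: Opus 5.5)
Your proposal is correct and follows the paper's proof. For fixed $j$, the metric convergence noted after \cref{prop:quantitative-paired-surgery} and the estimates of \cref{lem:paired-surgery-estimates} send the first two left-hand sides to zero, while the third tends to a limit below $\sigma_j/2$ by \eqref{eq:diagonal-background-energy}; your remark that $\vol_{g_{j,\eps}}(U_j)\to\vol_{\bar g}(U_j)$ merely makes explicit a step the paper leaves implicit.
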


\begin{proof}
For fixed $j$, the metric convergence following
\cref{prop:quantitative-paired-surgery} and the estimates in
\cref{lem:paired-surgery-estimates} show that the first two left-hand sides
tend to zero, while the last tends to a value less than $\sigma_j/2$
by \eqref{eq:diagonal-background-energy}.
Thus all three bounds hold for sufficiently small $\eps_j$.
\end{proof}

Fix $\vec{\eps}=(\eps_j)_{j\geq1}$ given by \cref{lem:strong-diagonal-choice}.
Since $|\alpha|_{\bar g}$ is Lipschitz on a fixed compact neighborhood
containing all $U_j$ and $\sup_{U_j}|\alpha|_{\bar g}<\sigma_j/2$,
\eqref{eq:diagonal-norm-difference} gives
\begin{equation}\label{eq:diagonal-norm-bounds}
  \sup_{U_j}|\alpha_j|_{g_j}\leq\sigma_j,
  \qquad
  [|\alpha_j|_{g_j}]_{\mathcal{C}^{0,1/2}(\overline U_j)}\leq C,
\end{equation}
where $C$ is independent of $j$, using $\sigma_j\leq1$.

All geometric quantities below use the Euclidean metric.
The choice of $U_j$ gives $\diam U_j\leq\ell_j+2\sigma_j$.
Since $\Sigma_{j,\eps_j}^\pm\subset U_j$ and an ellipse has length at most
$\pi$ times its diameter, we have
\begin{equation}\label{eq:diagonal-geometry}
  \begin{aligned}
    \diam\Sigma_{j,\eps_j}^\pm
      \leq\ell_j+2\sigma_j\text{ and }
    \operatorname{length}(\Sigma_{j,\eps_j}^\pm)
      \leq\pi(\ell_j+2\sigma_j).
  \end{aligned}
\end{equation}
Since $\sum_j\ell_j\leq2$ and $\sum_j\sigma_j<\infty$,
the lengths of ellipses are summable.

Define $g=g_j$ on $U_j$ and $g=\bar g$ outside $\bigcup_jU_j$.
$g_j$ agrees with $\bar{g}$ on a collar of $\partial U_j$, so $g$ is smooth on $\R^3\setminus K_0$.
To prove smoothness at $K_0$, fix a multiindex $\mu$ and an integer $N\geq0$.
Since $\gamma_j\subset U_j$, we have $0<d_j\leq e_j\leq\ell_j\to0$.
For $|\mu|\le j$, \eqref{eq:sigma-flatness} and
\eqref{eq:diagonal-metric} imply
\[
  \sup_{x\in U_j}
  \frac{|\partial^\mu (g-\bar{g})(x)|}{\dist_{g_{\Euc}}(x,K_0)^N}
  \leq 2^{-j}d_j^{2j-N}\longrightarrow0.
\]
On $\R^3\setminus K_0$, these derivatives vanish outside the domains.
Each of the finitely many omitted domains has positive distance from
$K_0$.
Thus $\partial^\mu (g-\bar{g})$ extends continuously by zero to $K_0$.
At each $p\in K_0$, the estimate with $N=1$ gives
$\partial^\mu (g-\bar{g})(p+v)=o(|v|)$ as $v\to0$.
The extended derivative is therefore differentiable at $p$, with
derivative zero.
Thus $g$ is smooth and every derivative of $g-\bar g$ vanishes on $K_0$.

Set $Z=K_0\cup\bigcup_{j\geq1}
  (\Sigma_{j,\eps_j}^+\cup\Sigma_{j,\eps_j}^-)$.
Since $\cI_j$ has a trivialization on a collar of $\partial U_j$, 
we glue these bundles to the trivial real line bundle outside the domains $U_j$, obtaining
a smooth flat real line bundle $\cI\to\R^3\setminus Z$.
The local forms $\alpha_j$ agree with $\alpha$ on the collars and hence
define a smooth $\cI$-valued $g$-harmonic $1$-form $\alpha_{\vec{\eps}}$ on
$\R^3\setminus Z$.

The norms $|\alpha_{j}|_{g_j}$ extend continuously by zero across
$\Sigma_{j,\eps_j}^+\cup\Sigma_{j,\eps_j}^-$.
Since $\sup_{U_j}|\alpha_j|_{g_j}\leq\sigma_j\to0$ by
\eqref{eq:diagonal-norm-bounds} and $|\alpha|_{\bar g}$ vanishes on $K_0$,
$|\alpha_{\vec{\eps}}|_g$ also extends continuously by zero across $K_0$.
The proof of \cref{prop:quantitative-paired-surgery} gives
$\Zero(\alpha_j)=\Sigma_{j,\eps_j}^+\cup\Sigma_{j,\eps_j}^-$.
Since every $p_j^\pm$ lies in $U_j$, the only zeros of $\alpha$
outside the domains lie on $K_0$.
Hence $\Zero(\alpha_{\vec{\eps}})=Z$.

\subsection{Verification of the Cantor model}

We verify the analytic conditions (iv) and (v) in \cref{def:z2-harmonic} and compute
the frequency at every point of $Z$.
Throughout this subsection, $B_r(p)$ denotes the geodesic ball for $g$.
H\"older seminorms continue to use Euclidean distance.

We first record a consequence of \eqref{eq:sigma-flatness}.
For $p\in K_0$, set $J_p(r)=\{j:U_j\cap B_r(p)\ne\varnothing\}$.
The metrics $g$ and $g_{\Euc}$ are uniformly equivalent near $K_0$.
Thus $d_j\leq C_1r$ for $j\in J_p(r)$, with $C_1$ independent of $p$.

Fix an integer $N\geq1$.
Each of the finitely many domains with $2j<N$ has positive distance from $K_0$.
For sufficiently small $r$, every $j\in J_p(r)$ therefore satisfies $2j\geq N$.
Taking $C_1r<1$, we obtain
\begin{equation}\label{eq:cantor-tail-sum}
  \sum_{j\in J_p(r)}\sigma_j
  \leq\sum_{j\in J_p(r)}2^{-j}(C_1r)^{2j}
  \leq(C_1r)^N.
\end{equation}
For each fixed $N$, there is $r_N>0$ such that this estimate holds
for every $p\in K_0$ and $0<r<r_N$.

\begin{lemma}\label[lemma]{lem:global-holder-patching}
The norm $|\alpha_{\vec{\eps}}|_g$ is globally H\"older continuous with exponent $1/2$.
The covariant derivative satisfies $\nabla^g \alpha_{\vec{\eps}}\in L^2(\R^3\setminus Z,g)$.
There are constants $C,r_0>0$ such that for  $p\in Z$ and $0<r<r_0$
\begin{align}\int_{B_r(p)}|\alpha_{\vec{\eps}}|_{g}^2\,\vol_g\leq Cr^4.\label{eq:C4}\end{align}
For $p\in K_0$, there is a stronger bound
\begin{align}
  \int_{B_r(p)}|\alpha_{\vec{\eps}}|_{g}^2\,\vol_g\leq Cr^5.\label{eq:C5}
\end{align}
\end{lemma}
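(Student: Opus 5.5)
The plan is to patch the per-domain estimates of \cref{lem:strong-diagonal-choice} together with the fixed background $(\bar g,\alpha)$ outside $\bigcup_jU_j$, using the tail bound \eqref{eq:cantor-tail-sum} near $K_0$. Write $w=|\alpha_{\vec{\eps}}|_g$; it equals $|\alpha_j|_{g_j}$ on $U_j$ and $|\alpha|_{\bar g}$ elsewhere.

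\textbf{H\"older continuity.} Outside $\bigcup_jU_j$, $w=|\alpha|_{\bar g}$, which is Lipschitz on bounded sets (away from the Euclidean end $w=1$). On each $\overline U_j$, \eqref{eq:diagonal-norm-bounds} gives a $1/2$-H\"older seminorm bounded by $C$ uniformly in $j$. For two points $x,y$, I would split the Euclidean segment $[x,y]$ at its entry and exit points of the domains it crosses. If it meets only one domain, or none, the estimate follows piecewise from the triangle inequality. If it crosses several domains, I would not sum over the pieces. Instead I would use the first and last boundary points $x',y'$ on $\partial U_j$, where $w=|\alpha|_{\bar g}$. The inner portion $|w(x')-w(y')|$ is then controlled by the Lipschitz bound for $|\alpha|_{\bar g}$. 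The two end pieces lie within single domains (or in the complement), so they are controlled by the uniform seminorm. This gives $|w(x)-w(y)|\le C|x-y|^{1/2}$ for $|x-y|\le1$, and boundedness handles $|x-y|>1$.

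\textbf{Energy.} The $L^2$ bound for $\nabla^g\alpha_{\vec{\eps}}$ is immediate: $\nabla^{\bar g}\alpha$ is locally bounded and vanishes where $\alpha=dt$, $g=g_{\Euc}$, while \eqref{eq:diagonal-energy} gives a contribution of at most $\sigma_j$ from each $U_j$, and $\sum_j\sigma_j<\infty$.

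\textbf{Growth estimates.} For $p\in K_0$, I would split $B_r(p)$ into the part outside the domains and the domains $U_j$ with $j\in J_p(r)$. Outside the domains, $F=F_0$ near $K_0$, and $|dF_0|^2=h^2+4b^2x^2+4c^2y^2$. Since $h$ vanishes to infinite order on $K$, we get $|\alpha|^2\le C(r^{2}\cdot r^{0})$, which needs care. Directly, $|h(t)|\le C_N\dist(t,K)^N\le C r^N$, so $|\alpha|^2\le Cr^2$ on $B_r(p)$ and the integral is at most $Cr^5$. On the domains, \eqref{eq:diagonal-norm-bounds} and \eqref{eq:diagonal-energy} give $\int_{U_j}w^2\le\sigma_j^2\vol(U_j)\le\sigma_j^3$. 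Summing with \eqref{eq:cantor-tail-sum} for $N=5$ yields \eqref{eq:C5}.

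For $p$ on an ellipse $\Sigma_{j,\eps_j}^\pm$, the H\"older bound and $w(p)=0$ give $w\le Cr^{1/2}$ on $B_r(p)$, hence $\int_{B_r(p)} w^2\le Cr\cdot r^3=Cr^4$, which is \eqref{eq:C4}. The same argument applies to $p\in K_0$. The uniformity of $C$ and $r_0$ comes from the global H\"older constant together with the uniform equivalence of $g$ and $g_{\Euc}$ on a compact set containing $Z$.

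\textbf{Main obstacle.} I expect the delicate step to be the H\"older patching across infinitely many domains accumulating at $K_0$. The first/last boundary-point trick above is how I would handle it; its key input is that the seminorm bound in \eqref{eq:diagonal-norm-bounds} is uniform in $j$.
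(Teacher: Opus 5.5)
Your proof is correct. The energy bound and the $r^4$ bound are exactly the paper's arguments; the differences lie in the H\"older patching and in the $r^5$ bound.

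\textbf{H\"older continuity.} The paper does not split segments. It extends each $w_{\eps_j}=|\alpha_j|_{g_j}-|\alpha|_{\bar g}$ by zero outside $U_j$, which is legitimate because $w_{\eps_j}$ vanishes on a collar of $\partial U_j$. It then writes $|\alpha_{\vec\eps}|_g=|\alpha|_{\bar g}+\sum_j w_{\eps_j}$ and uses \eqref{eq:diagonal-norm-difference} to get $\sum_j\|w_{\eps_j}\|_{\mathcal C^{0,1/2}(\R^3)}\le\frac12\sum_j\sigma_j<\infty$. Your first/last boundary-point argument is the same mechanism made explicit. It uses only three inputs:
\begin{enumerate}
  \item the uniform-in-$j$ seminorm bound \eqref{eq:diagonal-norm-bounds};
  \item the identity $w=|\alpha|_{\bar g}$ on $\partial U_j$;
  \item the pairwise disjointness of the closures $\overline U_j\subset I_j\times\R^2$.
\end{enumerate}
So it needs no summability, whereas the paper's version is a one-line series argument. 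The ``main obstacle'' you flag is therefore milder than you suggest.

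\textbf{The $r^5$ bound.} The paper proves the pointwise comparison \eqref{eq:cantor-norm-comparison}, bounding $\bigl||\alpha_{\vec\eps}|_g-|\alpha|_{\bar g}\bigr|$ on $B_r(p)$ by $\frac12\sum_{j\in J_p(r)}\sigma_j\le Cr^N$, and combines it with $|\alpha|_{\bar g}\le Cr$. You instead split the integral and bound each domain's contribution by $\sigma_j^3$, using the volume bound in \eqref{eq:diagonal-energy}. Both routes work. The paper's pointwise estimate is reused in \cref{prop:cantor-frequency-uniqueness} for $H_p$ and for the convergence of $|\alpha_{r,p}|_{g_{p,r}}$, so it is worth recording. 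It follows at once from your ingredients, since $\sup_{U_j}w\le\sigma_j\le(C_1r)^N$ for $j\in J_p(r)$.

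\textbf{Two small slips.} First, your formula for $|dF_0|^2$ is wrong. With $g_0=\operatorname{diag}(bc,c,b)$ in coordinates $(t,x,y)$, one has $|dF_0|_{g_0}^2=h^2/(bc)+4x^2/c+4y^2/b$. This is harmless, since you only need a bound of the form $C(h^2+x^2+y^2)$. Second, the infinite-order vanishing of $h$ is unnecessary. Because $|\alpha|_{\bar g}$ is Lipschitz and vanishes at $p\in K_0$, one already has $|\alpha|_{\bar g}\le Cr$ on $B_r(p)$.
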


\begin{proof}
First we prove the H\"older continuity. For each $j$, extend the function $w_{\eps_j}$ from the proof of
\cref{lem:paired-surgery-estimates} by zero outside $U_j$.
Thus \eqref{eq:diagonal-norm-difference} gives
\[
  \sum_j\|w_{\eps_j}\|_{\mathcal{C}^{0,1/2}(\R^3)}
  \leq\frac12\sum_j\sigma_j<\infty.
\]
Consequently $|\alpha_{\vec{\eps}}|_g-|\alpha|_{\bar g}=\sum_jw_{\eps_j}$
is globally H\"older continuous with exponent $1/2$.
Since $|\alpha|_{\bar g}$ is bounded and Lipschitz, $|\alpha_{\vec{\eps}}|_g=|\alpha|_{\bar g}+\sum_jw_{\eps_j}$ is also globally H\"older continuous with exponent $1/2$.

Next we prove the $L^2$-integrability of the covariant derivative. \eqref{eq:diagonal-energy} gives
\[
  \int_{\R^3\setminus Z}|\nabla^g\alpha_{\vec{\eps}}|_g^2\,\vol_g
  \leq
  \int_{\R^3}|\nabla^{\bar g}\alpha|_{\bar g}^2\,\vol_{\bar g}
  +\sum_j\sigma_j<\infty.
\]

Finally, we prove the growth estimates.
For $z\in Z$, the H\"older bound and vanishing of $|\alpha_{\vec{\eps}}|_g$ on $Z$ give
$\sup_{B_r(z)}|\alpha_{\vec{\eps}}|_g\leq Cr^{1/2}$.
Together with $\vol_g(B_r(z))\leq Cr^3$, uniform near the compact set $Z$
for small $r$, this proves \eqref{eq:C4}.

For $p\in K_0$, the Lipschitz bound for $|\alpha|_{\bar g}$ gives
$\sup_{B_r(p)}|\alpha|_{\bar g}\leq Cr$.
Since $|\alpha_{\vec{\eps}}|_g-|\alpha|_{\bar g}$ vanishes outside the domains,
\eqref{eq:diagonal-norm-difference} and \eqref{eq:cantor-tail-sum} imply
\begin{equation}\label{eq:cantor-norm-comparison}
  \bigl\||\alpha_{\vec{\eps}}|_g-|\alpha|_{\bar g}\bigr\|_{L^\infty(B_r(p))}
  \leq\frac12\sum_{j\in J_p(r)}\sigma_j
  \leq \frac{1}{2}C_1^Nr^N
\end{equation}
for every integer $N\geq1$ and sufficiently small $r$.
Since $(C_1r)^N\leq C_1r$ for $N\geq1$ and $C_1r<1$, these estimates give
$\sup_{B_r(p)}|\alpha_{\vec{\eps}}|_g\leq Cr$ and hence proves \eqref{eq:C5}.
\end{proof}

We next identify the tangent form at a point in $K_0$.
For $p=(q,0,0)\in K_0$, use the affine coordinates
$(X,Y,T)=(\sqrt{B_*}x,\sqrt{B_*}y,B_*(t-q))$, in which $g(p)$ is Euclidean,
and define $\eta_p=\frac{2}{B_*}(X\,dX-Y\,dY)$.
Let $\delta_{p,r}(X,Y,T)=(rX,rY,rT)$ and set
\begin{equation}\label{eq:cantor-rescaling}
  g_{p,r}=r^{-2}\delta_{p,r}^*g,
  \qquad
  \alpha_{r,p}=r^{-2}\delta_{p,r}^*\alpha_{\vec{\eps}}.
\end{equation}
The line bundle is also pulled back under $\delta_{p,r}$.

\begin{proposition}
  \label[proposition]{prop:cantor-frequency-uniqueness}
At every $p\in K_0$, the frequency of $\alpha_{\vec{\eps}}$ is $\Freq_p(0)=1$. In the trivialization in which
$\alpha_{\vec{\eps}}=dF$ outside $U_j$'s, $\alpha_{r,p}$ 
converge to $\eta_p$ as $r\to0$.
On the other hand, at every point of $\Sigma_{j,\eps_j}^\pm$, the frequency of
$\alpha_{\vec{\eps}}$ is $1/2$.
\end{proposition}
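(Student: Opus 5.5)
The plan splits into two independent parts: points of $K_0$, and points on the ellipses.

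\emph{Points of $K_0$.} Fix $p=(q,0,0)\in K_0$ and work on the ball $B_r(p)$. We compare $\alpha_{\vec\eps}$ there with the background form $\alpha=dF=dF_0$, using the trivialization in which $\alpha_{\vec\eps}=dF$ outside the $U_j$. Near $p$, $F_0=H(t)+x^2-y^2$, where $H$ and all its derivatives vanish at $q\in K$ by \cref{lem:straight-cantor-profile}. Also $b(q)=c(q)=B_*$, and $g_0$ is smooth with $g_0(p)$ Euclidean in the coordinates $(X,Y,T)$.

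Rescaling as in \eqref{eq:cantor-rescaling}, the contribution of $H$ is $r^{-2}\,d(H(q+rT/B_*))=O(r^\infty)$ in $\mathcal C^\infty_{\mathrm{loc}}$. The quadratic part gives exactly $\eta_p$. The rescaled metrics $r^{-2}\delta_{p,r}^*g$ converge smoothly to the Euclidean metric, because $g-\bar g$ is flat on $K_0$ and $\bar g=g_0$ near $K_0$.

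Inside the domains $U_j$ with $j\in J_p(r)$, the difference $\alpha_{\vec\eps}-\alpha$ is not small pointwise near the ellipses. However, \eqref{eq:diagonal-norm-bounds} and \eqref{eq:cantor-tail-sum} control it. Its sup norm is at most $\sigma_j\le (C_1r)^N$. Its energy and the volume of these domains are at most $\sum_{j\in J_p(r)}\sigma_j\le (C_1r)^N$ by \eqref{eq:diagonal-energy}. After rescaling by $r^{-2}$ and $r^{-3}$, these quantities still tend to zero for $N$ large.

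It follows that $\alpha_{r,p}\to\eta_p$ in $L^2_{\mathrm{loc}}$ and uniformly. On the complement of the rescaled domains the convergence is smooth. The convergence of the line bundle is fine in our trivialization, since the rescaled domains shrink into sets of vanishing volume accumulating only on the $T$-axis.

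With this, the frequency follows directly. Write
\[
  H_p(r)=r^{4}\bigl(H_{\eta_p}(1)+o(1)\bigr),\qquad
  D_p(r)=r^{3}\bigl(D_{\eta_p}(1)+o(1)\bigr).
\]
These come from the $L^2$ and energy comparisons, plus the full $\alpha$ terms integrated over the thin domains being $o(r^3)$. One technical point: $H_p(r)$ is a sphere integral, so sup-norm convergence is the right tool there, and it is available. Hence $\Freq_p(r)\to\Freq_{\eta_p}=1$, since $\eta_p$ is a linear $1$-form, homogeneous of frequency one. This also gives uniqueness of the limit $\alpha_{r,p}\to\eta_p$, with no need to pass to subsequences.

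\emph{Points on the ellipses.} On $B_{j,0}^\pm$ the metric is the constant metric $g_j^\pm$ and $\alpha_{\vec\eps}=df_{j,\eps_j}^\pm$. This is a rescaled copy of the nondegenerate model of \cref{thm:compact-branch-model}. Frequency is a local invariant and is scale invariant. Nondegeneracy means the leading $z^{1/2}$ branch coefficient is nonzero along the smooth curve $\Sigma$, so the tangent cone is $\operatorname{Re}(z^{3/2})$-type and the frequency equals $1/2$.

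\emph{Main obstacle.} The hardest step is the energy comparison at $K_0$: showing that $D_p(r)-D_{dF_0}(r)=o(r^3)$ despite the nonsmooth ellipses inside the balls. The tool is \eqref{eq:diagonal-energy} combined with the super-polynomial decay \eqref{eq:cantor-tail-sum}. Only $U_j$ meeting $B_r(p)$ contribute, and their total energy is at most $(C_1r)^N$. The background energy on those same sets is bounded by the constant $|\nabla\alpha|$ times their volume, which is also at most $(C_1r)^N$. The $H_p$ comparison uses \eqref{eq:cantor-norm-comparison} on the whole sphere $\partial B_r(p)$.
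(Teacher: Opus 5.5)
Your proposal is correct and follows essentially the same argument as the paper. It compares with the background form $dF_0$, whose $H$-part is flat at $q$, using the tail bound \eqref{eq:cantor-tail-sum} together with \eqref{eq:diagonal-norm-bounds} and \eqref{eq:diagonal-energy}: sup-norm control on $\partial B_r(p)$ handles $H_p$, energy and volume control on the thin domains handles $D_p$, and the nondegenerate model $d\operatorname{Re}(c\zeta^{3/2})$ gives the frequency at points of the ellipses. The only difference is the order: you first prove $\alpha_{r,p}\to\eta_p$ (uniformly and in energy) and read off $\Freq_p(0)=1$, whereas the paper first compares $H_p,D_p$ with the unrescaled background quantities $\bar H_p,\bar D_p$, Taylor-expands those, and then proves the convergence separately.
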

\begin{proof}
Fix $p=(q,0,0)\in K_0$. Define $H_p(r)$ and $D_p(r)$ by
\eqref{eq:frequency-integrals} for $\alpha_{\vec{\eps}}$, and set
\begin{align*}
    \bar H_p(r)=\int_{\partial B_r(p)}|\alpha|_{\bar g}^2\,\mathrm dA_g,\qquad
    \bar D_p(r)=\int_{B_r(p)}
      |\nabla^{\bar g}\alpha|_{\bar g}^2\,\vol_{\bar g}.
\end{align*}
By \cref{lem:global-holder-patching}, both norms on $B_r(p)$ are $O(r)$.
Since $\partial B_r(p)$ has area $O(r^2)$,
\eqref{eq:cantor-norm-comparison} with $N=2$ gives
\begin{equation}\label{eq:height_comp}
  |H_p(r)-\bar H_p(r)|=O(r^5).
\end{equation}
Outside the domains $U_j$, the forms and metrics agree.
By \eqref{eq:diagonal-background-energy}, \eqref{eq:diagonal-energy}, and
\eqref{eq:cantor-tail-sum} with $N=4$,
\begin{equation}\label{eq:energy_comp}
  |D_p(r)-\bar D_p(r)|
  \leq\frac32\sum_{j\in J_p(r)}\sigma_j
  =O(r^4).
\end{equation}

Near $p$, we have $\alpha=h(t)dt+\eta_p$.
Integration by
parts on the Euclidean unit ball $B_1$ gives
\[
  c_*:=\int_{\partial B_1}|\eta_p|_{g_{\Euc}}^2\,\mathrm dA_{\Euc}
  =\int_{B_1}|\nabla^{g_{\Euc}}\eta_p|_{g_{\Euc}}^2
    \,\vol_{g_{\Euc}}>0.
\]

Since $\bar g(p)=g_{\Euc}$ and $\alpha(p)=0$, we obtain
\[
  \begin{aligned}
    |\alpha(r\mathbf w)|_{\bar g(r\mathbf w)}^2
      &=r^2|\eta_p(\mathbf w)|_{g_{\Euc}}^2+O(r^3),\\
    |\nabla^{\bar g}\alpha(r\mathbf w)|_{\bar g(r\mathbf w)}^2
      &=|\nabla^{g_{\Euc}}\eta_p|_{g_{\Euc}}^2+O(r),
  \end{aligned}
\]
uniformly for bounded $\mathbf w=(X,Y,T)$. The metrics $g_{p,r}$ converge smoothly to $g_{\Euc}$ on bounded sets, and
$\delta_{p,r}^{-1}(B_r(p))=B_1^{g_{p,r}}(0)$ converges smoothly to $B_1$. Consequently,
\[
  \begin{aligned}
    \bar H_p(r)
      &=r^4\int_{\partial B_1(0)}
        \bigl(|\eta_p|_{g_{\Euc}}^2+O(r)\bigr)
        =c_*r^4+o(r^4),\\
    \bar D_p(r)
      &=r^3\int_{B_1(0)}
        \bigl(|\nabla^{g_{\Euc}}\eta_p|_{g_{\Euc}}^2+O(r)\bigr)
        =c_*r^3+o(r^3).
  \end{aligned}
\]
Then \eqref{eq:height_comp} and \eqref{eq:energy_comp} give the same leading terms for $H_p$ and $D_p$.
By definition \eqref{eq:frequency-function} we obtain
\[
  \Freq_p(r)=\frac{rD_p(r)}{H_p(r)}
  \longrightarrow1, \text{ as } r\to 0.
\]

We next prove convergence to $\eta_p$.
In the affine coordinates $(X,Y,T)$, every $U_j$ lies within distance
$\sqrt{B_*}\sigma_j$ of the $T$-axis. For each fixed $R>0$, the rescaled domains
$\delta_{p,r}^{-1}(U_j)$ meeting $B_R(0)$ therefore lie within distance
$\frac{\sqrt{B_*}}{r}\sum_{j\in J_p(2Rr)}\sigma_j=o(r)$
of the $T$-axis, by \eqref{eq:cantor-tail-sum}.
Since any compact set away from the $T$-axis has positive distance
from it, such a set avoids all $\delta_{p,r}^{-1}(U_j)$ for sufficiently
small $r$. On this set, in the prescribed trivialization of $\cI$, we have
$\alpha_{r,p}=r^{-2}\delta_{p,r}^*\alpha$.
The Taylor expansion of the background form $\alpha$ then gives
$\alpha_{r,p}\to\eta_p$ smoothly there.

Since $|\alpha_{r,p}|_{g_{p,r}}=r^{-1}|\alpha_{\vec{\eps}}|_g\circ\delta_{p,r}$,
\eqref{eq:cantor-norm-comparison} with $N\ge 2$ and the Taylor
expansion of $\alpha$ give uniform convergence 
$|\alpha_{r,p}|_{g_{p,r}}\to|\eta_p|_{g_{\Euc}}$ on bounded sets.
Fix $R>0$.
The energy of $\alpha_{r,p}$ on the rescaled dumbell
domains inside $B_R(0)$ satisfies
\[
\begin{aligned}
&\int_{B_R(0)\cap\bigcup_j\delta_{p,r}^{-1}(U_j)}
  |\nabla^{g_{p,r}}\alpha_{r,p}|_{g_{p,r}}^2
  \,\vol_{g_{p,r}}\\
  =&\,
  r^{-3}\int_{\delta_{p,r}(B_R(0))\cap\bigcup_jU_j}
  |\nabla^g\alpha_{\vec{\eps}}|_g^2\,\vol_g
\leq 
  r^{-3}\sum_{j\in J_p(2Rr)}\sigma_j
  \longrightarrow0.
\end{aligned}
\]
Thus no energy concentrates in the rescaled dumbell domains.
In the prescribed trivialization, every subsequential limit of
$\alpha_{r,p}$ agrees with $\eta_p$ away from the $T$-axis. Since $\eta_p$ extends smoothly
across the axis, the tangent is unique and is represented by the
ordinary harmonic $1$-form $\eta_p$ on $\R^3$.

Finally, fix $p\in \Sigma_{j,\eps_j}^\pm$.
Near $p$, the construction agrees with the nondegenerate model in
\cref{thm:compact-branch-model}.
In a complex normal coordinate $\zeta$ centered at the point, the form has
leading term $d\operatorname{Re}(c\zeta^{3/2})$ with $c\ne0$.
This term has homogeneous order $1/2$.
Therefore $N_p(0)=1/2$.
\end{proof}

It remains to identify the monodromy locus.
Since $p_j^\pm\in U_j$ accumulate precisely on $K_0$ and
$\diam U_j\leq\ell_j+2\sigma_j\to0$, the ellipses
$\Sigma_{j,\eps_j}^\pm\subset U_j$ also accumulate precisely on $K_0$.
In particular, every neighborhood of a point of $K_0$ contains an entire
ellipse. Thus in any neighborhood of a point of $K_0$, the line bundle $\cI$ is nontrivial.
By \cref{def:monodromy-locus}, $K_0\subset\Mono(\cI)$.  We conclude that $\Mono(\cI)=\Zero(\alpha_{\vec{\eps}})=Z$.

Each ellipse has a neighborhood in $Z$ on which the frequency equals $1/2$.
The frequency at every Cantor point equals $1$.
By \eqref{eq:CT-definition},
we obtain $C_{\mathrm T}=\cup_{j\geq1}(\Sigma_{j,\eps_j}^+\cup\Sigma_{j,\eps_j}^-)$.

The total length of ellipses under the Euclidean metric is finite by \eqref{eq:diagonal-geometry}.
Uniform equivalence of $g$ and $g_{\Euc}$ on the compact region containing
the ellipses then gives
$\sum_j\bigl(
  \operatorname{length}_g(\Sigma_{j,\eps_j}^+)
  +\operatorname{length}_g(\Sigma_{j,\eps_j}^-)
  \bigr)<\infty$.

All dumbell domains $U_j$ lie in a fixed compact set.
Hence $g=g_{\Euc}$ and $\alpha_{\vec{\eps}}=dt$ outside a compact set.
In particular, the smooth metric $g$ is complete.
The preceding construction verifies the bundle and harmonicity conditions
in \cref{def:z2-harmonic}.
The norm, energy, and growth conditions follow from
\cref{lem:global-holder-patching}.
Together with \cref{prop:cantor-frequency-uniqueness}, these conclusions
prove \cref{thm:intro-cantor}.

\section{Positive-even graphs as monodromy loci}
\label{sec:even-graphs}

This section proves \cref{thm:intro-even-graphs}.
For every finite graph $G$ with positive even valence at each vertex,
we construct a smooth metric on $B^3$ and a $\Ztwo$-harmonic 1-form whose
monodromy locus is the image of a tame embedding
$G\hookrightarrow\interior B^3$.
We first construct compatible harmonic models near the vertices and edges,
then extend them over $B^3$ using the relative positive-pair $h$-principle.

\subsection{Local modifications of homogeneous models}

Near each vertex of valence $2n$, we use a homogeneous model
on $\R^3$ with $2n$ branch rays.
These rays correspond to the edges meeting at the vertex.
For $n>1$, the models are obtained from the critical eigensections
in Theorem~\ref{thm:critical-configurations}.
We will modify the functions and metrics away from the vertices
so that the models agree where they are glued along the edges.

\subsubsection{Homogeneous vertex models}

For a vertex of valence $2n\geq4$, choose a configuration
$P=\{p_1,\ldots,p_{2n}\}\subset S^2$ and a critical
$\cI_P$-valued eigensection $f$ such that
\[
  -\Delta_{S^2}f=\mu(\mu+1)f.
\]
Let $\cI$ be the radial pullback of $\cI_P$ to
$\R^3\setminus\bigcup_{p\in P}\R_{\geq0}p$.
In polar coordinates $(R,\omega)$, define
\[
  F(R,\omega):=R^\mu f(\omega),
\]which is a homogeneous harmonic function.

At a two-valent vertex, use
$F(t,z)=\operatorname{Re}(z^{3/2})$ on $\R_t\times\C_z$.
For a four-valent vertex, one may take the Taubes--Wu tetrahedral
configuration, whose critical eigensections vanish to order $3/2$
at every puncture \cite{TaubesWuModels,ChenHeCritical}.
For general $2n$, the vanishing order of $F$ along the branch rays
may be greater than $3/2$.

For the homogeneous model associated with $f$, the ordinary zero rays
are determined by the critical zeros of $f$.
Define
\begin{equation}\label{eq:Qf-definition}
  Q_f:=\{q\in S^2\setminus P:f(q)=0,\ df(q)=0\}.
\end{equation}
Since $dF=\mu R^{\mu-1}f\,dR+R^\mu df,$ the ordinary zeros of $dF$ are precisely
\[
  \R_{>0}Q_f:=\{Rq:R>0,\ q\in Q_f\}.
\]
The points of $Q_f$ are isolated by
\cite[Theorem~2.5]{ChengNodalSets}, and the local expansion of $F$ \eqref{eq:critical-puncture-factorization} along edge excludes accumulation at $P$.
Hence $Q_f$ is finite.

\subsubsection{Normal forms and metric matching}

We will put the homogeneous function $F$ into the form
$\operatorname{Re}(z^\gamma)$ near a compact segment of a zero ray.
Keeping $F$ fixed, we will construct a metric that agrees with the
original metric near one end and with $dt^2+|dz|^2$ near the other.

In the following local constructions, write a compact interval as
$J=[1,3]_t$ and use normal coordinates $z=x+iy$ in
$D_\eps:=\{z\in\C:|z|<\eps\}$.
For $\gamma\in\tfrac12\mathbb Z$ with $\gamma>1$, define
\[
  u_\gamma(z):=\operatorname{Re}(z^\gamma).
\]
On $J\times(D_\eps\setminus\{0\})$, let $\cI$ be the flat real line
bundle with holonomy $(-1)^{2\gamma}$ around each normal circle.
We regard $u_\gamma$ as an $\cI$-valued function.
We use the inverse metric density $A$ from
\eqref{eq:metric-from-density}, so the harmonic equation is
$\diver(A\,du)=0$.
In the chosen neighborhood, $A_{\Euc}$ denotes the density of the original
Euclidean metric(which might not be $\mathrm{I}_3$), and $\mathrm{I}_3$ be the $3$-by-$3$ identty matrix which is the density of $dt^2+|dz|^2$.

We consider coefficients that are smooth in $t$ and admit power series
in $(x,y)$.
After decreasing $\eps$, these series converge absolutely and uniformly
on $J\times D_\eps$.
We require the same convergence after any number of derivatives in $t$,
using the same $\eps$.
We call a section \emph{real analytic in the normal variables}
if its coefficients satisfy these conditions.

\begin{lemma}
  \label[lemma]{lem:fixed-order-collar}
Suppose $A_-$ and $A_+$ are positive inverse metric densities on
$J\times D_\eps$, smooth in $t$ and real analytic in the normal variables,
satisfying
\[
  \diver(A_-\,du_\gamma)=\diver(A_+\,du_\gamma)=0.
\]
After decreasing $\eps$, there is a positive inverse metric density $A$
in the same class satisfying
\[
  \diver(A\,du_\gamma)=0,
  \qquad
  A=A_-\text{ near }t=1,
  \qquad
  A=A_+\text{ near }t=3.
\]
\end{lemma}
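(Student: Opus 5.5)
The plan is to interpolate the two densities with a cutoff in $t$ and then remove the resulting divergence error by an explicit correction. Because we may shrink $\eps$, the correction will be small enough to preserve positivity.

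\emph{Step 1: interpolate and locate the error.} Fix a cutoff $\chi(t)$ equal to $0$ near $t=1$ and $1$ near $t=3$, and set $A_0=(1-\chi)A_-+\chi A_+$. This is positive, smooth in $t$, real analytic in $(x,y)$, and equals $A_\pm$ near the ends. Since $\partial_tu_\gamma=0$, we get
\[
  \diver(A_0\,du_\gamma)=\chi'(t)\,e(t,z),\qquad
  e:=(A_+-A_-)^{tx}\partial_xu_\gamma+(A_+-A_-)^{ty}\partial_yu_\gamma .
\]
So the error is supported in the interior of $J$. It has the form (analytic vector)$\cdot du_\gamma$, and hence it is $O(|z|^{\gamma-1})$.

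\emph{Step 2: split off the $t$-average.} Choose a bump $\rho(t)$ supported in the interior of $J$ with $\int\rho=1$, and set $\bar e(z):=\int_J\chi'(t)e(t,z)\,dt$. Then $\chi'e-\rho\bar e=\partial_t\Psi$ with
\[
  \Psi(t,z):=\int_1^t\bigl(\chi'e-\rho\bar e\bigr)\,ds .
\]
This $\Psi$ is compactly supported in $t$, and it is again a combination $a(t,z)\partial_xu_\gamma+b(t,z)\partial_yu_\gamma$ with $a,b$ analytic in $z$. We can therefore absorb it by the off-diagonal entries $C^{tx}=C^{xt}=-a$ and $C^{ty}=C^{yt}=-b$ of a symmetric correction $C$. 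Because $\partial_tu_\gamma=0$, these entries contribute exactly $-\partial_t\Psi$ to $\diver(C\,du_\gamma)$ and nothing to the $x$ and $y$ rows. The bound $a,b=O(1)$ is not enough, so I will instead use that $\Psi$ is built from $(A_+-A_-)$ times $\chi'$, with coefficients $O(1)$. To get smallness, I will subtract from $A_+-A_-$ its value on the axis before splitting: the axis part $(A_+-A_-)^{t\cdot}(t,0)\cdot du_\gamma$ is handled the same way. If this still fails to be small, I will choose $\chi$ depending on the fact that positivity only needs $|C|<\inf\lambda_{\min}(A_0)$ on $J\times D_\eps$.

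\emph{Step 3: solve the transverse equation.} It remains to find a symmetric $2\times2$ block $M(t,z)=\rho(t)M_0(z)$ in the $(x,y)$-entries with $\diver_z(M_0\nabla u_\gamma)=-\bar e$. In complex notation, $\partial_xu_\gamma-i\partial_yu_\gamma=\gamma z^{\gamma-1}$, and $\bar e=\operatorname{Re}(\overline{c(z)}\,\gamma z^{\gamma-1})$ for an analytic complex coefficient $c$. Take $M_0$ of the form $m\,\mathrm I+(\text{trace-free part encoded by a complex function }w)$. Then $\diver(M_0\nabla u_\gamma)$ becomes $\operatorname{Re}$ of a first-order expression of the type $\partial_{\bar z}(w\,\bar z^{\gamma-1})+\nabla m\cdot\nabla u_\gamma$. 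I will solve this termwise in power series, matching monomials $z^j\bar z^k$. The divisibility of $\bar e$ by $z^{\gamma-1}$ from Step 1 is exactly what makes each monomial equation solvable, and it produces $m,w=O(|z|)$ with convergent series after shrinking $\eps$.

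\emph{Step 4: conclude.} Set $A=A_0+C$. Then $\diver(A\,du_\gamma)=0$, $A=A_\pm$ near the ends, and $A$ is in the stated class. Since $C\to0$ uniformly as $\eps\to0$ (at least the transverse block, which is $O(|z|)$) and $A_0$ is uniformly positive, $A$ is positive.

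I expect the main obstacle to be Step 3 together with the smallness needed in Step 2. One must check that the power-series solution of the transverse divergence equation exists with the claimed vanishing order and converges uniformly in $t$-derivatives. One must also arrange the $t$-column correction so that its size is controlled; I would first try to make it vanish at $z=0$ by the axis normalization described above.
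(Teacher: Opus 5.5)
\paragraph{Verdict.} Your Step~3 is sound; it is essentially the paper's key computation. Step~2, however, creates a genuine gap that the proposal never closes: the correction you put in the $t$-column is of size $O(1)$ on the axis, and it can destroy positivity.

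\paragraph{Why Step 2 fails.} You have $\Psi=a\,\partial_xu_\gamma+b\,\partial_yu_\gamma$ with $a(t,0),b(t,0)$ generically nonzero. Since $|du_\gamma|$ has the same order $|z|^{\gamma-1}$ as $\Psi$, no choice of $t$-column entries that absorbs $\partial_t\Psi$ is small. Handling the axis part ``the same way'' reproduces the same entries, so it does not help.

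A concrete failure: let $A_-$ be constant with $A_-^{tx}=A_-^{xt}=\delta\in(0,1)$ and all other entries those of the identity. Let $A_+=\operatorname{diag}(\kappa,1,1)$ with $0<\kappa<\delta^2$. Both are positive and satisfy $\diver(A_\pm\,du_\gamma)=\Delta_zu_\gamma=0$.
\begin{itemize}
\item Your construction gives $\Psi=-\delta(\chi-R)\,\partial_xu_\gamma$, where $R(t)=\int_1^t\rho$.
\item Hence on the axis $A^{tx}=\delta(1-R)$, $A^{tt}=1-\chi+\kappa\chi$ and $A^{xx}=1$; your Step~3 block vanishes there.
\item If $\operatorname{supp}\rho$ lies to the right of $\operatorname{supp}\chi'$, pick a point with $\chi=1$ and $R=0$. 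There the minor $A^{tt}A^{xx}-(A^{tx})^2=\kappa-\delta^2$ is negative, for every $\eps$.
\end{itemize}
Choosing $\chi$ alone cannot prevent this; $\rho$ and $\chi$ would have to be linked.

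\paragraph{The fix: drop Step 2.} The operator $M\mapsto\diver_z(M\nabla_zu_\gamma)$ on normal blocks involves no $t$-derivative, so $t$ is only a parameter. You can therefore solve your Step~3 equation directly with right-hand side $-\chi'(t)e(t,z)$ instead of the averaged $-\rho\bar e$.
\begin{itemize}
\item Write $\chi'e=2\gamma\operatorname{Re}(z^{\gamma-2}G)$ with $G=\tfrac{\chi'}{2}z\bigl(\Delta^{tx}+i\Delta^{ty}\bigr)$ and $\Delta=A_+-A_-$.
\item The trace-free normal block $\bigl(\begin{smallmatrix}\operatorname{Re}C&\operatorname{Im}C\\ \operatorname{Im}C&-\operatorname{Re}C\end{smallmatrix}\bigr)$ contributes $2\gamma\operatorname{Re}\bigl[z^{\gamma-2}\bigl(z\partial_zC+(\gamma-1)C\bigr)\bigr]$ to the divergence.
\item Taking $C=-\sum_{p,q}G_{pq}(t)z^p\bar z^q/(p+\gamma-1)$ cancels the error exactly.
\item This $C$ is $O(|z|)$ uniformly in $t$ and vanishes wherever $\chi'=0$.
\item The $t$-column of $A$ stays the convex combination $(1-\chi)A_-+\chi A_+$, so positivity follows by shrinking $\eps$.
\end{itemize}
This is exactly the paper's proof.

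Your route can also be rescued: take $\rho=\chi'$ with $\chi$ monotone and $\chi'$ supported in an interval of length $h$. This makes the coefficients of $\Psi$ of size $O(h\sup|\partial_t\Delta|)$, and you then take $\eps\ll h$ to control the $O(|z|/h)$ normal block. That only adds work compared with the pointwise-in-$t$ correction.
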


\begin{proof}
Choose a cut-off $\chi\in\mathcal{C}^\infty(J;[0,1])$ with $\chi=0$ near $t=1$
and $\chi=1$ near $t=3$.
Define $\widetilde A:=(1-\chi)A_-+\chi A_+$.
Since $u_\gamma$ is independent of $t$, the error in the harmonic
equation is
\[
  \diver(\widetilde A\,du_\gamma)
  =2\gamma\operatorname{Re}\bigl(z^{\gamma-2}G\bigr),
  \qquad
  G:=\frac{\chi'}2z
  \bigl(A_+^{tx}-A_-^{tx}+i(A_+^{ty}-A_-^{ty})\bigr).
\]
We correct this error by adding a symmetric matrix with only normal
components.
For a complex-valued function $C$, define
\[
  H_C:=
  \begin{pmatrix}
    0&0&0\\
    0&\operatorname{Re}C&\operatorname{Im}C\\
    0&\operatorname{Im}C&-\operatorname{Re}C
  \end{pmatrix}.
\]
With $\partial_z=\tfrac12(\partial_x-i\partial_y)$, direct
differentiation gives
\[
  \diver(H_C\,du_\gamma)
  =2\gamma\operatorname{Re}\left[
    z^{\gamma-2}\bigl(z\partial_zC+(\gamma-1)C\bigr)
  \right].
\]
Writing $G=\sum_{p,q\geq0}G_{pq}(t)z^p\bar z^q$, define
\[
  C:=-\sum_{p,q\geq0}
       \frac{G_{pq}(t)}{p+\gamma-1}z^p\bar z^q,
  \qquad A:=\widetilde A+H_C.
\]
The bound $p+\gamma-1\geq\gamma-1>0$ gives convergence in the same
regularity class. A straight forward computation implies
$$z\partial_zC+(\gamma-1)C=-G,$$ so $\diver(A\,du_\gamma)=0$.

As $G|_{z=0}=0$ and $G=0$ near $t=1$ and $t=3$,
the defining series for $C$ gives
\[
  C|_{z=0}=0,
  \qquad
  C=0\text{ near }t=1\text{ and }t=3.
\]
Thus $A=A_-$ near $t=1$ and $A=A_+$ near $t=3$.
Moreover, $H_C=O(|z|)$ uniformly in $t$, and
$\widetilde A\geq c\,\mathrm{I}_3$ on $J\times\{0\}$
for some constant $c>0$.
After decreasing $\eps$, the matrix $A$ is therefore positive definite.
\end{proof}

For $p\in P$, the ray $\R_{>0}p$ is a branch ray.
For $p\in Q_f$, it is an ordinary zero ray of $dF$.

\begin{lemma}
  \label[lemma]{lem:exact-ray-normal-form}
Let $p\in P\cup Q_f$, and let $J\subset\R_{>0}p$ be a compact
segment parametrized by $t\in[1,3]$.
There are tubular coordinates $(t,z)$ along $J$ in which
\[
  F=u_\gamma,
  \qquad
  \gamma\in
  \begin{cases}
    \{3/2,5/2,\ldots\},&p\in P,\\
    \{2,3,\ldots\},&p\in Q_f.
  \end{cases}
\]
On a sufficiently small tube $J\times D_\eps$, there is a smooth
positive inverse metric density $A$ satisfying
\[
  \diver(A\,du_\gamma)=0,
  \qquad
  A=A_{\Euc}\text{ near }t=1,
  \qquad
  A=\mathrm{I}_3\text{ near }t=3.
\]
\end{lemma}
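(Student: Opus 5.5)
The proof has two parts. First we find tubular coordinates in which $F$ is exactly $u_\gamma$. Then we build a density that is Euclidean at $t=1$ and standard at $t=3$, using \cref{lem:fixed-order-collar} with an intermediate density.

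\emph{Step 1: normal form.} Parametrize the ray by $t\mapsto tp$ with $t\in[1,3]$, and work in a transverse disk spanned by an orthonormal frame of $p^\perp$.

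For $p\in P$, the factorization \eqref{eq:critical-puncture-factorization} gives, in a complex coordinate $w$ on $S^2$ centered at $p$ (which we may take real analytic),
\[
F=R^\mu\operatorname{Re}\bigl(w^{m+1/2}A(w,\bar w)\bigr)
\]
with $A$ nowhere zero. Since $\phi$ is a real analytic eigensection away from $P$, $A$ is real analytic. Set $\gamma=m+1/2\ge 3/2$ and $t=R$. Define the complex coordinate $z:=R^{\mu/\gamma}\,w\,A(w,\bar w)^{1/\gamma}$, choosing a branch of $A^{1/\gamma}$, which exists since $A$ has no zeros on a small disk. Then $F=\operatorname{Re}(z^\gamma)$ exactly. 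The map $(R,w)\mapsto(t,z)$ is a diffeomorphism near $w=0$, because its $w$-derivative at $0$ is $R^{\mu/\gamma}A(0)^{1/\gamma}\neq0$. It is smooth in $t$ and real analytic in the normal variables.

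For $p\in Q_f$, the point $p$ is an isolated critical zero of $f$. Near $p$ the eigenfunction $f$ has leading homogeneous harmonic term of some degree $\gamma\ge2$, equal to $\operatorname{Re}(a w^\gamma)+\dots$. A real-analytic Morse-type lemma for functions of the form $\operatorname{Re}(w^\gamma\cdot\text{unit})$ is then needed. I expect this to be the main obstacle: the error terms include $\bar w$-components, so the expansion is not automatically of the form $w^\gamma\cdot$unit. I would first try the Bers/Cheng structure theorem: a real analytic solution of an elliptic equation in two variables is a real analytic diffeomorphic image of a harmonic polynomial near an isolated critical zero, so $f=\operatorname{Re}(\tilde w^\gamma)$ in a suitable analytic coordinate $\tilde w$. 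Then $z:=R^{\mu/\gamma}\tilde w$ works as before. For $p\in P$ the same argument on the double cover justifies the factorization already quoted.

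\emph{Step 2: metric matching.} In the new coordinates, the Euclidean density $A_{\Euc}$ satisfies $\diver(A_{\Euc}du_\gamma)=0$, since $F$ is Euclidean-harmonic. It is also smooth in $t$ and real analytic in $z$. We want an intermediate density $A_+$ that equals $\mathrm I_3$ near $t=3$ and also solves the equation. Note that $\mathrm I_3$ itself already solves it, since $u_\gamma$ is harmonic in $z$ and independent of $t$. Apply \cref{lem:fixed-order-collar} directly with $A_-=A_{\Euc}$ and $A_+=\mathrm I_3$: both are positive, lie in the required class, and satisfy the equation. After decreasing $\eps$, the lemma yields $A$ with $A=A_{\Euc}$ near $t=1$, $A=\mathrm I_3$ near $t=3$, and $\diver(A\,du_\gamma)=0$. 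Positivity comes from the lemma.

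The only subtlety here is uniformity of the analytic radius in $t$ over the compact segment. This follows from compactness of $J$ and the analyticity of the coordinate change, with a uniform radius obtained by continuity in $t$.
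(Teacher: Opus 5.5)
Your proposal is correct and is essentially the paper's proof: factor $F=\operatorname{Re}(z^\gamma a)$ with $a(t,0)\neq0$, absorb $a$ (radial factor $R^{\mu/\gamma}$ included) by the normal coordinate change $z\mapsto z\,a^{1/\gamma}$, and apply \cref{lem:fixed-order-collar} with $A_-=A_{\Euc}$, $A_+=\mathrm{I}_3$. The one thing to replace is your appeal to a Bers/Cheng structure theorem at $q\in Q_f$: Bers gives only the leading harmonic term, and Cheng's result, as I recall it, gives only a $C^1$ change of coordinates, which would take $A_{\Euc}$ out of the real-analytic class that lemma needs. What you need follows directly from the PDE: in a stereographic coordinate the equation $\partial_w\partial_{\bar w}f=-c\,f$ with $c$ real analytic, together with injectivity of $\partial_w\partial_{\bar w}$ on monomials $w^j\bar w^k$ with $j,k\ge1$, forces (by induction on degree) every monomial in the convergent expansion of $f$ to have $\max(j,k)\ge\gamma$, so $f=\operatorname{Re}(w^\gamma a)$ with $a$ real analytic and $a(0)\neq0$, which is exactly the paper's ``factoring the convergent local expansion.''
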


\begin{proof}
Choose a stereographic coordinate $z$ centered at $p$.
Factoring the convergent local expansion of $f$ gives
\[
  F(t,z)=\operatorname{Re}\bigl(z^\gamma a(t,z,\bar z)\bigr),
  \qquad a(t,0)\neq0.
\]
The exponent $\gamma$ is half-integral for $p\in P$ and integral
for $p\in Q_f$; see \cite[Section~4]{TaubesWuModels}
for the branch case.
Criticality of the eigensection gives $\gamma\geq3/2$ on a branch ray.
At an ordinary direction $q\in Q_f$, the equations $f(q)=df(q)=0$
give $\gamma\geq2$.
The function $a$ is real analytic in the normal variables with the
uniform convergence specified above.
Its dependence on $t$ is smooth because the radial factor $R^\mu$
is smooth on $J$.

After shrinking the tube, $a$ is nowhere zero.
Since the tube is contractible, choose a smooth branch of $a^{1/\gamma}$.
Consider the map
\[
  (t,z)\longmapsto\bigl(t,za(t,z,\bar z)^{1/\gamma}\bigr).
\]
At $z=0$, the real differential in the normal variables $(x,y)$ satisfies
\[
  D_{(x,y)}\bigl(za^{1/\gamma}\bigr)(t,0)[v]
  =a(t,0)^{1/\gamma}v,
  \qquad v\in\C.
\]
This real linear map is invertible because $a(t,0)\neq0$.
The inverse function theorem therefore gives coordinates on a smaller tube.
Denote the new normal coordinate again by $z$; then $F=u_\gamma$.

The coordinate change and its inverse are real analytic in the normal
variables, so $A_{\Euc}$ has the regularity required in
\cref{lem:fixed-order-collar}.
The original harmonic equation gives
$\diver(A_{\Euc}\,du_\gamma)=0$.
Also, $u_\gamma$ is independent of $t$ and harmonic in $z$, so
$\diver(\mathrm{I}_3\,du_\gamma)=0$.
Apply that lemma with $A_-=A_{\Euc}$ and $A_+=\mathrm{I}_3$.
\end{proof}

\subsubsection{Changing the vanishing order}

We next connect the models $u_\gamma$ and $u_{\gamma+1}$.
The metric is $dt^2+|dz|^2$ near both ends, and the differential
has no zeros off the axis.

\begin{lemma}\label[lemma]{lem:phase-adjacent-transition}
Let $\gamma\in\tfrac12\mathbb Z$ with $\gamma>1$.
For sufficiently small $\eps>0$, there are a smooth positive inverse
metric density $A$ on $J\times D_\eps$ and a smooth $\cI$-valued
function $u$ on $J\times(D_\eps\setminus\{0\})$ satisfying
\[
  \diver(A\,du)=0,
  \qquad du\neq0\text{ for }z\neq0.
\]
In suitable normal coordinates near the two ends,
\[
  (u,A)=
  \begin{cases}
    (u_\gamma,\mathrm{I}_3),&\text{near }t=1,\\
    (u_{\gamma+1},\mathrm{I}_3),&\text{near }t=3.
  \end{cases}
\]
\end{lemma}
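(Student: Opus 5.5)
The plan is to interpolate between the two profiles by a one-parameter family of normal forms that all have the same leading phase structure, and then to produce the metric by solving $\diver(A\,du)=0$ with the normal-matrix correction used in the proof of \cref{lem:fixed-order-collar}. Concretely, I would take
\[
  u(t,z)=\operatorname{Re}\bigl(z^\gamma(a(t)+z)\bigr),
  \qquad a(t)=s(t)e^{i\theta_0},
\]
where $\theta_0$ is a fixed phase with $\cos((\gamma+1)\theta_0)\neq0$. The profile $s\ge0$ is smooth and constant equal to $s_0>0$ near $t=1$. It vanishes identically near $t=3$ (for instance a profile of type $e^{-1/(3-t)}$ cut off suitably), and it satisfies $s'<0$ wherever $0<s<s_0$.

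Near $t=3$ we then have $u=u_{\gamma+1}$. Near $t=1$ the substitution $w=z(1+z/a)^{1/\gamma}$ gives $u=\operatorname{Re}(a\,w^\gamma)$. After the constant rotation $w\mapsto e^{i\theta_0/\gamma}w$ and a constant rescaling of $u$ (which changes neither harmonicity nor the zero set), this is $u_\gamma$ in suitable normal coordinates. Here $\eps$ must be taken small compared with $s_0$.

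The first step is the nonvanishing of $du$ off the axis. In the normal directions,
\[
  \partial_z u\sim z^{\gamma-1}\bigl(\gamma a+(\gamma+1)z\bigr),
\]
so the only possible zeros with $z\neq0$ lie on the curve $z_0(t)=-\gamma a(t)/(\gamma+1)$. At such a point the $dt$-component is
\[
  s'(t)\operatorname{Re}\bigl(e^{i\theta_0}z_0^\gamma\bigr)=c\,s'(t)s(t)^\gamma\cos\bigl((\gamma+1)\theta_0+\pi\gamma\bigr)
\]
up to the sign of the branch, with $c\neq0$. By the choice of $\theta_0$ (adjusted for the $\pi\gamma$ shift) and $s'<0$, this is nonzero whenever $0<s<s_0$. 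Where $s=s_0$ the point $z_0$ lies outside $D_\eps$, and where $s=0$ there is no such point. Hence $du\neq0$ for $z\neq0$.

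The second step, which I expect to be the main obstacle, is the metric. I must find a positive $A$, equal to $\mathrm I_3$ in the end coordinates, with $\diver(A\,du)=0$, and $A$ must be smooth across the axis. I would start from the Euclidean density $\widetilde A=\mathrm I_3$ (transported to the $w$-coordinates near $t=1$ and glued by a cutoff). The error is then
\[
  \diver(\widetilde A\,du)=\partial_t^2u+\text{(cutoff terms)}=\operatorname{Re}\bigl(z^{\gamma-1}G\bigr),
\]
and since $\partial_t^2u=\operatorname{Re}(a''z^\gamma)$, $G$ is divisible by $z$. I then add $H_C$ together with an off-diagonal $tz$-block. The identity
\[
  \diver(H_C\,du)\propto\operatorname{Re}\bigl[z^{\gamma-1}\bigl(z\partial_zC+(\gamma-1)C\bigr)\bigr]+\dots
\]
has to be recomputed for the perturbed profile $z^\gamma(a+z)$. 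I would solve for $C$ coefficientwise in powers of $z,\bar z$, exactly as in \cref{lem:fixed-order-collar}, with the divisors $p+\gamma-1>0$ or $p+\gamma>0$. This works provided the operator stays triangular in the $(p,q)$ grading. The term $a$ mixes orders $\gamma$ and $\gamma+1$, but it does so with a smooth coefficient in $t$, so I would solve order by order in $|z|$ and treat the $a$-term as a lower-order perturbation.

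The correction is $O(|z|)$ and vanishes near both ends, so after shrinking $\eps$ the matrix $A$ is positive definite. This gives a smooth $A$ near the axis. If the formal solution fails to converge uniformly in $t$ because $a$ is only smooth, the fallback is to realize the metric only on a thin tube by this series argument. On the remainder, where $du\neq0$, I would instead take $\beta=\star_{\widetilde g}du$ minus a compactly supported primitive of its exterior derivative, as in the proof of \cref{prop:quantitative-paired-surgery}, and then apply \cref{thm:relative-metric-realization}.
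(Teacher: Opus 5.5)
Your function $u$ is the paper's, with $b=s$ and $\vartheta=\theta_0$. Your proof that $du\neq0$ off the axis, using the corrected condition $\cos((\gamma+1)\theta_0+\pi\gamma)\neq0$, is also the paper's, and the end $t=1$ is handled the same way. There, absorb the constant into the coordinate, $w\mapsto a^{1/\gamma}w$, rather than rescaling $u$, since $u$ is also prescribed at $t=3$.

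The gap is in the metric, at exactly the step you flag. Take the trace-free block $H_C$ of \cref{lem:fixed-order-collar} as the unknown. Then $\diver(H_C\,du)=-\partial_t^2u=-\operatorname{Re}(a''z^\gamma)$ becomes
\[
  2\gamma a\bigl(z\partial_z+\gamma-1\bigr)C+2(\gamma+1)z\bigl(z\partial_z+\gamma\bigr)C=-a''z^2,
\]
and for half-integral $\gamma$ this must hold coefficientwise, because distinct terms $\operatorname{Re}(z^{\gamma-2}z^p\bar z^q)$ cannot cancel.

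The $a$-term is not a perturbation. It preserves the degree of $C$, while the other term raises it by one, so the $a$-term dominates at the axis. Writing $C=\sum c_{pq}z^p\bar z^q$, the recursion is $(p+\gamma-1)\bigl(\gamma a\,c_{pq}+(\gamma+1)c_{p-1,q}\bigr)=-\tfrac12a''\delta_{p2}\delta_{q0}$. It divides by $a$ at every step, and its unique solution is
\[
  C=-\frac{a''z^2}{2(\gamma+1)\bigl(\gamma a+(\gamma+1)z\bigr)}.
\]
This has a pole exactly at your point $z_0(t)=-\gamma a/(\gamma+1)$. Since $s$ reaches $0$ smoothly, $s''$ cannot vanish on all of $\{0<s<\delta\}$. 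Hence for every $\eps$ the pole lies in $D_\eps$ at some $t$ with $a''(t)\neq0$. So no fixed tube works, and this is not a convergence issue in $t$.

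Your $tz$-block is never given a role. Using it turns the problem into an ODE in $t$ that is singular where $a$ vanishes, and you do not address that. The fallback through \cref{thm:relative-metric-realization} presupposes a harmonic metric on a fixed neighborhood of the axis, so it cannot replace this step.

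The missing idea is a normal correction that annihilates the $z^\gamma$ mode identically, so that nothing is ever divided by $a$; this needs a trace part. Set
\[
  H:=H_{e^{i\vartheta}z}-2\gamma\operatorname{Re}\bigl(e^{i\vartheta}\bar z\bigr)\mathrm{I}_2 .
\]
For $F$ holomorphic in $z$ one computes $\diver_z(H\nabla_z\operatorname{Re}F)=2\operatorname{Re}\bigl(e^{i\vartheta}((1-\gamma)F'+zF'')\bigr)$. This vanishes for $F=z^\gamma$ and equals $2(\gamma+1)\operatorname{Re}(e^{i\vartheta}z^\gamma)$ for $F=z^{\gamma+1}$. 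Because $a''=s''e^{i\vartheta}$ has constant phase, the density $A=\operatorname{diag}\bigl(1,\mathrm{I}_2-\tfrac{s''}{2(\gamma+1)}H\bigr)$ cancels $\partial_t^2u=s''\operatorname{Re}(e^{i\vartheta}z^\gamma)$ exactly. This is the paper's construction. The coefficient $s''$ is smooth and vanishes near both ends, and $H=O(|z|)$ gives positivity on a thin tube.
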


\begin{proof}
Choose $b_0>0$ and $b\in\mathcal{C}^\infty(J;[0,b_0])$ such that
\[
  b=b_0\text{ near }t=1,
  \qquad b=0\text{ near }t=3,
  \qquad b'<0\text{ where }0<b<b_0.
\]
Choose $\vartheta\in\R$ satisfying
\begin{equation}\label{eq:phase-condition}
  \cos\bigl((\gamma+1)\vartheta+\pi\gamma\bigr)\neq0,
\end{equation}
and define
\begin{equation}\label{eq:phase-block-potential}
  u(t,z):=\operatorname{Re}
  \bigl(e^{i\vartheta}b(t)z^\gamma+z^{\gamma+1}\bigr).
\end{equation}
To make $u$ harmonic, define
\[
  H:=
  \begin{pmatrix}
    \operatorname{Re}(e^{i\vartheta}z)
      -2\gamma\operatorname{Re}(e^{i\vartheta}\bar z)
      &\operatorname{Im}(e^{i\vartheta}z)\\
    \operatorname{Im}(e^{i\vartheta}z)
      &-\operatorname{Re}(e^{i\vartheta}z)
       -2\gamma\operatorname{Re}(e^{i\vartheta}\bar z)
  \end{pmatrix},
\]
and define
\[
  A:=\operatorname{diag}
  \left(1,\mathrm{I}_2-\frac{b''(t)}{2(\gamma+1)}H\right).
\]
Direct differentiation gives
\[
  \diver(A\,du)
  =b''\operatorname{Re}(e^{i\vartheta}z^\gamma)
   -b''\operatorname{Re}(e^{i\vartheta}z^\gamma)=0.
\]
Since $H=O(|z|)$ uniformly in $t$, the matrix $A$ is positive
definite on a sufficiently small tube.

We now check that $du$ has no zeros off the axis.
For $z\neq0$, the equations $\partial_xu=\partial_yu=0$ require
\[
  z=z_*(t):=-\frac{\gamma}{\gamma+1}e^{i\vartheta}b(t).
\]
Where $0<b<b_0$, substitute this value into
$\partial_tu=b'\operatorname{Re}(e^{i\vartheta}z^\gamma)$.
Condition \eqref{eq:phase-condition} gives
$\partial_tu(t,z_*(t))\neq0$.
Where $b=b_0$, the bound $\eps<\gamma b_0/(\gamma+1)$ excludes
$z_*(t)$ from the tube.
Where $b=0$, the only possible zero is on the axis.

Near $t=1$, we have $b=b_0$.
On a smaller disk, make the holomorphic coordinate change
\[
  z\longmapsto z(e^{i\vartheta}b_0+z)^{1/\gamma},
\]
and continue to denote the new coordinate by $z$.
Then $u=u_\gamma$, and the transformed density is real analytic
in the normal variables.
Apply \cref{lem:fixed-order-collar} in this region to make $A=\mathrm{I}_3$
near $t=1$, keeping $u$ fixed and retaining the constructed density
at the other end of the region.
Near $t=3$, we already have $b=b''=0$, so
$(u,A)=(u_{\gamma+1},\mathrm{I}_3)$.
\end{proof}

We summarize these constructions in the following result on local
modifications near the rays.

\begin{corollary}\label[corollary]{cor:ray-standard-model}
With $p$ and $J$ as in \cref{lem:exact-ray-normal-form},
there exist, for sufficiently small $\eps>0$, a smooth positive
inverse metric density $A$ on $J\times D_\eps$ and a smooth
$\cI$-valued function $u$ on $J\times(D_\eps\setminus\{0\})$
satisfying
\[
  \diver(A\,du)=0,
  \qquad du\neq0\text{ for }z\neq0.
\]
Near $t=1$, we have
\[
  (u,A)=(F,A_{\Euc}).
\]
In suitable normal coordinates near $t=3$, we have
\[
  (u,A)=
  \begin{cases}
    (u_{3/2},\mathrm{I}_3),&p\in P,\\
    (u_2,\mathrm{I}_3),&p\in Q_f.
  \end{cases}
\]
If $p\in Q_f$, the function $u$ extends smoothly across $z=0$.
\end{corollary}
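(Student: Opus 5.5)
The plan is to concatenate the constructions along the segment $J$. Subdivide $J$ into consecutive subintervals; on each we apply one of the preceding lemmas, rescaled in $t$ so that each piece is a copy of $[1,3]$. The matching conditions at the ends are exact equalities of $(u,A)$ near the junctions, so the pieces glue smoothly.

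On the first piece we use \cref{lem:exact-ray-normal-form}. It produces tubular coordinates in which $F=u_\gamma$, and a density $A$ with $A=A_{\Euc}$ near the left end, $A=\mathrm I_3$ near the right end, and $\diver(A\,du_\gamma)=0$. This piece already has the required form near $t=1$, namely $(u,A)=(F,A_{\Euc})$, because the coordinates are those of the homogeneous model. If $\gamma=3/2$ for $p\in P$, or $\gamma=2$ for $p\in Q_f$, we are done.

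Otherwise we lower the order one step at a time. \cref{lem:phase-adjacent-transition} connects $u_\gamma$ to $u_{\gamma+1}$. Applied in reverse (reflecting $t\mapsto 4-t$, which preserves the harmonic equation after conjugating $A$ by $\operatorname{diag}(-1,1,1)$), it connects $(u_{\gamma},\mathrm I_3)$ near the left end to $(u_{\gamma-1},\mathrm I_3)$ near the right end, provided $\gamma-1>1$. We iterate finitely many times until we reach $\gamma_0=3/2$ (half-integer case) or $\gamma_0=2$ (integer case). The holonomy $(-1)^{2\gamma}$ is unchanged by integer steps, so $\cI$ is consistent along the whole tube. We take the minimum of the finitely many $\eps$'s.

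The main obstacle is the gluing at the junctions. Each lemma provides $(u,A)$ in "suitable normal coordinates" near its ends, and these coordinates come from holomorphic changes of the normal variable, such as $z\mapsto z(e^{i\vartheta}b_0+z)^{1/\gamma}$. So at a junction the two pieces agree only after a coordinate change, not literally. We handle this by defining the tube coordinates piecewise: near each junction both descriptions are $(u_{\gamma'},\mathrm I_3)$ in their respective normal coordinates. We use the normal coordinate of the right piece on the right and that of the left piece on the left, and identify them at a slice where each is standard. Since $t$ is untouched and the identification is by a fixed diffeomorphism of a disk, the glued tube is a smooth tube around $J$.

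The nonvanishing of $du$ off the axis holds on each piece by the lemmas. On the first piece it holds because $F=u_\gamma$ and $d\operatorname{Re}(z^\gamma)\neq0$ for $z\ne0$. Positivity of $A$ also holds piecewise. Finally, if $p\in Q_f$, every $\gamma$ used is an integer. Then each $u$, being a polynomial in $z,\bar z$ with smooth $t$-coefficients in the final coordinates (e.g.\ $\operatorname{Re}(e^{i\vartheta}b(t)z^\gamma+z^{\gamma+1})$), extends smoothly across $z=0$, and so does the glued $u$.
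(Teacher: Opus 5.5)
Your proposal is correct and matches the paper's argument: the paper's proof simply concatenates \cref{lem:exact-ray-normal-form} with finitely many direction-reversed applications of \cref{lem:phase-adjacent-transition}, and your reflection $t\mapsto4-t$ (conjugating $A$ by $\operatorname{diag}(-1,1,1)$) and your identification of normal coordinates at the junctions spell out details the paper leaves implicit. One small caution: rescaling $t$ alone turns the end density $\mathrm I_3$ into $\operatorname{diag}(a,a^{-1},a^{-1})$, so the junctions would no longer match exactly; instead, run the lemmas directly on the shorter subintervals, since their proofs use only cutoffs in $t$ and do not depend on the interval's length.
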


\begin{proof}
The result follows directly from
\cref{lem:exact-ray-normal-form,lem:phase-adjacent-transition}.
\end{proof}

\subsubsection{Modifying ordinary zero rays}

For an ordinary zero ray, we can further modify the model $u_2$
to remove the zeros of $du$ near one end of the segment.
The original function and metric remain unchanged near the other end.

\begin{lemma}\label[lemma]{lem:ordinary-ray-cap}
Let $q\in Q_f$, and let $J\subset\R_{>0}q$ be a compact segment
parametrized by $t\in[1,3]$.
On a sufficiently small tube $J\times D_\eps$ in tubular coordinates
$(t,z)$, there exist a smooth function $u$ and a smooth positive inverse
metric density $A$ satisfying
\[
  \diver(A\,du)=0.
\]
The pair $(u,A)$ can be chosen to satisfy either
\[
\begin{gathered}
  Z(du)=[2,3]\times\{0\},\\
  (u,A)=(F,A_{\Euc})\text{ near }t=3,
  \qquad A=\mathrm{I}_3\text{ near }t=1,
\end{gathered}
\]
or
\[
\begin{gathered}
  Z(du)=[1,2]\times\{0\},\\
  (u,A)=(F,A_{\Euc})\text{ near }t=1,
  \qquad A=\mathrm{I}_3\text{ near }t=3.
\end{gathered}
\]
\end{lemma}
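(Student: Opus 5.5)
The plan has three stages. First use \cref{cor:ray-standard-model} to reduce to the model $(u_2,\mathrm I_3)$. Then, on the remaining segment, write down an explicit function that interpolates between $u_2$ and a function with nonvanishing differential. Finally, correct the harmonic equation by a small, normal-only perturbation of the density, in the spirit of the proofs of \cref{lem:fixed-order-collar,lem:phase-adjacent-transition}. By the symmetry $t\mapsto 4-t$ it suffices to treat the first alternative: $Z(du)=[2,3]\times\{0\}$, with $(u,A)=(F,A_{\Euc})$ near $t=3$ and $A=\mathrm I_3$ near $t=1$.

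\textbf{Step 1: reduce to the model.} Subdivide $J$. Apply \cref{cor:ray-standard-model}, reparametrized to $[2.5,3]$ and reversed, on $[2.5,3]\times D_\eps$. This gives $(u,A)=(F,A_{\Euc})$ near $t=3$ and $(u_2,\mathrm I_3)$ near $t=2.5$ in suitable normal coordinates. Since $q\in Q_f$, $u$ extends smoothly across $z=0$, and the only zeros of $du$ lie on the axis. Along this piece $du$ vanishes on the whole axis, because $F=u_2$ in normal form along the segment and $du_2=0$ on the axis. It therefore remains to build, on $[1,2.5]\times D_\eps$, a pair $(u,A)$ with $u=u_2$ and $A=\mathrm I_3$ near $t=2.5$, $A=\mathrm I_3$ near $t=1$, and $Z(du)=[2,2.5]\times\{0\}$.

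\textbf{Step 2: the cap function.} Let $c\in\mathcal C^\infty([1,2.5])$ satisfy $c=1$ near $t=1$, $c=0$ on $[2,2.5]$, and $c>0$ on $[1,2)$. The first candidate is
\[
  u:=u_2+c(t)\,x .
\]
Then $du=(c'x,\;2x+c,\;-2y)$. A zero requires $y=0$ and $x=-c/2$, hence $c'(t)c(t)=0$. Where $c>0$ this forces $c'=0$. On a plateau of $c$ the zero $x=-c/2$ lies outside $D_\eps$ once $\eps<\inf c/2$, but as $c\to0^+$ this fails. To remove these zeros, replace the linear term by $c(t)x+\kappa(t)$ with $\kappa'$ chosen so that $\partial_tu=c'x+\kappa'\neq0$ at $x=-c/2$; for instance $\kappa'=c'c$, which gives $\partial_t u=c'c/2$ there. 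One still has to handle the points where $c'=0$ and $c>0$, so I would require $c'<0$ wherever $0<c<1$, exactly as in the phase-transition lemma. Where $c=1$ the zero is excluded because $\eps<1/2$. Where $c=0$ the zeros are exactly the axis. This gives $Z(du)=[2,2.5]\times\{0\}$.

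\textbf{Step 3: correct the density.} Compute the error of $u$ with $A=\mathrm I_3$:
\[
  \diver(\mathrm I_3\,du)=c''x+\kappa'' .
\]
This is supported where $c$ varies, which lies inside $(1,2)$. I would cancel it by adding to the density a term $H=O(1)\cdot c''$ and a $t$-row correction so that $\diver(H\,du)$ reproduces $-(c''x+\kappa'')$. Here $du$ is bounded below away from zero on the transition region, since $|\partial_y u|+|\partial_x u|\geq c$ is not small where $c$ is not small. The genuine difficulty is where $c$ is small.

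This is the main obstacle: keeping $A$ positive definite while $c\to0$ and $du$ degenerates. My plan is the following. Write the correction as a vector field $V$ with $\diver V=-(c''x+\kappa'')$ and $V(du)\ge0$, and set $\beta=\iota_{du^\sharp+V}\vol$. Positivity of $du\wedge\beta$ then gives a metric through \cref{thm:relative-metric-realization}, which retains $\mathrm I_3$ near both ends where $V=0$. The candidate is the $t$-directed field $V=(w,0,0)$ with $\partial_t w=-(c''x+\kappa'')$, that is $w=-(c'x+\kappa')$. Then
\[
  du(du^\sharp+V)=|du|^2-\partial_tu\,(c'x+\kappa')=(2x+c)^2+4y^2 ,
\]
which is nonnegative and vanishes exactly where $x=-c/2$, $y=0$. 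That is a zero set of $du\wedge\beta$ off the axis, and I would resolve it by adding a small multiple of $\partial_t u$ back into $V$, so that $V=(-(1-\delta)\partial_t u,0,0)$ with $\delta>0$ small. The error $\delta\,\partial_t^2u$ is then absorbed by a normal correction $O(\delta|c''|)$. Checking that this final perturbation keeps the form positive is the step I expect to require the most care.
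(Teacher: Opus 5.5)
Your Steps 1 and 2 are essentially the paper's construction. The paper also applies \cref{cor:ray-standard-model} in reverse near $t=3$ to reach $(u_2,\mathrm I_3)$. Its cap function is exactly your first candidate, $u=b(t)x+x^2-y^2$, with $b$ a positive constant near $t=1$, $b'<0$ where $0<b<b_0$, and $b=0$ near the model end, so the critical equations reduce to $y=0$, $x=-b/2$, $bb'=0$. The term $\kappa$ should be dropped. With $\kappa'=c'c$ the zero condition is still $cc'=0$, and $\kappa$ adds $\kappa''=c''c+c'^2$ to the error, a term that does not vanish on the axis.

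The gap is Step 3, which is the real content of the lemma and which you leave open. The difficulty you identify is not there: positive definiteness of an inverse metric density is a property of the matrix and does not depend on $du$. What is missing is an exact normal-block correction that vanishes to first order on the axis. With $\nabla_zu=(2x+c,-2y)$, take the density
\[
  A=\operatorname{diag}\Bigl(1,\ \mathrm I_2+\frac{c''(t)}{4}
  \begin{pmatrix}x&-y\\-y&3x\end{pmatrix}\Bigr).
\]
Then $\diver(A\,du)=c''x-c''x=0$, because the terms involving $c$ cancel. Since $c''$ is bounded, $A=\mathrm I_3+O(|z|)$ is positive definite on a thin tube. Also $A=\mathrm I_3$ wherever $c''=0$, in particular near $t=1$ and for $t\ge2$. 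The metric is $(\det A)A^{-1}$, and \cref{thm:relative-metric-realization} is not needed.

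Your detour does not replace this, for three reasons.
\begin{enumerate}
  \item With $V=-(1-\delta)\partial_tu\,\partial_t$, the residual divergence is $\delta\,\partial_t^2u$. This is the original error scaled by $\delta$, so the problem is unchanged.
  \item For an unspecified normal field $W$, positivity is not automatic. Near the curve $x=-c/2$, $y=0$ you have $du(X)=\delta(\partial_tu)^2+|\nabla_zu|^2+\nabla_zu\cdot W$, which forces $|W|^2\lesssim\delta c'^2c^2$ there. If $|W|$ is comparable to $\delta|c''|\,|z|$, this fails as $t\to2^-$, because $c''/c'=(\log|c'|)'$ is unbounded there. Only $W=H\nabla_zu$ with symmetric $H=O(|z|)$ gives $\nabla_zu\cdot W\ge -C|z|\,|\nabla_zu|^2$ for free, and that is exactly the density correction above.
  \item \cref{thm:relative-metric-realization} needs a smooth $g_0$ near the zero set $[2,2.5]\times\{0\}$ with $\star_{g_0}du=\beta$. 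Your $V$ is nonzero for $t<2$ arbitrarily close to $(2,0,0)$, so this hypothesis is unverified. Take a Euclidean auxiliary metric and any $W$ vanishing on the axis. Along $x=0$ the theorem's formula gives $g=\mathrm I_3$, but along $x=-c/2$ it gives $g|_{\operatorname{span}(\partial_x,\partial_y)}=\delta\,g_{\Euc}$. So the metric does not extend continuously to $(2,0,0)$ when $\delta\neq1$.
\end{enumerate}
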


\begin{proof}
On $t>2$, apply \cref{cor:ray-standard-model} with the direction
reversed to keep $(F,A_{\Euc})$ unchanged near $t=3$ and obtain
$(u_2,\mathrm{I}_3)$ near $t=2$.
The resulting differential vanishes exactly on the axis in this
portion of the tube.
We extend it to $t\leq2$.

Choose $b_0>0$ and $b\in\mathcal{C}^\infty(J;[0,b_0])$ such that
\[
  b=b_0\text{ for }t\leq3/2,
  \qquad b'<0\text{ for }3/2<t<2,
  \qquad b=0\text{ for }t\geq2.
\]
On $t\leq2$, define
\begin{equation}\label{eq:ordinary-cap-model}
\begin{aligned}
  u(t,x,y)&:=b(t)x+x^2-y^2,\\
  A&:=\operatorname{diag}\left(
    1,\mathrm{I}_2+\frac{b''(t)}4
      \begin{pmatrix}x&-y\\-y&3x\end{pmatrix}
  \right).
\end{aligned}
\end{equation}
All derivatives of $b$ vanish at $t=2$, so these expressions join
smoothly to $(u_2,\mathrm{I}_3)$ on $t>2$.
On a sufficiently small tube, $A$ is positive definite and
$\diver(A\,du)=b''x-b''x=0$.

The critical point equations for $u$ are
\[
  y=0,\qquad x=-b(t)/2,\qquad b(t)b'(t)=0.
\]
On $3/2<t<2$, the product $bb'$ is nonzero.
On $t\leq3/2$, choose $\eps<b_0/2$ to exclude $x=-b_0/2$.
Thus $du\neq0$ for $t<2$.
Together with the construction on $t\geq2$, this gives
$Z(du)=[2,3]\times\{0\}$.
On $t\leq3/2$, we also have $A=\mathrm{I}_3$ and
\[
  \partial_xu=b_0+2x\geq b_0-2\eps>0.
\]

For the second choice, start with $(F,A_{\Euc})$ near $t=1$
and carry out the same construction towards $t=3$.
\end{proof}

\subsubsection{Modified vertex models}

Combining \cref{cor:ray-standard-model} with
\cref{lem:ordinary-ray-cap}, we obtain the following modification
of a vertex model.

\begin{corollary}\label[corollary]{cor:modified-vertex-model}
Let $F$ be the homogeneous model \eqref{eq:homogeneous-eigencone}
associated with $f$, with branch directions $P$ and ordinary zero
directions $Q_f$.
Let $B\subset\R^3$ be a closed ball centered at the vertex $0$.
For each $q\in Q_f$, choose $r_q>0$ such that $r_q q\in\interior B$.
There exist an open neighborhood $U$ of
\[
  (B\cap\R_{\geq0}P)\cup\bigcup_{q\in Q_f}[0,r_q]q,
\]
a smooth metric $g$ on $U$, and a smooth $\cI$-valued function $u$
on $U\setminus\R_{\geq0}P$ satisfying
\[
  \Delta_gu=0,
  \qquad
  \Zero(du)
  =(U\cap\R_{\geq0}P)\cup\bigcup_{q\in Q_f}[0,r_q]q.
\]
Near $0$, we have $(u,g)=(F,g_{\Euc})$.
Near each point of $\partial B\cap\R_{>0}P$, there are tubular
coordinates $(t,z)$ in which
\[
  (u,g)=\bigl(u_{3/2},dt^2+|dz|^2\bigr).
\]
\end{corollary}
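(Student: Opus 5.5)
We assemble the neighborhood $U$ from three kinds of pieces: a small ball around the vertex where nothing is changed, thin tubes along segments of the branch rays reaching out to $\partial B$, and thin tubes along the ordinary zero rays that stop at $r_qq$. Since $F$ is homogeneous and $dF\neq0$ away from $\R_{\geq0}P\cup\R_{\geq0}Q_f$ and the origin, we first fix a small radius $\delta>0$ with $\delta<\min_q r_q$ and with $\delta$ much smaller than the radius of $B$. On $B_{\delta}(0)$ we keep $(u,g)=(F,g_{\Euc})$. Off the origin, every modification will be supported in disjoint thin tubes around compact radial segments. The rays are pairwise disjoint away from $0$ because $P\cup Q_f$ is finite.

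For a branch direction $p\in P$, we work on a segment $J_p\subset\R_{>0}p$ from radius $\delta$ to a radius slightly inside $B$, rescaled to $t\in[1,3]$. There we apply \cref{cor:ray-standard-model}. This keeps $(F,A_{\Euc})$ near the inner end and produces $(u_{3/2},\mathrm I_3)$ in normal coordinates near the outer end. The construction then continues radially as the translation-invariant model $(u_{3/2},dt^2+|dz|^2)$ on a tube that crosses $\partial B$. This gives the required normal form near $\partial B\cap\R_{>0}p$. Along the way $du\neq0$ off the axis, so the zero set is exactly the ray piece.

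For $q\in Q_f$, we take the segment from radius $\delta$ to radius $r_q$ and split it into two parts. On the inner part we use \cref{cor:ray-standard-model}, with $F$ extending smoothly, to reach $(u_2,\mathrm I_3)$. On the outer part we use the cap in the proof of \cref{lem:ordinary-ray-cap}, with $u=b(t)x+x^2-y^2$ and a profile $b$ that vanishes up to $r_qq$ and becomes positive past it. The parametrization is arranged so that the zero set is exactly $[\delta,r_q]q$, together with the inner part. Past the cap we have $\partial_xu\geq b_0-2\eps>0$ and $A=\mathrm I_3$, so the tube simply ends there.

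Finally, $U$ is defined as the union of $B_\delta(0)$ and these tubes, each shrunk so that the tubes are disjoint outside $B_{\delta/2}(0)$. Each inverse density $A$ is converted to a metric by \eqref{eq:metric-from-density}, so $\Delta_gu=0$ becomes $\diver(A\,du)=0$. On overlaps near radius $\delta$, every piece equals $(F,g_{\Euc})$, so the pieces glue smoothly. The line bundle on each branch tube is the restriction of $\cI$, since the coordinate changes preserve the meridian holonomy. The zero set is computed piece by piece. In $B_\delta(0)$ it is $\R_{\geq0}P\cup\R_{\geq0}Q_f$, and in the tubes it is as described above.

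The main obstacle is the bookkeeping needed to make the normal coordinates of \cref{lem:exact-ray-normal-form} glue to the ambient Euclidean coordinates at the inner end. We handle this by asking that $A=A_{\Euc}$ there, computed in the tubular coordinates, which is exactly what that lemma provides. We also need to choose the tube radii smaller than all the $\eps$ produced by the lemmas. Finitely many segments make this possible.
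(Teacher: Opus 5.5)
Your proposal is correct and follows the paper's route: the paper obtains this corollary by combining \cref{cor:ray-standard-model} on the branch rays with \cref{lem:ordinary-ray-cap} on the ordinary zero rays, keeping $(F,g_{\Euc})$ near the vertex and continuing the product model $(u_{3/2},dt^2+|dz|^2)$ across $\partial B$. Your two-step treatment of each $q$-ray just unpacks the second alternative of \cref{lem:ordinary-ray-cap}. The only bookkeeping point to fix is to start each tube slightly inside $B_\delta(0)$, so that the pieces genuinely overlap on a region where they all equal $(F,g_{\Euc})$.
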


\subsection{Gluing the vertex models}

Let $G$ be a finite graph with positive even valence at each vertex.
For a tame embedding $\iota:G\hookrightarrow\interior B^3$,
define $Z:=\iota(G)$.
Since every vertex has even valence, there is a flat real line bundle
$\cI\to B^3\setminus Z$ with holonomy $-1$ around every edge meridian
\cite[Section~2.1]{HaydysMazzeoTakahashiIndex}.

At each vertex of valence $2n\geq4$, choose a modified model
from \cref{cor:modified-vertex-model} with $2n$ branch rays.
At a two-valent vertex, use $(u_{3/2},dt^2+|dz|^2)$.
Choose $\iota$ smooth away from the vertices so that, in the
coordinates of each local model, $Z$ is the union of its branch rays.
Define $Z_{\mathrm{ord}}$ to be the union of the ordinary zero
segments in these models.

\begin{proposition}
  \label[proposition]{prop:harmonic-graph-neighborhood}
There exist a compact regular neighborhood $N\subset\interior B^3$ of
$Z\cup Z_{\mathrm{ord}}$ with smooth boundary, a smooth metric $g_0$
near $N$, and a smooth $\cI$-valued function $F_0$ near $N\setminus Z$
satisfying
\[
  \Delta_{g_0}F_0=0,
  \qquad
  \Zero(dF_0)=Z\cup Z_{\mathrm{ord}}.
\]
The triple $(Z\cup Z_{\mathrm{ord}},\cI,dF_0)$ is a
$\Ztwo$-harmonic 1-form on $N$ with $\Mono(\cI)=Z$.
\end{proposition}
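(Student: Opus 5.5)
The construction is local: we place a modified vertex model at every vertex and a standard edge model along every edge, then check that these pieces agree where they overlap. At each vertex $v$ of valence $2n\geq4$, take the data $(U_v,g_v,u_v)$ from \cref{cor:modified-vertex-model}, placed inside a small closed ball $B_v$ around $\iota(v)$. At a two-valent vertex, use the translation-invariant model $(u_{3/2},dt^2+|dz|^2)$. By the choice of $\iota$, near each point of $\partial B_v\cap Z$ there are tubular coordinates $(t,z)$ in which $Z=\{z=0\}$ and $(u_v,g_v)=(u_{3/2},dt^2+|dz|^2)$.

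Each edge $e$ joins two such boundary points. Outside the vertex balls it is a smooth embedded arc, so choose a tubular neighborhood $e\times D_\eps$ with coordinates $(t,z)$, where $t$ is arclength along $e$. On this tube set
\[
  (F_0,g_0)=(u_{3/2},dt^2+|dz|^2).
\]
The metric $dt^2+|dz|^2$ is not the pullback of a fixed ambient metric, but it is a genuine smooth metric on the tube, and this is all we need. To match the edge tube with the vertex-model coordinates near the endpoints, we arrange that the tubular coordinates of the edge coincide with the vertex tubular coordinates on a short overlap. This can be done because the vertex models are prescribed only in a neighborhood of the rays, and $\iota$ was chosen to be smooth and to agree with the rays there.

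One subtlety is the normal framing. Going around the edge, $u_{3/2}=\operatorname{Re}(z^{3/2})$ depends on the choice of the normal coordinate $z$ up to rotation. The rotation $z\mapsto e^{i\theta}z$ sends $u_{3/2}$ to $\operatorname{Re}(e^{3i\theta/2}z^{3/2})$, which is a different function. We resolve this by letting the normal frame of the tube twist along $t$: the coordinate $z$ is defined from a normal framing of $e$ that agrees with the given framings at both ends. Such a framing exists because the space of normal frames along an interval is connected, and any relative rotation can be absorbed by choosing the framing to interpolate. The resulting $(t,z)$ are still tubular coordinates. Because $g_0=dt^2+|dz|^2$ is defined in these coordinates rather than by an ambient metric, $u_{3/2}$ remains exactly $g_0$-harmonic. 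The sign ambiguity of $z^{3/2}$ is absorbed into the choice of local trivialization of $\cI$, and we check that it is consistent with the global bundle $\cI$, which has holonomy $-1$ around each meridian.

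Let $N$ be a smoothed union of the vertex neighborhoods and the edge tubes; taking all tubes thin enough makes $N$ a compact regular neighborhood of $Z\cup Z_{\mathrm{ord}}$. The ordinary segments $[0,r_q]q$ lie inside the vertex neighborhoods, by the choice $r_qq\in\interior B_v$. Then:
\begin{itemize}
  \item $\Delta_{g_0}F_0=0$ holds piece by piece.
  \item $\Zero(dF_0)=Z\cup Z_{\mathrm{ord}}$ follows from \cref{cor:modified-vertex-model} and from $|du_{3/2}|=\tfrac32|z|^{1/2}$.
  \item The $\Ztwo$-harmonic axioms are local, so we verify them pointwise on $Z\cup Z_{\mathrm{ord}}$.
\end{itemize}

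\emph{Verification of the axioms.} Along the edges and on the two-valent models, $|dF_0|\sim|z|^{1/2}$. Hence $|dF_0|$ is continuous, the Hessian satisfies $|\nabla dF_0|\sim|z|^{-1/2}\in L^2$, and the growth bound holds with $r^{4}$. Near the branch rays of the higher-valence models, $F=\operatorname{Re}(z^\gamma a)$ with $\gamma\geq3/2$, which gives the same bounds. At a vertex, $F$ is homogeneous of degree $\mu>1$ by \cref{lem:homogeneous-exponent}, so $|dF|\lesssim R^{\mu-1}$. This gives $\int_{B_r}|dF|^2\lesssim r^{2\mu+1}$ with $2\mu+1>3$, and finite energy follows from the same homogeneity. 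Along $Z_{\mathrm{ord}}$ the function $u$ is smooth by \cref{cor:ray-standard-model} and \cref{lem:ordinary-ray-cap}, so the axioms are automatic there.

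\emph{Monodromy locus.} Every edge point has holonomy $-1$ around its meridian, so $Z\subset\Mono(\cI)$. Points of $Z_{\mathrm{ord}}\setminus Z$ have neighborhoods on which $\cI$ is trivial, so they are not in $\Mono(\cI)$. Therefore $\Mono(\cI)=Z$.

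The main obstacle is the coordinate and framing matching along the edges: we must confirm that the twisted tubular coordinates can be made to agree exactly with the vertex-model coordinates on the overlaps, while remaining compatible with the chosen bundle $\cI$. The first approach to try is to define the edge tubes directly as extensions of the vertex-model tubes, and to interpolate the framing in the middle of each edge, where only the coordinate-defined metric $dt^2+|dz|^2$ is used.
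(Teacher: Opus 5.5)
Your construction is the paper's. By \cref{cor:modified-vertex-model}, every vertex model is already $(u_{3/2},dt^2+|dz|^2)$ near the points where the edges leave. The paper puts the same product model on each edge tube, glues, and checks the axioms locally. Your local verifications are correct and more explicit than the paper's one-paragraph proof: the $|z|^{1/2}$ and $|z|^{-1/2}$ bounds along the branch curves, $\int_{B_r}|dF|^2\lesssim r^{2\mu+1}$ at a vertex, smoothness along $Z_{\mathrm{ord}}$, and $\Mono(\cI)=Z$.

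The step that does not work as you state it is the compatibility with $\cI$, which you yourself single out as the main obstacle.

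\emph{Meridians do not suffice.} Holonomy $-1$ around meridians is automatic for the glued two-valued function, and it does not determine a flat bundle on $N\setminus Z$. Since $N\setminus Z\simeq\partial N$, there are $2^{b_1(Z)}$ flat $\{\pm1\}$-bundles on $N\setminus Z$ with this meridian holonomy. Because meridians generate $H_1(B^3\setminus Z)$, only $\cI|_{N\setminus Z}$ among them extends over $B^3\setminus Z$. The extension over $W$ in the next subsection needs exactly this one.

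\emph{The twist count matters.} The number of full twists in your interpolating framing is therefore not irrelevant. Adding one full twist on an edge multiplies the continuation of $z^{3/2}$ along a push-off of that edge, as it arrives at the far end, by $e^{3\pi i}=-1$. This flips the holonomy of the glued bundle along every loop whose image in $Z$ traverses that edge an odd number of times. For a single knot, this is the requirement in \cref{sec:sphere} that the normal framing lift to the branched cover.

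\emph{The fix.} Choose the parity of the twisting on each edge so that the glued bundle is $\cI$. This is always possible. The discrepancy class vanishes on meridians, so it comes from $H^1(N;\Ztwo)\cong H^1(Z;\Ztwo)$, because the arcs of $Z_{\mathrm{ord}}$ are dead ends. Single-edge cochains span this group.

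\emph{A smaller point.} Your claim that normal frames along an interval form a connected space is false, since $\mathrm{GL}_2(\R)$ has two components. If the two prescribed end frames cannot be joined, precompose one end chart with $z\mapsto\bar z$. This preserves $dt^2+|dz|^2$ and preserves $u_{3/2}$ up to sign.

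The paper's proof only asserts that the pieces agree on overlaps. Making these choices explicit is what completes your argument, and the paper's.
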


\begin{proof}
For each vertex $v$, let $(F_v,g_v)$ be the chosen local model.
By \cref{cor:modified-vertex-model}, on the regions where we
attach the edge tubes, we have
\[
  (F_v,g_v)=\bigl(u_{3/2},dt^2+|dz|^2\bigr).
\]
Use the same product model on each edge tube.
These pairs agree on the overlaps and therefore glue together
to give $(F_0,g_0)$ near a compact regular neighborhood
$N\subset\interior B^3$ of $Z\cup Z_{\mathrm{ord}}$ with smooth boundary.
By construction, $(Z\cup Z_{\mathrm{ord}},\cI,dF_0)$ is a
$\Ztwo$-harmonic 1-form on $N$.
\end{proof}

Figure~\ref{fig:figure-eight-ordinary-zero-set} illustrates a regular
neighborhood of $Z\cup Z_{\mathrm{ord}}$ for $Z=S^1\vee_p S^1$.

\begin{figure}[H]
  \centering
  \includegraphics[width=.80\textwidth]
    {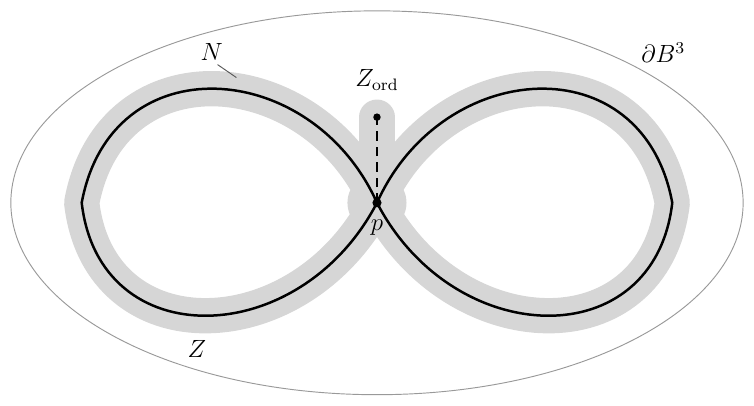}
  \caption{A schematic projection of the regular neighborhood $N$
  (shaded), with $Z$ solid and $Z_{\mathrm{ord}}$ dashed.}
  \label{fig:figure-eight-ordinary-zero-set}
\end{figure}

\subsection{Global extension by the relative
\texorpdfstring{$h$}{h}-principle}

We extend $(dF_0,g_0)$ to $B^3$, keeping it unchanged near $N$
and introducing no zeros outside $N$.
We first construct a positive pair using the relative $h$-principle
\cite[Theorem~7.2.4]{EliashbergMishachev}.
Then \cref{thm:relative-metric-realization} gives the metric.

Fix the data from \cref{prop:harmonic-graph-neighborhood} and define
$W:=B^3\setminus\interior N$ and $S:=\partial N$.
The manifold $W$ is connected, with boundary $\partial W=S\sqcup\partial B^3$.
Choose a collar $S\times(-2\eps,2\eps)$ on which $dF_0$ is nonzero, with $S\times[0,2\eps)\subset W$.

\begin{lemma}
  \label[lemma]{lem:interface-primitive}
There is a $\cI$-valued 1-form $\lambda_0$ on
$S\times(-2\eps,2\eps)$ such that $d\lambda_0
  =\star_{g_0}dF_0.$
\end{lemma}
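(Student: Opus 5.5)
The plan is to show that the closed $\cI$-valued 2-form $\beta:=\star_{g_0}dF_0$ on the collar $S\times(-2\eps,2\eps)$ has vanishing class in twisted de Rham cohomology. We then produce $\lambda_0$ by passing to the double cover of the collar determined by $\cI$ and antisymmetrizing an ordinary primitive.

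First, $\beta$ is closed. Indeed, $dF_0\neq0$ on the collar and $\Delta_{g_0}F_0=0$ there by \cref{prop:harmonic-graph-neighborhood}, so $d\star_{g_0}dF_0=0$. The collar deformation retracts onto $S$, and $S$ is a finite disjoint union of closed oriented surfaces $\Sigma$ (components of $\partial N$). It suffices to work on one component $C_\Sigma=\Sigma\times(-2\eps,2\eps)$.

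The key topological input is that $\cI|_{C_\Sigma}$ is nontrivial for every such $\Sigma$. Each component of $N$ is a regular neighborhood of a component of $Z\cup Z_{\mathrm{ord}}$. Such a component contains a vertex of positive valence, hence an edge of $Z$, since the ordinary segments are attached at vertices of $Z$. A meridian circle of that edge can be pushed onto $\partial N$, and it lands on the boundary component $\Sigma$ bounding that component of $N$. There $\cI$ has holonomy $-1$ by construction.

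Let $\pi:\widetilde C\to C_\Sigma$ be the associated double cover with involution $\tau$. By the nontriviality just established, $\widetilde C$ is connected, and it is of the form $\widetilde\Sigma\times(-2\eps,2\eps)$ with $\widetilde\Sigma$ a connected closed surface double covering $\Sigma$. Give $\widetilde\Sigma$ the pulled-back orientation; then $\tau$ preserves it, being a deck transformation of an oriented cover. The form $\beta$ lifts to an ordinary closed 2-form $\widetilde\beta$ with $\tau^*\widetilde\beta=-\widetilde\beta$. Since $H^2_{dR}(\widetilde C)\cong\R$ is detected by integration over $\widetilde\Sigma\times\{0\}$, we compute
\[
  \int_{\widetilde\Sigma}\widetilde\beta=\int_{\widetilde\Sigma}\tau^*\widetilde\beta=-\int_{\widetilde\Sigma}\widetilde\beta ,
\]
so the integral vanishes. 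Hence $\widetilde\beta=d\widetilde\lambda$ for some ordinary 1-form $\widetilde\lambda$ on $\widetilde C$.

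Now set $\widetilde\lambda_0:=\tfrac12(\widetilde\lambda-\tau^*\widetilde\lambda)$. This form is $\tau$-odd and satisfies $d\widetilde\lambda_0=\tfrac12(\widetilde\beta-\tau^*\widetilde\beta)=\widetilde\beta$. It therefore descends to an $\cI$-valued 1-form $\lambda_0$ on $C_\Sigma$ with $d\lambda_0=\star_{g_0}dF_0$. Doing this on each component gives the lemma.

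The main point requiring care is the nontriviality of $\cI$ on every boundary component. If some component of $\partial N$ carried trivial $\cI$, the argument would instead require the flux $\int_\Sigma\star_{g_0}dF_0$ to vanish, which is not automatic. This is where the hypothesis of positive valence at each vertex enters: it guarantees that every component of $N$ contains a branch edge whose meridian lies on its boundary. An equivalent formulation is $H^2(\Sigma;\cI)\cong H_0(\Sigma;\cI)=0$ by twisted Poincaré duality, since $\Sigma$ is orientable and $\cI$ is nontrivial.
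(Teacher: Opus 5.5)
Your proposal is correct and follows the paper's argument. In both, the key input is that every component of $S$ carries an edge meridian with holonomy $-1$, which forces $H^2(S;\cI)=0$; the paper simply cites twisted Poincar\'e duality for this, whereas you verify it directly by lifting to the connected double cover, noting that the odd lift integrates to zero over $\widetilde\Sigma$, and antisymmetrizing an ordinary primitive.
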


\begin{proof}
Each connected component of $S$ contains an edge meridian along which
$\cI$ has holonomy $-1$, so $H^2(S;\cI)=0$ by Poincar\'e duality.
As $\star_{g_0}dF_0$ is closed, such $\lambda_0$ exist on the collar.
\end{proof}

On the collar, we have
\[
  dF_0\wedge d\lambda_0
  =|dF_0|_{g_0}^2\,\vol_{g_0}>0.
\]
Thus $(F_0,\lambda_0)$ is a positive pair there.
Define $K_{\mathrm{in}}:=S\times[0,\eps]$.
We will keep $(F_0,\lambda_0)$ unchanged near $K_{\mathrm{in}}$.

We seek a smooth section $F$ and a smooth 1-form $\lambda$ on $W$
such that $F=F_0$ and $\lambda=\lambda_0$ near $K_{\mathrm{in}}$.
We also seek a 1-form $p$ and a two-tensor $Q$ satisfying
$p=dF_0$ and $Q=\nabla\lambda_0$ near $K_{\mathrm{in}}$.
We use a torsion-free connection, coupled to the flat connection on
$\cI$, to define $\nabla\lambda$.
For a $\cI$-valued two-tensor $Q$, define
\[
  (\operatorname{Alt}Q)(X,Y):=Q(X,Y)-Q(Y,X).
\]
The following lemma constructs these extensions with
$p\wedge\operatorname{Alt}Q>0$ on $W$.

\begin{lemma}
  \label[lemma]{lem:graph-extension-data}
There exist $F\in\mathcal{C}^\infty(W;\cI)$,
$\lambda,p\in\Omega^1(W;\cI)$, and
$Q\in\Gamma(T^*W\otimes T^*W\otimes\cI)$ such that
$p\wedge\operatorname{Alt}Q>0$ on $W$ and
\[
  (F,\lambda,p,Q)=(F_0,\lambda_0,dF_0,\nabla\lambda_0)
  \quad\text{near }K_{\mathrm{in}}.
\]
\end{lemma}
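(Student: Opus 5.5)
The plan is to reduce the lemma to a pointwise, purely topological extension problem for the $1$-form $p$ alone. The functions $F$ and $\lambda$ are unconstrained away from $K_{\mathrm{in}}$. The condition $p\wedge\operatorname{Alt}Q>0$ is pointwise, open, and convex in $Q$ for fixed $p$.

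\emph{Step 1: extend $F$ and $\lambda$.} Choose a cutoff $\chi$ on $W$ with $\chi=1$ on a neighborhood of $K_{\mathrm{in}}$ and $\supp\chi\subset S\times[0,2\eps)$. Set $F:=\chi F_0$ and $\lambda:=\chi\lambda_0$, extended by zero. These are smooth $\cI$-valued sections on $W$ agreeing with $(F_0,\lambda_0)$ near $K_{\mathrm{in}}$. No positivity is required of them; positivity will be imposed only on the formal data $(p,Q)$.

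\emph{Step 2: extend $p$ as a nowhere-vanishing $\cI$-valued $1$-form.} This is the main step. Nonvanishing sections of $T^*W\otimes\cI$ are sections of a bundle with fiber $\R^3\setminus\{0\}\simeq S^2$. The twisting by $\cI$ acts on the fiber by $-1$, which preserves the fiber and only twists the coefficient system $\pi_2(S^2)$.

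We must extend the given nonvanishing section $dF_0$ from a neighborhood of $K_{\mathrm{in}}$, slightly larger than where $\chi=1$, to all of $W$. Since $\pi_0(S^2)=\pi_1(S^2)=0$, the only obstruction lies in $H^3(W,K_{\mathrm{in}};\underline{\pi_2})$ for a local system $\underline{\pi_2}$. By Lefschetz duality this group is isomorphic to $H_0(W,\partial B^3;\underline{\Z}')$ for a twisted local system $\underline{\Z}'$. That group vanishes because $W$ is connected and $\partial B^3\neq\varnothing$.

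Equivalently, and more concretely: $W$ admits a handle decomposition relative to the collar of $S$ with no $3$-handles, since the other boundary component $\partial B^3$ is free. A section into an $S^2$-bundle extends over handles of index $\le2$ without obstruction. This gives a nowhere-vanishing $p$ on $W$ with $p=dF_0$ on a neighborhood $V$ of $\{\chi\neq0\}$.

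\emph{Step 3: choose $Q$ by convex interpolation.} Fix an auxiliary metric $h$ on $W$ and set $\omega:=\star_hp$, an $\cI$-valued $2$-form with $p\wedge\omega=|p|_h^2\vol_h>0$. Define
\[
  Q:=\chi\,\nabla\lambda_0+\tfrac12(1-\chi)\,\omega^{\flat},
\]
where $\omega^{\flat}$ is $\omega$ viewed as an antisymmetric two-tensor, so that $\operatorname{Alt}\omega^{\flat}=2\omega$. Since the connection is torsion-free, $\operatorname{Alt}\nabla\lambda_0=d\lambda_0$. Hence
\[
  \operatorname{Alt}Q=\chi\,d\lambda_0+(1-\chi)\,\omega.
\]
On $\supp\chi\subset V$ we have $p=dF_0$, and there $dF_0\wedge d\lambda_0=|dF_0|^2_{g_0}\vol_{g_0}>0$ and $p\wedge\omega>0$. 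For fixed $p$, the set of $2$-forms $\omega'$ with $p\wedge\omega'>0$ is an open half-space and hence convex. Therefore $p\wedge\operatorname{Alt}Q>0$ everywhere on $W$, and $(F,\lambda,p,Q)=(F_0,\lambda_0,dF_0,\nabla\lambda_0)$ near $K_{\mathrm{in}}$.

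The expected obstacle is Step 2: the twisted obstruction theory with the local system induced by $\cI$ and the orientation. I would handle it via the relative handle decomposition without $3$-handles, which sidesteps the bookkeeping of coefficients.
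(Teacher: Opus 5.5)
Your proposal is correct and follows essentially the paper's route: cutoff extensions of $F_0,\lambda_0$, an obstruction-theoretic extension of $dF_0$ to a nowhere-zero $\cI$-valued 1-form (the obstruction lies in $H^3(W,K_{\mathrm{in}};\mathfrak{o}_E)\cong H_0(W,\partial B^3;\mathfrak{o}_E)=0$), and a choice of $Q$ whose alternation is a positive dual of $p$. The only difference is minor: the paper sets $Q=\nabla\lambda+\frac12(\star_{g_{\mathrm{aux}}}p-d\lambda)$ with $g_{\mathrm{aux}}=g_0$ near $K_{\mathrm{in}}$, so that $\operatorname{Alt}Q=\star_{g_{\mathrm{aux}}}p$ holds exactly and neither a convexity argument nor $p=dF_0$ on all of $\supp\chi$ is needed; your convex interpolation needs that extra condition, and you correctly arrange it by extending $p$ relative to the slightly larger collar $V$.
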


\begin{proof}
We first extend $dF_0$ to a nowhere zero section of
$E:=T^*W\otimes\cI$.
The complement of the zero section in $E$ has fiber
$\R^3\setminus\{0\}\simeq S^2$.
Since this fiber is connected and simply connected, $dF_0$ extends
over the relative two-skeleton of $(W,K_{\mathrm{in}})$.
The only remaining obstruction lies in
$H^3(W,K_{\mathrm{in}};\mathfrak{o}_E)$, where $\mathfrak{o}_E$ is the
integral orientation local system of $E$
\cite[Sections~3.H and~4.3]{HatcherAlgebraicTopology}.
Since $W$ is oriented, the monodromy of $\mathfrak{o}_E$ is given by
the holonomy of $\cI$.
As $K_{\mathrm{in}}$ retracts onto $S$, Poincar\'e--Lefschetz duality gives
\begin{equation}\label{eq:graph-relative-obstruction}
  H^3(W,K_{\mathrm{in}};\mathfrak{o}_E)
  \cong H_0(W,\partial B^3;\mathfrak{o}_E)=0.
\end{equation}
The last equality follows because $W$ is connected and
$\partial B^3\neq\varnothing$.
Thus there is a continuous nowhere zero section $p$ of $E$
such that $p=dF_0$ near $K_{\mathrm{in}}$.
We smooth $p$ by a sufficiently small perturbation so that
$p=dF_0$ near $K_{\mathrm{in}}$ and $p\neq0$ on $W$.

Choose a smooth metric $g_{\mathrm{aux}}$ on $W$ equal to $g_0$ near
$K_{\mathrm{in}}$.
Multiplying $F_0,\lambda_0$ by a cutoff equal to one near
$K_{\mathrm{in}}$ and supported in the collar gives smooth extensions
$F,\lambda$ to $W$.
Viewing 2-forms as alternating two-tensors, define
\[
  Q:=\nabla\lambda+
  \frac12\bigl(\star_{g_{\mathrm{aux}}}p-d\lambda\bigr).
\]
Since $\operatorname{Alt}(\nabla\lambda)=d\lambda$, we have
\[
  \operatorname{Alt}Q=\star_{g_{\mathrm{aux}}}p,
  \qquad
  p\wedge\operatorname{Alt}Q
  =|p|_{g_{\mathrm{aux}}}^2\,\vol_{g_{\mathrm{aux}}}>0.
\]
Near $K_{\mathrm{in}}$, we have
$\star_{g_{\mathrm{aux}}}p=\star_{g_0}dF_0=d\lambda_0=d\lambda$.
Hence $Q=\nabla\lambda_0$ near $K_{\mathrm{in}}$.
\end{proof}

For a sufficiently small inner collar, $W\setminus K_{\mathrm{in}}$
is connected and meets $\partial B^3$.  Thus the compression
construction in
\cite[pp.~40--41 and~68]{EliashbergMishachev} gives the following.

\begin{lemma}
  \label[lemma]{lem:relative-compression}
There is a polyhedron $P\subset W$ of positive codimension, that is,
a subcomplex of dimension at most two in a smooth triangulation of $W$,
with the following property.  For every neighborhood $\mathcal U$ of
$K_{\mathrm{in}}\cup P$, there is a smooth family of embeddings
$c_t:W\hookrightarrow W$, $0\leq t\leq1$, satisfying
\[
  c_0=\id_W,
  \qquad c_t=\id_W\quad\text{near }K_{\mathrm{in}},
  \qquad c_1(W)\subset\mathcal U.
\]
\end{lemma}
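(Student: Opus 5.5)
We prove the compression lemma by the standard Eliashberg--Mishachev argument for open manifolds. The plan is to build a Morse function adapted to the pair $(W,K_{\mathrm{in}})$ with no index-three critical points, take $P$ to be its descending skeleton, and compress $W$ along the gradient flow.

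\textbf{Step 1: handle decomposition.} Choose a smooth triangulation of $W$ in which $K_{\mathrm{in}}=S\times[0,\eps]$ is a subcomplex. Build $W$ from a collar of $K_{\mathrm{in}}$ by attaching handles. Since $W\setminus K_{\mathrm{in}}$ is connected and meets $\partial B^3$, every $3$-handle can be cancelled against a $2$-handle, or traded away by sliding it out through the free boundary $\partial B^3$. Concretely, choose a Morse function $\phi:W\to[0,1]$ with the following properties.
\begin{itemize}
  \item $\phi^{-1}(0)$ is the outer face $S\times\{\eps\}$ of $K_{\mathrm{in}}$, extended into $K_{\mathrm{in}}$.
  \item $\phi^{-1}(1)=\partial B^3$.
  \item $\phi$ has no critical points of index $3$.
\end{itemize}
The existence of such a $\phi$ is the usual fact that a connected cobordism with nonempty outgoing boundary admits a handle decomposition relative to the incoming end with handles of index at most two. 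We get it by turning the decomposition upside down and cancelling every $0$-handle of the dual decomposition, which is possible by connectedness to $\partial B^3$.

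\textbf{Step 2: the polyhedron.} Let $P$ be the union of the descending (stable) manifolds of the critical points of $\phi$ with respect to a gradient-like vector field $v$ that is Morse--Smale. Each descending manifold has dimension equal to the index, which is at most two. Its closure is a subcomplex of a suitable smooth triangulation, so $P$ is a polyhedron of dimension at most two.

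\textbf{Step 3: the compression.} Every point of $W\setminus(K_{\mathrm{in}}\cup P)$ flows under $-v$ either into $K_{\mathrm{in}}$ or toward $P$. Given a neighborhood $\mathcal U$, we modify $-v$ by a cutoff so that it vanishes near $K_{\mathrm{in}}$, and we also make it tangent to, or inward along, $\partial B^3$, so that the time-$s$ flow embeds $W$ into itself. Compactness then gives a time $T$ such that the time-$T$ flow maps $W$ into $\mathcal U$. Setting $c_t$ equal to the flow at time $tT$ gives $c_0=\id_W$, $c_t=\id_W$ near $K_{\mathrm{in}}$, and $c_1(W)\subset\mathcal U$.

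\textbf{Main obstacle.} The hardest step is the uniform entrapment: points near, but not on, the stable manifolds may linger near critical points. We handle this as in \cite[pp.~40--41]{EliashbergMishachev}. We use a Morse--Smale vector field and a standard neighborhood of each descending manifold in which the flow is linear. The complement of a small neighborhood of $K_{\mathrm{in}}\cup P$ is then a compact set on which $\phi$ strictly decreases at a uniform rate, so it is swept into $\mathcal U$ in bounded time.
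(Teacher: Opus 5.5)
Your proposal is correct and takes the same route as the paper. The paper simply invokes the Eliashberg--Mishachev compression construction after noting that $W\setminus K_{\mathrm{in}}$ is connected and meets $\partial B^3$; this is exactly the hypothesis you use to eliminate index-three critical points, and your Morse-theoretic core with its cut-off downward flow is the standard content of that construction. One small repair to your last paragraph: a uniform rate of decrease of $\phi$ outside a neighborhood only bounds the total time a trajectory spends outside it, so you should either take that neighborhood forward-invariant, or argue directly that the nested compact sets $c_T(W)$ shrink to the points whose backward orbits stay in $W$ for all time, which lie in $P$ together with the zero set of the cut-off field; hence $c_T(W)\subset\mathcal U$ for large $T$.
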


The polyhedron $P$ in \cref{lem:relative-compression} may be chosen in a
triangulation compatible with $K_{\mathrm{in}}$, with
$P\setminus K_{\mathrm{in}}\subset\interior W$.
Fix a smooth background metric $g_{\mathrm{aux}}$ on $W$; all pointwise norms below use
$g_{\mathrm{aux}}$ and the flat metric on $\cI$.
The holonomic approximation theorem applies to sections of arbitrary smooth
fiber bundles, in particular to $\cI\oplus(T^*W\otimes\cI)$, with
derivatives expressed using the chosen connection.

\begin{lemma}
  \label[lemma]{lem:holonomic-approximation}
\cite[Theorem~1.2.1]{EliashbergMishachevHolonomic}
Let
\[
  F\in \mathcal{C}^\infty(W;\cI),
  \qquad \lambda,p\in\Omega^1(W;\cI),
  \qquad Q\in\Gamma(T^*W\otimes T^*W\otimes\cI)
\]
satisfy $p=dF$ and $Q=\nabla\lambda$ near $K_{\mathrm{in}}$.
For every $\delta>0$, one can choose $P$ as in
\cref{lem:relative-compression} and a neighborhood $\mathcal U$ of
$K_{\mathrm{in}}\cup P$ on which there exist
$f\in \mathcal{C}^\infty(\mathcal U;\cI)$ and
$\eta\in\Omega^1(\mathcal U;\cI)$ such that
\begin{equation}\label{eq:holonomic-approximation}
  |f-F|+|\eta-\lambda|+|df-p|+|\nabla\eta-Q|<\delta
  \quad\text{on }\mathcal U,
\end{equation}
and $(f,\eta)=(F,\lambda)$ near $K_{\mathrm{in}}$.
\end{lemma}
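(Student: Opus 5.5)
The plan is to reduce the statement to the Eliashberg--Mishachev holonomic approximation theorem, \cite[Theorem~1.2.1]{EliashbergMishachevHolonomic}, in its relative form. That theorem is formulated for sections of a fiber bundle over a manifold with boundary. We apply it to the bundle $Y:=\cI\oplus(T^*W\otimes\cI)$ over $W$.

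\textbf{Step 1: package the data as a formal section.} A pair $(F,\lambda)$ is a section of $Y$. Its $1$-jet lives in $J^1Y$. Using the chosen torsion-free connection coupled to the flat connection on $\cI$, we identify $J^1Y$ with
\[
Y\oplus (T^*W\otimes\cI)\oplus(T^*W\otimes T^*W\otimes\cI).
\]
Under this identification the $1$-jet of $(f,\eta)$ is $(f,\eta,df,\nabla\eta)$. Hence $\Phi:=(F,\lambda,p,Q)$ is a formal section of $J^1Y$. By hypothesis, $\Phi$ is holonomic near $K_{\mathrm{in}}$: it equals $j^1(F,\lambda)$ there. This identification is a smooth bundle isomorphism, so pointwise $\delta$-closeness measured in $g_{\mathrm{aux}}$ matches closeness in $J^1Y$, up to a constant over the compact $W$.

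\textbf{Step 2: invoke the relative theorem.} Take $A:=P$ to be a polyhedron of positive codimension in $W$, chosen as in \cref{lem:relative-compression} from a triangulation compatible with $K_{\mathrm{in}}$. The relative version of holonomic approximation then gives, for every $\delta>0$, the following data:
\begin{enumerate}
\item a $\delta$-small diffeotopy $h^\tau$ of $W$, fixed near $K_{\mathrm{in}}$;
\item a holonomic section $j^1(f,\eta)$ over a neighborhood $\mathcal U$ of $K_{\mathrm{in}}\cup h^1(P)$;
\end{enumerate}
such that $j^1(f,\eta)$ is $\delta$-close to $\Phi$ on $\mathcal U$. The relative form also lets us take $(f,\eta)=(F,\lambda)$ near $K_{\mathrm{in}}$, because $\Phi$ is already holonomic there.

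The perturbed set $h^1(P)$ is again a positive-codimension polyhedron, obtained from $P$ by a diffeotopy fixed near $K_{\mathrm{in}}$. The compression property of \cref{lem:relative-compression} is preserved under conjugation by $h^1$, so we may rename $h^1(P)$ as $P$. This is why the statement says ``one can choose $P$''. Unwinding the $J^1Y$ components then gives \eqref{eq:holonomic-approximation}.

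\textbf{Main obstacle.} The step needing care is matching the hypotheses of the cited theorem. First, $W$ has boundary and $\cI$ is twisted. Since the theorem is local in nature and holds for arbitrary smooth fiber bundles, the twisting causes no issue. The boundary is handled by working in $\interior W$ near $P\setminus K_{\mathrm{in}}\subset\interior W$.

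Second, and more delicate, is the relative condition near $K_{\mathrm{in}}$. If the cited version only gives closeness rather than exact agreement there, I would fix it as follows:
\begin{enumerate}
\item take a cutoff $\chi$ supported in the collar and equal to $1$ near $K_{\mathrm{in}}$;
\item replace $(f,\eta)$ by $(f,\eta)+\chi\,\bigl((F,\lambda)-(f,\eta)\bigr)$.
\end{enumerate}
Since $(F,\lambda)$ is itself holonomic near $K_{\mathrm{in}}$, the $C^1$ difference between the two sections is $O(\delta)$ there. Because $\chi$ is fixed independently of $\delta$, the error introduced by $d\chi$ is also $O(\delta)$. Rescaling $\delta$ by a constant then gives \eqref{eq:holonomic-approximation} exactly as stated.
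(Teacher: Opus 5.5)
Your proposal is correct and takes essentially the paper's approach: the paper simply invokes the relative form of \cite[Theorem~1.2.1]{EliashbergMishachevHolonomic} for the bundle $\cI\oplus(T^*W\otimes\cI)$, identifying $1$-jets with $(f,\eta,df,\nabla\eta)$ via the chosen connection. Your conjugation argument showing that $h^1(P)$ inherits the compression property of \cref{lem:relative-compression} is exactly what the phrase ``one can choose $P$'' encodes, and the cutoff fallback is unnecessary because the relative version already gives exact agreement near $K_{\mathrm{in}}$.
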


We apply the relative $h$-principle directly on $W$, keeping the data fixed
near $K_{\mathrm{in}}$ and imposing no condition on
$\partial B^3$.

\begin{theorem}
  \label{thm:relative-positive-pair-hprinciple}
\cite{EliashbergMishachev}
Suppose that every connected component of $W\setminus K_{\mathrm{in}}$
meets $\partial B^3$.
Let $F\in\mathcal{C}^\infty(W;\cI)$,
$\lambda,p\in\Omega^1(W;\cI)$, and
$Q\in\Gamma(T^*W\otimes T^*W\otimes\cI)$ satisfy
$p\wedge\operatorname{Alt}Q>0$ on $W$, with $p=dF$ and
$Q=\nabla\lambda$ near $K_{\mathrm{in}}$.
Then there exist $\widetilde F\in\mathcal{C}^\infty(W;\cI)$ and
$\widetilde\lambda\in\Omega^1(W;\cI)$ such that
$d\widetilde F\wedge d\widetilde\lambda>0$ on $W$ and
\[
  (\widetilde F,\widetilde\lambda)=(F,\lambda)
  \quad\text{near }K_{\mathrm{in}}.
\]
\end{theorem}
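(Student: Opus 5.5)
The proof uses the standard Gromov--Eliashberg--Mishachev scheme for open, diffeomorphism-invariant relations on open manifolds: holonomic approximation near a codimension-one-or-more polyhedron, followed by compression. The relation
\[
  \mathcal R=\{(F,\lambda,p,Q): p\wedge\operatorname{Alt}Q>0\}
\]
is open in the jet space of $\cI\oplus(T^*W\otimes\cI)$, first order in $F$ and first order in $\lambda$. It is also invariant under orientation-preserving embeddings, because pulling back a positive $3$-form by such an embedding keeps it positive. The plan is to approximate the given formal solution by a holonomic one near $K_{\mathrm{in}}\cup P$. We then pull it back to all of $W$ by a compressing isotopy.

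\textbf{Step 1 (margin).} Since $W$ is compact, there is $m>0$ with $p\wedge\operatorname{Alt}Q\geq m\,\vol_{g_{\mathrm{aux}}}$ on $W$. Let $C$ bound $|p|+|Q|$. For any $(f,\eta)$ satisfying \eqref{eq:holonomic-approximation}, we have $d\eta=\operatorname{Alt}(\nabla\eta)$, since the connection is torsion-free. A pointwise multilinear estimate then gives
\[
  \bigl|df\wedge d\eta-p\wedge\operatorname{Alt}Q\bigr|\leq C'\delta(C+\delta)\,\vol_{g_{\mathrm{aux}}}.
\]
Choosing $\delta$ small makes $df\wedge d\eta>0$ on the neighborhood $\mathcal U$.

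\textbf{Step 2 (holonomic approximation).} Apply \cref{lem:holonomic-approximation} with this $\delta$. This yields the polyhedron $P$, a neighborhood $\mathcal U$ of $K_{\mathrm{in}}\cup P$, and $(f,\eta)$ on $\mathcal U$ with $(f,\eta)=(F,\lambda)$ near $K_{\mathrm{in}}$. By Step~1, $(f,\eta)$ is a positive pair on $\mathcal U$. The twisted coefficients cause no trouble, because the theorem is local in the fiber bundle and the flat connection on $\cI$ is used only to express derivatives.

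\textbf{Step 3 (compression).} This step uses the hypothesis that every component of $W\setminus K_{\mathrm{in}}$ meets $\partial B^3$. Under it, \cref{lem:relative-compression} provides an isotopy $c_t$ with $c_0=\id$, $c_t=\id$ near $K_{\mathrm{in}}$, and $c_1(W)\subset\mathcal U$. Each $c_1$ is an embedding isotopic to the identity, so it is orientation preserving. Parallel transport of the flat bundle $\cI$ along the tracks $t\mapsto c_t(x)$ gives a canonical isomorphism $c_1^*\cI\cong\cI$, which is the identity near $K_{\mathrm{in}}$. Set
\[
  \widetilde F:=c_1^*f,\qquad \widetilde\lambda:=c_1^*\eta .
\]
Then $d\widetilde F\wedge d\widetilde\lambda=c_1^*(df\wedge d\eta)>0$ on $W$, and $(\widetilde F,\widetilde\lambda)=(F,\lambda)$ near $K_{\mathrm{in}}$.

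The main obstacle is Step~2. There, one must check that the relative holonomic approximation theorem applies to the \emph{twisted} bundle with the prescribed data fixed near $K_{\mathrm{in}}$. One must also ensure that the polyhedron it produces (after the small $C^0$ isotopy inherent in the theorem) is still one for which \cref{lem:relative-compression} supplies a compression. I would handle both points by choosing $P$ in a triangulation compatible with $K_{\mathrm{in}}$, with $P\setminus K_{\mathrm{in}}\subset\interior W$. Perturbing a codimension-$\geq1$ polyhedron by a small isotopy fixed near $K_{\mathrm{in}}$ preserves the compression property, since one can conjugate $c_t$ by that isotopy.
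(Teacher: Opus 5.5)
Your proposal is correct and follows essentially the same route as the paper. Both arguments first use compactness to pick $\delta$ so that the holonomic approximation $(f,\eta)$ near $K_{\mathrm{in}}\cup P$ satisfies $df\wedge d\eta>0$, and then pull back by the compressing isotopy via a flat identification $c_1^*\cI\cong\cI$; your parallel transport along the tracks is equivalent to the paper's lift of $c_t$ to the double cover, and your remark about conjugating the compression by the small isotopy from holonomic approximation spells out a point the paper absorbs into the statement of its holonomic approximation lemma.
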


\begin{proof}
We adapt the argument in
\cite[Section~4.3 and the proof of Theorem~7.2.2]{EliashbergMishachev}
to $\cI$-valued functions and 1-forms.

For $\delta>0$, \cref{lem:holonomic-approximation} gives
$P$, a neighborhood $\mathcal U$ of $K_{\mathrm{in}}\cup P$,
and sections $f,\eta$ on $\mathcal U$ satisfying
\eqref{eq:holonomic-approximation}, with $(f,\eta)=(F,\lambda)$ near
$K_{\mathrm{in}}$.
Since $d\eta=\operatorname{Alt}(\nabla\eta)$, the approximation makes
$df$ and $d\eta$ close to $p$ and $\operatorname{Alt}Q$, respectively.
Compactness of $W$ and $p\wedge\operatorname{Alt}Q>0$ therefore allow us
to choose $\delta$ sufficiently small that
$df\wedge d\eta>0$ on $\mathcal U$.
Choose a family $c_t$ as in \cref{lem:relative-compression} for this
neighborhood $\mathcal U$.
The covering homotopy property lifts $c_t$
to the double cover defining $\cI$, starting at the identity.  The lifts
commute with the deck involution and give flat identifications
$c_t^*\cI\cong\cI$, equal to the identity near $K_{\mathrm{in}}$.
Using these identifications, set $\widetilde F=c_1^*f$ and
$\widetilde\lambda=c_1^*\eta$.
Flatness makes exterior differentiation commute with these pullbacks.
Since $c_1$ preserves orientation,
\[
  d\widetilde F\wedge d\widetilde\lambda
  =c_1^*(df\wedge d\eta)>0.
\]
The pair equals $(F,\lambda)$ near $K_{\mathrm{in}}$, since the isotopy
and the bundle identifications are the identity there.
\end{proof}

\begin{proof}[Proof of \cref{thm:intro-even-graphs}]
For a sufficiently small inner collar, $W\setminus K_{\mathrm{in}}$
is connected and meets $\partial B^3$.  Applying
\cref{thm:relative-positive-pair-hprinciple} to the data supplied by
\cref{lem:graph-extension-data} gives
$F\in\mathcal{C}^\infty(W;\cI)$ and
$\lambda\in\Omega^1(W;\cI)$ such that
\[
  dF\wedge d\lambda>0\quad\text{on }W,
  \qquad (F,\lambda)=(F_0,\lambda_0)\quad\text{near }K_{\mathrm{in}}.
\]
By \cref{thm:relative-metric-realization}, there is a smooth metric
$g$ on $W$ for which $dF$ is harmonic and $g=g_0$ near
$K_{\mathrm{in}}$.
Since $F=F_0$ near $K_{\mathrm{in}}$, we extend $F$ by $F_0$ and $g$ by $g_0$ from $N$ to $B^3$. The inequality $dF\wedge d\lambda>0$ gives $dF\neq0$ on $W$.
Hence, by \cref{prop:harmonic-graph-neighborhood}, $dF$ is a
$\Ztwo$-harmonic 1-form on $B^3$ with $\Mono(\cI)=Z$.
\end{proof}

\section{Desingularization of critical eigencones}
\label{sec:desingularization}

In this section, we prove \cref{thm:intro-desingularization}.
For the homogeneous model associated with a critical eigensection,
we replace the branch rays by smooth curves joining the paired ends.
The function and the Euclidean metric remain unchanged outside
a compact set.

\subsection{The local harmonic function}

Let $P=\{p_1,\dots,p_{2n}\}\subset S^2$, let $\phi$ be a critical
$\cI_P$-valued eigensection, and fix a pairing
\[
  \Pi=\bigl\{\{p_{i_a},p_{j_a}\}:1\le a\le n\bigr\}
\]
of $P$.
Write $F_{\mathrm{hom}}(r,\omega)=r^\mu\phi(\omega)$ for the
homogeneous model \eqref{eq:homogeneous-eigencone}.
Let $Q_\phi$ be the finite set of ordinary zero directions defined
in \eqref{eq:Qf-definition}.

For each $a=1,\dots,n$, choose a smooth embedded curve
$\gamma_a$ whose two ends agree with $\R_{>0}p_{i_a}$ and
$\R_{>0}p_{j_a}$ outside the unit ball.
Choose these curves pairwise disjoint, as illustrated in
\cref{fig:tetrahedral-pairing}.
Define $\Gamma:=\gamma_1\sqcup\cdots\sqcup\gamma_n$.
Let $\cI\to\R^3\setminus\Gamma$ be the flat real line bundle
with holonomy $-1$ around each meridian of $\Gamma$.
We use the same notation for its restrictions to open subsets.
On the radial end, identify $\cI$ with the radial pullback of $\cI_P$.

Fix a curve $\gamma_a$ and write $i=i_a$, $j=j_a$.
On a tubular neighborhood of $[2,3]p_i:=\{r p_i:2\le r\le 3\}$,
\cref{cor:ray-standard-model} gives a harmonic function and a
smooth metric.
Near $3p_i$, they agree with $(F_{\mathrm{hom}},g_{\Euc})$.
Near $2p_i$, the harmonic function and the metric have the form $u_{3/2}=\operatorname{Re}(z^{3/2})$ and  $dt^2+|dz|^2$
in tubular coordinates $(t,z)$ with $z=0$ on $\gamma_a$.
We make the same construction on $[2,3]p_j$.

Both models therefore have the same product form near
$2p_i$ and $2p_j$.
We connect them along $\gamma_a\cap\overline{B_2}$ using
$u_{3/2}$ and the product metric.
For $r\geq3$, we use $(F_{\mathrm{hom}},g_{\Euc})$ on both radial ends.
This gives a harmonic function for a smooth metric near $\gamma_a$.
Its differential vanishes precisely along $\gamma_a$.

Choose $R_0$ so that all modifications lie in $B_{R_0}$.
For each $q\in Q_\phi$, choose a small tubular neighborhood of a
segment of $\R_{>0}q$ containing $R_0q$.
Applying \cref{lem:ordinary-ray-cap}, we obtain a harmonic function
for a smooth metric on a neighborhood of $[R_0,\infty)q$.
Its differential vanishes precisely on $[R_0,\infty)q$.
The function and metric agree with $(F_{\mathrm{hom}},g_{\Euc})$
for sufficiently large $r$.
The resulting ordinary zero set is
\[
  [R_0,\infty)Q_\phi:=\{rq:r\geq R_0,\ q\in Q_\phi\}.
\]

Choose $R_1>R_0$ so that all modifications lie in $B_{R_1}$.
Choose small open tubular neighborhoods $N_\Gamma$ of $\Gamma$
and $N_Q$ of $[R_0,\infty)Q_\phi$ on which the local models are defined.
Define
\[
  U:=\{r>R_1\}\cup N_\Gamma\cup N_Q.
\]

The local models agree on their overlaps and define
$F_{\mathrm{loc}}\in\mathcal{C}^\infty(U\setminus\Gamma;\cI)$
and a smooth metric $g_{\mathrm{loc}}$ on $U$ satisfying
\[
  \Delta_{g_{\mathrm{loc}}}F_{\mathrm{loc}}=0
  \quad\text{on }U\setminus\Gamma,
  \qquad
  \Zero(dF_{\mathrm{loc}})=\Gamma\sqcup[R_0,\infty)Q_\phi.
\]
Moreover,
\[
  (F_{\mathrm{loc}},g_{\mathrm{loc}})
  =(F_{\mathrm{hom}},g_{\Euc})
  \qquad\text{on }\{r>R_1\}.
\]

\begin{figure}[H]
  \centering
  \includegraphics[width=.60\textwidth]
    {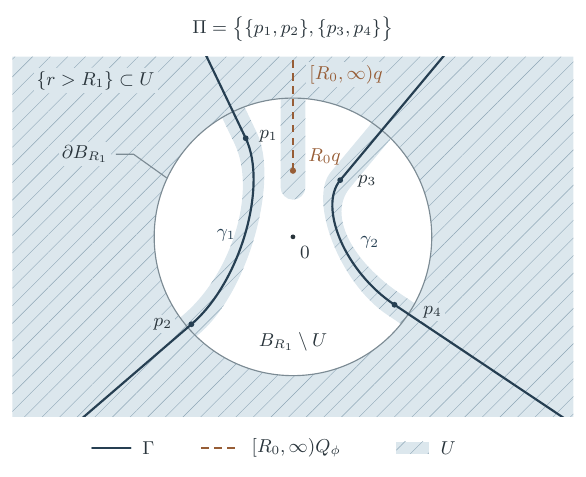}
  \caption{The local-model domain $U$ for the tetrahedral pairing,
with a possible ordinary zero ray $[R_0,\infty)q$, $q\in Q_\phi$.}
  \label{fig:tetrahedral-local-model-domain}
\end{figure}

\subsection{Extension of the odd function}

Let $\pi:\widetilde M\to\R^3$ be the double branched cover branched along
$\Gamma$, with involution $\tau$.
Give $\widetilde M$ the lifted orientation, so that $\tau$ preserves
orientation.
A tilde denotes a lift to this cover; in particular,
$\widetilde F_{\mathrm{hom}}=r^\mu\widetilde\phi$ on the radial end.
The lift $\widetilde F_{\mathrm{loc}}$ is smooth and odd on $\pi^{-1}(U)$.

We extend $\widetilde F_{\mathrm{loc}}$ to a smooth odd function on
$\widetilde M$ that equals $\widetilde F_{\mathrm{hom}}$ for sufficiently
large $r$.
The extension has only finitely many additional Morse critical points
and provides the increasing arcs used to construct the positive dual form.

\begin{proposition}\label[proposition]{prop:odd-indefinite-filling}
After increasing $R_1$ if necessary, there exists a smooth odd function
$\widetilde F$ on $\widetilde M$ with the following properties:
\begin{enumerate}
  \item $\widetilde F=\widetilde F_{\mathrm{loc}}$ near
  $\pi^{-1}(\Gamma\cup[R_0,\infty)Q_\phi)$ and
  $\widetilde F=\widetilde F_{\mathrm{hom}}$ on $\{r\geq R_1\}$;
  \item The critical points of $\widetilde F$ outside
  $\pi^{-1}(\Gamma\cup[R_0,\infty)Q_\phi)$ form $k\geq0$
  disjoint pairs $\{x_j,\tau x_j\}$, $1\leq j\leq k$.
  The points $x_j$ and $\tau x_j$ are Morse critical points of
  indices one and two, respectively;
  \item For every $x\in\pi^{-1}(\overline{B_{R_1}})$ with
  $d\widetilde F_x\neq0$, there is a smooth oriented embedded
  curve $\gamma$ through $x$ satisfying
  $d\widetilde F(\dot\gamma)>0$.
  Outside a compact set, $\gamma$ consists of two radial rays
  extending to infinity.
\end{enumerate}
\end{proposition}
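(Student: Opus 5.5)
Work on the quotient first and lift at the end. An odd function on $\widetilde M$ is the same thing as a section of $\cI$ on $\R^3\setminus\Gamma$. The set $\pi^{-1}(\overline{B_{R_1}})$ is a compact $3$-manifold with boundary, and $\widetilde F$ is already prescribed near its boundary sphere(s) and near the branch and ordinary zero locus. So the task is a relative extension problem. Set
\[
W:=\pi^{-1}\bigl(\overline{B_{R_1}}\bigr)\setminus\pi^{-1}(N_\Gamma\cup N_Q),
\]
a compact manifold with a free orientation-preserving involution $\tau$. The odd function $\widetilde F_{\mathrm{loc}}$ is given and regular on a collar of $\partial W$. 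First extend it to some smooth odd function on $W$: extend the section arbitrarily and antisymmetrize, $\tfrac12(G-\tau^*G)$, which does not change it near $\partial W$. Then perturb equivariantly, away from a neighbourhood of $\partial W$, to an odd Morse function. Odd Morse functions are generic among odd functions because $\tau$ is free, so one can perturb on a fundamental domain and antisymmetrize. Since $\widetilde F(\tau x)=-\widetilde F(x)$, the Hessian at $\tau x$ is minus the Hessian at $x$ in dimension three, so critical points come in pairs of indices $(i,3-i)$.

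Next, cancel the pairs of type $(0,3)$ so that only index one/two pairs remain, as required in (ii). This is done with an equivariant version of Morse cancellation or local birth moves. A local minimum $x$ can be traded for an index-one point by creating an index-$(0,1)$ birth pair near $x$, connected to it, and cancelling. That is, the minimum is cancelled against a new index-one point by a flow line along which $\widetilde F$ increases. Carry out the $\tau$-image of every move at the same time; since $\tau$ is free, small enough supports are disjoint from their images. The cancellation needs an index-one point joined by a single flow line, and here it is easiest to push the minimum out to the boundary region instead. Because $W$ is connected to the end $\{r=R_1\}$, where $\widetilde F=r^\mu\widetilde\phi$ takes arbitrarily negative values, a descending path from $x$ to the end exists. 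A standard ``finger move'' along that path removes a minimum (and, by oddness, its maximum). The same is done near $\Gamma$, where $u_{3/2}$ also has values of both signs. If necessary, $R_1$ is increased so that there is room for this.

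Property (iii) is then a gradient-flow statement. Take a complete gradient-like vector field for $\widetilde F$ that is radial-ish at the end; on $\{r\ge R_1\}$, $\partial_r\widetilde F=\mu r^{\mu-1}\widetilde\phi$ away from $\widetilde\phi=0$. For a regular point $x$, follow the flow line forward and backward. A flow line can stop only at a critical point or at $\pi^{-1}(\Gamma\cup Q)$.

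\begin{itemize}
\item \emph{Local maxima or minima.} These have been removed, and near the branch locus $u_{3/2}$ has no local extrema off $z=0$.
\item \emph{Saddles.} The stable and unstable manifolds of the finitely many saddles, together with flow lines entering $N_\Gamma$ or $N_Q$, form a codimension $\ge1$ set. A flow line that hits such an obstruction can be pushed sideways: take a curve that is nearly tangent to the flow, detouring around the saddle along a level-increasing path in the Morse chart. This works because an index-one or index-two saddle has a region of increasing values connected around it (the positive part of $-x_1^2+x_2^2+x_3^2$ minus an arbitrarily small neighbourhood of the saddle is connected).
\end{itemize}

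Flow lines that reach the end region continue to infinity on a ray where $\widetilde\phi\neq0$; replace them there by radial rays with $\widetilde\phi>0$ forward and $\widetilde\phi<0$ backward.

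The main obstacle is the extremum-elimination step together with the escape of flow lines to both ends of (iii). One has to ensure that every ascending path reaches the region $\widetilde\phi>0$ at infinity and every descending path reaches $\widetilde\phi<0$, without getting trapped near the ordinary rays or near $\Gamma$.

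I would handle this by using connectivity of $\{\widetilde F>c\}$ and of $\{\widetilde F<-c\}$ for each $c$. Oddness makes the two statements equivalent. Connectivity is to be proved by the Morse-theoretic fact that, once no index-$0$ or index-$3$ points remain and the ends are unbounded, sublevel and superlevel sets have no compact components. The remaining step is a careful check of this near $N_Q$, using the explicit cap model of \cref{lem:ordinary-ray-cap}.
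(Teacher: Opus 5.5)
\textbf{Gap: eliminating the $(0,3)$ pairs in (ii).} You flag this step yourself as the main obstacle, and neither mechanism you offer works as stated.

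\emph{Birth pair.} Creating a $(0,1)$ birth pair near a minimum $x$ and cancelling $x$ against the new index-one point leaves the newborn minimum. The count of minima does not change.

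\emph{Finger move.} A modification supported in a compact subset of $\operatorname{int}W$ does not change the Poincar\'e--Hopf count of the gradient on its support. So a ``finger move'' cannot simply delete a minimum: compensating critical points must appear. In Morse theory a minimum disappears only by cancellation against an index-one point joined to it by a single flow line.

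\emph{Descending path.} From a local minimum there is no descending path to the end. Any path to lower values first climbs, and in general it crosses $\{\widetilde F>0\}$. Nothing in your extension prevents a component of $\{\widetilde F<0\}$ from being relatively compact in $\operatorname{int}W$, enclosed by $\{\widetilde F>0\}$. Then every function with the same zero level has a local minimum there. The zero level itself must be changed, equivariantly.

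\emph{Connectivity.} Your last paragraph derives connectivity from the absence of extrema, which is backwards: connectivity of the two sides is the input that cancellation needs. Moreover, the absence of extrema gives only ``no compact components'', not connectivity.

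\emph{Boundary.} Milnor-type cancellation needs a cobordism with level-type boundary. On your $W$, $\widetilde F$ changes sign along $\partial N$ and along $\{r=R_1\}$.

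\textbf{What the paper does instead.} It first chooses the tubes so that $0$ is a regular value of $\widetilde F_{\mathrm{loc}}$ on every boundary face and boundary circle of $Y$. It then builds an odd Morse separator $h$ on $Y$ with $\{h>0\}$ and $\{h<0\}$ connected. This is done by attaching thin $1$-handles across the zero level along arcs $\gamma$ with $\gamma\cap\tau\gamma=\varnothing$. Each handle carries one index-two point, and its $\tau$-image carries one index-one point. The maximum principle on the normal disks in $N$ then makes $\{\pm H>0\}\cap\widetilde B_R$ connected.

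Next it forms the compact connected cobordism $C^-$: the set $\widetilde B_R\cap\{H\le-\eps\}$ together with radial segments out to $\{H=-L\}$. On its side boundary, $H=t$. Milnor's cancellation theorems \cite{MilnorHCobordism} then apply on $C^-$, and oddly on $C^+=\tau C^-$. All critical points of the local models lie on $\{H=0\}$, so modifying only on $\{|H|\ge\eps\}$ preserves (i).

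The same structure drives (iii):
\begin{enumerate}
  \item gradient-like arcs from $P_0^\pm$ to $P_1^\pm$ inside $C^\pm$;
  \item the product structure on $Y\cap\{|H|\le\eps\}$;
  \item level-set transport \cite[Lemma~3.2]{ChenHeYanCalabiSurgery} for points of $N$ near $\pi^{-1}(\Gamma)$ and the ordinary zero segments, where ascending flow lines can otherwise converge to the degenerate critical set;
  \item radial rays at the ends.
\end{enumerate}

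A carefully executed finger move would have to create new saddles where the tunnel crosses a ridge, which essentially reproduces the handle step. You would then still need to prove that only indices one and two arise and that they are nondegenerate.
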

We call a smooth oriented embedded curve $\gamma$
a \emph{positive arc} if $d\widetilde F(\dot\gamma)>0$.
\subsubsection{A separator with connected sides}

We begin the proof of \cref{prop:odd-indefinite-filling} by
constructing an auxiliary odd function on the compact region $Y$
defined below. The function will agree with the local
model near $\partial Y$, have zero as a regular value, and have
connected positive and negative regions. This connectedness will be
used in the next subsection.

To define $Y$, choose $R>R_1$ and put
$\widetilde B_R=\pi^{-1}(\overline{B_R})$.
Let $N$ be a compact, $\tau$-invariant union of sufficiently thin tubes around
\[
  \pi^{-1}\bigl(\Gamma\cup[R_0,\infty)Q_\phi\bigr)
  \cap\widetilde B_R,
\]
contained in $\pi^{-1}(N_\Gamma\cup N_Q)\cap\widetilde B_R$.
We choose $N$ so that $\pi^{-1}(R_0Q_\phi)$ lies in its interior,
and smooth the corners of $N$ in $\operatorname{int}\widetilde B_R$.
The lateral boundary of $N$ meets $\partial\widetilde B_R$ transversely.

Set $Y:=\overline{\widetilde B_R\setminus N}$.
Then $Y$ is a connected manifold with corners. By construction, $Y\cap\pi^{-1}\bigl(\Gamma\cup[R_0,\infty)Q_\phi\bigr)
  =\varnothing$. In particular $\tau$ acts freely on $Y$. We call the connected components of
$\overline{\partial Y\setminus\partial\widetilde B_R}$ and
$\partial Y\cap\partial\widetilde B_R$ the \emph{boundary faces} of $Y$,
and their boundary components the \emph{boundary circles} of $Y$.

\begin{lemma}\label[lemma]{lem:boundary-regularity}
The neighborhood $N$ can be chosen so that
$d\widetilde F_{\mathrm{loc}}\neq0$ near $\partial Y$
and $0$ is a regular value of the restriction of
$\widetilde F_{\mathrm{loc}}$ to each boundary face
and each boundary circle of $Y$.
\end{lemma}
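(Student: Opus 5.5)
The plan is to analyse the three kinds of boundary pieces of $Y$ separately: the outer sphere part $\partial Y\cap\partial\widetilde B_R$, the lateral tube boundaries around the lifts of $\Gamma$, and the lateral tube boundaries around the lifts of $[R_0,\infty)Q_\phi$. We then choose the tube radii and $R$ generically. First, $d\widetilde F_{\mathrm{loc}}\neq0$ near $\partial Y$ is automatic. The zero set of $dF_{\mathrm{loc}}$ on $U$ is exactly $\Gamma\sqcup[R_0,\infty)Q_\phi$. The set $\partial Y$ lies in $\pi^{-1}(U)$ at positive distance from the lift of this set, provided $R>R_1$ and the tubes have positive radius, and $\pi$ is a local diffeomorphism away from the branch locus.

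\emph{Outer face.} Here $\widetilde F_{\mathrm{loc}}=R^\mu\widetilde\phi$, so its restriction to $\partial\widetilde B_R$ minus the tubes is a positive multiple of the lift of $\phi$. We need $0$ to be a regular value of $\phi$ on $S^2\setminus(P\cup Q_\phi)$. Critical points of $\phi$ on $\{\phi=0\}$ are by definition exactly $Q_\phi$, which is finite. Hence, after removing small disks around $P\cup Q_\phi$, zero is a regular value on the sphere face. Its boundary circles are small circles about points of $P\cup Q_\phi$. Near $p\in P$ we have $\phi=\operatorname{Re}(z^{m+1/2}A(z))$ with $A\neq0$, and on $|z|=\delta$ the restriction behaves like $\delta^{m+1/2}\cos((m+\tfrac12)\theta+\arg A)$ plus lower order. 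This has only simple zeros for small $\delta$, since the derivative in $\theta$ dominates the error. Near $q\in Q_\phi$ we argue the same way, using the expansion $\operatorname{Re}(z^\gamma a)$ with integer $\gamma\ge2$ from \cref{lem:exact-ray-normal-form}, applied on the sphere.

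\emph{Lateral faces.} In the tubular coordinates $(t,z)$ from \cref{cor:ray-standard-model} and \cref{lem:ordinary-ray-cap}, the function is $\operatorname{Re}(z^{3/2})$ along $\Gamma\cap\overline{B_2}$ with product metric. On the tube $|z|=\eps$, this restricts to $\eps^{3/2}\cos(\tfrac32\theta)$, independent of $t$, so zero is regular on the face. On the transition segments and the radial ends, the function is $\operatorname{Re}(z^\gamma a(t,z))$ after the coordinate change of \cref{lem:exact-ray-normal-form}, which is exactly $u_\gamma$. Along the ordinary rays it is $u_2$ or the cap model \eqref{eq:ordinary-cap-model}. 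For the cap, we choose $N$ so that $\pi^{-1}(R_0Q_\phi)$ is interior. On the part where $t\le 3/2$ we have $\partial_x u\ge b_0-2\eps>0$, so the tube end can be capped by a small region on which the restriction has regular zero level set.

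The main obstacle is the transition region of \cref{lem:phase-adjacent-transition}, where $u=\operatorname{Re}(e^{i\vartheta}b(t)z^\gamma+z^{\gamma+1})$ is not $t$-independent. There I would first try to avoid the issue by choosing $\eps$ much smaller than $b_0$ where $b$ is large. In the region where $b$ is small, I would use Sard's theorem: regular values of the restriction to the face are dense, and we may shift to a nearby tube radius. The key point is a transversality argument. The family of tube radii $\eps\mapsto\{|z|=\eps\}$ gives a submersion from the ambient punctured tube onto the radius parameter. By parametric transversality applied to the map $(t,\theta,\eps)\mapsto u$ restricted to $\{u=0\}$, which is a smooth surface because $du\neq0$ off the axis, almost every $\eps$ makes $\{u=0\}$ transverse to the face. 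The same argument handles the boundary circles: these are the intersections of the lateral faces with $\partial\widetilde B_R$, and we also vary $R$ generically. Finally, we take the tube radii constant on each piece, with $\tau$-invariant choices, and smooth corners away from $\{u=0\}$ where necessary.
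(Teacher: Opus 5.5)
Your chart-by-chart transversality argument is valid, and it is a different route from the paper's. In a fixed chart, $0$ is a regular value of $u$ on $\{|z|=\rho\}$ exactly when $\rho$ is a regular value of $|z|$ on the smooth surface $\Sigma=\{u=0\}$, so Sard's theorem gives almost every small $\rho$. The paper instead checks every small $\rho$ by hand. For instance, in the transition block where $b'\neq0$, the equations $u=\partial_tu=0$ force $\operatorname{Re}(e^{i\vartheta}z^\gamma)=\operatorname{Re}(z^{\gamma+1})=0$. Homogeneity then gives $\partial_\rho u=0$, and $du\neq0$ yields $\partial_\theta u\neq0$. The gap is in the step where you assemble the pieces into a single tube $N$.

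The normal coordinates change from piece to piece. There is the normalization $z\mapsto za^{1/\gamma}$ in \cref{lem:exact-ray-normal-form}, the change $z\mapsto z(e^{i\vartheta}b_0+z)^{1/\gamma}$ inside the block of \cref{lem:phase-adjacent-transition}, and the stereographic coordinate you use for the circles on $\partial\widetilde B_R$. Constant-radius tubes in different charts therefore do not match and must be interpolated. Every interpolating surface and every corner meets $\{u=0\}$. The zero set of $u_\gamma$ consists of $2\gamma$ half-planes bounded by the axis, so it meets every tube. On the cap corner $\{t=t_0,\ |z|=\rho\}$ with $b=b_0$, the function $b_0x+x^2-y^2$ has two zeros. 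So your last step, smoothing corners away from $\{u=0\}$, cannot be carried out, and genericity of the individual radii gives no control over the interpolating surfaces.

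What is missing is a regularity statement that survives such deformations. The paper supplies it. Where $u=u_\gamma$, any tube $\rho=\rho(t,\theta)$ works, since at zeros $\partial_\theta\bigl[\rho^\gamma\cos(\gamma\theta)\bigr]=-\gamma\rho^\gamma\sin(\gamma\theta)\neq0$. In the cap region with $b=b_0$ and $|z|<b_0/2$, every zero satisfies $\partial_\theta u=-y(b_0+4x)\neq0$, so any corner smoothing that keeps $\partial_\theta$ tangent is harmless.

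Alternatively, you can keep your Sard route by working with one global function instead of separate pieces. Patch the local functions $|z|$ into a single smooth function $G$ on the punctured neighborhood, radially invariant near $r=R$ and with capped sublevel sets near $R_0Q_\phi$. Choose $s$ to be simultaneously a regular value of $G|_\Sigma$ and of $G|_{\Sigma\cap\partial\widetilde B_R}$. Then $N=\pi^{-1}(\{G\le s\})\cap\widetilde B_R$ has no interior corners or junctions to repair.
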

\begin{proof}
Since $\pi$ is unbranched near $\partial Y$, we can check the
required regularity in local coordinates on $\R^3$.
On $N_\Gamma\cup N_Q$, use the local tubular coordinates
$(t,z)$, with $z=x+iy$, chosen in the construction of
$F_{\mathrm{loc}}$.
By \cref{lem:exact-ray-normal-form,lem:phase-adjacent-transition,lem:ordinary-ray-cap},
a local representative $u$ of $F_{\mathrm{loc}}$ is $u_\gamma$,
the function in \eqref{eq:phase-block-potential}, or
$b(t)x+x^2-y^2$ from \eqref{eq:ordinary-cap-model}.
Choose $\pi(N)$ locally by $|z|\leq\rho$ for sufficiently small
$\rho>0$, and take its inverse image under $\pi$.

Write $z=\rho e^{i\theta}$.
For $u=u_\gamma$, at each zero on $|z|=\rho$ we have
$\partial_\theta u=-\gamma\rho^\gamma\sin(\gamma\theta)\neq0$.
For $u$ in \eqref{eq:phase-block-potential} where $b'\neq0$,
the equations $u=\partial_tu=0$ imply $\partial_\rho u=0$.
Since $du\neq0$ for $z\neq0$ by \cref{lem:phase-adjacent-transition},
we obtain $\partial_\theta u\neq0$.
Where $b'=0$, either $b=0$ and $u=u_{\gamma+1}$, or $b=b_0$.
For $b=b_0$, the expression
$\rho^{-\gamma}u=b_0\cos(\vartheta+\gamma\theta)+\rho\cos((\gamma+1)\theta)$
has simple zeros for sufficiently small $\rho$.

For $u=b(t)x+x^2-y^2$ with $b'\neq0$, the equations
$u=\partial_tu=0$ force $x=y=0$, contradicting $|z|=\rho>0$.
Where $b=0$, we have $u=u_2$.
Where $b=b_0$, choose $\rho<b_0/2$.
At a zero with $0<|z|\leq\rho$, the identity $x(b_0+x)=y^2$
gives $x\geq0$ and $y\neq0$, so
$\partial_\theta u=-y(b_0+4x)\neq0$.
Close $\pi(N)$ by disks at a fixed $t$ where $b=b_0$.
On these disks, $\partial_xu=b_0+2x>0$.
Smooth the intersections of the disks with $|z|=\rho$ within
$\{b=b_0,\ 0<|z|\leq\rho\}$, keeping $\partial_\theta$
tangent to $\partial\pi(N)$.

Join the local choices of $\pi(N)$ where $u=u_\gamma$,
using a smooth positive radius $\rho=\rho(t,\theta)$.
At each zero of the restriction to $\partial\pi(N)$, we have
\[
  \partial_\theta\bigl[u(t,\rho(t,\theta)e^{i\theta})\bigr]
  =-\gamma\rho(t,\theta)^\gamma\sin(\gamma\theta)\neq0.
\]

On $Y\cap\partial\widetilde B_R$,
$\widetilde F_{\mathrm{loc}}=R^\mu\widetilde\phi$.
Since $Y\cap\pi^{-1}(R(P\cup Q_\phi))=\varnothing$,
zero is a regular value of
$\widetilde F_{\mathrm{loc}}|_{Y\cap\partial\widetilde B_R}$.
Choose the boundary circles of $Y$ so that their images under $\pi$
are $|z|=\rho$ in the coordinates of \cref{lem:exact-ray-normal-form}.
The calculation for $u_\gamma$ gives regularity on these circles.
Extend $\pi(N)\cap\partial B_R$ by radial dilations near $r=R$.
The identity
$F_{\mathrm{hom}}(r,\omega)=(r/R)^\mu F_{\mathrm{hom}}(R,\omega)$
preserves the nonzero angular derivative at zeros on
$\partial Y\setminus\partial\widetilde B_R$ near $r=R$.
Finally, $\partial Y$ is disjoint from the critical set of
$\widetilde F_{\mathrm{loc}}$, so continuity gives
$d\widetilde F_{\mathrm{loc}}\neq0$ near $\partial Y$.
\end{proof}

\begin{lemma}\label[lemma]{lem:connected-odd-separator}
Fix $N$ as in \cref{lem:boundary-regularity}.
There is a smooth odd Morse function $h:Y\to\R$, equal to
$\widetilde F_{\mathrm{loc}}$ near $\partial Y$, such that zero
is a regular value and both
$\{h>0\}$ and $\{h<0\}$ are connected.
\end{lemma}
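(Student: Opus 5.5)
The plan is to build a preliminary odd Morse extension with $0$ as a regular value, and then merge the components of $\{h>0\}$ one pair at a time by $\tau$-equivariant ``tube'' modifications.

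Since $\tau$ acts freely on $Y$, it is convenient to work on the quotient $Y_0:=Y/\tau$, a compact manifold with corners. An odd function on $Y$ is the same as a section of the flat line bundle $\cI$ over $Y_0$.

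\textbf{Step 1: a preliminary extension.} Multiply $\widetilde F_{\mathrm{loc}}$ by a $\tau$-invariant cutoff equal to one near $\partial Y$ and supported in a collar. By \cref{lem:boundary-regularity}, this odd function has nonzero differential near $\partial Y$, and $0$ is a regular value of its restriction to every boundary face and boundary circle. Next make a small perturbation supported in the interior, chosen on $Y_0$ as a generic section of $\cI$ (respectively of $\cI$ on the cover $Y$) and lifted. Because the action is free, genericity can be imposed on the quotient, so we may assume the result $h_0$ is Morse and $0$ is a regular value. The boundary regularity keeps the zero set transverse to $\partial Y$ and to the corners. Since $h_0$ is odd, $\tau$ interchanges $\{h_0>0\}$ and $\{h_0<0\}$. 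Hence it suffices to make $\{h>0\}$ connected; connectedness of $\{h<0\}=\tau\{h>0\}$ then follows automatically. Both sets are nonempty, because on $\partial\widetilde B_R\cap Y$ the function $R^\mu\widetilde\phi$ takes both signs. Finally, since $0$ is regular and $Y$ is compact, $\{h_0>0\}$ has finitely many components $C_1,\dots,C_m$.

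\textbf{Step 2: merging components by equivariant tubes.} Suppose $m\geq2$. Choose an embedded arc $\sigma\subset\interior Y$ from a point of $C_1$ to a point of $C_2$. Such an arc exists because $Y$ is connected, and it can be taken transverse to $h_0^{-1}(0)$. Since $\dim Y=3$ and $\tau$ is free, a generic perturbation makes $\sigma$ and $\tau\sigma$ disjoint, and keeps $\sigma$ away from the critical points and from $\partial Y$. Let $T$ be a thin tubular neighborhood of $\sigma$ with $T\cap\tau T=\varnothing$, and let $\psi\geq0$ be a bump equal to one on a smaller tube around $\sigma$ and supported in $T$. Define
\[
  h_1:=h_0+M\bigl(\psi-\psi\circ\tau\bigr),
\]
with $M$ large. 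Then $h_1$ is odd, agrees with $h_0$ near $\partial Y$, and is positive along $\sigma$, so $C_1$ and $C_2$ become joined inside $\{h_1>0\}$. Outside $T\cup\tau T$ nothing changes. On $T$ the positive region only grows, while the negative region loses at most a thin neighbourhood of an arc; removing such a set from a connected open $3$-dimensional region does not disconnect it. Dually, on $\tau T$ two negative components are joined and positive regions lose only a thin arc neighbourhood, which again disconnects nothing. After the modification, a further small odd generic perturbation supported in $T\cup\tau T$ restores the Morse property and the regularity of $0$. Small perturbations with $0$ regular preserve the topology of the sign regions, so the merging is not undone. Each step reduces the number of components of $\{h>0\}$ by one, and after finitely many steps $\{h>0\}$, hence also $\{h<0\}$, is connected.

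\textbf{Main obstacle.} The delicate point is the second step: ensuring that raising $h$ along $T$ does not accidentally disconnect some component of $\{h<0\}$, or of $\{h>0\}$ via $\tau T$. Concretely, one must check that the removed set is, up to isotopy, a regular neighbourhood of an arc, and that the final regularizing perturbation is small enough not to change the component structure. I would handle both at once. First choose the profile of $\psi$ radially monotone, so that $\{h_1\leq0\}\cap T$ differs from $\{h_0\leq0\}\cap T$ by a set that deformation-retracts onto a subarc of $\sigma$. Then carry out all perturbations in the complement of a fixed compact neighbourhood of $h_1^{-1}(0)$'s critical-free region, relying on stability of transverse zero sets.
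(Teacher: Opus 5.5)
Your proposal is essentially the paper's argument: an odd perturbation made generic on the free quotient, then iterated equivariant surgery along an arc with $T\cap\tau T=\varnothing$. Oddness forces the mirror modification on $\tau T$, and each step lowers the number of components of $\{h>0\}$, hence of $\{h<0\}=\tau\{h>0\}$. The only difference is the local step: the paper prescribes the new sign region (a thin $1$-handle attached to $\overline{\Omega_+}$ along a path with interior in $\Omega_-$) and then takes a function with that regular zero set, which makes the topology automatic. For your bump $h_0+M(\psi-\psi\circ\tau)$, radial monotonicity of $\psi$ alone is not enough, as your ``main obstacle'' paragraph half-anticipates: fix $M>\max_Y|h_0|$ first, then shrink the tube radius relative to $M$ and $\|dh_0\|$. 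Then $h_1$ is strictly radially decreasing at its zeros away from the crossings of $\sigma$ with $h_0^{-1}(0)$, and near those crossings $\partial_sh_1=\partial_sh_0\neq0$ provided $\psi$ depends only on the normal radius. This makes $0$ a regular value of $h_1$ and the removed set a genuine tube, which is what your appeal to stability of transverse zero sets needs.
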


\begin{proof}
Choose a smooth $\tau$-invariant cutoff supported where
$\widetilde F_{\mathrm{loc}}$ is defined and equal to one near
$\partial Y$.
Multiplying $\widetilde F_{\mathrm{loc}}$ by this cutoff and extending
by zero gives a smooth odd function $h:Y\to\R$.
It agrees with $\widetilde F_{\mathrm{loc}}$ near $\partial Y$,
so $dh\neq0$ there by \cref{lem:boundary-regularity}.
Since $\tau$ acts freely on $Y$, relative equivariant transversality
allows us to perturb $h$ through odd functions, keeping it fixed
on a boundary neighborhood, so that $h$ has only Morse critical
points and zero is a regular value.
We continue to denote the perturbed function by $h$.

Define $\Omega_\pm:=\{x\in Y:\ \pm h(x)>0\}$.
Note that $\tau(\Omega_\pm)=\Omega_{\mp}$.
Since $h=\widetilde F_{\mathrm{loc}}$ near $\partial Y$,
\cref{lem:boundary-regularity} implies that zero is also a regular
value of the restriction of $h$ to each boundary face and circle.
Thus $\overline{\Omega_\pm}=\{x\in Y:\ \pm h(x)\geq0\}$
are compact smooth three-manifolds with corners.

We now modify $h$, keeping it fixed near $\partial Y$, so that both
$\Omega_+$ and $\Omega_-$ are connected.
If $\Omega_+$ is disconnected, connectedness of $Y$ gives two
distinct components $A,B$ of $\Omega_+$ and an embedded path
$\gamma:[0,1]\to\operatorname{int}Y$ satisfying
$\gamma(0)\in\partial A$, $\gamma(1)\in\partial B$, and
$\gamma((0,1))\subset\Omega_-$.
Choose $\gamma$ transverse to $h^{-1}(0)$ at its endpoints.
Since $h$ has finitely many critical points and $\tau$ acts freely,
general position allows us to choose $\gamma$ disjoint from
the critical set of $h$ and from $\tau\gamma$.

Extend $\gamma$ slightly into $A$ and $B$ at its two ends.
Choose a thin tubular neighborhood
$T_\gamma\Subset\operatorname{int}Y$ disjoint from
$\tau T_\gamma$ and from the critical set of $h$.
Inside $T_\gamma$, attach a thin $1$-handle to
$\overline{\Omega_+}$ joining $A$ and $B$, and smooth the corners
at the attaching disks. Choose a standard Morse function on the handle that vanishes
on its lateral boundary, is positive on the interiors of the
attaching disks, and has a single critical point of index $2$
with positive critical value. See \cref{fig:connected-separator-handle}.

Then extend this function smoothly to $T_\gamma$ so that zero
remains a regular value and it agrees with $h$ near
$\partial T_\gamma$.
We can perturb the resulting function sufficiently slightly
in $T_\gamma$ to make it Morse, keeping it fixed near
$\partial T_\gamma$, the $1$-handle. This defines the modification of $h$ inside $T_\gamma$.
Moreover, we can choose the modification so that it preserve the sign of the
original $h$ away from the handle.

Extend the modification to $\tau T_\gamma$ by setting
$h(\tau x)=-h(x)$ for $x\in T_\gamma$, and leave $h$
unchanged outside $T_\gamma\cup\tau T_\gamma$.
Continue to write $\Omega_\pm={\pm h>0}$.
Each paired modification reduces the number of connected
components of each of $\Omega_+$ and $\Omega_-$ by one.
These numbers are finite because $\overline{\Omega_\pm}$
are compact manifolds with corners.
After finitely many repetitions, we obtain the required
odd Morse function.
\end{proof}

\begin{figure}[H]
  \centering
  \includegraphics[width=.80\textwidth]{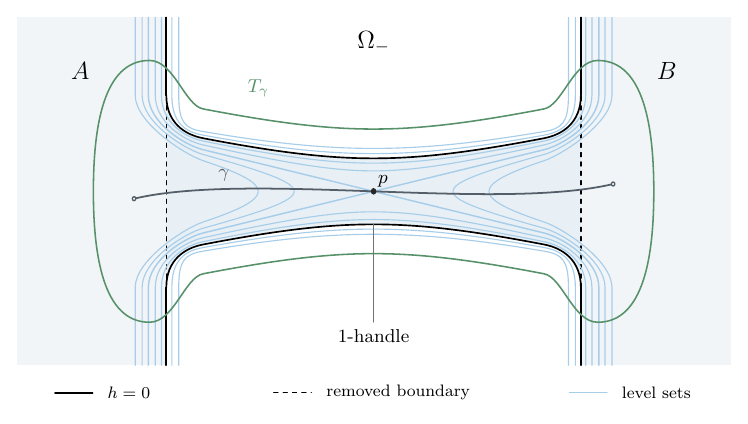}
  \caption{The modification of $h$ to reduce number of components of $\Omega_+$, with $p$ the critical point of Morse index $2$.}
  \label{fig:connected-separator-handle}
\end{figure}

\subsubsection{Completion of the odd extension}

We now complete the proof of \cref{prop:odd-indefinite-filling}
using the connected separator from \cref{lem:connected-odd-separator}.
We use the relative cancellation lemma below to construct an odd extension
whose additional critical points have index one or two.
We then verify the positive-arc property.

\begin{lemma}\label[lemma]{lem:indefinite-relative-filling}
Let $a<b$, and let $P_0$ and $P_1$ be nonempty compact smooth
surfaces, possibly with boundary.
Let $(C,P_0,P_1)$ be a connected compact smooth cobordism
of dimension three, viewed as a manifold with corners.
Write $\partial C=P_0\cup S\cup P_1$ and fix an identification
$S\cong\partial P_0\times[a,b]$ under which
$S\cap P_0=\partial P_0$ corresponds to $t=a$ and
$S\cap P_1=\partial P_1$ corresponds to $t=b$.

Let $f:C\to[a,b]$ be a Morse function with $df\neq0$
near $\partial C$.
Suppose $f|_{P_0}=a$, $f|_{P_1}=b$,
and $f(y,t)=t$ on $S$, where $(y,t)$ is the coordinate of $\partial P_0\times[a,b]$.
Then there is a Morse function $f_1:C\to[a,b]$, equal to $f$
near $\partial C$, whose critical points have index $1$ or $2$.
\end{lemma}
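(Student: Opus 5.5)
This is a relative Morse cancellation statement for a three-dimensional cobordism whose side boundary is a product. The plan is to remove all index $0$ and index $3$ critical points of $f$ by handle cancellation, working in the interior of $C$ and keeping $f$ fixed near $\partial C$. Since $f(y,t)=t$ on $S$ and $df\neq0$ near $\partial C$, we first straighten $f$ near $S$. Take a collar $\partial P_0\times[0,\delta)\times[a,b]$ of $S$ on which $f$ is the projection to $[a,b]$. Then $(C,P_0,P_1)$ with $f$ is a relative cobordism whose gradient flow is tangent to the product part. The standard handle-moving and cancellation lemmas of Milnor's $h$-cobordism lectures therefore apply with supports in the interior.

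\textbf{Step 1: rearrangement.} After a small perturbation supported in the interior, we may assume $f$ is self-indexing. Concretely, the critical points of index $k$ lie on the level $a+(k+1)(b-a)/5$, and the gradient-like vector field is fixed near $\partial C$. This uses Milnor's rearrangement theorem, whose proof only modifies $f$ on flow-invariant neighborhoods of stable and unstable manifolds. Those neighborhoods can be chosen away from the collars because no flow line from a critical point reaches $S$.

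\textbf{Step 2: cancelling index $0$ points.} Let $q$ be an index-$0$ critical point. Its unstable manifold is $3$-dimensional; its boundary, after flowing up to a regular level $c$ just above the index-$0$ level, is a sphere $\Sigma\subset f^{-1}(c)$. Because $C$ is connected and $P_0\neq\varnothing$, this sphere cannot form a whole component of the level set $f^{-1}(c)$ without trapping the component of $q$. More precisely, $f^{-1}([a,c])$ has the component containing $q$ (a ball) and components containing pieces of $P_0$. Connectedness of $C$ forces some index-$1$ critical point whose descending $1$-disk joins the ball around $q$ to another component. Choose such a point $p$ whose descending manifold meets $\Sigma$ transversely in exactly one point; this is automatic, since one end of its descending arc lands in the ball component. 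Milnor's first cancellation theorem then lets us cancel the pair $(q,p)$ by a modification supported near the closure of the connecting flow line, hence away from $\partial C$. Induct on the number of index-$0$ points.

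\textbf{Step 3: cancelling index $3$ points.} Apply the same argument to $a+b-f$, which satisfies the same hypotheses with $P_0$ and $P_1$ interchanged and $t\mapsto a+b-t$ on $S$. Index-$3$ points of $f$ become index-$0$ points of $a+b-f$, and they are cancelled against index-$2$ points of $f$ using $P_1\neq\varnothing$.

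\textbf{Main obstacle.} The delicate step is Step 2's existence argument in the relative setting with corners. One must ensure the connecting index-$1$ point exists and that its other descending branch does not also return to $q$. If both branches return to $q$, the point does not cancel. We handle this by choosing $p$ on a path in the Reeb-type graph from $q$'s component to a component containing $P_0$. Since $P_0\neq\varnothing$ and $C$ is connected, the first index-$1$ point where $q$'s sublevel component merges with another component has its two descending branches in different components, so exactly one branch ends at $q$. Throughout, every modification is kept inside the interior, so $f_1=f$ near $\partial C$ and $f_1(C)\subset[a,b]$.
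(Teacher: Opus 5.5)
Your proposal is correct and is essentially the paper's argument. You take a gradient-like field tangent to $S$ (product near $S$, so no flow line through a critical point enters the collar), then apply Milnor's rearrangement and cancellation results (Theorems~5.4 and~8.1, i.e.\ the $H_0(C,P_0)=0$ argument) first to $f$ and then to $a+b-f$, with all modifications supported in $\operatorname{int}C$. You unpack what the paper only cites, namely the self-indexing step and the choice of an index-$1$ point whose descending arc has exactly one end at $q$. One sentence is misphrased: your claim that $\Sigma$ \emph{cannot form a whole component of the level set} is wrong as written, since just above the index-$0$ level $\Sigma$ is a whole component of $f^{-1}(c)$. The correct content is your subsequent argument via components of $f^{-1}([a,c])$ joined by $1$-handles.
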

\begin{proof}
Choose a gradient-like vector field for $f$ tangent to $S$.
Since $C$ is connected and $P_0$ and $P_1$ are nonempty, the cancellation
argument in \cite[Theorems~5.4 and~8.1]{MilnorHCobordism},
applied to $f$ and then to $-f$, removes all critical points of
index $0$ and $3$.
The cancellations can be performed with compact support in
$\operatorname{int}C$, giving a Morse function $f_1$ equal to $f$
near $\partial C$.
\end{proof}

\begin{proof}[Proof of \cref{prop:odd-indefinite-filling}]
\textbf{Proof of (i) and (ii).}
Let $h$ be given by \cref{lem:connected-odd-separator}, and define
\[
H(x):=
\begin{cases}
	h(x),&x\in Y,\\
	\widetilde F_{\mathrm{loc}}(x),&x\in N,\\
	\widetilde F_{\mathrm{hom}}(x),
	&x\in\widetilde M\setminus\widetilde B_R.
\end{cases}
\]
These functions agree near their common boundaries, so $H$ is
smooth and odd on $\widetilde M$.
Define $\Omega^\pm:=\{x\in\widetilde B_R:\pm H(x)>0\}$.

\textbf{Step 1:} We show that $\Omega^\pm$ are connected.\par\nopagebreak

By the choice of $h$, the sets $\Omega^\pm\cap Y$ are connected.
It remains to connect every point of $\Omega^\pm\cap N$
to $\Omega^\pm\cap Y$ by a path in $\Omega^\pm$.

Fix either sign and let $x\in\Omega^\pm\cap N$.
Let $D\subset N$ be the disk through $x$ obtained by fixing
$t=t(x)$ in the tubular coordinates $(t,z)$ used in the
proof of \cref{lem:boundary-regularity}.
The local expressions of $H$ in
\cref{lem:exact-ray-normal-form,lem:phase-adjacent-transition,lem:ordinary-ray-cap}
are, respectively,
\[
\operatorname{Re}(z^\gamma),\qquad
\operatorname{Re}\bigl(e^{i\vartheta}b(t)z^\gamma
+z^{\gamma+1}\bigr),\qquad
\operatorname{Re}\bigl(b(t)z+z^2\bigr).
\]
If $D\cap\pi^{-1}(\Gamma)=\varnothing$, use $z$ as a
coordinate on $D$.
If $D\cap\pi^{-1}(\Gamma)\neq\varnothing$, use the
coordinate $w$ on $D$ with $z=w^2$.
In both cases, $H|_D$ is the real part of a nonconstant
holomorphic function and hence is harmonic.

The maximum principle applied to $\pm H$ implies that
the connected component of $\Omega^\pm\cap D$ containing
$x$ meets $\partial D$.
Hence $x$ can be joined within $\Omega^\pm\cap D$ to a
point of $(\Omega^\pm\cap\partial D)\subset\Omega^\pm\cap Y$.
As $\Omega^\pm\cap Y$ is connected, $\Omega^\pm$ is connected.

\textbf{Step 2:} We construct a compact connected region $C^-$ where $H$ satisfies the boundary conditions of \cref{lem:indefinite-relative-filling}.\par\nopagebreak

We first choose regular values $\pm\eps$ of $H$.
Since $H=\widetilde F_{\mathrm{loc}}$ on $N$, the local
models in
\cref{lem:phase-adjacent-transition,lem:ordinary-ray-cap}
give $H(x)=0$ whenever $x\in N$ and $dH_x=0$.
On $Y$, $0$ is a regular value of $h$, so the critical values
of $h$ are bounded away from zero.
Therefore, we can choose $\eps>0$ such that $dH\neq0$ on $\{x\in\widetilde B_R:0<|H(x)|\leq2\eps\}$.

On $\widetilde M\setminus\widetilde B_R$, we have
$H(r,\omega)=r^\mu\widetilde\phi(\omega)$.
The local expansions used in \cref{lem:exact-ray-normal-form}
show that each lift of a point in $P\cup Q_\phi$ is an
isolated critical point of $\widetilde\phi$.
Thus the zero level set of $\widetilde{\phi}$ is regular away from these points.
Therefore, after decreasing $\eps$, we have
$d(H|_{\partial\widetilde B_R})=R^\mu d\widetilde{\phi}\neq0$
 on $0<|H|\leq2\eps$.

We will modify $H$ only in compact subsets of
$\{H\geq\eps\}$ and $\{H\leq-\eps\}$.
The resulting function $\widetilde F$ will satisfy
$\widetilde F=H$ on $\{|H|<\eps\}$.
Thus the local models near $\pi^{-1}(\Gamma\cup [R_0,\infty)Q_\phi)$
will remain unchanged. Note that the sets $\{H\geq\eps\}$ and $\{H\leq-\eps\}$ in
$\widetilde B_R$ are retracts of $\Omega^+$ and $\Omega^-$,
respectively.
They are therefore connected.

Choose $L>\max_{\widetilde B_R}|H|$.
From each point $(R,\omega)$ with $H(R,\omega)\leq-\eps$,
we extend radially outward until $H=-L$.
Since $H(R,\omega)=R^\mu\widetilde\phi(\omega)$, the
condition on the starting point is equivalent to
$\widetilde\phi(\omega)\leq-\eps/R^\mu$.
Along each of these rays, $\partial_rH=\mu H/r<0$.
Thus the inequalities $r\geq R$ and $H(r,\omega)\geq-L$
specify the segment from $(R,\omega)$ to the point where
$H=-L$.
We define $C^-$ to be the union of
$\widetilde B_R\cap\{H\leq-\eps\}$ with these radial segments,
so that
\[
  C^-:=
  \bigl(\widetilde B_R\cap\{H\leq-\eps\}\bigr)
  \cup\left\{
  (r,\omega):
  r\geq R,\quad
  \widetilde\phi(\omega)\leq-\frac{\eps}{R^\mu},
  \quad H(r,\omega)\geq-L
  \right\}.
\]
Then $C^-$ is compact and connected.
The defining inequalities give
$C^-\subset\widetilde B_{R(L/\eps)^{1/\mu}}$.
Moreover, $C^-$ satisfies
\[
\widetilde B_R\cap\{H\leq-\eps\}
\subset C^-
\subset\{-L\leq H\leq-\eps\}.
\]
See \cref{fig:c-minus-radial-extension} for a two-dimensional illustration.

\begin{figure}[htbp]
  \centering
  \includegraphics[width=.80\textwidth]{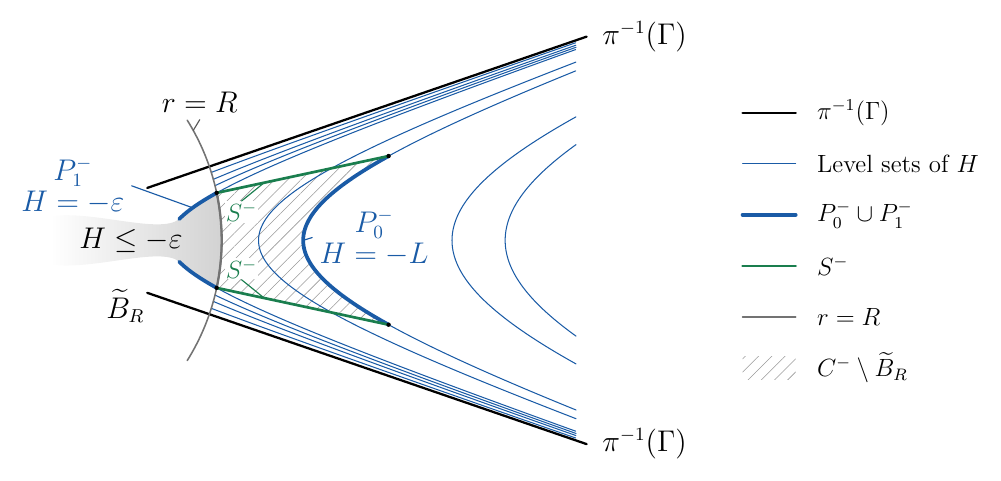}
  \caption{The radial extension defining $C^-$.}
  \label{fig:c-minus-radial-extension}
\end{figure}

Set $P_0^-:=C^-\cap\{H=-L\}$ and
$P_1^-:=\widetilde B_R\cap\{H=-\eps\}$.
Set $C^+:=\tau(C^-)$, $P_0^+:=\tau(P_1^-)$,
and $P_1^+:=\tau(P_0^-)$.
The surfaces $P_0^-$ and $P_1^-$ are nonempty by the choices
of $\eps$ and $L$.
The side boundary $S^-$ consists of the radial segments starting
at $\partial P_1^-$.
Thus $\partial C^-=P_0^-\cup S^-\cup P_1^-$.
Using $t:=H$ along these segments gives
\[
  S^-\cong\partial P_1^-\times[-L,-\eps],
\]
with $H=t$.
Set $S^+:=\tau(S^-)$.
The regularity of $H|_{\partial\widetilde B_R}$ at $-\eps$
and the nonzero radial derivative ensure that the boundary
faces meet transversely.
Thus $C^-$ is a smooth manifold with corners.

Near $P_1^-$, we have $dH\neq0$ by the choice of $\eps$.
On $P_0^-\cup S^-$,
$\partial_rH=\mu H/r<0$.
Therefore $dH\neq0$ near $\partial C^-$.

\textbf{Step 3:} Modify $H$ to obtain $\widetilde F$ satisfying (i) and (ii).\par\nopagebreak

All critical points of $H|_{C^-}$ lie in $Y$, where $H=h$ is Morse.
Apply \cref{lem:indefinite-relative-filling} to
$(C^-,P_0^-,P_1^-)$ with $f=H$, $a=-L$, $b=-\eps$,
and $S=S^-$.
This gives a Morse function $\widetilde F:C^-\to[-L,-\eps]$,
equal to $H$ near $\partial C^-$, whose critical points
have index $1$ or $2$.
Extend $\widetilde F$ to $C^+$ by $\widetilde F(x)=-\widetilde F(\tau x)$
and set $\widetilde F=H$ on $\widetilde M\setminus(C^-\cup C^+)$.
Since $\widetilde F=H$ near $\partial C^-$, the extension is smooth and odd.

We have $\widetilde F=H$ on $\{|H|<\eps\}$, so the
local models remain unchanged near the prescribed curves.
Every additional critical point lies in the interior of $C^\pm$
and is Morse of index one or two.
These points are isolated and hence finite in number, since
$C^\pm$ are compact and have no critical points near their boundaries.
Oddness gives $\operatorname{ind}_{\tau x}(\widetilde F)
=3-\operatorname{ind}_x(\widetilde F)$.

Increase $R_1$, keeping $R_0$ fixed, so that $\widetilde B_R\cup C^-\cup C^+
  \subset\pi^{-1}(B_{R_1}).$ Use this larger radius in the definition of $U$ and shrink its
tubes if necessary.
Then $\widetilde F=\widetilde F_{\mathrm{hom}}$ on $\{r\geq R_1\}$.
This proves (i) and (ii).

\textbf{Proof of (iii).}
\textbf{Step 1.} We first construct positive arcs in $C^\pm$.\par\nopagebreak
Fix one of the domains $C^\pm$. We choose an arbitrary regular point $x\in C^\pm$
and construct an arc $\gamma$ from $P_0^\pm$ to $P_1^\pm$
through $x$ with $d\widetilde F(\dot\gamma)>0$.
Choose a gradient-like vector field for $\widetilde F$
tangent to $S^\pm$.
Follow its flowline from $x$ towards increasing values
of $\widetilde F$.
If the flowline tends to a Morse critical point $p$,
modify it near $p$ as in \cref{fig:morse-gradient-modification}.
The modified arc crosses
$\widetilde F^{-1}(\widetilde F(p))$ through regular points
and satisfies $d\widetilde F(\dot\gamma)>0$.

Continue along the vector field after crossing
$\widetilde F^{-1}(\widetilde F(p))$.
Compactness of $C^\pm$ and finiteness of the critical values
imply that finitely many such modifications give an arc
from $x$ to $P_1^\pm$.
Applying the same argument to $-\widetilde F$ gives an arc
from $P_0^\pm$ to $x$ along which $\widetilde F$ increases.
Joining the two arcs gives the required $\gamma$.

\begin{figure}[htbp]
  \centering
  \includegraphics[width=.50\textwidth]
    {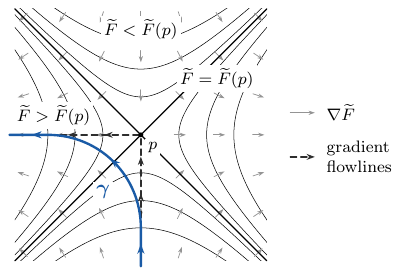}
  \caption{Modification of a gradient flowline near a Morse
  critical point $p$.}
  \label{fig:morse-gradient-modification}
\end{figure}

\textbf{Step 2.} We construct positive arcs from $P_1^-$ to $P_0^+$.\par\nopagebreak
On $\widetilde B_R\cap\{|H|\leq\eps\}$, we have
$\widetilde F=H$.
By \cref{lem:boundary-regularity,lem:connected-odd-separator},
zero is a regular value of $H|_Y$ and of the restriction
of $H$ to each boundary face and boundary circle of $Y$.
Compactness of $Y$ then gives, for sufficiently small $\eps$,
a product structure
\[
  Y\cap\{|H|\leq\eps\}
  \cong \bigl(Y\cap H^{-1}(0)\bigr)\times[-\eps,\eps],
\]
where $H(y,s)=s$ for $(y,s)\in Y\cap H^{-1}(0)\times[-\eps,\eps]$.
Thus every point of $Y\cap\{|H|\leq\eps\}$ lies on an arc
from $Y\cap P_1^-$ to $Y\cap P_0^+$ along which
$dH(\dot\gamma)>0$.

Let $x\in N$ satisfy $|H(x)|\leq\eps$ and $dH_x\neq0$.
Choose the disk $D\subset N$ through $x$ by fixing
$t=t(x)$ as in Step~1 of the proof of (i) and (ii).
Then $H|_D$ is nonconstant and harmonic.
The centre of $D$ is the only possible critical point
of $H$ in $D$.
By the maximum principle, no level set of $H|_D$
contains a simple closed curve or an arc with both ends
at the centre of $D$.
Thus $x$ can be joined to $\partial D$ within
$D\cap H^{-1}(H(x))$ by a path avoiding $\{dH=0\}$.
Since $H^{-1}(H(x))$ meets the boundary faces and
boundary circles of $Y$ transversely, we can extend
the path within $H^{-1}(H(x))\cap\{dH\neq0\}$
to a point $y\in\operatorname{int}Y$.

Take the arc $\gamma_y$ through $y$ given by the product structure on
$Y\cap\{|H|\leq\eps\}$.
If $|H(x)|<\eps$, the local modification in the proof of
\cite[Lemma~3.2]{ChenHeYanCalabiSurgery}, applied along
the path from $y$ to $x$, gives an arc $\gamma_x$ through $x$
with the same endpoints and with $dH(\dot\gamma)>0$.
See \cref{fig:positive-arc-level-transport}.
For $x\in P_1^-\cup P_0^+$, the same modification
in a boundary collar gives an arc with endpoint $x$.
Therefore for every regular point of
$\widetilde B_R\cap\{|H|\leq\eps\}$ we obtain an arc from $P_1^-$ to $P_0^+$
through it.

\begin{figure}[htbp]
  \centering
  \includegraphics[width=.80\textwidth]
    {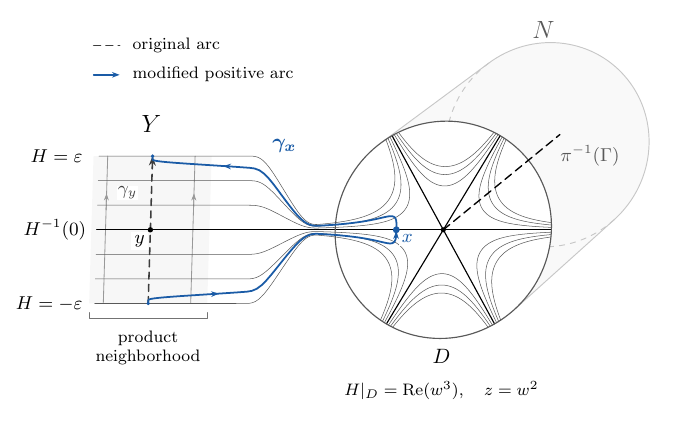}
  \caption{Modification of $\gamma_y$ to a positive arc $\gamma_x$
  through $x$ with the same endpoints, illustrated for $H(x)=H(y)=0$.
  The shaded strip is a local product neighborhood in $Y$, and $D$
  shows the lifted standard model $\operatorname{Re}(w^3)$.}
  \label{fig:positive-arc-level-transport}
\end{figure}

\textbf{Step 3.} Join the arcs from $P_0^-$ to $P_1^+$.\par\nopagebreak

Join the arcs constructed in Steps~1 and~2 across
$P_1^-$ and $P_0^+$ and we can modify the resulting piecewise smooth arc to be smooth.
Every regular point of
$\widetilde B_R\cup C^-\cup C^+$
then lies on a piecewise smooth arc from $P_0^-$ to $P_1^+$
such that $d\widetilde F(\dot\gamma)>0$

\textbf{Step 4.} Extend the arcs to infinity.\par\nopagebreak

Let $\gamma$ be an arc from Step~3, oriented from
$P_0^-$ to $P_1^+$.
Extend $\gamma$ at both endpoints to infinity
by keeping $\omega$ fixed, and smooth the corners.
By the definition of $C^\pm$, the added rays lie in
$\{r>R\}\setminus\operatorname{int}(C^-\cup C^+)$.
Hence $\widetilde F=H=\widetilde F_{\mathrm{hom}}$
on the added rays.

On the ray attached at $P_0^-$, we have $H\leq-L$ and
$d\widetilde F(-\partial_r)=-\mu H/r>0$.
Orient the ray attached at $P_0^-$ towards decreasing $r$.
On the ray attached at $P_1^+$, we have $H\geq L$ and
$d\widetilde F(\partial_r)=\mu H/r>0$.
Orient the ray attached at $P_1^+$ towards increasing $r$.
Therefore, for every regular point
$x\in\widetilde B_R\cup C^-\cup C^+$,
we have constructed a smooth arc $\gamma$ through $x$
with both ends at infinity and $d\widetilde F(\dot\gamma)>0$.

Now let $x=(r,\omega)$ be a regular point outside
$\widetilde B_R\cup C^-\cup C^+$.
Suppose first that $\widetilde\phi(\omega)\neq0$.
Let $y$ be the first point of
$\widetilde B_R\cup C^-\cup C^+$
reached from $x$ by decreasing $r$ while keeping $\omega$ fixed.
Since $\partial_r\widetilde F_{\mathrm{hom}}(y)\neq0$,
the point $y$ is regular.

Choose an arc $\gamma$ from Step~3 through $y$.
If $\widetilde\phi(\omega)>0$, restrict $\gamma$ to its
subarc from $P_0^-$ to $y$.
If $\widetilde\phi(\omega)<0$, restrict $\gamma$ to its
subarc from $y$ to $P_1^+$.
Extend $\gamma$ at both endpoints to infinity
by keeping $\omega$ fixed, and smooth the corners.
Then $\gamma$ passes through $x$ and satisfies
$d\widetilde F(\dot\gamma)>0$.

Suppose now that $\widetilde\phi(\omega)=0$.
Set $y=(R,\omega)$.
Regularity of $x$ gives $d\widetilde\phi(\omega)\neq0$.
Hence the segment from $y$ to $x$ with $\omega$ fixed
is contained in
$\widetilde F^{-1}(0)\cap\{d\widetilde F\neq0\}$.
We have shown that we can choose a smooth and positive arc $\gamma_y$ through $y$ with both ends
at infinity.
Apply the local modification in
\cite[Lemma~3.2]{ChenHeYanCalabiSurgery}
to $\gamma_y$ along the segment from $y$ to $x$.
The resulting smooth and positive arc $\gamma$ passes through $x$
and agrees with $\gamma_y$ outside a compact set.
This proves (iii).
\end{proof}

\subsection{Construction of the harmonic metric}

Fix the function $\widetilde F$ from
\cref{prop:odd-indefinite-filling}.
Near the critical set of $\widetilde F$ and sufficiently far out,
the form $\widetilde b$ will agree with the closed odd 2-forms
described below using the local harmonic models.
After descent to $\R^3$, \cref{thm:relative-metric-realization}
will give the required metric.

\subsubsection{Prescribed forms and a closed extension}

Near $\pi^{-1}(\Gamma\cup[R_0,\infty)Q_\phi)$, set
$b_{\mathrm{loc}}:=\pi^*(*_{g_{\mathrm{loc}}}dF_{\mathrm{loc}})$.
At each Morse critical point $x_j$, applying the Morse lemma, we can choose a metric $g_j$ with
$\Delta_{g_j}\widetilde F=0$, as in
\cite[Section~3.1]{ChenHeYanCalabiSurgery}.
Set $b_{\mathrm{loc}}=*_{g_j}d\widetilde F$ near $x_j$ and
define $b_{\mathrm{loc}}$ near $\tau x_j$ by
$\tau^*b_{\mathrm{loc}}=-b_{\mathrm{loc}}$.
The formulas in
\cref{lem:exact-ray-normal-form,lem:phase-adjacent-transition}
show that $b_{\mathrm{loc}}$ extends smoothly across
$\pi^{-1}(\Gamma)$.
Harmonicity gives $db_{\mathrm{loc}}=0$.

On $\{r>R_1\}$, define
\[
  \lambda_\infty:=-\frac{r^{\mu+1}}{\mu+1}
    *_{S^2}d\widetilde\phi,
  \qquad b_\infty:=d\lambda_\infty,
\]
where $*_{S^2}$ is the Hodge operator on the round sphere $S^2$,
pulled back from $S^2\setminus P$.
The equation $-\Delta_{S^2}\phi=\mu(\mu+1)\phi$ gives
$b_\infty=\pi^*(*_{g_{\Euc}}dF_{\mathrm{hom}})$.
By \cref{lem:exact-ray-normal-form}, $\lambda_\infty$
extends smoothly and oddly across
$\pi^{-1}(\Gamma)\cap\{r>R_1\}$.

Choose a sufficiently small $\tau$-invariant neighborhood
$V$ of $\Zero(d\widetilde F)$ such that each component of
$V$ and of $V\cap\{r>R_1\}$ is contractible.
Since $b_{\mathrm{loc}}=b_\infty$ on $V\cap\{r>R_1\}$,
a primitive of $b_{\mathrm{loc}}$ on $V$ differs from
$\lambda_\infty$ by an exact form on $V\cap\{r>R_1\}$.
After increasing $R_1$, we can therefore choose an odd
primitive of $b_{\mathrm{loc}}$ agreeing with $\lambda_\infty$
on $V\cap\{r>R_1\}$.

Let $\eta$ denote the odd primitive of $b_{\mathrm{loc}}$
chosen above, and extend $\eta$ by $\lambda_\infty$
to $V\cup\{r>R_1\}$.
Choose a smooth $\tau$-invariant cutoff $\chi$ supported in
$V\cup\{r>R_1\}$, with $\chi=1$ near $\Zero(d\widetilde F)$
and on $\{r\geq R_1+1\}$.
Extend $\chi\eta$ by zero to $\widetilde M$ and set
$\bar b:=d(\chi\eta)$.
Then $\bar b$ is smooth, closed, and odd.
After increasing $R_1$, we have
$\bar b=b_{\mathrm{loc}}$ near $\Zero(d\widetilde F)$
and $\bar b=b_\infty$ on $\{r\geq R_1\}$.

Consequently, the set
\[
  K_{\mathrm{bad}}
  :=\{x\in\widetilde M:d\widetilde F_x\neq0,\,
                  (d\widetilde F\wedge\bar b)_x\leq0\}
\]
is a compact subset of
$\pi^{-1}(B_{R_1})\setminus\Zero(d\widetilde F)$.

\subsubsection{Completion of the proof}

\begin{proposition}\label[proposition]{prop:remote-thom-correction}
There is a smooth closed odd 2-form $\widetilde b$ on
$\widetilde M$ such that $d\widetilde F\wedge\widetilde b>0$
whenever $d\widetilde F\neq0$,
with $\widetilde b=b_{\mathrm{loc}}$ near $\Zero(d\widetilde F)$
and $\widetilde b=b_\infty$ outside a compact set.
\end{proposition}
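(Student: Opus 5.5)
The plan is to correct $\bar b$ by adding finitely many closed odd 2-forms. Each is concentrated in a thin tube around a positive arc from \cref{prop:odd-indefinite-filling}(iii) and is cut off at a large radius, so the correction is compactly supported. The basic observation is local. Let $\gamma$ be a positive arc and choose tubular coordinates $(s,w)\in\R\times D^2$ with $\gamma=\{w=0\}$ and $\dot\gamma=\partial_s$. Let $\psi\geq0$ be a bump on $D^2$ with $\int\psi=1$, and set
\[
  \Theta_\gamma:=\psi(w)\,dw_1\wedge dw_2 .
\]
This is a closed 2-form supported in the tube, and
\[
  d\widetilde F\wedge\Theta_\gamma=\psi\,d\widetilde F(\partial_s)\,ds\wedge dw_1\wedge dw_2 .
\]
This is $\geq0$ everywhere, provided the tube is thin enough that $d\widetilde F(\partial_s)>0$ on it. It is $>0$ on a neighborhood of $\gamma$ if $\psi>0$ near $0$. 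Since $d\widetilde F(\dot\gamma)>0$, the arc avoids $\Zero(d\widetilde F)$, so the tube can also be chosen disjoint from a fixed neighborhood of $\Zero(d\widetilde F)$ on which $\bar b=b_{\mathrm{loc}}$.

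\textbf{Step 1 (covering $K_{\mathrm{bad}}$).} By (iii), every point of $K_{\mathrm{bad}}$ lies on a positive arc whose ends are radial rays going to infinity. By compactness, finitely many arcs $\gamma_1,\dots,\gamma_m$ suffice: the open sets $\{d\widetilde F\wedge\Theta_{\gamma_i}>0\}$ cover $K_{\mathrm{bad}}$. To handle oddness, I use that $\tau$ preserves orientation and $\tau^*d\widetilde F=-d\widetilde F$. These give
\[
  d\widetilde F\wedge(-\tau^*\Theta_{\gamma_i})=\tau^*\bigl(d\widetilde F\wedge\Theta_{\gamma_i}\bigr)\geq0 .
\]
Thus $\Theta_i^{\mathrm{odd}}:=\Theta_{\gamma_i}-\tau^*\Theta_{\gamma_i}$ is closed, odd and nonnegative against $d\widetilde F$. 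It is strictly positive near $\gamma_i\cup\tau\gamma_i$, and $K_{\mathrm{bad}}$ is $\tau$-invariant.

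\textbf{Step 2 (remote truncation).} This is the step I expect to be the main obstacle, because $\Theta_{\gamma_i}$ is not compactly supported. Each $\gamma_i$ is radial outside a compact set, with fixed direction $\omega$. On $\{r>R_2\}$ each end-tube is $[R_2,\infty)\times D^2$, which is contractible, and there $\Theta_{\gamma_i}=d\beta_i$ with
\[
  \beta_i=\Psi(w)\,d\theta ,
\]
a fixed primitive that is independent of $r$ in the product coordinates. In scaled radial coordinates I would take the tube radius proportional to $r$, so that $\beta_i$ is $O(1)$ in $\mathcal{C}^0$ with respect to $g_{\Euc}$ uniformly in $R_2$. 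Let $\chi_{R_2}$ be a radial cutoff equal to $1$ on $r\leq R_2$ and $0$ on $r\geq R_2+1$. Replace $\Theta_{\gamma_i}$ by
\[
  \Theta_{\gamma_i}':=\chi_{R_2}\Theta_{\gamma_i}+d\chi_{R_2}\wedge\beta_i=d(\chi_{R_2}\beta_i)\quad\text{on } r>R_2 .
\]
This form is closed and compactly supported, but its error term $d\chi_{R_2}\wedge\beta_i$ has no sign. The key estimate uses the homogeneous model. On the end rays, $|\widetilde F_{\mathrm{hom}}|\geq L$ and $\partial_r\widetilde F$ has the good sign. In the annulus tube one has
\[
  d\widetilde F\wedge b_\infty\sim|dF_{\mathrm{hom}}|^2\sim r^{2\mu-2}|\widetilde\phi(\omega)|^2 ,
\]
and $\widetilde\phi(\omega)$ is bounded away from $0$ along the finitely many end directions. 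The error, including the coefficient $c$ introduced in Step 3, is
\[
  |d\widetilde F\wedge c\,d\chi_{R_2}\wedge\beta_i|\lesssim c\,r^{\mu-1}.
\]
Since $\mu>1$, choosing $R_2$ large after fixing $c$ makes $d\widetilde F\wedge(b_\infty+\text{error})>0$ there. If a uniform primitive bound is awkward, I would instead shrink the cross-sections so that $\beta_i$ stays bounded. The required inequality is still a comparison of $r^{2\mu-2}$ against $r^{\mu-1}$.

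\textbf{Step 3 (assembly).} Set
\[
  \widetilde b:=\bar b+c\sum_i\bigl(\Theta'_{\gamma_i}-\tau^*\Theta'_{\gamma_i}\bigr).
\]
First choose $c$ so large that
\[
  d\widetilde F\wedge\bar b+c\,d\widetilde F\wedge\sum_i\Theta^{\mathrm{odd}}_i>0
\]
on $K_{\mathrm{bad}}$; this is possible by compactness, since $d\widetilde F\wedge\bar b$ is bounded below there. Away from $K_{\mathrm{bad}}$ the inequality $d\widetilde F\wedge\bar b>0$ already holds where $d\widetilde F\neq0$, and the added terms are $\geq0$ inside $r\leq R_2$. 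Then choose $R_2$ as in Step 2, so the annulus errors are absorbed by $b_\infty$. The resulting $\widetilde b$ is smooth, closed and odd. It equals $b_{\mathrm{loc}}$ near $\Zero(d\widetilde F)$ because the tubes avoid that set, and it equals $b_\infty$ for $r\geq R_2+1$. It satisfies $d\widetilde F\wedge\widetilde b>0$ wherever $d\widetilde F\neq0$.
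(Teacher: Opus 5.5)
Your Steps 1 and 3 are sound and match the paper: Thom forms of positive arcs, the $\tau$-antisymmetrization, and fixing the constant before the truncation radius. Step 2, however, fails as written.

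\textbf{The gap in Step 2.} On an end tube, any primitive of $\Theta_{\gamma_i}$ satisfies $\oint_{\partial D^2}\beta_i=\int_{D^2}\Theta_{\gamma_i}=1$ around each fiber circle at the tube wall. For your choice, $\beta_i=\tfrac{1}{2\pi}\,d\theta$ near the wall. Hence $d\chi_{R_2}\wedge\beta_i$ is nonzero all the way up to the wall of the tube on $\{R_2<r<R_2+1\}$, and $\Theta'_{\gamma_i}$ does not extend by zero to a smooth closed form. Choosing a different primitive does not help. Apply Stokes' theorem to a capped solid cylinder around one end. A closed form supported in that end tube and equal to $\Theta_{\gamma_i}$ for $r\le R_2$ would have flux $\pm1$ through the inner cap and $0$ through the rest of the boundary. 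So a single radial end cannot be cut off inside its own tube.

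\textbf{The missing idea.} You must treat the two ends of each arc together. The paper does this for $R_3\gg R_1$: it joins the two radial ends by a curve $t\mapsto(R_3a_j(t),\omega_j(t))$ in $\{R_3<r<2R_3\}$, with $a_j,\omega_j$ independent of $R_3$. It then uses the Thom form of the resulting embedded loop, which is closed and compactly supported. Equivalently, you could take a primitive of the sum of the two end Thom forms on $\{r>R_2\}$, supported near a strip over a path joining the two end directions. Such a primitive exists because the two ends cross each large (connected) sphere $\pi^{-1}(\partial B_r)$ with opposite orientations.

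\textbf{Consequence for your estimate.} Either way, the unsigned error now lives along the connecting curve. The ends lie in directions with $\widetilde\phi<0$ (at $P_0^-$) and $\widetilde\phi>0$ (at $P_1^+$). So the connecting curve must cross $\{\widetilde\phi=0\}$, where your bound $|d\widetilde F_{\mathrm{hom}}|^2\sim r^{2\mu-2}|\widetilde\phi|^2$ gives nothing. Instead use $|d\widetilde F_{\mathrm{hom}}|^2=r^{2\mu-2}(\mu^2\widetilde\phi^2+|d\widetilde\phi|^2)$, and require the projection of $\omega_j([0,1])$ to $S^2$ to avoid $P\cup Q_\phi$. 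Then $|d\widetilde F_{\mathrm{hom}}|\geq cR_3^{\mu-1}$ on the supports, while the Thom forms are $O(1)$ there. The unsigned term is therefore $O(CR_3^{1-\mu})\to0$ relative to $d\widetilde F\wedge b_\infty$, and your Step 3 goes through unchanged.
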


\begin{proof}
If $K_{\mathrm{bad}}=\varnothing$, take $\widetilde b=\bar b$.
By \cref{prop:odd-indefinite-filling}(iii) and compactness of
$K_{\mathrm{bad}}$, choose finitely many positive arcs
$\gamma_1,\ldots,\gamma_m$ with radial ends and tubular
neighborhoods covering $K_{\mathrm{bad}}$.
For $R_3\gg R_1$, join the radial ends of $\gamma_j$ in
$\{R_3<r<2R_3\}$ to form an embedded loop $L_{j,R_3}$.
Parametrize the connecting curve by
$t\mapsto(R_3a_j(t),\omega_j(t))$, $0\leq t\leq1$,
where $a_j$ and $\omega_j$ are independent of $R_3$
and $1<a_j(t)<2$.
Choose the curve to agree with the radial ends of $\gamma_j$
near its endpoints.
Require the projection of $\omega_j([0,1])$ to $S^2$
to be disjoint from $P\cup Q_\phi$.

As in \cite[proof of Theorem~3.10]{ChenHeYanCalabiSurgery},
choose Thom forms $\psi_{j,R_3}$ compactly supported in
tubes about $L_{j,R_3}$, with
$\psi_{j,R_3}|_{\{r\leq R_1\}}$ independent of $R_3$.
Require $\operatorname{supp}\psi_{j,R_3}
\subset\{r<2R_3\}\setminus\Zero(d\widetilde F)$.
On $\{R_3<r<2R_3\}$, use Euclidean normal coordinates
$(y_1,y_2)$ on disks of radius $\delta>0$ and take
$\psi_{j,R_3}=\rho_j(y_1,y_2)\,dy_1\wedge dy_2$,
where $\rho_j\geq0$ and both $\delta$ and $\rho_j$
are independent of $R_3$.
Choose $\psi_{j,R_3}$ so that
$d\widetilde F\wedge\psi_{j,R_3}\geq0$ on $\{r\leq R_3\}$
and $d\widetilde F\wedge\sum_j\psi_{j,R_3}>0$
on $K_{\mathrm{bad}}$.

Compactness of $K_{\mathrm{bad}}$ gives a constant $C>0$,
independent of $R_3$, such that
$B:=\bar b+C\sum_j\psi_{j,R_3}$ satisfies
$d\widetilde F\wedge B>0$ wherever $d\widetilde F\neq0$
on $\{r\le R_3\}$.

By construction, we have
$|\sum_j\psi_{j,R_3}|=O(1)$ on $\{R_3<r<2R_3\}$
in the lifted Euclidean metric.
Since $\omega_j$ and $\delta$ are independent of $R_3$
and the projections of $\omega_j([0,1])$ are disjoint
from $P\cup Q_\phi$, homogeneity gives
\[
\begin{aligned}
  |d\widetilde F_{\mathrm{hom}}|&\geq cR_3^{\mu-1},\\
  \frac{C|\sum_j\psi_{j,R_3}|}
       {|d\widetilde F_{\mathrm{hom}}|}
  &=O(CR_3^{1-\mu})\longrightarrow0
\end{aligned}
\]
on $\{R_3<r<2R_3\}\cap
\bigcup_j\operatorname{supp}\psi_{j,R_3}$,
where $c>0$ is independent of $R_3$ and
$\mu>1$ by \cref{lem:homogeneous-exponent}.
Outside $\bigcup_j\operatorname{supp}\psi_{j,R_3}$,
we have $B=\bar b$.
Since $\bar b=b_\infty$ on $\{r>R_3\}$, for sufficiently
large $R_3$ we have
\[
  d\widetilde F\wedge B
  \geq
  \left(1-\frac{C|\sum_j\psi_{j,R_3}|}
                   {|d\widetilde F_{\mathrm{hom}}|}\right)
  d\widetilde F_{\mathrm{hom}}\wedge b_\infty
  \geq \frac12
  d\widetilde F_{\mathrm{hom}}\wedge b_\infty>0
\]
on $\{R_3<r<2R_3\}\setminus\Zero(d\widetilde F)$.
Together with positivity on $\{r\leq R_3\}$ and
$B=b_\infty$ on $\{r\geq2R_3\}$, this gives
$d\widetilde F\wedge B>0$ wherever $d\widetilde F\neq0$.

Set $\widetilde b=(B-\tau^*B)/2$.
Then $\widetilde b$ is smooth, closed, and odd.
Since $\tau$ preserves orientation and
$\tau^*d\widetilde F=-d\widetilde F$,
\[
  d\widetilde F\wedge\widetilde b
  =\frac12\bigl(d\widetilde F\wedge B
       +\tau^*(d\widetilde F\wedge B)\bigr)>0
  \quad\text{where }d\widetilde F\neq0.
\]
The support of $\widetilde b-\bar b$ is compact and disjoint
from $\Zero(d\widetilde F)$.
Hence $\widetilde b=b_{\mathrm{loc}}$ near
$\Zero(d\widetilde F)$ and $\widetilde b=b_\infty$
outside a compact set.
\end{proof}

\begin{proof}[Proof of \cref{thm:intro-desingularization}]
Let $F$ and $b$ be the descents of
$\widetilde F$ and $\widetilde b$ to $\R^3\setminus\Gamma$. They are $\cI$-valued forms.
Set $Z=\Zero(dF)$.
By \cref{prop:remote-thom-correction}, the form $b$ is closed
and $dF\wedge b>0$ on $\R^3\setminus Z$.
Apply \cref{thm:relative-metric-realization} using
$\widetilde b=b_{\mathrm{loc}}$ near $\Zero(d\widetilde F)$
and $\widetilde b=b_\infty$ for large $r$.
We obtain a smooth metric $g$ on $\R^3$ for which $dF$ is
harmonic, with $g=g_{\mathrm{loc}}$ near $\Gamma$ and
$g=g_{\Euc}$ for large $r$.
In particular, $g$ is complete.
By \cref{prop:odd-indefinite-filling}(i), we also have
$F=F_{\mathrm{hom}}$ for large $r$.

Near $\Gamma$,
\cref{lem:exact-ray-normal-form,lem:phase-adjacent-transition}
give $|dF|_g=O(|z|^{1/2})$ and
$|\nabla dF|_g=O(|z|^{-1/2})$, uniformly on compact sets.
Together with smoothness of $F$ and $g$ across
$Z\setminus\Gamma$, these estimates verify
Definition~\ref{def:z2-harmonic}.
The definition of $\cI$ gives $\Mono(\cI)=\Gamma$.
The curves $\gamma_a$ realize the pairing $\Pi$ by construction.
\end{proof}

\section{\texorpdfstring{Nondegenerate $\Ztwo$-harmonic 1-forms on closed three-manifolds}{Nondegenerate Z/2-harmonic 1-forms on closed three-manifolds}}\label{sec:sphere}

We now prove \cref{thm:intro-sphere}.
We first construct two-valued functions on $3$-balls whose singular sets are prescribed knots.
We insert two such functions near the extrema of a Morse function on $M$
and obtain a transitive two-valued closed 1-form.
Finally we apply \cite[Theorem~3.11]{ChenHeYanCalabiSurgery} to construct a metric under which the $1$-form is harmonic.
We conclude by discussing the topology of these examples on $S^3$
and the failure of the transverse distance condition.

\subsection{A local model on a ball}

Let $K$ be a smooth knot contained in a closed $3$-ball $B$. Let
$\pi:\hat{B}\to B$ be the double branched covering that is branched along $K$, and
write $\tau$ for its involution.
For simplicity we still denote the preimage $\pi^{-1}(K)$ by $K$.
Since $\partial B$ is simply connected, $\pi^{-1}(\partial B)$ consists
of two connected components $\partial_+\hat{B}$ and $\partial_-\hat{B}$, which are exchanged by $\tau$.
Choose a normal framing of $K$ in $B$ that lifts to $K\subset\hat{B}$ and hence define coordinates $(\theta,z)$ in a tubular neighborhood of $K$ in $B$.
We can lift this framing to $K\subset\hat{B}$, and define corresponding product coordinates
$(\theta,w)\in S^1\times D^2$ in which $\tau(\theta,w)=(\theta,-w)$
and $\pi(\theta, w)=(\theta,z=w^2)$. We now construct a smooth odd function on $\hat{B}$, 
which gives a two-valued function on $B$. By odd function, we mean a function $u$ on $\hat{B}$ satisfying $u\circ\tau=-u$.

\begin{lemma}\label[lemma]{lem:sphere-ball-function}
There is a smooth odd function $u:\hat{B}\to[-1,1]$ such that $u=\operatorname{Re}(w^3)$ near $K$, and $u(y,s)=\pm(1-s)$ near $\partial_\pm\hat{B}$,
where $(y,s)\in S^2\times [0,\epsilon)$ are collar coordinates near $\partial_\pm \hat{B}$ that are preserved by $\tau$, with $s$ increasing inward.
The critical set of $u$ consists of $K$ and finitely many Morse critical
points of index $1$ or $2$.

Moreover, for every regular point $x\in \mathrm{Int}\hat{B}$ of $u$ and every
$y_\pm\in\partial_\pm\hat{B}$, there is a smooth embedded arc $\gamma$
from $y_-$ to $y_+$ through $x$ satisfying $du(\dot \gamma)>0$.
The arc $\gamma$ can be chosen so that in the collar coordinates $(y,s)$, its spherical coordinate $y$ is independent of $s$ sufficiently close to its endpoints.
\end{lemma}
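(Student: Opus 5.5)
The plan is to repeat, in the compact setting of $\hat B$, the three-stage construction used for \cref{prop:odd-indefinite-filling}: fix the function near $\partial\hat B$ and near $K$, fill in an odd function with connected positive and negative regions, and then cancel the extrema.

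\textbf{Step 1: prescribed data and the compact region.} Prescribe $u=\operatorname{Re}(w^3)$ on a thin $\tau$-invariant tube $N=\{|w|\le\rho\}$ around $K$, and $u=\pm(1-s)$ on the collars of $\partial_\pm\hat B$. These are odd because $\tau(\theta,w)=(\theta,-w)$ and $\tau$ exchanges the two collars. Set $Y:=\hat B\setminus(\operatorname{int}N\cup\text{collars})$. Then $\tau$ acts freely on $Y$, and $Y$ is connected because $\hat B$ is connected and $K$ has codimension two.

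On the boundary torus $|w|=\rho$, the zero set of $\operatorname{Re}(w^3)$ is regular, since $\partial_\phi\operatorname{Re}(\rho^3e^{3i\phi})\neq0$ at its zeros. On the collar boundaries, $u=\pm(1-\epsilon)\neq0$. Hence the argument of \cref{lem:connected-odd-separator} applies verbatim. Cut off, extend by zero, and perturb equivariantly to an odd Morse function $h$ on $Y$ with $0$ a regular value. Then join components of $\{h>0\}$ by paired $1$-handles until $\{h>0\}$ and $\{h<0\}$ are connected. The maximum principle on the disks $\{\theta=\text{const}\}\subset N$, where $\operatorname{Re}(w^3)$ is harmonic, shows that the full sets $\Omega^\pm=\{\pm u>0\}$ are connected, as in Step~1 of that proof.

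\textbf{Step 2: removing extrema.} Choose a small $\varepsilon'>0$ with $du\neq0$ on $\{0<|u|\le2\varepsilon'\}$. Let $C^+:=\{u\ge\varepsilon'\}$. It is a compact connected cobordism from $P_0=\{u=\varepsilon'\}$ to $P_1=\partial_+$-collar level, with empty side boundary. Nonemptiness of $P_0$ follows from $u$ changing sign near $K$. Apply \cref{lem:indefinite-relative-filling} to cancel all index $0$ and $3$ critical points in $C^+$, keeping $u$ fixed near $\partial C^+$. Define $u$ on $C^-=\tau C^+$ by oddness.

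Two conditions must be checked here. First, the range must stay in $[-1,1]$, which we arrange by taking $\max|h|<1-\epsilon$ before cancellation. Second, $u$ must equal $1-s$ on the collar, which is the level-$b$ face of the cobordism. Oddness then forces the remaining critical points in $C^-$ to have index $3-\text{ind}\in\{1,2\}$. On $\{|u|<\varepsilon'\}$ the only critical set is $K$.

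\textbf{Step 3: positive arcs.} I expect this step to be the main obstacle, though it follows Steps 1--4 of part (iii) in the proof of \cref{prop:odd-indefinite-filling}. For a regular $x\in C^\pm$, follow a gradient-like flow up and down, detouring around Morse points as in \cref{fig:morse-gradient-modification}. Since there are no interior maxima or minima, the flow reaches the collar boundary or the level $\pm\varepsilon'$.

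In the slab $\{|u|\le\varepsilon'\}$, use the product structure on $Y$. For points in $N$, transport along level sets inside the disks $\theta=\text{const}$, where no level curve closes up by the maximum principle, and apply the local modification of \cite[Lemma~3.2]{ChenHeYanCalabiSurgery}. Concatenating and smoothing gives an arc from some point of $\partial_-\hat B$ to some point of $\partial_+\hat B$.

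It remains to make the endpoints arbitrary prescribed points $y_\pm$. Since $u=\pm(1-s)$ exactly on the collars, any path in the level sphere $\{s=s_0\}\cong S^2$ is a level path. So we reroute inside the collar: near the end, move tangentially at a fixed $s$ by a small tilt in $s$ to keep $du(\dot\gamma)>0$, reaching a point with spherical coordinate $y_\pm$, and then exit along the straight segment with $y$ constant, using $du(-\partial_s)=1>0$ on $\partial_+$ and the analogue on $\partial_-$. The last segment also gives the required independence of $y$ from $s$ near the endpoints.

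Embeddedness is handled in two ways. Self-intersections of the concatenated arc are removed by general position in dimension three. The tangential moves are kept disjoint from the rest of the arc by performing them in distinct collar depths.
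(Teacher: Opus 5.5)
Your proposal is correct. Steps 1 and 2 are the paper's construction: build an odd Morse separator on $Y$ with connected sign regions via \cref{lem:connected-odd-separator}, then cancel extrema on $C^\pm=\{\pm u\ge\varepsilon'\}$ with \cref{lem:indefinite-relative-filling} and extend by oddness.

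You take a genuinely different route for the positive-arc assertion.

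The paper argues globally. Since $\hat B$ is $\Sigma_2(K)$ minus two balls, $b_1(\hat B)=0$, so the Reeb graph of $u$ is a tree. Because $u$ has no interior extrema, the only leaves are $\partial_\pm\hat B$, so the graph is the segment $[-1,1]$ and every level set is connected. A single modified gradient line through a regular point of $u^{-1}(0)$ then meets each level exactly once. The paper then uses \cite[Lemma~3.2]{ChenHeYanCalabiSurgery} to slide this arc inside the connected level set until it passes through any prescribed regular point.

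You instead reuse the purely local scheme of part (iii) of \cref{prop:odd-indefinite-filling}:
\begin{itemize}
\item gradient flow with Morse detours inside $C^\pm$;
\item the product structure of $Y\cap\{|u|\le\varepsilon'\}$, which is available because $\partial N$ is a closed torus on which $0$ is a regular value of $\operatorname{Re}(w^3)$, and $u=\pm(1-s)$ stays away from $0$ on the collars;
\item maximum-principle transport along level curves in the normal disks of $N$;
\item concatenation through $P_1^-$ and $P_0^+$, followed by explicit rerouting in the collars to hit $y_\pm$.
\end{itemize}
Your route needs no input about $b_1(\hat B)$. It also avoids having to check that the regular part of a critical level set is connected. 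The paper's route yields the extra global fact that the Reeb graph, and hence the leaf space, is a segment. The paper reuses this in \cref{subsec:sphere-s3} for the transverse-distance discussion.

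One simplification: $u$ is strictly increasing along any arc with $du(\dot\gamma)>0$, so such an arc is automatically injective. Your general-position argument and the ``distinct collar depths'' device for embeddedness are therefore unnecessary.
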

\begin{proof}
Choose a tubular neighborhood $N=\{|w|\leq\rho\}\subset\hat{B}$ of $K$
with $0<\rho<1$
and set $Y=\hat{B}\setminus\interior N$.
$Y$ is connected, and $\tau$ acts freely on it. We first define $u$ near $\partial Y$.
Let $u=\operatorname{Re}(w^3)$ near $\partial N$
and $u=\pm(1-s)$ near $\partial_\pm\hat{B}$.
Note that $du\ne 0$ near $\partial Y$, and $0$ is a regular value of $u|_{\partial Y}$.
Apply the proof of \cref{lem:connected-odd-separator},
choosing $|u|<1$ on $\operatorname{int}Y$ when extending $u$.
All perturbations have compact support in $\operatorname{int}Y$
and can be chosen sufficiently small to preserve $|u|<1$.
We obtain an odd Morse extension $u:Y\to[-1,1]$ with zero
as a regular value and with $\{u>0\}$ and $\{u<0\}$ connected.

We extend $u$ to the entire $\hat{B}$ by setting $u=\operatorname{Re}(w^3)$ on $N$.
Each component of $\{\operatorname{Re}(w^3)>0\}\cap N$ meets
$\{x\in Y:u(x)>0\}$ along $\partial N$.
The same holds for the negative components.
Consequently, on the entire $\hat{B}$, the regions $\{u>0\}$ and $\{u<0\}$ are still connected.
The critical set of $u$ intersects $u^{-1}(0)$ precisely in $K$.

On $N\setminus K$, $du=d\operatorname{Re}(w^3)\neq0$.
Since $Y$ is compact and $du\neq0$ on $Y\cap u^{-1}(0)$,
we can choose $\eps>0$ sufficiently small that $du\neq0$ when $0<|u|\leq2\eps$.
After decreasing $\eps$ if necessary, $Y\cap\{|u|\leq\eps\}$
has a product structure with $u$ as one coordinate taking values in
$[-\eps,\eps]$.

Set $C^\pm:=\{\pm u\geq\eps\}$.
Set $P_0^-:=\partial_-\hat B$ and $P_1^-:=u^{-1}(-\eps)$.
Set $P_0^+:=u^{-1}(\eps)$ and $P_1^+:=\partial_+\hat B$.
Flowing along the gradient field of $u$ gives a retraction of $\{u>0\}$ onto $C^+$.
Since $\{u>0\}$ is connected, so is $C^+$.
Oddness of $u$ gives $C^- =\tau(C^+)$, so $C^-$ is connected as well.

We have $\partial C^+=P_0^+\sqcup P_1^+$.
Since $u|_Y$ is Morse and $du\neq0$ on $N\setminus K$,
the restriction $u|_{C^+}:C^+\to[\eps,1]$ is Morse.
The choice of $\eps$ and the identity $u=1-s$ near
$\partial_+\hat{B}$ imply $du\neq0$ near $\partial C^+$.
Apply \cref{lem:indefinite-relative-filling} to
$(C^+,P_0^+,P_1^+)$ with $f=u$, $a=\eps$, $b=1$,
and $S=\varnothing$ to modify $u|_{C^+}$ while keeping $u$
fixed near $\partial C^+$.
After the modification, all critical points of $u|_{C^+}$ have index $1$ or $2$.
Extend the modification to $C^-$ by oddness and keep $u$
fixed on $\hat{B}\setminus(C^+\cup C^-)$.
Continue to denote the resulting smooth odd function by $u$.
The function $u$ is unchanged on $\{|u|\leq\eps\}$.
In particular, $u=\operatorname{Re}(w^3)$ on
$N\cap\{|u|\leq\eps\}$.

We now prove the second assertion about positive arcs. Consider the double cover of $S^3$ branched along $K\subset B\subset S^3$. 
It is a classical fact that this double cover is a rational homology sphere. Consequently, $b_1(\hat{B})=0$. 
Since $b_1(\hat{B})=0$, the Reeb graph of $u$ is a tree. Here, the Reeb graph is defined to be the space of 
connected components of level sets of $u$ with the quotient topology. 
Since $u$ has no interior local extrema, the only vertices of this tree correspond to $\partial_\pm\hat{B}$, so the Reeb graph of $u$ 
is identified with $[-1,1]$.

Let $x$ be a regular point of $u$ with $u(x)=0$. Consider the gradient flowline of $u$ passing through $x$.
This flowline intersects $u^{-1}(0)$ once exactly at $x$. So if it tends to a critical point of $u$, that point must be a Morse critical point of
index $1$ or $2$. Thus we can modify the flowline near the critical points to 
cross the level sets containing critical points, as in the proof of \cref{prop:odd-indefinite-filling}(iii).
This defines a positive arc $\gamma$ satisfying the assertions of the lemma.

Now since the Reeb graph of $u$ is a segment, $\gamma$ passes through every level set of $u$ exactly once. 
For any regular point $x$ of $u$ in the interior, applying \cite[Lemma~3.2]{ChenHeYanCalabiSurgery}, 
we can modify $\gamma$ to pass through $x$ while preserving the strict increase of $u$ along $\gamma$. 
The same argument in the boundary collars allows us to prescribe the endpoints $y_\pm$ 
and make the spherical coordinate constant near each endpoint.
\end{proof}

\subsection{Construction of the harmonic 1-form}\label{subsec:sphere-closed-form}
Let $M$ be a closed connected oriented three-manifold.
By classical Morse theory, there is a Morse function $f\to\mathbb R$
with a unique local minimum $x_0$ and a unique local maximum $x_1$.

Choose sufficiently small disjoint closed $3$-balls $B_0,B_1\subset M$
centered at $x_0,x_1$ in Morse coordinates, so that their boundaries
are regular level sets of $f$, and set
$W=M\setminus\operatorname{Int}(B_0\sqcup B_1)$.
After an affine rescaling of $f$, we may assume that
$f|_{\partial B_0}=0$ and $f|_{\partial B_1}=1$.
Near each $\partial B_i$, choose coordinates $(y,s)$, with
$y\in\partial B_i$ and $s>0$ inside $B_i$, such that
$f=-s$ near $\partial B_0$ and $f=1+s$ near $\partial B_1$.

Since all critical points of $f$ in $W$ have index $1$ or $2$, every regular 
point $x\in\mathrm{Int}W$ lies on a smooth arc $\gamma$ from any prescribed $y_0\in\partial B_0$ 
to $y_1\in\partial B_1$ with $df(\dot\gamma)>0$.
The arc can be chosen with constant sphere coordinate near each endpoint.
If $x\in\partial B_i$, take $y_i=x$.

Choose any smooth knot $K_i\subset\interior B_i$ for $i=0,1$,
and take the covers $\hat{B}_i$ and functions $u_i$ from
\cref{lem:sphere-ball-function}.
Use the lift of $s$ as the collar coordinate on each $\hat{B}_i$.
Over $W$, take the trivial double cover $W_+\sqcup W_-$.
Along $\partial B_0$, glue $\partial_+\hat{B}_0$ and
$\partial_-\hat{B}_0$ to the corresponding boundary components of
$W_+$ and $W_-$, respectively.
Along $\partial B_1$, glue $\partial_-\hat{B}_1$
to $W_+$ and $\partial_+\hat{B}_1$ to $W_-$.
This defines a double branched cover $\pi:\widehat M\to M$ branched along
$L=K_0\sqcup K_1$. See \cref{fig:morse-knot-gluing}.
Its involution $\tau$ exchanges $W_+$ and $W_-$ and agrees with involutions on $\hat{B}_0$ and $\hat{B}_1$.
Since $u_i=\pm(1-s)$ near $\partial_\pm\hat{B}_i$, while
$f=-s$ near $\partial B_0$ and $f=1+s$ near $\partial B_1$,
the forms $du_0$ and $du_1$ glue to $df$ on $W_+$ and to $-df$ on $W_-$.
This gives a smooth closed 1-form $\tilde{\alpha}$ on $\widehat M$ and satisfies
$\tau^*\tilde{\alpha}=-\tilde{\alpha}$.

\begin{figure}[htbp]
  \centering
  \includegraphics[width=.50\textwidth]{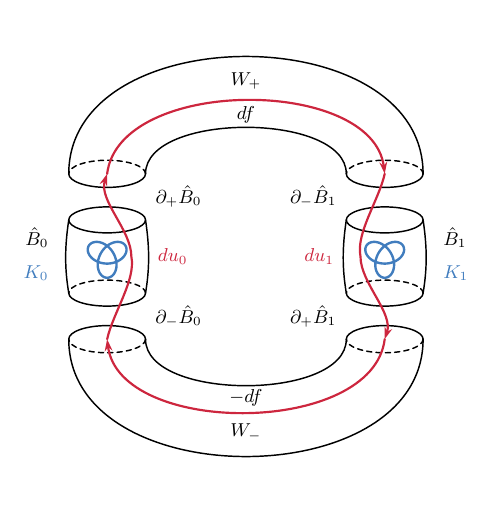}
  \caption{Gluing construction of $\widehat M$ and a positive loop.}
  \label{fig:morse-knot-gluing}
\end{figure}

Let $\cI\to M\setminus L$ be the flat real line bundle 
associated to the double cover $\pi:\widehat M\setminus\pi^{-1}(L)\to M\setminus L$, 
which is considered as a principal bundle with fiber $\mathbb{Z}/2$. The holonomy of $\cI$ around each meridian of $L$ is $-1$.
The form $\tilde{\alpha}$ descends to a closed
$\cI$-valued 1-form $\alpha$.
By construction of $u_0$ and $u_1$, we have $\alpha=d\operatorname{Re}(z^{3/2})$ near $L$, 
where $z$ is the normal complex coordinate near $L$ in $B_0\cup B_1$.
The zero set of $\tilde{\alpha}$ outside $L$ is a finite set $Q$ of Morse
zeros of index $1$ or $2$.

Now we verify the \emph{transitvity} of $\alpha$. Let $x\in M$ be a nonzero point of $\alpha$.
Lift this point to a point $\hat{x}\in\widehat{M}$. In any of the four pieces $W_+,W_-,\hat{B}_0,\hat{B}_1$ containing $\hat{x}$,
 we can find a smooth embedded arc through the lift of $x$ with $\tilde{\alpha}(\dot{\gamma})>0$,
with arbitrary prescribed endpoints of the boundary of that piece. On the remaining three pieces, 
we can find smooth embedded arcs with $\tilde{\alpha}(\dot{\gamma})>0$ and with endpoints matching the previous arc. 
Thus this forms a smooth embedded loop $\hat{\gamma}$ through $\hat{x}$ with $\tilde{\alpha}(\dot{\gamma})>0$. 
It desecends to a loop $\gamma$ through $x$ with $|\alpha(\dot{\gamma})|>0$. A small perturbation makes this loop be a
embedded one while fixing a short segment through $x$. This shows that $\alpha$ is transitive. See \cref{fig:morse-knot-gluing}

\begin{proof}[Proof of \cref{thm:intro-sphere}]
Near $\pi(Q)$, choose a metric $g_0$ making $\alpha$ harmonic as at
the ordinary Morse zeros in \cref{sec:desingularization}.
On small balls around these points, $\star_{g_0}\alpha$ is exact
by the Poincar\'e lemma.
Near $L$, choose the product metric $g_0=d\theta^2+|dz|^2$,
with orientation
$d(\operatorname{Re}z)\wedge d(\operatorname{Im}z)\wedge d\theta$.
Then we have $\star_{g_0}\alpha
  =d\bigl(\operatorname{Im}(z^{3/2})\,d\theta\bigr)$ exact.
The primitive $\bigl(\operatorname{Im}(z^{3/2})\,d\theta\bigr)$ is a $\cI$-valued 1-form on the 
tubular neighborhood.
These metrics near $\pi(Q)$ and $L$ define $g_0$ near $Z$ for which $\alpha$
is harmonic and $\star_{g_0}\alpha$ is exact. Thus
$\alpha$ is locally intrinsically harmonic and locally
$\star$-exact.
We have shown that $\alpha$ is transitive, so we can apply
\cite[Theorem~3.11]{ChenHeYanCalabiSurgery} to $\alpha$. 
Therefore, there exists a smooth metric $g$ on $M$ such that $\alpha$ is harmonic with respect to $g$.
Moreover, by construction $\alpha$ is nondegenerate.
\end{proof}

The above construction can be generalized to produce a metric together with a nondegenerate
$\Ztwo$-harmonic 1-form on every closed connected oriented
$n$-manifold $(n\geq3)$. The singular set is two embedded $(n-2)$-spheres that are contained in two disjoint $n$-balls
and the $1$-form has ordinary Morse zeros of indices
$1,\ldots,n-1$.

\subsection{\texorpdfstring{The case of $S^3$}{The case of S3}}\label{subsec:sphere-s3}

We now discuss two aspects of the case $M=S^3$.
We first describe the corresponding double branched covers and explain why the existence
theorem does not extend to arbitrary links with two components.
We then show that these examples fail the transverse distance condition in
He--Wentworth--Zhang's nonexistence theorem
\cite[Theorem~1.7(2)]{HeWentworthZhangTrees}, and serves as counterexamples of \cite[Conjecture~1.6]{HeWentworthZhangTrees}.

Since $K_0$ and $K_1$ lie in disjoint balls,
we have $\widehat{S^3}\cong
\Sigma_2(K_0)\#\Sigma_2(K_1)\#(S^1\times S^2)$,
where $\Sigma_2(K_i)$ denotes the double cover of $S^3$ branched
along $K_i$.
Each $\Sigma_2(K_i)$ is a rational homology sphere, so
$H^1(\widehat{S^3};\R)\cong\R$.
The positive loop $\hat\gamma\subset\widehat{S^3}$ constructed above satisfies $\int_{\hat\gamma}\tilde\alpha>0$, so $[\tilde{\alpha}]\ne 0$.

The existence statement does not extend to arbitrary links
with two components.
It was proved in \cite{HaydysPSL2R} that
a nonzero $\Ztwo$-harmonic 1-form with a smooth monodromy set
requires the corresponding double branched cover to have
positive first Betti number.
For the Hopf link, the corresponding branched cover is $\mathbb{RP}^3$, whose first
Betti number is zero.
Thus the Hopf link cannot occur as the monodromy set of
a nonzero $\Ztwo$-harmonic 1-form for any smooth metric on $S^3$.

When $M=S^3$, \cref{thm:intro-sphere} gives counterexamples to
\cite[Conjecture~1.6]{HeWentworthZhangTrees}.
These examples fail the transverse distance condition in
\cite[Theorem~1.7(2)]{HeWentworthZhangTrees}.

Following \cite{HeWentworthZhangTrees},
for a piecewise $C^1$ path $\eta:[0,1]\to S^3$, define its
transverse measure by
$\mu_\alpha(\eta)
  =\int_0^1|\alpha_{\eta(t)}(\dot\eta(t))|\,dt$.
For $x,y\in S^3$, the transverse distance is
$d_{S^3,\alpha}(x,y)=\inf_{\eta:x\to y}\mu_\alpha(\eta)$,
where the infimum is over all piecewise $C^1$ paths from $x$ to $y$.
The transverse distance condition requires every arc $\eta$ in $S^3\setminus Z$
transverse to $\ker\alpha$, with endpoints $x,y$, to satisfy
$\mu_\alpha(\eta)=d_{S^3,\alpha}(x,y)$.

Recall that the Reeb graph of $u$ on $\hat B$ is the segment
$[-1,1]$.
Since $u\circ\tau=-u$, the involution induces the reflection
$t\mapsto-t$ on this segment.
The quotient is $[0,1]$, parametrized by $|u|$.
In the construction on $S^3$, we may choose $f$ to be a height
function with exactly two critical points, its minimum and maximum.
The two quotient segments from $B_0$ and $B_1$ then glue to the Reeb interval of
$f|_W$.
Thus the leaf space of $\alpha$, defined in
\cite[Definition~4.3]{HeWentworthZhangTrees}, is a segment $I$.
Its endpoints correspond to the singular leaves containing
$K_0$ and $K_1$, respectively.

Choose $x\in\operatorname{Int}W$ and take the positive embedded loop
through $x$.
The level set of $f$ containing $x$, which is a $2$-sphere, meets this loop at one further
point $y$. Thus this level set separates the loop into two arcs $\eta_1$ and $\eta_2$, that are transverse to $\ker\alpha$.
Write $[x]\in I$ for the image of $x$ in the leaf space.
The projection of $\eta_1$ runs from $[x]$ to one endpoint of $I$
and back to $[x]$.
Thus $\mu_{\alpha}(\eta_1)$ is twice the distance from
$[x]$ to that endpoint and is therefore positive.
However, $x$ and $y$ can be joined within the level set by a
path of zero transverse measure. Thus $d_{S^3,\alpha}(x,y)=0<\mu_{\alpha}(\eta_1)$, which shows that the transverse distance condition fails.

\appendix
\section{Irregular degeneracy along a smooth branch circle}
\label{sec:prescribed-degeneracy}
Even when the monodromy locus is a smooth circle, the leading branch
coefficient of a $\Ztwo$-harmonic 1-form can have an arbitrarily
complicated zero set. More precisely, for any closed subset
$K\subset S^1$, we construct a local exact $\Ztwo$-harmonic 1-form
near the branch circle whose leading branch coefficient vanishes
precisely on $K$.

\begin{theorem}\label{thm:intro-prescribed-degeneracy}
Let $K\subset S^1$ be closed.
There exist a neighborhood $U$ of
$\Sigma_0=S^1\times\{0\}$ in $S^1_t\times\C_z$,
a smooth metric $g$ on $U$, and a $\Ztwo$-harmonic 1-form
\[
  \alpha\in\Omega^1(U\setminus Z;\cI)
\]
whose monodromy locus is $\Sigma_0$.
The form can be chosen as
\[
  \alpha=d\operatorname{Re}
  \bigl(B(t)z^{3/2}+z^{5/2}\bigr),
  \qquad B^{-1}(0)=K,
\]
where $B\in \mathcal{C}^\infty(S^1;[0,\infty))$.
\end{theorem}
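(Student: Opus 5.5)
Our plan is to imitate the proof of \cref{lem:phase-adjacent-transition}, now with a $t$-dependent coefficient in front of $z^{3/2}$ on the circle $S^1_t$. Choose $B\in\mathcal{C}^\infty(S^1;[0,\infty))$ with $B^{-1}(0)=K$; such a function exists for every closed set, for instance as a convergent sum of bumps with rapidly decaying coefficients. Put $\gamma=3/2$ and
\[
  u(t,z):=\operatorname{Re}\bigl(B(t)z^{3/2}+z^{5/2}\bigr),
\]
regarded as an $\cI$-valued function on $S^1\times(D_\eps\setminus\{0\})$, where $\cI$ has holonomy $-1$ around the normal circles. Since $\operatorname{Re}(z^{3/2})$ and $\operatorname{Re}(z^{5/2})$ are harmonic in $z$, we get $\diver(\mathrm{I}_3\,du)=B''\operatorname{Re}(z^{3/2})$. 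We cancel this term by the normal correction from that lemma, taking $\vartheta=0$ and $b=B$:
\[
  A:=\operatorname{diag}\Bigl(1,\mathrm{I}_2-\tfrac{B''(t)}{2(\gamma+1)}H\Bigr),
  \qquad
  H=\begin{pmatrix}\operatorname{Re}z-2\gamma\operatorname{Re}\bar z&\operatorname{Im}z\\ \operatorname{Im}z&-\operatorname{Re}z-2\gamma\operatorname{Re}\bar z\end{pmatrix}.
\]
The identity $\diver(A\,du)=0$ is the same computation as before. It is pointwise in $t$ and uses only the $z^{\gamma+1}$ term, so it holds for arbitrary smooth $B$. Because $H=O(|z|)$ uniformly on the compact circle, $A$ is positive definite for $\eps$ small. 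The metric $g=(\det A)A^{-1}$ from \eqref{eq:metric-from-density} is smooth and makes $\alpha=du$ harmonic.

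Next we locate the zeros. Off the axis, $\partial_xu=\partial_yu=0$ is equivalent to $\tfrac32Bz^{1/2}+\tfrac52z^{3/2}=0$. This forces $z=-\tfrac35B(t)$, which requires $B(t)>0$, and then $\partial_tu=B'(t)\operatorname{Re}(z^{3/2})$ must also vanish. Since $z^{3/2}$ at a negative real $z$ is purely imaginary for either branch, this happens automatically; this is exactly why the phase condition \eqref{eq:phase-condition} with $\vartheta=0$ fails. We would therefore rather shrink the tube. If $\max B<\tfrac53\eps$ fails, we replace $B$ by $\delta B$ with $\delta<\tfrac53\eps/\max B$, which keeps $B^{-1}(0)=K$. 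Then $z_*=-\tfrac35\delta B(t)$ lies in the disk, so we must instead take $\eps<\tfrac35\min$. That is impossible when $K\neq\varnothing$, because $\min B=0$.

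This zero-location step is the main obstacle: off-axis critical points at $z_*(t)$ accumulate onto $K$. We would first try a phase $e^{i\vartheta}$ with $\cos(\tfrac52\vartheta+\tfrac32\pi)\neq0$, as in \eqref{eq:phase-condition}. Then at $z_*$ we have $\partial_tu=B'(t)\operatorname{Re}(e^{i\vartheta}z_*^{3/2})$, which is nonzero wherever $B'\neq0$ and $B>0$. We also choose $B$ with $B'\neq0$ on the set $\{0<B<\tfrac53\eps\}$, which is possible only if that set avoids local minima of $B$. Near an interval of constancy or an interior local minimum of $B$, zeros would remain. Failing that, we accept off-axis ordinary zeros: the theorem only asserts $\Mono(\cI)=\Sigma_0$, and $Z$ may contain the ordinary zeros $\{z=z_*(t)\}$, which is the $Z$ in the statement. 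In that case we must check that these extra zeros are genuine ordinary zeros of a smooth harmonic form, which is automatic since $\cI$ is trivial near them.

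Finally we verify \cref{def:z2-harmonic}. We have $|du|=O(|z|^{1/2})$ and $|\nabla du|=O(|z|^{-1/2})$ uniformly, using $B$ bounded and $z^{5/2}$ smooth on the double cover. Hence $|du|$ extends continuously by $0$, the energy is finite, and $\int_{B_r}|du|^2\le Cr^4$. The monodromy locus is $\Sigma_0$, because the holonomy around every meridian is $-1$, and the leading branch coefficient is $B(t)$, vanishing exactly on $K$.
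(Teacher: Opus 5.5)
Your final construction is correct and is essentially the paper's: with $C=1$ and $s(t)=-\tfrac25B''(t)$, the paper's density $\operatorname{diag}(1,\mathrm{I}_2+H_{s(t)})$ is exactly your $\operatorname{diag}\bigl(1,\mathrm{I}_2-\tfrac{B''}{5}H\bigr)$ with $\vartheta=0$, $\gamma=3/2$. The paper likewise simply accepts the ordinary zero curve $\Gamma_B=\{z=-\tfrac35B(t)\}$, where $\partial_tu$ vanishes automatically as you observed, so your detour through phases and rescalings of $B$ is unnecessary; a phase $e^{i\vartheta}$ would also change the prescribed form.
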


\subsection{Motivation}

The construction below is motivated by the obstruction problem studied in
\cite{HeMazzeoTakahashiHaydys}.  Let $(Z,\cI,\alpha)$ be a
$\Ztwo$-harmonic 1-form on $X$, and suppose that $\Mono(\cI)$ is
a smooth curve.  Near each $p\in\Mono(\cI)$, choose adapted coordinates
$(z,t)$, where $z$ is a complex coordinate in the normal plane and $t$
is a coordinate along $\Mono(\cI)$.
Donaldson's local expansion \cite{Donaldson} gives a local primitive $f$ of
$\alpha$ satisfying
\[
  \alpha=df,
  \qquad
  f(z,t)=\operatorname{Re}\bigl(B(t)z^{3/2}\bigr)+O(r^{5/2}),
  \qquad r=|z|.
\]
The coefficient $B$ is the leading branch coefficient;
although its local representative depends on the adapted coordinates, its
zero set is well defined.  The expansion implies
\[
  B(p)\neq0
  \quad\Longleftrightarrow\quad
  \Freq_p(0)=\frac12,
  \qquad
  B(p)=0
  \quad\Longleftrightarrow\quad
  \Freq_p(0)\geq\frac32.
\]

A Kuranishi description of the moduli space is obtained in
\cite{HeMazzeoTakahashiHaydys} under the assumption that $B$ has only
finitely many simple zeros.  This raises the question whether the hypothesis
follows from smoothness of the monodromy locus.  The construction below
gives a negative answer even locally: when $\Mono(\cI)\simeq S^1$,
the set $B^{-1}(0)$ can be any closed subset $K\subset S^1$.

\subsection{Construction of a local \texorpdfstring{$\Ztwo$}{Z/2}-harmonic
1-form}

We construct a local exact $\Ztwo$-harmonic 1-form by prescribing its
leading branch coefficient and choosing a metric for which a function with two angular modes is harmonic.  

\begin{lemma}\label[lemma]{lem:flat-defining-function}
Let $K\subset S^1$ be a proper closed subset.  There is a smooth
function $B:S^1\to[0,\infty)$ such that
\[
  B^{-1}(0)=K,
  \qquad
  \partial_t^mB|_K=0
  \quad\text{for every }m\geq0.
\]
\end{lemma}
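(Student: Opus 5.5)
The plan is to use the classical construction of a flat defining function for a closed set, as in Whitney's theorem, working on the open complement. Identify $S^1=\R/2\pi\Z$. Since $K$ is a proper closed subset, $U:=S^1\setminus K$ is a nonempty open set. Write $U$ as a countable (possibly finite) disjoint union of open arcs $I_j=(a_j,b_j)$, with lengths $\ell_j$. If $K=\varnothing$, we simply take $B\equiv1$.

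On each arc we use the same template as in the proof of \cref{lem:straight-cantor-profile}. Fix $\psi\in\mathcal C^\infty(\R)$ with $\psi>0$ on $(0,1)$ and $\psi=0$ outside $(0,1)$, flat at the endpoints; for instance $\psi(s)=\exp(-1/s-1/(1-s))$ on $(0,1)$. On $I_j$ put
\[
  B(t)=A_j\,\psi\bigl((t-a_j)/\ell_j\bigr),
\]
and set $B=0$ on $K$. Choose the amplitudes $A_j>0$ super-polynomially small in $\ell_j$; for example $A_j=e^{-1/\ell_j}$, or any choice with $A_j\ell_j^{-m}\to0$ for every $m$. Then
\[
  \|\partial_t^mB\|_{\mathcal C^0(I_j)}\le C_mA_j\ell_j^{-m}.
\]
If there are finitely many arcs, no decay is needed: each piece is flat at its endpoints, so $B$ is smooth. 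If $K$ is a single point, the unique arc has both endpoints at that point; the formula still works after lifting to $\R$.

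The only real step is to verify that $B$ is smooth with all derivatives vanishing on $K$. On $U$ this is clear. We show by induction on $m$ that $\partial^m_tB$ exists on all of $S^1$, is continuous, and vanishes on $K$.

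Fix $p\in K$. Since $\ell_j\to0$, for any $\delta>0$ only finitely many arcs have $A_j\ell_j^{-m-1}\ge\delta$. Any point $t$ close to $p$ either lies in $K$ or lies in an arc $I_j$ with $|t-p|\ge\dist(t,\partial I_j)$.

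For arcs where $A_j\ell_j^{-m-1}<\delta$, use the mean value theorem from the nearest endpoint, where $\partial^m_tB$ vanishes by flatness of $\psi$. This gives
\[
  |\partial^m_tB(t)|\le C\delta\,|t-p|.
\]
The finitely many exceptional arcs either have $p$ as an endpoint, where flatness of $\psi$ handles them directly, or stay a positive distance from $p$. Hence $\partial_t^mB(t)=o(|t-p|)$ as $t\to p$. This shows that $\partial^{m+1}_tB(p)$ exists and equals $0$, and that $\partial^{m+1}_tB$ is continuous at $p$.

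Finally, $B\ge0$ everywhere, $B>0$ on $U$ because $\psi>0$ on $(0,1)$, and $B=0$ on $K$. Thus $B^{-1}(0)=K$ and $\partial_t^mB|_K=0$ for every $m$. The main obstacle, which is mild, is this uniform bookkeeping at accumulation points of the arcs; it is handled exactly as in \cref{lem:straight-cantor-profile}.
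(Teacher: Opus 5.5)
Your proposal is correct and follows essentially the same route as the paper. Both decompose $S^1\setminus K$ into arcs, place the rescaled bump $\exp(-1/(s(1-s)))$ with super-polynomially small amplitudes on each arc, and control derivatives near a point of $K$ by separating the finitely many large arcs from the small ones; your mean value theorem step showing $\partial_t^mB(t)=o(|t-p|)$ also makes explicit the differentiability at points of $K$, which the paper only asserts after checking that derivatives tend to zero along sequences approaching $K$.
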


\begin{proof}
When $K=S^1$, we can choose $B=0$ and when $K=\emptyset$, we can choose $B=1$. Now we assume $K\neq \emptyset$ or $S^1$. 	
	
Write the complement as a disjoint union of open intervals
\[
  S^1\setminus K=\bigsqcup_{j\geq1}J_j,
  \qquad
  J_j=(a_j,b_j),
  \qquad
  \ell_j=b_j-a_j.
\]
If there are only finitely many intervals, the same construction is used with
a finite index set.  Let
\[
  \psi(s)=
  \begin{cases}
    \exp\bigl(-1/(s(1-s))\bigr),&0<s<1,\\
    0,&s\notin(0,1),
  \end{cases}
\]
and put $\psi_j(t)=\psi((t-a_j)/\ell_j)$ on $J_j$, extended by zero.
Choose positive numbers $\eps_j$ so small that
\begin{equation}\label{eq:flat-amplitude-choice}
  \|\eps_j\psi_j\|_{\mathcal{C}^m(S^1)}
  \leq 2^{-j}\ell_j^j,
  \qquad 0\leq m\leq j.
\end{equation}
Define $B=\eps_j\psi_j$ on $J_j$ and $B=0$ on $K$.

By construction, $B>0$ on $S^1\setminus K$, and hence $B^{-1}(0)=K$.
For each fixed $j$, every derivative of the zero extension of $\psi_j$
vanishes at $a_j$ and $b_j$.

Let $t_n\in S^1\setminus K$ converge to $t\in K$.  After passing to a
subsequence, either all $t_n$ lie in one fixed interval $J_j$, or
$t_n\in J_{j_n}$ for pairwise distinct intervals $J_{j_n}$.  In the first
case, $t$ is either $a_j$ or $b_j$.  Since every derivative of the zero
extension of $\psi_j$ vanishes at both endpoints,
$\partial_t^mB(t_n)\to0$ for every $m$.  In the second case,
$j_n\to\infty$ and, since the intervals are pairwise disjoint,
$\ell_{j_n}\to0$.  Thus, for each fixed $m$ and all sufficiently large $n$,
\eqref{eq:flat-amplitude-choice} gives
\[
  |\partial_t^mB(t_n)|
  \leq 2^{-j_n}\ell_{j_n}^{j_n}
  \longrightarrow0.
\]
Therefore, $B$ extends smoothly across $K$ and that
$\partial_t^mB|_K=0$ for every $m\geq0$.
\end{proof}

We next record the identity that produces the required metric correction.
Write $z=x+iy$.  For $s=a+ib\in\C$, define the real symmetric matrix
\begin{equation}\label{eq:Hs-definition}
  H_s(x,y)=
  \begin{pmatrix}
    -ax-2by & \frac12(bx+ay)\\[1mm]
    \frac12(bx+ay) & -2ax-by
  \end{pmatrix}.
\end{equation}
It vanishes to first order at $z=0$.

\begin{lemma}\label[lemma]{lem:mode-lowering}
For $\gamma\in\frac12\Z$, we have
\begin{equation}\label{eq:mode-lowering}
  \diver_z\bigl(H_s\nabla_z z^\gamma\bigr)
  =\gamma\left(\gamma-\frac32\right)s z^{\gamma-1}.
\end{equation}
In particular,
\begin{equation}\label{eq:two-special-modes}
  \diver_z(H_s\nabla_z z^{3/2})=0,
  \qquad
  \diver_z(H_s\nabla_z z^{5/2})=\frac52s z^{3/2}.
\end{equation}
\end{lemma}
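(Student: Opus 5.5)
The plan is a direct computation using only holomorphy of $z^\gamma$. We work on a simply connected domain in $\C\setminus\{0\}$, for instance a slit disk, and fix a holomorphic branch of $z^\gamma$. Both sides of \eqref{eq:mode-lowering} change by the same sign under a change of branch, so the identity is local and branch-independent. We apply $\nabla_z=(\partial_x,\partial_y)$ and $\diver_z$ complex-linearly, that is, to real and imaginary parts separately. Since $H_s$ is real, the real part of the identity is the statement for $\operatorname{Re}(z^\gamma)$, which is what is used later. The first step is the Cauchy--Riemann relations for a holomorphic $G$:
\[
  \partial_xG=G',\qquad \partial_yG=iG'.
\]
Applied to $G=z^\gamma$ this gives $\nabla_z z^\gamma=\gamma z^{\gamma-1}(1,i)^T$.

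Next we compute the vector field $V:=H_s(1,i)^T=(\Gamma_1,\Gamma_2)^T$ explicitly:
\[
  \Gamma_1=-ax-2by+\tfrac{i}{2}(bx+ay),\qquad
  \Gamma_2=\tfrac12(bx+ay)+i(-2ax-by).
\]
By the product rule and the Cauchy--Riemann relations applied to $z^{\gamma-1}$,
\[
  \diver_z\bigl(H_s\nabla_z z^\gamma\bigr)
  =\gamma\Bigl[(\partial_x\Gamma_1+\partial_y\Gamma_2)\,z^{\gamma-1}
   +(\gamma-1)\,z^{\gamma-2}(\Gamma_1+i\Gamma_2)\Bigr].
\]
So we need two algebraic facts about $H_s$. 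The first is the divergence:
\[
  \partial_x\Gamma_1+\partial_y\Gamma_2=(-a+\tfrac{i}{2}b)+(\tfrac12a-ib)=-\tfrac12 s.
\]
The second is the contraction:
\[
  \Gamma_1+i\Gamma_2=(ax-by)+i(bx+ay)=sz.
\]
Substituting, the bracket becomes $\bigl(-\tfrac12+(\gamma-1)\bigr)s\,z^{\gamma-1}$. This gives $\gamma(\gamma-\tfrac32)s\,z^{\gamma-1}$, which is \eqref{eq:mode-lowering}.

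Finally, \eqref{eq:two-special-modes} is the specialization $\gamma=3/2$, where the coefficient vanishes, and $\gamma=5/2$, where $\gamma(\gamma-\tfrac32)=\tfrac52$.

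The only real obstacle is bookkeeping in the contraction $\Gamma_1+i\Gamma_2$. The specific off-diagonal entries $\tfrac12(bx+ay)$ and diagonal entries $-ax-2by$, $-2ax-by$ are exactly what make the $\bar z$-dependent terms cancel, leaving the holomorphic quantity $sz$. I would verify this cancellation carefully, and the constant divergence $-\tfrac12 s$, since the vanishing at $\gamma=3/2$ depends on that precise value.
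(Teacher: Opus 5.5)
Your proposal is correct and follows essentially the same route as the paper: both use $\partial_x z^\gamma=\gamma z^{\gamma-1}$, $\partial_y z^\gamma=i\gamma z^{\gamma-1}$ and differentiate directly. Your $\Gamma_1+i\Gamma_2=sz$ and $\partial_x\Gamma_1+\partial_y\Gamma_2=-\tfrac12 s$ are exactly the paper's identities $h_{11}-h_{22}+2ih_{12}=sz$ and $\partial_xh_{11}+\partial_yh_{12}+i\partial_xh_{12}+i\partial_yh_{22}=-\tfrac12 s$, with the same bookkeeping.
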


\begin{proof}
Write $H_s=(h_{ij})$.
Since $\partial_xz^\gamma=\gamma z^{\gamma-1}$ and
$\partial_yz^\gamma=i\gamma z^{\gamma-1}$, direct differentiation gives
\begin{align*}
  \diver_z(H_s\nabla_z z^\gamma)
  &=\gamma(\gamma-1)(h_{11}-h_{22}+2ih_{12})z^{\gamma-2}\\
  &\quad
    +\gamma\bigl(
      \partial_xh_{11}+\partial_yh_{12}
      +i\partial_xh_{12}+i\partial_yh_{22}
    \bigr)z^{\gamma-1}\\
  &=\gamma(\gamma-1)\bigl(ax-by+i(bx+ay)\bigr)z^{\gamma-2}
    -\frac \gamma2(a+ib)z^{\gamma-1}\\
  &=\gamma(\gamma-1)sz^{\gamma-1}
    -\frac \gamma2sz^{\gamma-1}\\
  &=\gamma\left(\gamma-\frac32\right)sz^{\gamma-1}.
\end{align*}
\end{proof}

We now use the mode-lowering identity to construct the exact local metric.
Let
\[
  U=S^1_t\times D_\rho,
  \qquad
  Z_0=S^1\times\{0\},
\]
and let $\cI\to U\setminus Z_0$ have holonomy $-1$ around a normal
meridian and trivial holonomy in the $S^1$-direction.  The half-integral
powers below are sections of $\cI$.  Their real parts are anti-invariant under
the deck involution and therefore define $\cI$-valued functions.

\begin{proposition}
  \label[proposition]{thm:local-two-mode-model}
Let $B\in \mathcal{C}^\infty(S^1;\C)$ and $C\in\C^*$, and set
\[
  u=\operatorname{Re}\bigl(B(t)z^{3/2}+Cz^{5/2}\bigr),
  \qquad
  \alpha=du,
\]
After decreasing $\rho$, there is a smooth metric $g_B$ on $U$ such that
\[
  d\alpha=0,
  \qquad
  d_{g_B}^*\alpha=0
  \quad\text{on }U\setminus Z_0,
\]
and
\[
  \left.g_B\right|_{Z_0}=dt^2+dx^2+dy^2.
\]
\end{proposition}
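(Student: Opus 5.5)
We work with the inverse metric density of \eqref{eq:metric-from-density}, in coordinates $(t,x,y)$. The plan is to find a symmetric matrix $A$ on $U$ with $A|_{Z_0}=\mathrm{I}_3$ and $\diver(A\,du)=0$. Then $g_B:=(\det A)A^{-1}$ restricts to the flat metric on $Z_0$. Since $A$ is positive definite near $Z_0$, this $g_B$ is a metric after shrinking $\rho$. Harmonicity then follows from $d_g^*du=-(\det A)^{-1}\diver(A\,du)$, and $d\alpha=0$ is automatic.

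\textbf{Step 1: the error for $A=\mathrm{I}_3$.} Both $\operatorname{Re}(B(t)z^{3/2})$ and $\operatorname{Re}(Cz^{5/2})$ are harmonic in $z$. The only error is therefore the $t$-derivative:
\[
  \diver(\mathrm{I}_3\,du)=\partial_t^2u=\operatorname{Re}\bigl(B''(t)z^{3/2}\bigr).
\]

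\textbf{Step 2: correct it with the normal block.} Take
\[
  A=\operatorname{diag}\bigl(1,\ \mathrm{I}_2+H_{s(t)}\bigr),
\]
where $H_s$ is the matrix \eqref{eq:Hs-definition} and $s(t)$ is a complex function to be chosen. The block acts only in the $(x,y)$ directions and $H_s$ does not depend on $z$ in a way that involves $t$-derivatives. Hence
\[
  \diver(A\,du)=\operatorname{Re}(B''z^{3/2})+\diver_z(H_s\nabla_z u).
\]
Because $H_s$ is real, the operator $\diver_z(H_s\nabla_z\,\cdot)$ commutes with taking real parts. Lemma~\ref{lem:mode-lowering}, in the form \eqref{eq:two-special-modes}, then gives
\[
  \diver_z\bigl(H_s\nabla_z\operatorname{Re}(Bz^{3/2})\bigr)=0.
\]
For the second mode, the complex constant $C$ must be absorbed. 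Since $H_s$ is linear in $s$ over $\R$ only, I check directly that multiplying by $C$ is compatible: $\diver_z(H_s\nabla_z(Cz^{5/2}))=C\cdot\frac52 s\,z^{3/2}$, by the identity applied termwise. This gives
\[
  \diver_z\bigl(H_s\nabla_z\operatorname{Re}(Cz^{5/2})\bigr)=\operatorname{Re}\bigl(\tfrac52 Cs\,z^{3/2}\bigr).
\]
Choose $s(t):=-\tfrac{2}{5}C^{-1}B''(t)$, which is possible because $C\neq0$. The two terms then cancel and $\diver(A\,du)=0$.

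\textbf{Step 3: smoothness and positivity.} The function $s$ is smooth in $t$, so $A$ is smooth on $U$. Since $H_s$ is linear in $(x,y)$, we have $A=\mathrm{I}_3$ on $Z_0$. Compactness of $S^1$ gives $|H_{s(t)}|\le C'\rho$, so $A$ is positive definite once $\rho$ is small.

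The main point to get right is the step where $\operatorname{Re}$ is commuted through $H_s$ together with the complex coefficient $C$. Lemma~\ref{lem:mode-lowering} is stated for $z^\gamma$ alone, and its proof uses only that $H_s$ is a real matrix acting on the complex-valued function $z^\gamma$. The derivation is complex-linear in the function, so multiplying by a constant $C$ is allowed, but this must be verified. Everything else is bookkeeping.
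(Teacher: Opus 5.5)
Your proposal is correct and is essentially the paper's proof. It uses the same density $\operatorname{diag}(1,\mathrm{I}_2+H_{s(t)})$ with $s=-2B''/(5C)$, the same reduction $\diver(A\,du)=\partial_t^2u+\diver_z(H_s\nabla_z u)$, and the same appeal to the mode-lowering identity. The step you single out as delicate, moving $\operatorname{Re}$ and the constants $B,C$ through the operator, is immediate: $\diver_z(H_s\nabla_z\,\cdot)$ has real coefficients and is therefore $\C$-linear on complex-valued functions.
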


\begin{proof}
Set
\begin{equation}\label{eq:sB-definition}
  s(t)=-\frac{2B''(t)}{5C}
\end{equation}
and, relative to the coordinate volume form $dt\wedge dx\wedge dy$, define
the contravariant metric density
\begin{equation}\label{eq:AB-definition}
  A_B=
  \begin{pmatrix}
    1&0\\
    0&\mathrm{I}_2+H_{s(t)}(x,y)
  \end{pmatrix}.
\end{equation}
Because $s$ is bounded and
$|H_s(x,y)|\leq c|s||z|$, a uniform choice of sufficiently small $\rho$
makes $A_B$ positive definite.  Define
\begin{equation}\label{eq:gB-definition}
  g_B=(\det A_B)A_B^{-1}.
\end{equation}
Since $A_B|_{Z_0}=\mathrm{I}_3$, we have
\[
  \left.g_B\right|_{Z_0}=dt^2+dx^2+dy^2.
\]

The definition of $g_B$ and the block form of $A_B$ give
\[
  \begin{aligned}
  \Delta_{g_B}u
  &=\frac{1}{\det A_B}\diver(A_B\nabla u)\\
  &=\frac{1}{\det A_B}\left(
      \partial_t^2u+\Delta_z u
      +\diver_z\bigl(H_{s(t)}\nabla_z u\bigr)
    \right).
  \end{aligned}
\]
By \cref{lem:mode-lowering} and \eqref{eq:sB-definition},
\begin{align*}
  \diver(A_B\nabla u)
  &=\operatorname{Re}\left[
      \left(B''(t)+\frac52C s(t)\right)z^{3/2}
    \right]\\
  &=0.
\end{align*}
Consequently, $\Delta_{g_B}u=0$.
\end{proof}

\subsection{Zero set, frequency, and analytic estimates}

We now determine the zero set and frequency of the local model and verify the
remaining growth estimate.  Let $B$ be the function constructed in
\cref{lem:flat-defining-function}.  Thus
\[
  B\geq0,
  \qquad
  B^{-1}(0)=K,
  \qquad
  \partial_t^mB|_K=0
  \quad\text{for every }m\geq0,
\]
and in particular $B(t)>0$ precisely when $t\in S^1\setminus K$.

\begin{proposition}
  \label[proposition]{prop:closed-set-frequency}
Let $C>0$, and let $\alpha=du$ be the model constructed in
\cref{thm:local-two-mode-model}.  Then
\begin{equation}\label{eq:two-mode-zero-set}
  \Zero(\alpha)=Z_0\cup\Gamma_B,
  \qquad
  \Gamma_B:=
  \left\{(t,z):z=-\frac{3B(t)}{5C}\right\}.
\end{equation}
Moreover,
\[
  \Gamma_B\cap Z_0=K\times\{0\},
  \qquad
  \Mono(\cI)=Z_0,
\]
and
\begin{equation}\label{eq:prescribed-frequency-values}
  \Freq_{(t,0)}(0)=
  \begin{cases}
    \frac12,&t\notin K,\\[2mm]
    \frac32,&t\in K.
  \end{cases}
\end{equation}
For every $p\in\Gamma_B\setminus Z_0$, we have $\Freq_p(0)=1$.
\end{proposition}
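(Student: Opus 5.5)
We work on the lift to the double cover, writing $z=w^2$. There $u$ becomes the single-valued odd function $\operatorname{Re}\bigl(B(t)w^3+Cw^5\bigr)$. The plan is to compute the critical set directly, identify the monodromy locus, and read off frequencies from homogeneous leading terms.

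\textbf{Zero set.} For $z\ne0$, the normal derivatives give $\partial_xu-i\partial_yu=\tfrac32 Bz^{1/2}+\tfrac52Cz^{3/2}$ (with $B$ real). This vanishes exactly when $z=-\tfrac{3B(t)}{5C}$. At such a point we have $\partial_tu=\operatorname{Re}(B'(t)z^{3/2})$. Here I must check that $\partial_tu=0$. On a branch of $z^{1/2}$ the quantity $z^{3/2}$ is purely imaginary, because $z$ is a negative real number. Hence $\operatorname{Re}(B'z^{3/2})=0$ and the whole differential vanishes, so these points lie in $\Zero(\alpha)$.

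Conversely, away from $\Gamma_B$ the normal gradient is nonzero. On $Z_0$ itself, $|\alpha|\to0$ because every term carries a factor $|z|^{1/2}$. For $t\in K$ the point $z=-3B/(5C)$ is $0$, and for $t\notin K$ it is nonzero. This gives $\Gamma_B\cap Z_0=K\times\{0\}$. Since $B$ is small, we may shrink $B$ by a constant first if needed, so that $\Gamma_B\subset U$.

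\textbf{Monodromy.} $\Mono(\cI)=Z_0$ because every neighborhood of a point of $Z_0$ contains a meridian with holonomy $-1$. Points of $\Gamma_B\setminus Z_0$ have a simply connected neighborhood avoiding $Z_0$, so $\cI$ is trivial there.

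\textbf{Frequencies.} At $t_0\notin K$ the leading term is $\operatorname{Re}(B(t_0)z^{3/2})$ with $B(t_0)\ne0$. The rescalings \eqref{eq:cantor-rescaling}, using $g_B|_{Z_0}$ Euclidean, converge to $d\operatorname{Re}(B(t_0)z^{3/2})$, which is homogeneous of frequency $1/2$. To see this I write $H$ and $D$ as rescaled integrals, exactly as in \cref{prop:cantor-frequency-uniqueness}.

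At $t_0\in K$ the function $B$ vanishes to infinite order. So $B(t)=O(|t-t_0|^N)$ together with its derivatives, and on $B_r$ the term $Bz^{3/2}$ contributes $O(r^{N})$ to $|\alpha|$ and $O(r^{N-1})$ to $|\nabla\alpha|$. Then $H_p(r)=c\,r^{2\cdot 3/2+2}(1+o(1))$ and $D_p(r)=c'r^{4}(1+o(1))$, both coming from $Cz^{5/2}$. The rescaled metrics converge to Euclidean because $H_s=O(|z|)$. This gives $\Freq=3/2$.

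At $p\in\Gamma_B\setminus Z_0$ the form is an ordinary smooth harmonic form in a local trivialization. I compute the Hessian of $u$ at $p$. In $z$ it is $\operatorname{Re}$ of the second $z$-derivative times $dz^2$, with coefficient $\tfrac34Bz^{-1/2}+\tfrac{15}4Cz^{1/2}$. At $z=-3B/(5C)$ this equals $\tfrac32Cz^{1/2}\ne0$, so the normal $2\times2$ block is nondegenerate. Thus $\alpha$ vanishes to first order, and its linear part is the nonzero harmonic quadratic. This gives $\Freq_p(0)=1$.

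\textbf{Main obstacle.} I expect the main obstacle to be the $K$ case. There one must control the contributions of $B$ and $B''$, which enter through $s(t)$ in the metric, uniformly in the rescaling. This is exactly where flatness of $B$ is used: the error terms are $O(r^N)$ for all $N$, and therefore negligible against $r^5$ and $r^4$. The condition $\int_{B_r}|\alpha|^2\le cr^{3+\delta}$ follows since $|\alpha|=O(|z|^{1/2})$ uniformly.
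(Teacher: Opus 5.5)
Your proposal is correct and follows essentially the same route as the paper. You compute the critical set directly, and you usefully make explicit that $\partial_t u=\operatorname{Re}(B'z^{3/2})$ vanishes on $\Gamma_B$ because $z^{3/2}$ is purely imaginary there. You then get $\Mono(\cI)=Z_0$ from meridian holonomy, the frequencies on $Z_0$ from the leading homogeneous terms (using flatness of $B$ and $s$ at $K$), and frequency $1$ on $\Gamma_B\setminus Z_0$ from a nonvanishing Hessian.

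Two small slips do not affect the conclusion. At $z_c=-3B/(5C)$ the second derivative is $\tfrac52 Cz_c^{1/2}=-\tfrac32 Bz_c^{-1/2}$, not $\tfrac32 Cz_c^{1/2}$. Also, the $B$-term contributes $O(r^N|z|^{-1/2})$ to $|\nabla\alpha|$ pointwise, not $O(r^{N-1})$; this is negligible only after squaring and integrating over $B_r$.
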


\begin{proof}
A direct calculation gives
\[
  du=0
  \quad\Longleftrightarrow\quad
  z=0
  \quad\text{or}\quad
  z=-\frac{3B(t)}{5C}.
\]
This proves \eqref{eq:two-mode-zero-set}.  Since $B^{-1}(0)=K$, it also
gives $\Gamma_B\cap Z_0=K\times\{0\}$.

By construction, $\cI$ has holonomy $-1$ around every meridian of $Z_0$ and
is trivial near every point of $\Gamma_B\setminus Z_0$.  Hence
$\Mono(\cI)=Z_0$.  The values in \eqref{eq:prescribed-frequency-values}
follow directly from the leading terms
$B(t_0)\operatorname{Re}(z^{3/2})$ for $t_0\notin K$ and
$C\operatorname{Re}(z^{5/2})$ for $t_0\in K$.  

Finally, let $p=(t_0,z_c)\in\Gamma_B\setminus Z_0$, where
\[
  z_c=-\frac{3B(t_0)}{5C}\neq0.
\]
A direct calculation gives
\[
\left.
\partial_z^2\bigl(B(t_0)z^{3/2}+Cz^{5/2}\bigr)
\right|_{z=z_c}
=-\frac32 B(t_0)z_c^{-1/2}\neq0.
\]
Thus $\alpha=du$ vanishes to first order at $p$. Since the frequency of a
smooth harmonic 1-form agrees with its order of vanishing, we obtain $\Freq_p(0)=1.$
\end{proof}

A direct computation shows that there is a constant $c>0$ such that, for
every $p\in Z_0$ and all sufficiently small $r>0$,
\[
  \int_{B_r(p)}|\alpha|_{g_B}^2\,\vol_{g_B}
  \leq cr^4.
\]
The corresponding conditions at the ordinary zeros in
$\Gamma_B\setminus Z_0$ are automatic.  Consequently,
\[
\bigl(\Zero(\alpha),\cI|_{U\setminus\Zero(\alpha)},\alpha\bigr)
\]
is a $\Ztwo$-harmonic 1-form on $(U,g_B)$.

\begin{proof}[Proof of \cref{thm:intro-prescribed-degeneracy}]
The result follows by combining \cref{lem:flat-defining-function},
\cref{thm:local-two-mode-model}, and \cref{prop:closed-set-frequency}.
\end{proof}

\bibliographystyle{plain} 
\bibliography{paper}

\end{document}